\renewcommand\tableofcontents{%
    \leftline{{\bf\contentsname}}
    \smallskip\noindent
    \@starttoc{toc}%
    }
\let\hat\widehat
\let\tilde\widetilde
\newtheorem{theorem}{Theorem}
\newtheorem{lemma}[theorem]{Lemma}
\newtheorem{corollary}[theorem]{Corollary}
\newenvironment{proof}{{\bf Proof.}}{$\Box$}
\newenvironment{proofof}{\noindent{\bf Proof of}}{$\Box$}
\DeclareMathOperator*{\argmax}{argmax}
\newcommand\cA{{\cal A}}
\newcommand\cC{{\cal C}}
\newcommand\cE{{\cal E}}
\newcommand\cF{{\cal F}}
\newcommand\cP{{\cal P}}
\newcommand\cT{{\cal T}}
\newcommand\cU{{\cal U}}
\newcommand\mathand{\;\ {\rm and}\ \;}
\newcommand\union{\cup}
\newcommand\Union{\bigcup}
\newcommand\intersect{\cap}
\newcommand\norm[1]{\left\|#1\right\|}
\newcommand\R{\mathbb{R}}
\newcommand\Set[1]{\left\{#1\right\}}
\newcommand{\HRule}{\rule{\linewidth}{0.5mm}}
\begin{document}

\begin{frontmatter}
\title{The Geometry of Nonparametric Filament Estimation}
\runtitle{Filaments}


\author{\fnms{Christopher} \inits{R.} \snm{Genovese,}\ead[label=e1]{genovese@stat.cmu.edu}}
\thankstext{t1}{Thanks to Pierpaolo Brutti for pointing us to some useful references
and Tony Cai for helpful comments.
This research was partially supported by NSF grant DMS-08-060009.}
\address{Department of Statistics\\ Carnegie Mellon Univerity\\ \printead{e1}}
\author{\fnms{Marco} \snm{Perone-Pacifico,}\ead[label=e2]{marco.peronepacifico@uniroma1.it}}
\address{Department of Statistical Sciences\\ Sapienza Univerity of Rome\\ \printead{e2}}
\author{\fnms{Isabella} \snm{Verdinelli}\ead[label=e3]{isabella@stat.cmu.edu}}
\address{Department of Statistics\\ Carnegie Mellon Univerity\\
and Department of Statistical Sciences\\ Sapienza University of Rome\\ \printead{e3}}
\and
\author{\fnms{Larry} \snm{Wasserman}\corref{}\ead[label=e4]{larry@stat.cmu.edu}}
\address{Department of Statistics\\ Carnegie Mellon Univerity\\ \printead{e4}}
\affiliation{Department of Statistics\\ Carnegie Mellon University\\
 and \\
Department of Statistical Sciences\\Sapienza University of Rome\\ \today}
\runauthor{Genovese et al}

\begin{abstract}
We consider the problem of 
estimating filamentary structure from planar point process data.
We make some connections with computational geometry and
we develop nonparametric methods for estimating the filaments.
We show that, under weak conditions, the filaments
have a simple geometric representation
as the medial axis of the data distribution's support.
Our methods convert an estimator of the support's boundary
into an estimator of the filaments.
We also find the rates of convergence of our estimators.
\end{abstract}

\begin{keyword}[class=AMS]
\kwd[Primary ]{62H30;}
\kwd[secondary ]{62G07.}
\end{keyword}

\begin{keyword}
\kwd{clustering}
\kwd{density estimation}
\kwd{filaments}
\kwd{manifold learning}
\kwd{principal curves.}
\end{keyword}
\end{frontmatter}

\newpage

\tableofcontents

\newpage

\section{Introduction}

Filaments are one-dimensional curves embedded in 
$\mathbb{R}^d$ where $d>1$.
Filament estimation has important applications in many fields
including astronomy, geology, and medicine.
Our basic filament model is
\begin{equation}\label{eq::first}
Y_i = f(U_i) +  \epsilon_i
\end{equation}
where $f:[0,1] \to \mathbb{R}^d$.
The unobserved variables $U_1,\ldots, U_n$ are drawn from
a distribution $H$ on $[0,1]$ and
$\epsilon_1, \ldots, \epsilon_n$ are 
drawn from a mean zero noise distribution $F$.
The goal is to estimate
\begin{equation}
\Gamma\equiv \Gamma_f = \{ f(u): 0 \leq u \leq 1\}.
\end{equation}
Later, we extend the model to include background clutter,
other $Y_i$'s drawn uniformly from a compact set containing the filaments.
See Figure \ref{fig::model}.
Estimating $f$ is an example of one-dimensional manifold learning.
It may also be regarded as a type of principal curve
estimation.

There is a plethora of available statistical methods 
that can, in principle, be used for estimating filaments.
These include:
principal curves (\cite{hs::1989},
\cite{kegl::2000}, \cite{sandilya::2002}, and \cite{smola});
nonparametric, penalized, maximum likelihood
\citep{tibs::1992};
beamlets (\cite{Donoho01beamletsand}, and \cite{arias::2006});
parametric models (\cite{stoica-etal:2007});
manifold learning techniques
(\cite{isomap}, \cite{lle}, and \cite{Huo02locallinear});
gradient based methods
(\cite{novikov-etal:2006}, and
\cite{us::2009} and
methods from computational geometry 
(\cite{Dey}, \cite{Lee:1999}, and \cite{Cheng:2005}).

In this paper, 
we make some connections between the statistical problem
and some ideas from computational geometry.
We propose new, simple,
nonparametric estimators for $\Gamma_f$,
and we find their rates of convergence.
To the best of our knowledge, our methods are the first
that are computationally simple,
consistent, and have given rates of convergence
with the exception of
\cite{Cheng:2005}.
However, our methods are simpler than those in
\cite{Cheng:2005},
our assumptions are weaker,
our loss function is more stringent
and our estimators have faster rates of convergence.

The optimal rates of convergence
for this problem appear to be unknown.
In related work (\cite{us})
we derived the minimax rate under stringent conditions.
In ongoing work, we are finding the minimax rate
under more general conditions.
These rates depends critically on various features of the noise distribution $F$.
The methods in this paper are unlikely to be minimax optimal.
Nonethless, they achieve reasonable rates of convergence and are simple to compute.

Our basic strategy involves two steps:
\begin{enumerate}
\item Construct a set of fitted values that are close in Hausdorff distance to the filament.
\item Extract a curve from this set of fitted values.
\end{enumerate}

\begin{figure}
\vspace{-.5in}
\begin{center}
\includegraphics[width=4in,height=4in]{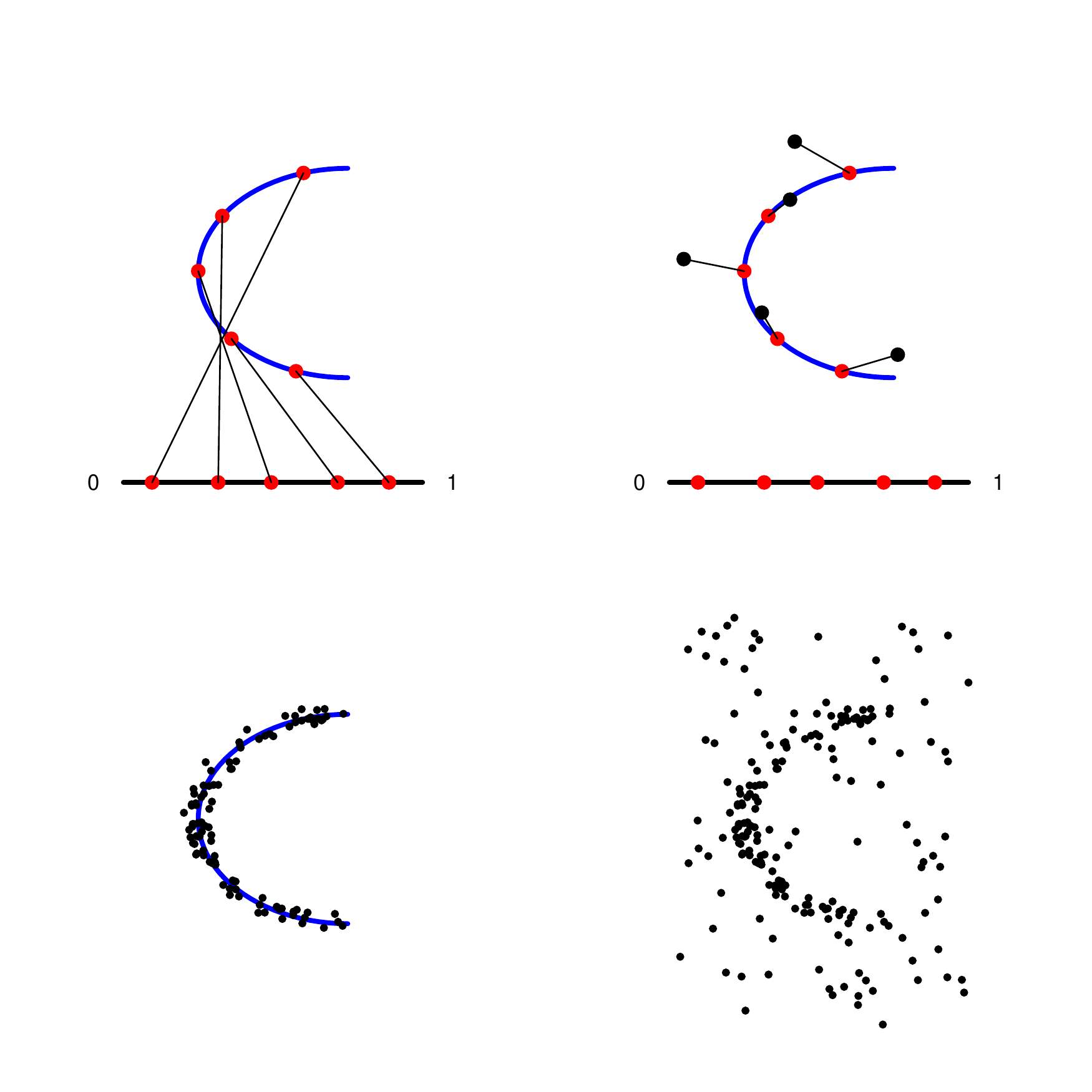}
\end{center}
\vspace{-.5in}
\caption{These plots illustrate the filament model.
Top left: some points $U_i$ on $[0,1]$ are mapped to $\Gamma_f$ by $f$.
Top right: noise is added to the points.
Bottom left: a larger sample.
Bottom right: background clutter has been added.}
\label{fig::model}
\end{figure}

{\em Motivation.}
The need to identify filamentary structures 
arises in a wide variety of applications.
In medical imaging, for instance,
filaments arise as networks of blood vessels in tissue
and need to be identified and mapped.
In remote sensing,
river systems and road networks are common filamentary structures
of critical importance (\cite{lacoste-etal:2005,stoica-etal:2004}).
In seismology, the concentration of earthquake epicenters
traces the filamentary network of fault lines.
Filaments are of particular interest in astronomy
because the distribution of galaxies in the universe is concentrated
on a network of filaments that is often called the ``cosmic web.''
Indeed, astronomers have substantial literature on the problem of
estimating filaments; see
\cite{luo-vishniac:1995},
\cite{weygaert},
\cite{martinez-saar:2002},
\cite{barrow::1985},
\cite{stoica-etal:2005},
\cite{eriksen-etal:2004},
\cite{novikov-etal:2006},
\cite{sousbie-etal:2006} and
\cite{stoica-etal:2007}.

\vspace{.5cm}

{\em Summary of Results.}
Two key geometric ideas underlie our results -- the medial axis of a set
and the thickness $\Delta(f)$ of a curve $f$ -- both of which are defined in Section 3.
The medial axis is like the median of a set.
The thickness of a curve measures both the curvature and 
how close the curve comes to being self-intersecting.

Our main results are the following:
\begin{enumerate}

\item If the noise level $\sigma$ of $F$ is less than the thickness
  $\Delta(f)$, the filament equals the medial axis of the support of
  $Y$'s distribution (Theorem \ref{thm::filamentismedial}).
\item Any  estimate of the boundary of the support of the distribution can be
  converted into an estimate of the filament that is close in
  Hausdorff distance to the true filament (Theorems
  \ref{thm::edtestimator} and \ref{thm::methodII}).  If the rate of
  convergence of the boundary estimator is $r_n$ then the rate of
  convergence of the filament estimator is also $r_n$.
\item Our estimators produce a set of fitted values that contain the
  filament and are close to it in Hausdorff distance.  In Section 5,
  we show how to extract curves from the set estimators that are Hausdorff close
  to the true filament.
\end{enumerate}
Proofs of all results are given in Section \ref{sec::proofs}.

\medskip
{\em Notation.}
The boundary of a set $S$ is denoted by $\partial S$.
The Hausdorff distance
between two sets $A$ and $B$ is
\begin{equation}
d_H(A_1,A_2) =\min \left\{\delta:\strut\  A_1 \subset A_2\oplus\delta \;\mathand  A_2 \subset A_1\oplus\delta\right\}
\end{equation}
where
\begin{equation}
A\oplus\delta = \Union_{x\in A} B(x,\delta)
\end{equation}
denotes the $\delta$-\emph{enlargement} of the set $A$,
and $B(x,\delta) = \{ y:\ ||y-x|| \le \delta\}$
denotes a closed ball centered at $x$ with radius $\delta$.
If $A$ is a set and $x$ is a point then we write
$d(x,A) = \inf_{y\in A}||x-y||$.
The closure of $A$ is denoted by $\overline{A}$ and the complement of $A$ by $A^c$.
A curve is a map $f:[0,1]\to \mathbb{R}^d$.
Throughout, we use symbols like $C, c_0, c_1\ldots $ to denote generic positive constants
whose value may be different in different expressions.

\section{The Model}
\label{sect::model}

We will focus on finding filaments in a two dimensional
point process although the ideas extend to higher dimensions.
We begin with a single filament.
Suppose we observe $Y_1,\ldots, Y_n$ where
\begin{equation}
Y_i = f(U_i) +  \epsilon_i,\ \ \ \ \ i=1,\ldots, n
\end{equation}
where
$f:[0,1]\to \mathbb{R}^2$,
$U_1,\ldots,U_n \sim H$
where $H$ is a distribution on $[0,1]$
and $\epsilon_1,\ldots,\epsilon_n$ are drawn from $F$.

Denote the graph of the filament $f$ by
\begin{equation}
\Gamma\equiv \Gamma_f = \left\{f(u):\strut\ u\in[0,1]\right\}.
\end{equation}
With some abuse of terminology, 
we refer to both $f$ and $\Gamma_f$ as the filament.
We assume that $\Gamma_f$ is contained in a compact set
which, without loss of generality, we take to be
$[-1,1]^2$.

The output of our algorithms will be a set
$\hat\Gamma$ which need not be a curve.
Our loss function is Hausdorff distance
\begin{equation}
d_H(\Gamma_f,\hat\Gamma) = \inf\Bigl\{ \delta:\ 
\hat\Gamma \subset \Gamma_f \oplus \delta\ {\rm and}\ \Gamma_f \subset \hat\Gamma \oplus \delta \Bigr\}.
\end{equation}
We will also show how to extract a curve from $\hat\Gamma$.

\smallskip
Next we define a smoothness condition for $f$.
For any three distinct points $x,y,z$ on $\Gamma_f$ let
$r(x,y,z)$ be the radius of the circle passing through
the three points.
Define the \emph{thickness} of the curve $\Gamma_f$,
\citep{gonzalez}
denoted $\Delta \equiv \Delta(f)$, by
\begin{equation}\label{eq::Delta}
\Delta \equiv \Delta(f) \equiv \Delta(\Gamma_f)= \min_{x,y,z} r(x,y,z)
\end{equation}
where the minimum is over all triples of distinct points on $\Gamma_f$.
$\Delta$ is also called the {\em minimum global radius of curvature}, 
and the {\em normal injectivity radius} of $f$
and the {\em condition number} (\cite{smale}).
The thickness $\Delta$ has the following interpretation:
it is the minimum radius of all circles
that are tangent to one point of $\Gamma_f$ while passing
through another point of $\Gamma_f$.
A ball of radius $r > \Delta$ tangent to a point $y$ on $\Gamma_f$ 
can contain points in $\Gamma_f$ other than $y$.
This can occur because the radius of curvature of $\Gamma_f$ is
smaller than $r$ or because
the curve comes within $r$ of self-intersecting.
See Figure \ref{fig::thickness}. Hence the thickness
combines information about curvature and
separation, capturing both local and global features of the curve.
A useful way to think of $\Delta$
is that it is the largest radius of a ball that can roll
freely around $\Gamma_f$.

\smallskip
If $f(0) \neq f(1)$ we say that $f$ is \emph{open}.
If $f(0) = f(1)$ we say that $f$ is \emph{closed}.
If, for $u,v\in(0,1)$, $u \neq v$ implies that $f(u)\neq f(v)$ then we say that
$f$ is \emph{simple}, or non-self-intersecting.
Otherwise, we say it is self-intersecting. Unless stated otherwise,
we assume that $f$ is smooth (non-zero, finite gradient at every point)
and simple.
We assume that the filament is parameterized
with respect to arclength, normalized to $[0,1]$.

\newpage
\begin{figure}[h]
\begin{center}
\hbox to\textwidth{\hss \includegraphics[width=6.5in]{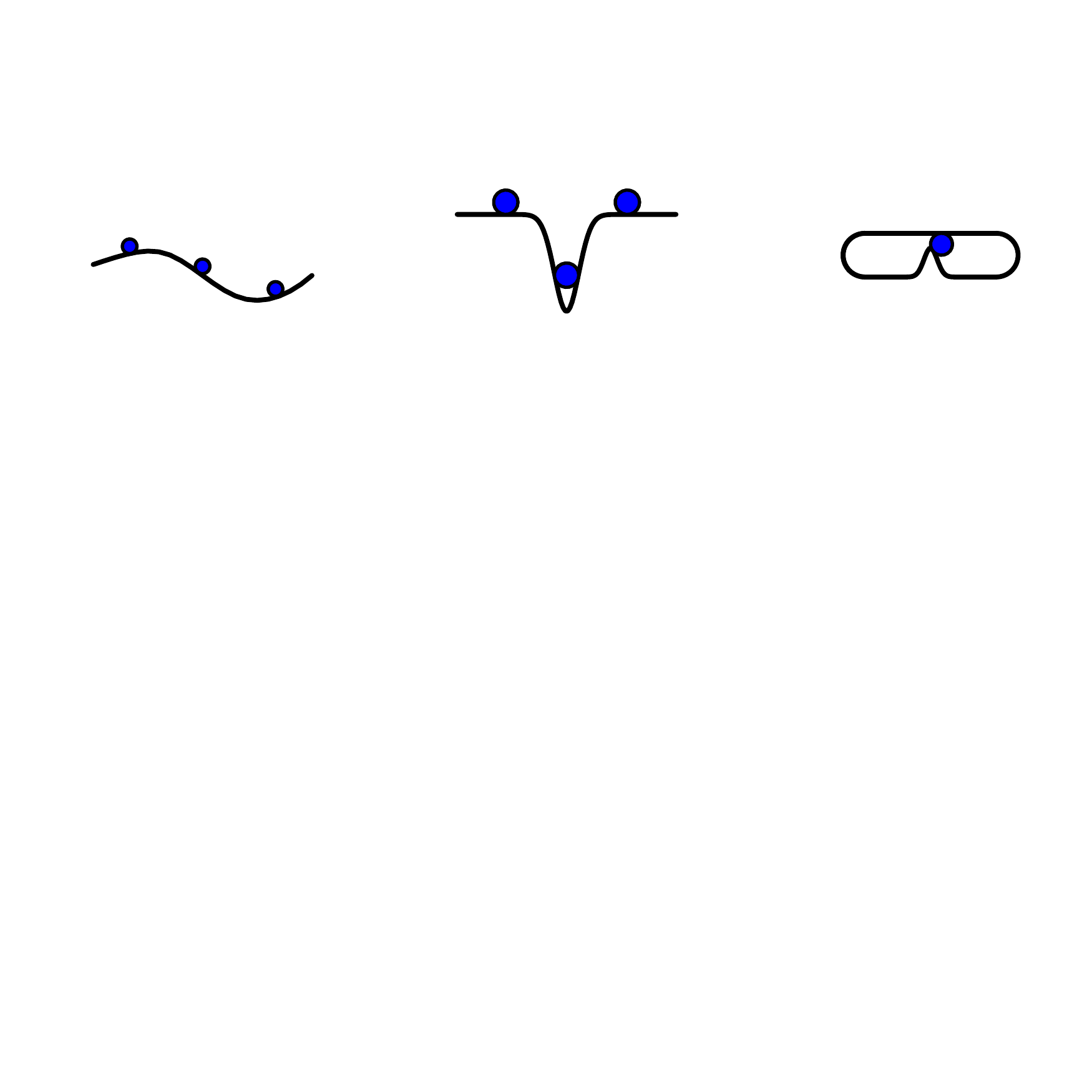}\hss}
\end{center}
\vspace{-4.53in}
\caption{A ball of radius $r \leq \Delta$ can roll freely (left).
A ball of radius $r > \Delta$ cannot roll freely
because either it hits a region of high curvature (center)
or it hits a region with a near self-intersection (right).}
\label{fig::thickness}
\end{figure}

We make the following assumptions:

\begin{description}
\item [(A1)]
$H$ has density $h$ with respect to Lebesgue measure on $[0,1]$
that is bounded and bounded away from zero:
\begin{equation}
0 < c_1 \leq \inf_{0\leq u \leq 1}h(u) \leq \sup_{0\leq u \leq 1} h(u)\leq c_2 < \infty
\end{equation}
for some $c_1,c_2$.

\smallskip
\item [(A2)]
The noise distribution $F$ satisfies these conditions:
\begin{enumerate}
\item $F$ has support $B(0,\sigma)$.
\item $F$ has bounded continuous density $\phi$ with respect to
Lebesgue measure on $\mathbb{R}^2$ and $\phi(y) >0$ for all $y$ in the interior of $B(0,\sigma)$.
\item $\phi$ is nonincreasing, that is,
$||u|| \leq ||v||$ implies that $\phi(u) \geq \phi(v)$.
\item $\phi$ is symmetric, i.e. $||x|| = ||y||$ implies that $\phi(x)=\phi(y)$.
\item There exists $0 \leq \beta \leq \infty$ and $C>0$ such that
$$
\phi(x)\sim C (\sigma-||x||)^\beta\ \ \ {\rm as}\ \ \  ||x||\to\sigma.
$$
\end{enumerate}

\item [(A3)] $f$ is sufficienty smooth, i.e., $\sigma < \Delta(f)$. If $f$ is
open, then also \\ $||f(1)-f(0)||/2 > \Delta(f)$.

\end{description}

The parameter $\beta$ controls the behavior of $\phi$ near the boundary of its support.
The marginal density of $Y_i$ is
$q(y) =  \int \phi(y-f(u)) dH(u)$.
Let
\begin{equation}
S = \left\{y:\strut\ q(y) > 0\right\}
\end{equation}
denote the support of $q$.
It follows from assumption (A2) that
\begin{equation}
S = \Union_{0\le u\le 1} B(f(u),\sigma).
\end{equation}
We will let $Q=Q_{f,h,\sigma}$ denote the distribution of the data
corresponding to density $q$.
The boundary behavior of $q$ is related to $\beta$.
Let 
\begin{equation}
\alpha = \beta + (1/2).
\end{equation}

\begin{lemma}
\label{lemma::new-boundary}
There exist constants $c_1,c_2>0$ 
such that the following is true.
Let $y = (y_1,y_2)$ be in the interior of $S$.
For small enough $d(y,\partial S)$ we have that
\begin{equation}
c_1 d(y,\partial S)^\alpha \leq q(y) \leq c_2 d(y,\partial S)^\alpha.
\end{equation}
\end{lemma}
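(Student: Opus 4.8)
The plan is to reduce the lemma to the geometry of the $\sigma$-tube $S=\Gamma_f\oplus\sigma$ together with a single beta-type integral. Write $t=d(y,\partial S)$. Since $\sigma<\Delta(f)$ by (A3) and $\Delta(f)$ is the normal injectivity radius (condition number) of $\Gamma_f$, every point within distance $<\Delta$ of $\Gamma_f$ has a unique nearest point on $\Gamma_f$, and the nearest-point projection $\pi_{\Gamma_f}$ is continuous. I would first check that for $y$ near $\partial S$ one has $d(y,\Gamma_f)=\sigma-t$: with $p=\pi_{\Gamma_f}(y)=f(u^\ast)$ and $y=p+(\sigma-t')\nu$ for the unit normal $\nu$ at $p$ pointing toward $y$ (so $t'=\sigma-\|y-p\|$), the point $b=p+\sigma\nu$ lies on $\partial S$ because $d(b,\Gamma_f)=\sigma$ (again the reach bound: the nearest point stays $p$ all along the normal ray out to radius $\Delta$), while every point of the open ball $B(y,t')$ is within $\sigma$ of $\Gamma_f$, so $B(y,t')\subseteq S^\circ$; hence $t=\|y-b\|=t'$, i.e.\ $t=\sigma-\|y-f(u^\ast)\|$. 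Next comes the key \emph{localization} step: for every $\rho>0$ there is $c(\rho)>0$ so that $t\le c(\rho)$ forces $\{u:\|y-f(u)\|<\sigma\}\subseteq(u^\ast-\rho,u^\ast+\rho)$. I would prove this by compactness: if it failed there would be $b_k\in\partial S$ and $u_k$ with $|u_k-u^\ast(b_k)|\ge\rho$ (where $f(u^\ast(b_k))=\pi_{\Gamma_f}(b_k)$) but $\|b_k-f(u_k)\|\to\sigma$; passing to a limit and using continuity of $\pi_{\Gamma_f}$ and of $f^{-1}$ (continuous since $f$ is a smooth simple curve on a compact interval) produces a point of $\partial S$ at distance $\sigma$ from two distinct points of $\Gamma_f$, contradicting uniqueness of the nearest point. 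Consequently, for $t$ small, $q(y)=\int_{|s|<\rho}\phi\big(y-f(u^\ast+s)\big)\,h(u^\ast+s)\,ds$.

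Second, the local expansion. Fix $\rho$ small enough, uniformly in $u^\ast$ by compactness and smoothness of $f$, that on $(u^\ast-\rho,u^\ast+\rho)$ the arclength Taylor expansion $f(u^\ast+s)=p+sT+\tfrac12 s^2\kappa\nu+O(|s|^3)$ holds, where $\kappa$ is the signed curvature relative to $\nu$ and $|\kappa|\le1/\Delta$. Since $y-p=(\sigma-t)\nu$, this gives $\|y-f(u^\ast+s)\|^2=(\sigma-t)^2+s^2\big(1-(\sigma-t)\kappa\big)+O(|s|^3)$, and because $(\sigma-t)|\kappa|\le\sigma/\Delta<1$ the coefficient of $s^2$ lies in a fixed interval $[\lambda_-,\lambda_+]\subset(0,2)$. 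Taking square roots and absorbing the cubic term (only $|s|=O(\sqrt t)$ matters, where it is $O(t^{3/2})$), there are constants $0<\gamma_2\le\gamma_1$ depending only on $\sigma,\Delta$ with
\[
\big(t-\gamma_1 s^2\big)_+\ \le\ \big(\sigma-\|y-f(u^\ast+s)\|\big)_+\ \le\ \big(t-\gamma_2 s^2\big)_+
\]
for $|s|$ small; in particular the set where $\phi(y-f(u^\ast+s))>0$ is caught between $\{|s|<\sqrt{t/\gamma_1}\}$ and $\{|s|<\sqrt{t/\gamma_2}\}$, both inside $(-\rho,\rho)$ once $t$ is small.

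Third, the integral. On the contributing region $\|y-f(u^\ast+s)\|\to\sigma$ uniformly as $t\to0$, so the last condition in (A2) gives $\phi\big(y-f(u^\ast+s)\big)\asymp\big(\sigma-\|y-f(u^\ast+s)\|\big)_+^{\beta}$; combining with (A1) to strip $h$, with the display above, and substituting $s=\sqrt{t/\gamma}\,w$,
\[
q(y)\ \asymp\ \int\big(\sigma-\|y-f(u^\ast+s)\|\big)_+^{\beta}\,ds\ \asymp\ \sqrt t\int_{-1}^{1}\big(t-tw^2\big)^{\beta}\,dw\ =\ c\,t^{\beta+1/2}\ =\ c\,t^{\alpha},
\]
with the upper and lower constants differing only through $\gamma_1,\gamma_2$, (A1), and (A2). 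This is exactly $c_1 d(y,\partial S)^\alpha\le q(y)\le c_2 d(y,\partial S)^\alpha$ for $d(y,\partial S)$ small.

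I expect the main obstacle to be the uniform localization: the remaining work is a Taylor expansion and a beta integral, but establishing that only the arc near $u^\ast$ contributes, uniformly over all $y$ with $d(y,\partial S)\to0$, is precisely where the thickness inequality $\sigma<\Delta$ and the compactness of $[0,1]$ are used, and the quantitative form (existence of $c(\rho)$) needs care. One further point needs separate treatment when $f$ is open: near its two endpoints $S$ is a pair of half-disk caps rather than a tube, so there the nearest-point decomposition must be set up using the cap geometry, invoking the separation condition $\|f(1)-f(0)\|/2>\Delta$ from (A3).
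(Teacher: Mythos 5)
Your proof is correct and arrives at the same reduction as the paper's — the density $q(y)$ at a point with $d(y,\partial S)=d$ is essentially the pointwise size of $\phi$, which is $\asymp d^{\beta}$, times the length of the contributing arc of $\Gamma_f$, which is $\asymp \sqrt{d}$, yielding $q(y)\asymp d^{\beta+1/2}$. But the route to the $\sqrt{d}$ interval length differs substantially. The paper avoids any local Taylor expansion of $f$: it bounds $\|f(u')-f(u'')\|$ from below by computing the chord of the lens formed by intersecting the two thickness-balls $B((0,0),\Delta)$ and $B((\Delta+\sigma-2z,0),\sigma-z)$, then converts chord length to arclength via $\|f(u')-f(u'')\|\le(\sup\|\nabla f\|)\,|u'-u''|$. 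This uses only the global thickness inequality $\sigma<\Delta$ and is robust to weak regularity of $f$ (positive reach plus $C^1$ suffices). You instead Taylor-expand $f(u^\ast+s)=p+sT+\tfrac12 s^2\kappa\nu+O(|s|^3)$ and use $|\kappa|\le 1/\Delta$ to trap $\sigma-\|y-f(u^\ast+s)\|$ between two parabolas in $s$; that is cleaner analytically and makes the matching beta-integral bound on both sides transparent, but it silently requires $f\in C^{2}$ (really $C^{2,\alpha}$ for the uniform $O(|s|^3)$ remainder), which is stronger than what the paper's argument needs. Two places where your write-up is actually more careful than the paper's: you establish the localization (only $u$ near $u^\ast$ contributes) explicitly by a compactness/reach argument, whereas the paper takes it for granted; and you work directly with the marginal $q(y)$, avoiding the paper's slightly awkward detour through the conditional density $q(y_2\mid y_1)$ and the separate claim $q(y_1)\ge c>0$. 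You also flag the open-curve end caps, which the paper's proof silently ignores (its $S=\bigcup_u L(u)$ decomposition only holds for closed curves); neither proof actually carries out that case in detail, so you are at least honest about the gap.
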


\smallskip
We remark that if the noise density is uniform on
$B(0,\sigma)$, then
$\alpha = 1/2$ and so
$q$ is not uniform over its support.
In fact, $q(y)=0$ on $\partial S$.

\bigskip

Multiple filaments can be modeled by allowing $f$
to be piecewise continuous instead of continuous.
Multiple filaments can also be represented as follows.
Let $f_1,\ldots,f_k$ be a set of one dimensional curves in $\mathbb{R}^2$ where
$f_j:[0,1] \to \mathbb{R}^2$, $j=1,\ldots, k$.
Let $\Omega$ be a distribution on
$\{1,\ldots, k\}$ and let
$H_1,\ldots, H_k$ denote $k$ different distributions on $[0,1]$.
For $i=1,\ldots, n$ let
\begin{align*}
Z_i &\sim \Omega\\
U_i &\sim H_{Z_i}\\
Y_i &= f_{Z_i}(U_i) +  \epsilon_i.
\end{align*}

We can also extend the model to allow for clutter,
as in \cite{raftery::1998}.
Let $Q_0$ denote a uniform distribution
on a compact set $C\subset \mathbb{R}^2$ and define the mixture
$(1-\eta)Q_0 + \eta Q_{f,h,\sigma}$
where $0 \leq \eta \leq 1$.
We call points drawn from $Q_0$ {\em background clutter}.
Until Section \ref{sec::decluttering},
we will assume no clutter is present (i.e., $\eta = 1$).
Another generalization of the model
is to allow $f$ to be self-intersecting,
which we consider briefly later.

\section{Estimation}
\label{section::tractable}

It will be helpful
to first make some connections with some concepts
from computational geometry.

\subsection{Some Backgound on Geometry}

Let $S\subset\mathbb{R}^2$ be a compact set.
A ball $B\subset S$ is called \emph{medial} if
\begin{enumerate}
\item ${\rm interior}(B) \cap \partial S = \emptyset$ and
\item $B\cap \partial S$ contains at least 2 points.
\end{enumerate}
The {\em medial axis} $M\equiv M(S)$, shown in Figure \ref{fig::medial}, is
the closure of the set
\begin{equation}
\left\{x\in S\strut:\ B(x,r)\ \mbox{\rm is medial for some}\ r > 0\right\}.
\end{equation}
See \cite{Dey} and references therein for more information about
the properties of the medial axis.

\begin{figure}
\vspace{-.5in}
\begin{center}
\includegraphics[width=3.5in,height=3.5in]{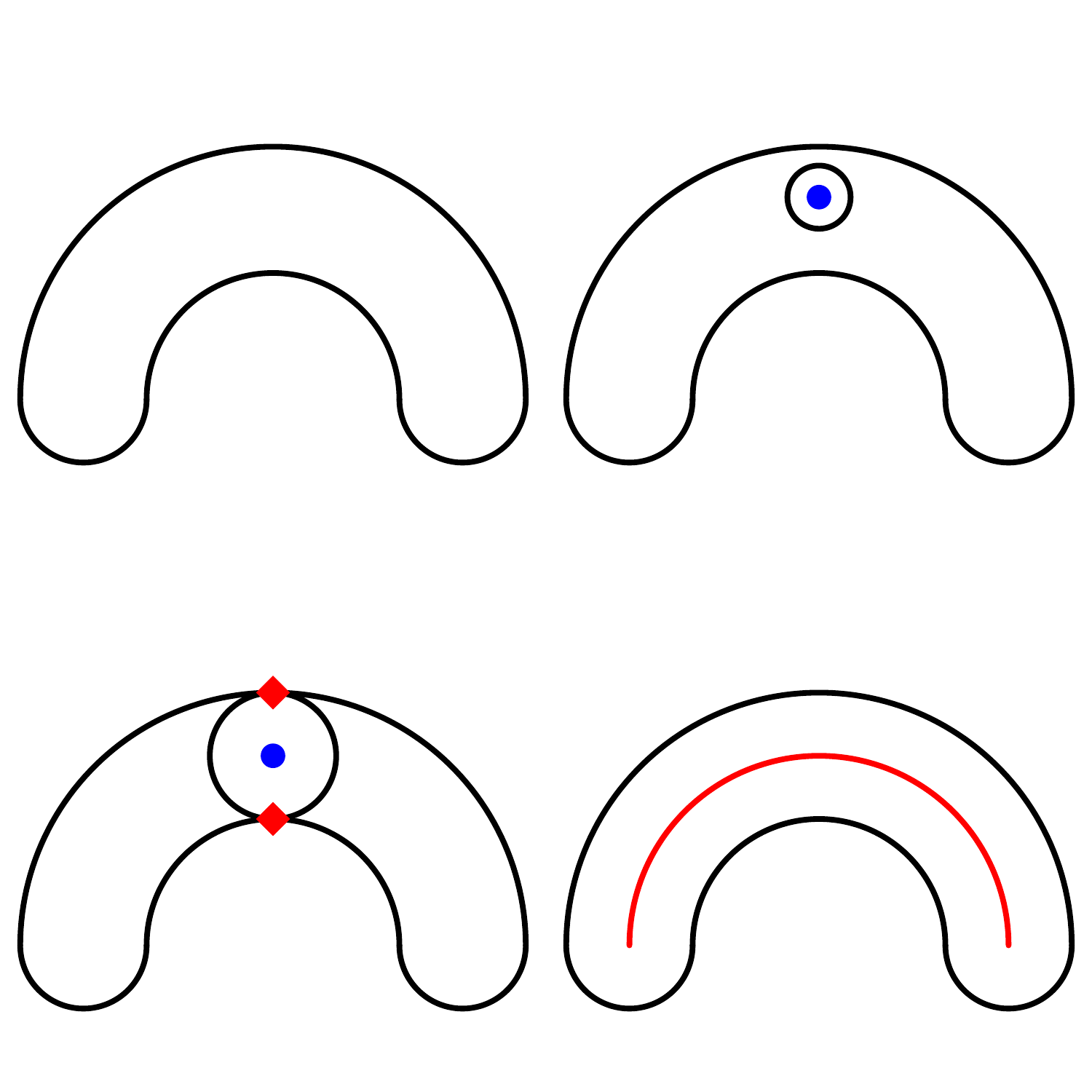}
\end{center}
\vspace{-.2in}
\caption{The Medial Axis.
Top left: a set $S$.
Top right: a non-medial ball contained in $S$;
Bottom left: a medial ball that touches the boundary of $S$ in 2 places.
Bottom right: the medial axis consists of the centers of the medial balls.}
\label{fig::medial}
\end{figure}

For each $u$ let $N(u)$ denote the normal vector at $f(u)$
and $T(u)$ the tangent vector at $f(u)$.
Define the {\em fiber},
\begin{equation}\label{eq::tube}
L(u) = \Bigl\{ f(u) + t N(u):\ -\sigma \leq t \leq \sigma \Bigr\}
\end{equation}
and the tube $ \cT = \Union_{0 \le u \le 1} \ L(u).$

For open curves define the initial and final \emph{end caps}, respectively,
by
\begin{equation}\label{eq::endcaps}
\cC_0 = B(f(0),\sigma) - \cT \qquad \mbox{and} \qquad \cC_1 = B(f(1),\sigma) - \cT.
\end{equation}
When $f$ is a closed curve, the end caps are empty, and when $f$ is open with
$||f(1)-f(0)|| > 2 \sigma$, $\cC_0 \cap \cC_1 =\emptyset.$

\smallskip
The next lemma gives a useful decomposition of the
support set $S$.

\medskip
\begin{lemma} \label{lemma::disjoint}

\begin{enumerate}
\item $S = \cT \union \cC_0 \union \cC_1$, and in particular, when $f$ is closed, $S = \cT$.
\item For every $u\ne v\in[0,1]$, $L(u)$ and $L(v)$ are disjoint.
\item For every $y\in \cT$, there exists a unique fiber containing $y$.
\item For every $y\in \cT$, the closest point on $\partial S$ to $y$ is either
$f(u) + \sigma N(u)$ or $f(u) - \sigma N(u)$.
\item When $f$ is closed $\partial S = \partial S_0 \cup \partial S_1$, when $f$ is open
$\partial \cT = \partial S_0 \cup \partial S_1$, where
$$
\partial S_0 = \{ f(u) + s(u)\sigma N(u):\ 0 < u < 1\} 
$$
and
$$
\partial S_1 = \{ f(u) + t(u) \sigma N(u):\ 0 < u < 1\}
$$
are two non intersecting connected curves where $s(u) \in \{-1,+1\}$ and $t(u) = -s(u)$.
\end{enumerate}
\end{lemma}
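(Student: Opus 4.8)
The plan is to reduce all five assertions to a single geometric fact and then do elementary planar bookkeeping. That fact is that, by (A3), $\sigma<\Delta(f)$, and for the curves considered here the thickness $\Delta(f)$ equals the reach (normal injectivity radius) of $\Gamma_f$; see \cite{gonzalez} and \cite{smale}. First I would record the two consequences this has for balls of radius $r\le\sigma$: (i) the normal map $E(u,t)=f(u)+tN(u)$ is injective on $\{(u,t):u\in(0,1),\ |t|\le\sigma\}$ and, for each such $(u,t)$, $f(u)$ is the unique point of $\Gamma_f$ nearest to $E(u,t)$; and (ii) for every $u$ and $r\le\Delta$ the open ball $B(f(u)+rN(u),r)$ tangent to $\Gamma_f$ at $f(u)$ (and likewise the one on the other side) is disjoint from $\Gamma_f$. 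With (i)--(ii) in hand the rest is routine.

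Parts 2 and 3 come straight out of (i): if $z\in L(u)\cap L(v)$ with $u\ne v$ then $z=E(u,a)=E(v,b)$ for suitable $|a|,|b|\le\sigma$, contradicting injectivity once the endpoint cases $u$ or $v\in\{0,1\}$ are dispatched using that $f$ is simple; and (3) is merely the resulting $u=v$ restated. For Part 1, $\cT\cup\cC_0\cup\cC_1\subseteq S$ is immediate because every $L(u)$ and every $\cC_j$ lies in some $B(f(\cdot),\sigma)\subseteq S$; conversely, for $y\in S$ we have $d(y,\Gamma_f)\le\sigma<\Delta$, so $y$ has a unique nearest point $f(w)$ on $\Gamma_f$, and if $w\in(0,1)$ then minimality forces $(y-f(w))\perp T(w)$, whence $y=E(w,t)\in L(w)\subseteq\cT$, while if $w\in\{0,1\}$ then $y\in B(f(w),\sigma)$, so $y\in\cC_w$ unless already $y\in\cT$. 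When $f$ is closed there are no endpoints, the caps are empty by definition, and $S=\cT$.

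For Part 4, write $y=f(u)+tN(u)\in\cT$ with $|t|\le\sigma$. The open ball of radius $\sigma-|t|$ about $y$ sits inside $B(f(u),\sigma)\subseteq S$, so $d(y,\partial S)\ge\sigma-|t|$; the endpoint $p$ of $L(u)$ on the same side as $y$ satisfies $\|y-p\|=\sigma-|t|$, and $p\in\partial S$ because moving a distance $\varepsilon<\Delta-\sigma$ further along that normal lands at a point $q$ with $B(q,\sigma)$ contained in the open $\Delta$-ball tangent at $f(u)$, which misses $\Gamma_f$ by (ii); hence $q\notin\bigcup_vB(f(v),\sigma)=S$, and letting $\varepsilon\to0$ gives $p\in\partial S$. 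So $p=f(u)\pm\sigma N(u)$ is nearest on $\partial S$ to $y$. For Part 5, choosing $N$ continuously along $\Gamma_f$ and setting $\partial S_0=\{f(u)+\sigma N(u):0<u<1\}$, $\partial S_1=\{f(u)-\sigma N(u):0<u<1\}$, each is a connected (continuous image of an interval) subset of $\partial S$ by Part 4, and they are disjoint because $f(u)+\sigma N(u)=f(v)-\sigma N(v)$ would put a common point in $L(u)\cap L(v)$, forcing $u=v$ by Part 2. For the reverse inclusion with $f$ closed, $y\in\partial S=\partial\cT$ lies in $\cT$, so $y=f(u)+tN(u)$ and Part 4 makes $d(y,\partial S)=\sigma-|t|$ vanish, i.e.\ $|t|=\sigma$; the open case is the same once one checks that any $f(u)+tN(u)$ with $0<u<1$, $|t|<\sigma$ is interior to $\cT$, so that $\partial\cT$ is the two rails together with the end fibers $L(0),L(1)$, which are the part one sets aside.

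The hard part is really the preliminary facts (i)--(ii): identifying thickness with reach for these curves and invoking the tubular-neighbourhood theorem for sets of positive reach. After that everything is bookkeeping with balls of radius at most $\sigma$. The secondary nuisance is the open-curve geometry in Parts 1 and 5, where one has to be careful about which points of the end fibers $L(0),L(1)$ and of the circular cap arcs on $\partial B(f(j),\sigma)$ belong to $\partial S$ as opposed to $\partial\cT$.
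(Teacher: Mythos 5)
Your proof is correct in substance, but it takes a genuinely different route from the paper's for the key step. The paper proves Part~2 (disjointness of fibers, equivalently injectivity of the normal map) \emph{directly} and elementarily: given $y \in L(u)\cap L(v)$, it considers the two $\Delta$-radius tangent balls $A_u$ and $A_v$ at $f(u)$ and $f(v)$ containing $y$, uses the thickness property to note $f(v)\notin A_u$ and $f(u)\notin A_v$, deduces $s=t$, and then a triangle-inequality calculation forces the three points $f(u)+\Delta N(u)$, $y$, $f(v)$ to be collinear --- a contradiction. Part~1 in the paper is proved by the same perturbation-of-the-closest-point argument you use; Parts~3--5 are then dispatched tersely as consequences. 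You instead front-load the entire lemma onto the identification of $\Delta(f)$ with the reach of $\Gamma_f$ and the tubular-neighbourhood theorem for sets of positive reach, from which fibers-disjointness, uniqueness of the nearest point, and injectivity of $E(u,t)=f(u)+tN(u)$ all drop out; you then do the ball-containment bookkeeping yourself (and in fact your Parts~4 and~5 are more explicit than the paper's one-line ``Follows from 1/4''). What the paper's approach buys is self-containedness: no appeal to reach theory, just the definition of thickness plus a triangle inequality. What your approach buys is modularity and brevity once the external facts are granted, but --- as you yourself flag --- facts~(i) and (ii) are exactly where the work lives, and the paper's Part~2 \emph{is} a proof of (i) in this setting. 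One small inaccuracy: you say the endpoint cases $u,v\in\{0,1\}$ in Part~2 are ``dispatched using that $f$ is simple.'' Simplicity of $f$ is neither necessary nor sufficient here; what rules out $L(0)\cap L(u)\neq\emptyset$ is the same global thickness/reach bound (which applies uniformly on $[0,1]$, endpoints included), exactly as in the interior case.
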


\smallskip
The following theorem relates the filament to its medial axis.

\begin{theorem} 
\label{thm::filamentismedial} 
\vspace{-.05in}
\begin{enumerate}
\item 
If $f$ is closed and $\sigma < \Delta(f)$ then
$\Gamma_f = M(S)$.
\item If $f$ is open and
$\sigma < \Delta(f)$ then
$\Gamma_f \subset M(S)$.
If, in addition,
$\sigma < ||f(1)-f(0)||/2$ then
$\Gamma_f = M(S)$.
\end{enumerate}
\end{theorem}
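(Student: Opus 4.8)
The plan is to read off everything from the structural decomposition in Lemma \ref{lemma::disjoint} together with one elementary observation, the \emph{internal tangency fact}: if a closed ball $B(x,\rho)$ is internally tangent to $B(f(u),\sigma)$ at a point $z$ and $B(x,\rho)\subseteq B(f(u),\sigma)$, then every point $w\in B(x,\rho)$ other than $z$ satisfies $\|w-f(u)\|<\sigma$ (triangle inequality, with equality only when $w$, $x$, $f(u)$ are colinear in that order). Since $B(f(u),\sigma)\subseteq S$, its open version lies in $\mathrm{int}(S)$, so a ball of this kind meets $\partial S$ in at most the single point $z$. For the inclusion $\Gamma_f\subseteq M(S)$ (both cases): for $0<u<1$ the ball $B(f(u),\sigma)$ is medial, because it lies in $S$, its interior is an open subset of $S$ and hence misses $\partial S$, and its boundary contains the two distinct points $f(u)\pm\sigma N(u)$, which are in $\partial S_0\cup\partial S_1\subseteq\partial S$ by Lemma \ref{lemma::disjoint}(5). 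Thus $f(u)\in M(S)$ for all $u\in(0,1)$, and since $M(S)$ is closed and $\Gamma_f$ is the closure of $\{f(u):0<u<1\}$, we get $\Gamma_f\subseteq M(S)$.

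For the reverse inclusion inside the tube, let $x\in\cT$; by Lemma \ref{lemma::disjoint}(2)--(3), $x=f(u)+tN(u)$ for a unique $(u,t)$ with $|t|\le\sigma$. If $t=0$ then $x\in\Gamma_f$; otherwise take $t>0$ (the case $t<0$ is symmetric), put $z=f(u)+\sigma N(u)\in\partial S$ and $\rho=\sigma-t$. The open ball $B(x,\rho)$ lies in the open ball $B(f(u),\sigma)\subseteq\mathrm{int}(S)$, so $d(x,\partial S)\ge\rho$; and $z\in\partial S$ gives $d(x,\partial S)\le\|x-z\|=\rho$, hence $d(x,\partial S)=\rho$. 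Any medial ball centered at $x$ would have radius $d(x,\partial S)=\rho$ and would meet $\partial S$ in at least two points, contradicting the internal tangency fact, which gives $B(x,\rho)\cap\partial S=\{z\}$. So $M(S)\cap\cT\subseteq\Gamma_f$. When $f$ is closed, $S=\cT$ (Lemma \ref{lemma::disjoint}(1)), so $M(S)=M(S)\cap\cT\subseteq\Gamma_f\subseteq M(S)$, which is part (1); for open $f$ the first paragraph already gives the stated inclusion $\Gamma_f\subseteq M(S)$ of part (2).

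It remains to treat the end caps when $f$ is open and $\sigma<\|f(1)-f(0)\|/2$. Since $S=\cT\cup\cC_0\cup\cC_1$, it suffices to show no medial ball is centered in $\cC_0\cup\cC_1$; fix $x\in\cC_0$. I first claim the nearest point of $\Gamma_f$ to $x$ is $f(0)$: a nearest point $f(u^*)$ with $u^*\in(0,1)$ would force $x-f(u^*)\perp T(u^*)$, hence $x\in L(u^*)\subseteq\cT$, impossible; and $u^*=1$ is impossible because $\|f(1)-f(0)\|>2\sigma$ and $x\in B(f(0),\sigma)$ give $\|x-f(1)\|>\sigma\ge\|x-f(0)\|$. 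Since $\Gamma_f\subseteq\cT$ (each $f(v)\in L(v)$) and $\cC_0\cap\cT=\emptyset$, we have $d^*:=\|x-f(0)\|=d(x,\Gamma_f)\in(0,\sigma]$. Set $\hat e=(x-f(0))/d^*$, $z^*=f(0)+\sigma\hat e$, $\rho=\sigma-d^*$; then $B(x,\rho)$ lies in the open ball $B(f(0),\sigma)\subseteq\mathrm{int}(S)$ and is internally tangent to $B(f(0),\sigma)$ at $z^*$. If $z^*\in\partial S$, the argument of the previous paragraph applies verbatim and $x\notin M(S)$, so $M(S)\subseteq\cT$ and, with the previous step, $M(S)=\Gamma_f$.

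The only nontrivial point — and the place where both smoothness hypotheses are needed — is the claim $z^*\in\partial S$: that the ``outward'' spherical cap of $B(f(0),\sigma)$ genuinely lies on $\partial S$ and is not absorbed into the interior of some other $B(f(v),\sigma)$. Because $\sigma<\Delta(f)$, the nearest-point projection onto $\Gamma_f$ is well defined on the $\sigma$-neighborhood of $\Gamma_f$ and is constant along the normal rays; since $f(0)$ is the nearest point of $\Gamma_f$ to $x=f(0)+d^*\hat e$ with $d^*<\sigma<\Delta(f)$, it is also the nearest point to $z^*=f(0)+\sigma\hat e$, so $d(z^*,\Gamma_f)=\sigma$ and therefore $\|z^*-f(v)\|\ge\sigma$ for every $v$, strictly for $v\ne0$; the separation $\|f(1)-f(0)\|>2\sigma$ keeps $B(f(1),\sigma)$ (and hence $\cC_1$) away from $z^*$ as well. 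Hence near $z^*$ the set $S$ agrees with $B(f(0),\sigma)$, so $z^*\in\partial S$. I expect the main work of the write-up to be precisely this last step — replacing the appeal to ``radial projection within the reach'' by an honest comparison of the distances from $z^*$ to the nearby fibers $L(v)$ and to the far end cap — whereas for closed curves, and for the generic inclusion $\Gamma_f\subseteq M(S)$ in the open case, no end caps appear and the two earlier paragraphs suffice.
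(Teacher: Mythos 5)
Your overall architecture matches the paper's: both inclusions are obtained from Lemma \ref{lemma::disjoint}, with the forward inclusion $\Gamma_f\subseteq M(S)$ coming from the medial ball $B(f(u),\sigma)$, and the reverse inclusion on the tube coming from the internal-tangency observation (the paper makes exactly the same move with $B(y,r)\subsetneq B(f(u),\sigma)$, tangent at $a$, to conclude $B(y,r)\cap\partial S$ has fewer than two points). Where you diverge is in the end-cap step. The paper simply asserts, with no argument, that for $y\in\cC_0$ the ball of radius $r=d(y,\partial S)$ ``intersects $\partial S$ at a single point.'' You instead identify the putative nearest boundary point $z^*=f(0)+\sigma\hat e$, observe $\hat e$ lies in the backward half-space at $f(0)$, and try to verify $z^*\in\partial S$ by appealing to Federer-type constancy of the nearest-point projection along rays from the foot of the projection. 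This is the right tool, and you are right to flag the step: the phrase ``normal rays'' undersells the fact that $\hat e$ is generally not the normal direction at $f(0)$ but only lies in the normal cone there, so the invocation really needs (i) that the circumradius thickness $\Delta(f)$ is a lower bound for the reach of the open curve, including its endpoints, and (ii) Federer 4.8(6) or an explicit comparison $\|z^*-f(v)\|^2\ge \|f(v)-f(0)\|^2(1-\sigma/\Delta)+\sigma^2 \ge\sigma^2$, which follows once (i) is in place. In short, your proposal is essentially the paper's proof, but it makes explicit the one genuinely nontrivial geometric claim in the open-curve end-cap case that the paper leaves unjustified, and correctly isolates where the hypotheses $\sigma<\Delta(f)$ and $\sigma<\|f(1)-f(0)\|/2$ actually enter.
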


This result holds both good news and bad news.
The good news is that $\Gamma_f = M(S)$,
relating the filament to a well defined geometric quantity.
The bad news is that the medial axis is not continuous in Hausdorff distance.

\begin{figure}[h]
\vspace{-.8in}
\begin{center}
\includegraphics[width=3in]{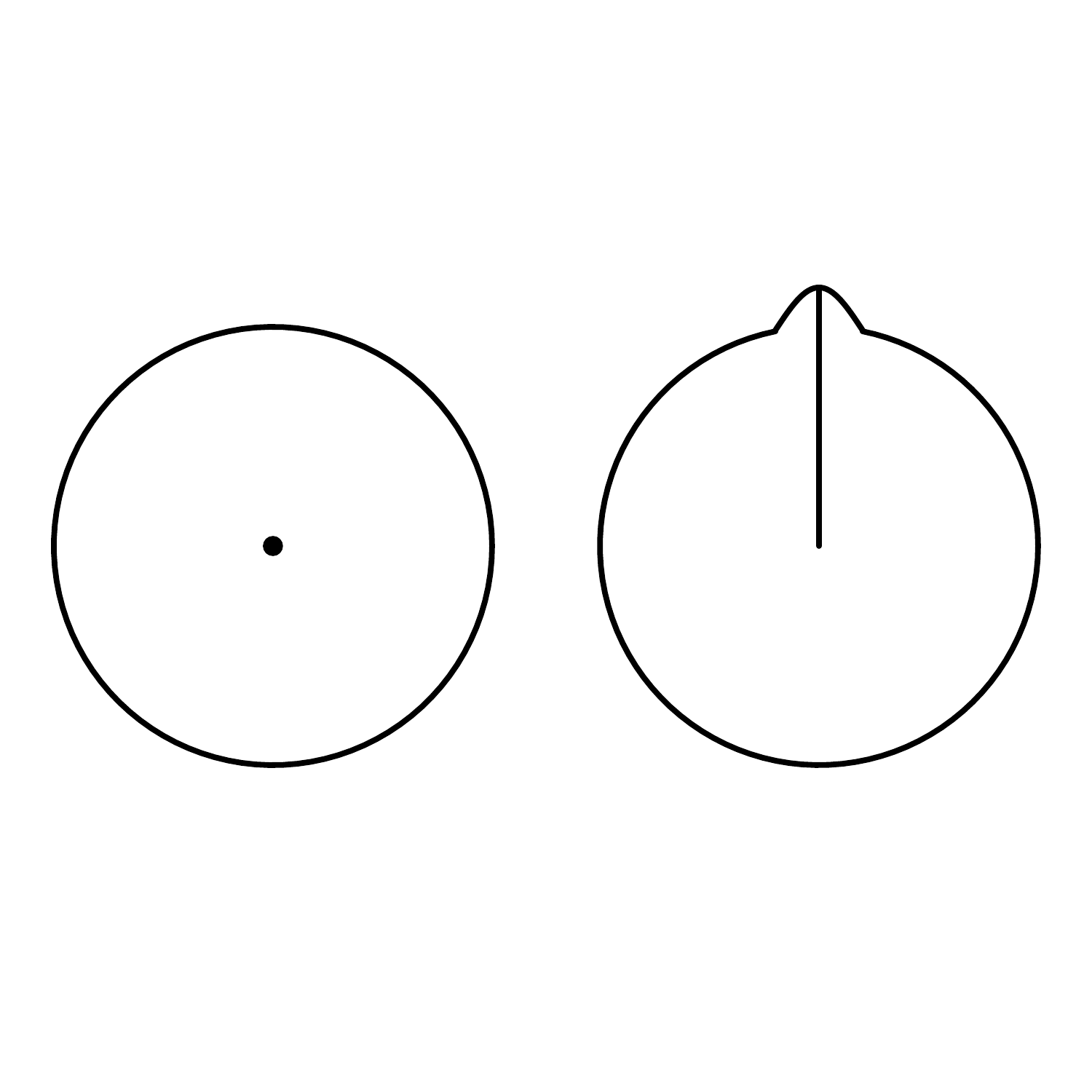}
\end{center}
\vspace{-1in}
\caption{A stylized example
showing that small perturbations in $S$ can lead to large changes in $M(S)$.
The medial axis of a circle (left) is the center.
If a small perturbation is added to the circle (right) then the medial axis changes completely.}
\label{fig::delicate}
\end{figure}

Small perturbations to $S$ give a completely different medial axis,
as illustrated in Figure \ref{fig::delicate}.
Thus, estimating the medial axis is non-trivial.
From now on, we assume that $\sigma < \Delta(f)$.

\bigskip
The {\em Euclidean distance 
transform} (EDT) (\cite{breu}) is a mapping from $\mathbb{R}^2\to[0,\infty)$ defined by
$\Lambda(y) = d(y,\partial S)$.
The next result gives another characterization of
the filament $\Gamma_f$: the filament maximizes $\Lambda(y)$.
In particular, $\Gamma_f = \{y\in S:\ \Lambda(y)=\sigma\}$.

\begin{lemma}
\label{lemma::edt}
\begin{enumerate}
\item $y\in M(S)$ if and only if
$\Lambda(y)=\sigma$.
\item For any $y\in S- M(S)$,
$\Lambda(y) < \sigma$.
\item For any $y\in S$,
$d(y,M(S))+ \Lambda(y) = \sigma$.
\end{enumerate}
\end{lemma}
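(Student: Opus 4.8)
The plan is to prove the third identity, $d(y,M(S))+\Lambda(y)=\sigma$ for $y\in S$, and to read off the first two from it. By Theorem~\ref{thm::filamentismedial} we have $M(S)=\Gamma_f$ (assumption~(A3) supplies the condition $\sigma<\|f(1)-f(0)\|/2$ needed when $f$ is open), and $M(S)$ is closed. Hence, once the identity is established, part~(1) follows because, for $y\in S$, $\Lambda(y)=\sigma\iff d(y,M(S))=0\iff y\in M(S)$, and part~(2) follows because $y\in S\setminus M(S)\iff d(y,M(S))>0\iff\Lambda(y)<\sigma$. So all the work is in the identity, equivalently $d(y,\Gamma_f)+d(y,\partial S)=\sigma$ on $S$.

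I would prove this by two inequalities, using the representation $S=\bigcup_u B(f(u),\sigma)=\Gamma_f\oplus\sigma$ recorded after assumption~(A2). First, $\partial S\subseteq\{z:d(z,\Gamma_f)=\sigma\}$: any $z\in\partial S$ lies in $S$, so $d(z,\Gamma_f)\le\sigma$, and if $d(z,\Gamma_f)<\sigma$ a small ball about $z$ stays inside $\Gamma_f\oplus\sigma=S$, contradicting $z\in\partial S$. Consequently, for $y\in S$ and any $z\in\partial S$ the triangle inequality gives $\sigma=d(z,\Gamma_f)\le\|z-y\|+d(y,\Gamma_f)$, and minimizing over $z$ yields $\Lambda(y)=d(y,\partial S)\ge\sigma-d(y,\Gamma_f)$.

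For the reverse inequality I would split along the decomposition $S=\cT\cup\cC_0\cup\cC_1$ of Lemma~\ref{lemma::disjoint}. If $y\in\cT$, write $y=f(u)+tN(u)$ on its unique fiber $L(u)$ (parts~(2)--(3) of that lemma); using $\sigma<\Delta(f)$ (equivalently, the fiber structure itself) one checks that the nearest point of $\Gamma_f$ to $y$ is $f(u)$, so $d(y,\Gamma_f)=|t|$, while part~(4) identifies the nearest point of $\partial S$ to $y$ as $f(u)+\sigma N(u)$ or $f(u)-\sigma N(u)$, so $\Lambda(y)=\sigma-|t|$ and equality holds. If $y\in\cC_0$ (the case $\cC_1$ is symmetric), then $y\notin\cT$ forces the nearest point of $\Gamma_f$ to $y$ to be an endpoint — a nearest point $f(w)$ with $0<w<1$ would satisfy $y-f(w)\perp T(w)$, placing $y$ on the normal line at $f(w)$ and hence in $\cT$ — and since $y\in B(f(0),\sigma)$ the separation $\|f(1)-f(0)\|>2\sigma$ from~(A3) rules out $f(1)$. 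So the nearest point is $f(0)$ and $d(y,\Gamma_f)=\|y-f(0)\|=:\rho\in(0,\sigma]$. The radial point $z^\star=f(0)+\sigma(y-f(0))/\rho$ then has $\|y-z^\star\|=\sigma-\rho$, so provided $z^\star\in\partial S$ we get $\Lambda(y)\le\sigma-\rho$ and again equality.

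The step carrying the real content is the claim that the distance-$\sigma$ radial point is a genuine boundary point: $z^\star\in\partial S$ in the end-cap case, and more generally $\{z:d(z,\Gamma_f)=\sigma\}\subseteq\partial S$ (which also covers the routine sub-case $y\in\Gamma_f$, where $\Lambda(y)=\sigma$). Showing $z^\star$ is not interior to $S$ amounts to checking that pushing $z^\star$ radially away from $\Gamma_f$ leaves $\bigcup_u B(f(u),\sigma)$; for points of $\cT$ this is contained in Lemma~\ref{lemma::disjoint}(5), but near the endpoint of an open curve that lemma describes only $\partial\cT$ and not the end-cap portion of $\partial S$, so one must argue directly, using $\sigma<\Delta(f)$ to bound how close the rest of $\Gamma_f$ can come to $z^\star$ and $\|f(1)-f(0)\|>2\sigma$ to keep the far endpoint out of the way. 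I expect this end-cap geometry to be the main obstacle; with it in hand the two triangle-inequality bounds finish part~(3), and parts~(1) and~(2) follow immediately.
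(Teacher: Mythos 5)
Your tube case reproduces the paper's argument: put $y$ on its unique fiber $L(u)$, observe that the nearest boundary point is $f(u)\pm\sigma N(u)$ and the nearest point of $M(S)=\Gamma_f$ is $f(u)$, and add $|t|+(\sigma-|t|)=\sigma$. Proving part (3) first and reading (1)--(2) off of it via the closedness of $M(S)$ is a clean reorganization rather than a different route, and the easy inclusion $\partial S\subseteq\{z:d(z,\Gamma_f)=\sigma\}$ is a quick way to get the lower bound $\Lambda(y)\geq\sigma-d(y,\Gamma_f)$. The paper, however, only proves the closed case and dismisses the open case as ``similar''; the end caps are exactly where your proposal also stops short, and you flag this yourself.

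That flagged gap is real, not routine. You reduce the end-cap case to the claim $z^\star=f(0)+\sigma(y-f(0))/\rho\in\partial S$, correctly identify $\sigma<\Delta$ and $\|f(1)-f(0)\|>2\sigma$ as the ingredients, but do not assemble them, and the claim needs two pieces: $d(z^\star,\Gamma_f)=\sigma$, and $z^\star$ not interior to $S=\Gamma_f\oplus\sigma$. For the first, since $y\in\cC_0$ the direction $v=(y-f(0))/\rho$ has $\langle v,T(0)\rangle<0$, so the ray from $f(0)$ through $y$ to $z^\star$ stays in the open half of $B(f(0),\sigma)$ that Lemma~\ref{lemma::disjoint}(1) places in $\cT^c$, hence $z^\star\notin\cT$; then if $\|z^\star-f(u^*)\|<\sigma$ for some $u^*>0$, the function $u\mapsto\|z^\star-f(u)\|$ attains its minimum on $[0,1]$ with value $<\sigma$ at some $u_1$, which cannot be $0$ (value $\sigma$ there) nor $1$ (since $\|z^\star-f(1)\|\geq\|f(1)-f(0)\|-\sigma>\sigma$ by (A3)), and the interior first-order condition at $u_1$ puts $z^\star\in L(u_1)\subseteq\cT$, a contradiction. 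For the second, one still needs a short continuity argument: for $w=z^\star+\delta v$ with small $\delta>0$, $\langle z^\star-f(u),v\rangle>0$ uniformly for $u$ near $0$ (precisely because $\langle T(0),v\rangle<0$), while $\|z^\star-f(u)\|\geq\sigma+\eta$ for $u$ bounded away from $0$; together these give $d(w,\Gamma_f)>\sigma$ and hence $z^\star\in\partial S$. So your outline is on target, but the end-cap argument genuinely has to be written out; until it is, part (3) is established only on $\cT$, which is the closed case the paper already handles.
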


Let $\hat S$ be an estimate of $S$
and $\hat{\partial S}$ be an estimate of $\partial S$. 
For $y\in\mathbb{R}^2$, define the empirical EDT by
$\hat\Lambda(y) = d(y,\hat{\partial S})$.
We estimate the noise level $\sigma$ by 
$\hat\sigma = \sup_{y\in\hat S}\hat\Lambda(y) \equiv \hat\Lambda(\hat y)$,
where
\begin{equation}\label{eq::yhat}
\hat{y} = {\rm argmax}_{y\in \hat S}\hat\Lambda(y).
\end{equation}

\begin{theorem}
\label{thm::edt}
Suppose that
$d_H(\partial S,\hat{\partial S}) \leq \epsilon$. Then:
\begin{enumerate}
\item $\sup_{y\in \mathbb{R}^2} |\hat\Lambda(y) - \Lambda(y)| \leq \epsilon$.
\item $|\hat\sigma- \sigma| \leq  \epsilon$.
\item $d(\hat y,M(S)) \leq 2\epsilon$.
\end{enumerate}
\end{theorem}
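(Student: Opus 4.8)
The plan is to prove the three claims in order, each resting on the elementary fact that the distance-to-a-set function is $1$-Lipschitz in the set. For (1), fix $y\in\mathbb{R}^2$ and let $x\in\partial S$ attain $\Lambda(y)=||y-x||$ (the infimum is attained, $\partial S$ being compact). Since $d_H(\partial S,\hat{\partial S})\le\epsilon$, there is $\hat x\in\hat{\partial S}$ with $||x-\hat x||\le\epsilon$, whence $\hat\Lambda(y)\le||y-\hat x||\le\Lambda(y)+\epsilon$; interchanging the roles of $\partial S$ and $\hat{\partial S}$ gives the reverse inequality, and since $y$ was arbitrary this proves (1). Note this uses nothing about $y$ lying in $S$, so the bound holds on all of $\mathbb{R}^2$.

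For (2) I would bound $\hat\sigma=\hat\Lambda(\hat y)$ from above and below. Above: by (1), $\hat\sigma\le\Lambda(\hat y)+\epsilon$, and since $\hat y\in\hat S$ lies within $\epsilon$ of $S$ while Lemma \ref{lemma::edt} gives $\Lambda\le\sigma$ on $S$ (with equality precisely on $M(S)$) and the nearest point of $S$ to any exterior point lies on $\partial S$, one gets $\Lambda(\hat y)\le\sigma$ as soon as $\epsilon\le\sigma$, hence $\hat\sigma\le\sigma+\epsilon$. Below: fix any $y_0=f(u_0)\in\Gamma_f$. By Theorem \ref{thm::filamentismedial} (using $\sigma<\Delta(f)$) we have $y_0\in M(S)$, so $\Lambda(y_0)=\sigma$ by Lemma \ref{lemma::edt}; moreover $y_0$ is an interior point of $S$ with $d(y_0,S^c)=\sigma$, so the ball $B(y_0,\sigma-\epsilon)$ has interior disjoint from $\hat{\partial S}$ (because $\hat{\partial S}\subset\partial S\oplus\epsilon$) and, being connected and meeting $\hat S$, it lies in $\hat S$; thus $y_0\in\hat S$ once $\epsilon<\sigma$, and $\hat\sigma\ge\hat\Lambda(y_0)\ge\Lambda(y_0)-\epsilon=\sigma-\epsilon$. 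Combining the two bounds gives (2).

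For (3) I first argue $\hat y\in S$ when $\epsilon$ is small: if $\hat y\notin S$ then $\Lambda(\hat y)=d(\hat y,\partial S)=d(\hat y,S)\le\epsilon$, so by (1) $\hat\sigma=\hat\Lambda(\hat y)\le 2\epsilon$, contradicting $\hat\sigma\ge\sigma-\epsilon$ from (2) once $\epsilon<\sigma/3$. With $\hat y\in S$, Lemma \ref{lemma::edt}(3) gives $d(\hat y,M(S))=\sigma-\Lambda(\hat y)$; and $\Lambda(\hat y)\ge\hat\Lambda(\hat y)-\epsilon=\hat\sigma-\epsilon\ge\sigma-2\epsilon$ by (1) and (2), so $d(\hat y,M(S))\le 2\epsilon$, which is (3).

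The routine part is the Lipschitz estimate (1); the delicate part is the bookkeeping in (2) and (3) of \emph{which} set each estimate is evaluated on. The claimed constants are attained only if the maximizer $\hat y$ can be certified to lie in $S$ and a reference point of $\Gamma_f$ to lie in $\hat S$, and this needs closeness of $\hat S$ to $S$ itself, not merely of the boundaries, together with $\epsilon$ small relative to $\sigma$ — so that a point at distance $\sigma$ from $S^c$ cannot fall outside $\hat S$, and a maximizer of $\hat\Lambda$ over $\hat S$ cannot escape $S$. Under the regularity assumptions in force this closeness follows from $d_H(\partial S,\hat{\partial S})\le\epsilon$, and supplying that implication cleanly (using Lemma \ref{lemma::disjoint} and the structure of $S=\cT\union\cC_0\union\cC_1$) is essentially the only work beyond the displayed chains of inequalities.
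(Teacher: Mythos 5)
Your argument is correct and follows the same overall strategy as the paper's proof: part (1) is the standard Lipschitz-in-Hausdorff-distance bound, part (2) sandwiches $\hat\sigma$ by comparing $\hat\Lambda$ at $\hat y$ and at a reference point of $M(S)$, and part (3) reduces $d(\hat y,M(S))$ to a lower bound on $\Lambda(\hat y)$. The small differences are cosmetic: for the lower bound in (2) you anchor at a fixed $y_0 = f(u_0)\in\Gamma_f$ rather than at the point of $M(S)$ nearest $\hat y$, and for (3) you invoke Lemma~\ref{lemma::edt}(3) directly, whereas the paper re-derives that identity on the fly by noting $\hat y\in L(u)$ (Lemma~\ref{lemma::disjoint}) and writing $\sigma = \|f(u)-\hat y\| + d(\hat y,\partial S)$; these are the same computation. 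Where you add genuine value is in flagging the implicit hypotheses: the theorem's stated assumption $d_H(\partial S,\hat{\partial S})\le\epsilon$ does not, on its own, certify that $\hat y\in S$ (or even within $\epsilon$ of $S$), that $M(S)\cap\hat S\ne\emptyset$, or that $\epsilon<\sigma$. The paper's proof uses all three tacitly---$\Lambda(\hat y)\le\Lambda(y_*)$ presumes $\Lambda(\hat y)\le\sigma$, $\hat\Lambda(\hat y)\ge\hat\Lambda(y_*)$ presumes $y_*\in\hat S$, and the fiber argument in (3) presumes $\hat y\in\cT$. In the paper's application these follow from Lemma~\ref{lemma::supp} (which gives $d_H(S,\hat S)=O(r_n)$ and $S\subset\hat S$), not from the boundary condition alone, and your closing paragraph correctly identifies exactly that as the unstated work. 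One caveat: your assertion that the needed closeness of $\hat S$ to $S$ ``follows from $d_H(\partial S,\hat{\partial S})\le\epsilon$'' via the structure of $S$ is optimistic---without also tying $\hat{\partial S}$ to $\partial\hat S$ and controlling $d_H(S,\hat S)$, this does not follow; it is better regarded as an additional regularity hypothesis, which is how the estimator supplies it in practice.
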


\smallskip
Following \cite{cuevas-boundary}, we say that a set $S$ is 
{\em $(\chi,\lambda)$-standard} if there exist positive numbers
$\chi$ and $\lambda$ such that
\begin{equation}
\nu(B(y,\epsilon)\cap S) \geq \chi \ \nu(B(y,\epsilon)) \ \ \ \ \ 
{\rm for \ all\ }y\in S,\  0< \epsilon \leq \lambda
\end{equation}
where $\nu$ is Lebesgue measure. We say that $S$ is {\em partly expandable} 
if there exist $r>0$ and $R\geq 1$ such that 
$d_H(\partial S, \partial (S\oplus\epsilon)) \leq R \epsilon$
for all $0\leq \epsilon < r$. (Recall that $S\oplus\epsilon$ is the enlargement 
of $S$). A standard set has no sharp peaks while a partly expandable 
set has not deep inlets.

\begin{lemma}
\label{lemma::standard}
$S$ is standard with $\chi = 1/4$ and $\lambda = \sigma$.
Also, $S$ is partly expandable with $R=1$ and $r=\Delta-\sigma$.
\end{lemma}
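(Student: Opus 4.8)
The plan is to establish the two claims separately, using the tube decomposition from Lemma \ref{lemma::disjoint} and the rolling-ball interpretation of $\Delta$. For the standardness claim, fix $y \in S$ and $0 < \epsilon \le \sigma$, and I would show $\nu(B(y,\epsilon) \cap S) \ge \tfrac14 \nu(B(y,\epsilon))$. The worst case is $y$ on or near $\partial S$. By Lemma \ref{lemma::disjoint}, every boundary point of $S$ is of the form $f(u) \pm \sigma N(u)$ (on the tube part) or lies on an end cap $\partial B(f(0),\sigma)$ or $\partial B(f(1),\sigma)$; in all cases, locally the boundary of $S$ is a smooth curve with radius of curvature at least $\sigma$ (the tube walls are at distance $\sigma$ from $\Gamma_f$, whose global radius of curvature is $\Delta > \sigma$, and the end caps are arcs of circles of radius exactly $\sigma$). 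Hence at any boundary point there is a ball of radius $\sigma$ contained in $S$ tangent there (a ball of radius $\le \Delta$ can roll freely around $\Gamma_f$, and the end-cap circles have radius $\sigma$). Given such an interior tangent ball $B(z,\sigma) \subset S$ touching $\partial S$ at the point of $\partial S$ closest to $y$, elementary geometry shows $B(y,\epsilon) \cap B(z,\sigma)$ contains at least a quarter of $B(y,\epsilon)$ whenever $\epsilon \le \sigma$: the chord of $\partial B(z,\sigma)$ cutting through $B(y,\epsilon)$ is, in the extreme case where $y \in \partial S$, a diameter's worth of separation, and the circular arc of radius $\sigma \ge \epsilon$ bulges away from $y$, so it cuts off no more than half of $B(y,\epsilon)$; being slightly more careful about the geometry and allowing for $y$ to be on the concave side, the constant $1/4$ is comfortably attained. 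For $y$ in the interior far from $\partial S$, $B(y,\epsilon) \subset S$ and the bound is trivial.

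For the partly-expandable claim, I must show $d_H(\partial S, \partial(S \oplus \epsilon)) \le \epsilon$ for $0 \le \epsilon < \Delta - \sigma$, i.e., $R = 1$. One inclusion is immediate: $\partial(S\oplus\epsilon) \subset (\partial S) \oplus \epsilon$ always holds for the enlargement of any closed set, since every point of $\partial(S\oplus\epsilon)$ is at distance exactly $\epsilon$ from $S$, hence within $\epsilon$ of $S$, and in fact within $\epsilon$ of $\partial S$ (a point at distance exactly $\epsilon>0$ from $S$ realizes that distance at a boundary point of $S$; when $\epsilon = 0$ the statement is vacuous). The reverse inclusion $\partial S \subset \partial(S \oplus \epsilon) \oplus \epsilon$ is the substantive one: I need every boundary point $x$ of $S$ to be within $\epsilon$ of the boundary of the $\epsilon$-enlargement, which fails precisely when the enlargement "fills in" near $x$ so that the nearest point of $\partial(S\oplus\epsilon)$ is farther than $\epsilon$ away — this is the "deep inlet" phenomenon. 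Here is where $\epsilon < \Delta - \sigma$ enters: the tube $\cT$ has a fiber structure, and the inward normal from $f(u)\pm\sigma N(u)$ points toward $f(u)$ at distance exactly $\sigma$; enlarging by $\epsilon$, a ball $B(x,\epsilon)$ centered at a boundary point has its outward half entirely outside $S$ (since $S$ sits on a curve of radius-of-curvature $\ge \Delta$ and we are enlarging by less than $\Delta - \sigma < \Delta$), so the point $x + \epsilon \, n_{\mathrm{out}}(x)$, where $n_{\mathrm{out}}$ is the outward normal, lies on $\partial(S\oplus\epsilon)$ at distance exactly $\epsilon$ from $x$. Verifying that this outward-pushed point is genuinely on the boundary of the enlargement — that it is not in the interior of $S\oplus\epsilon$ because some other piece of $S$ is close enough — is exactly where the bound $\epsilon < \Delta - \sigma$ is used, via the rolling-ball property: a ball of radius $\sigma + \epsilon < \Delta$ still rolls freely around $\Gamma_f$ on the outside, so the outer parallel body at distance $\sigma+\epsilon$ from $\Gamma_f$ (together with enlarged end caps) stays embedded and its boundary is reached from $\partial S$ by moving distance $\epsilon$ outward.

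I expect the main obstacle to be the partly-expandable direction, specifically confirming that pushing a boundary point outward by $\epsilon$ lands on $\partial(S\oplus\epsilon)$ rather than its interior; this requires ruling out that a distant portion of $S$ comes within $\epsilon$ of the pushed-out point, and the clean way to do this is to invoke the global radius-of-curvature / rolling-ball characterization of $\Delta$ (Section 3) for the larger radius $\sigma + \epsilon$, noting $\sigma + \epsilon < \Delta$, together with assumption (A3) handling the open-curve end caps. The standardness constant $1/4$ should follow from a short explicit two-circle intersection computation once the interior-tangent-ball of radius $\sigma$ is in hand, which itself is again just the rolling-ball property applied at radius $\sigma < \Delta$.
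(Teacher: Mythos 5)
Your plan is sound and reaches the same conclusions as the paper, but both halves take a more circuitous route, and one intermediate claim in the standardness part is stated backwards.

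For standardness, the paper's argument is cleaner and avoids any lens-area computation: in the nontrivial case $\Lambda(y)<\epsilon$, it places a point $y^*$ on the segment from $y$ toward the nearest filament point $f(u)$ at distance $\epsilon/2$ and observes that $B(y^*,\epsilon/2)$ is contained in both $B(y,\epsilon)$ and $S$, so $\nu(B(y,\epsilon)\cap S)\ge \pi(\epsilon/2)^2 = \tfrac14\nu(B(y,\epsilon))$, giving the constant $1/4$ at a glance. Your route via a tangent ball $B(z,\sigma)\subset S$ at the nearest boundary point is also correct, and the worst case is indeed $y\in\partial S$ (as $y$ moves in toward $z$ the overlap only grows), but the claim that ``the circular arc of radius $\sigma\ge\epsilon$ \ldots cuts off no more than half of $B(y,\epsilon)$'' is actually reversed: since $y$ sits \emph{on} $\partial B(z,\sigma)$ and the arc curves toward $z$, the portion of $B(y,\epsilon)$ inside $B(z,\sigma)$ is \emph{less} than half (it approaches one half only as $\sigma\to\infty$). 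The lens-area formula for two circles of radii $\epsilon\le\sigma$ with centers at distance $\sigma$ gives, at the extreme $\epsilon=\sigma$, an overlap of $\epsilon^2(2\pi/3-\sqrt3/2)\approx 1.23\epsilon^2$, which is below $\pi\epsilon^2/2$ but comfortably above $\pi\epsilon^2/4$; so the conclusion survives, but the computation you defer is genuinely needed, whereas the paper's half-radius inscription sidesteps it entirely.

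For the partly-expandable claim, you are essentially re-deriving the reduction that the paper obtains by citing Proposition~1 of \cite{cuevas-boundary}: that partial expandability with $R=1$ follows once a ball of radius $r$ rolls freely outside $S$. Your direct argument — the easy inclusion $\partial(S\oplus\epsilon)\subset\partial S\oplus\epsilon$ plus the observation that $x+\epsilon\,n_{\mathrm{out}}(x)$ lies at distance exactly $\epsilon$ from $S$ provided nothing else in $S$ intrudes — is precisely that rolling-ball condition in disguise. The paper then notes that the outward tangent ball $O_y$ of radius $\Delta-\sigma$ at any $y\in\partial S$ lies in $S^c$ (this is the outer critical ball of Lemma~\ref{lemma::inner.outer}), which is the same geometric fact you invoke by observing that a ball of radius $\sigma+\epsilon<\Delta$ rolls freely around $\Gamma_f$. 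Both formulations turn on $\epsilon<\Delta-\sigma$, so your identification of where the hypothesis enters is exactly right; you just spend more words getting there than the citation does.
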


\subsection{Estimating Boundaries}

We estimate the support $S$ and its boundary $\partial S$.
The estimate of $\partial S$ will be
converted into an estimator of the filament.
The performance of these estimators, in Hausdorff-distance loss, translates directly
to the performance of the filament estimators.
We use $r_n$ to denote the rate of convergence
of the boundary estimator; that is, 
$d_H(\hat{\partial S},\partial S) =O_P(r_n)$.

\begin{figure}
\hbox to\textwidth{\hss \includegraphics[width=5in]{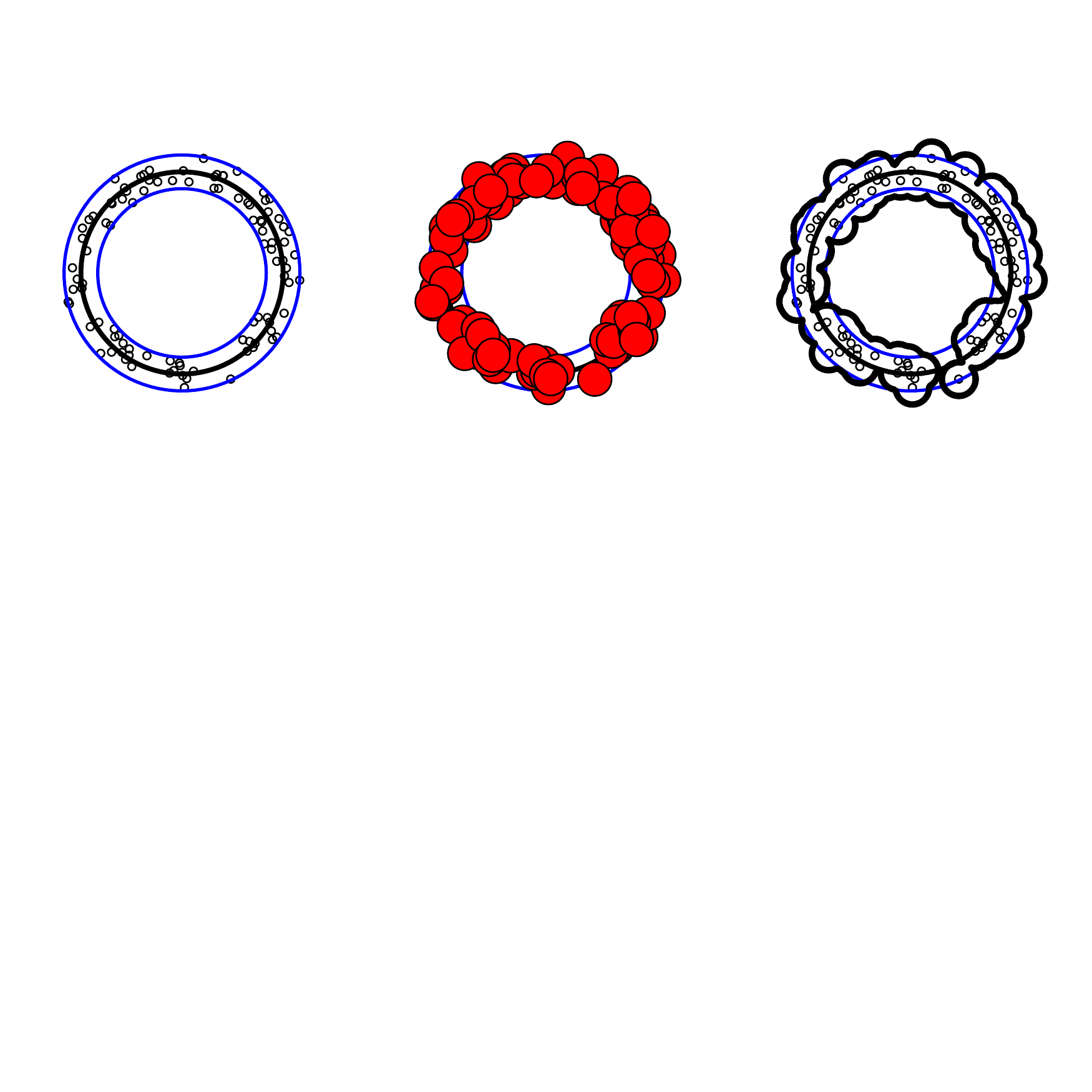}\hss}
\vspace{-3in}
\caption{These plots illustrate the estimators $\hat S$ and $\hat{\partial S}$.
Left: A closed filament, data and the true support. Center:The estimator of 
the support $\hat S$ is a union of balls. Right: The boundary estimator.}
\label{fig::unionofballs}
\end{figure}

In practice, we will use the estimator from \cite{cuevas-boundary}
and \cite{dw},
described 
in the following result. An example is shown in Figure \ref{fig::unionofballs}.
This estimator
is simple to use and fast to compute. 
Recall that $\alpha = \beta + (1/2)$
where $\beta$ is defined in condition (A2).

\begin{lemma} {\rm (\cite{cuevas-boundary}).}
\label{lemma::supp}
Let $Y_1,\ldots, Y_n$ be a random sample from a distribution with support $S$.
Let $S$ be compact, $(\lambda,\chi)$-standard and partly expandable.
Suppose the distribution $Q$ has positive density $q$
and that for all $y\in S$,
$q(y) \geq C d(y,\partial S)^\alpha$
for some $C>0$ and some $\alpha \geq 0$.
Let 
\begin{equation}
\hat{S} = \Union_{i=1}^n B(Y_i,\epsilon_n)
\end{equation}
and let $\hat{\partial S}$ be the boundary of $\hat S$.
If $C > \sqrt{2/(\chi \pi)}$ and $\epsilon_n = C (\log n /n)^{1/(2+\alpha)}$ then, 
with probability one
\begin{equation}
d_H(S,\hat{S}) \le r_n \qquad\mathand\qquad
d_H(\partial S,\hat{\partial S}) \le r_n
\end{equation}
for all large $n$,
where $r_n =C (\log n/n)^{1/(2+\alpha)}$.
Also, $S\subset \hat S$ almost surely for all large $n$.
\end{lemma}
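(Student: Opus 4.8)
This is the union-of-balls (Devroye--Wise) support estimator whose Hausdorff consistency is established in \cite{cuevas-boundary} and \cite{dw}, so I would recall that argument. The inclusion $\hat S\subset S\oplus\epsilon_n$ is deterministic, since $Y_i\in S$ gives $B(Y_i,\epsilon_n)\subset S\oplus\epsilon_n$. Granting in addition that $S\subset\hat S$ for all large $n$ with probability one (the main probabilistic input), everything follows from this sandwich: $d_H(S,\hat S)\le\epsilon_n$ because $S\subset\hat S\subset S\oplus\epsilon_n$, the final assertion $S\subset\hat S$ is part of what we prove, and the boundary bound is extracted from the sandwich together with the regularity of $S$. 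So the real work is (i) the covering statement $S\subset\hat S$ and (ii) passing to the boundaries.

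For (i): $S$ is compact and lies in $[-1,1]^2$, so for each $n$ it can be covered by $N_n\le K\epsilon_n^{-2}$ balls $B(z_1,\epsilon_n/2),\dots,B(z_{N_n},\epsilon_n/2)$ with $z_j\in S$. If every one of these balls contains at least one $Y_i$, then every $x\in S$ lies within $\epsilon_n/2+\epsilon_n/2=\epsilon_n$ of a data point, i.e.\ $x\in\hat S$. With $p_n=\min_j Q(B(z_j,\epsilon_n/2))$, the chance that some covering ball contains no data is at most $N_n(1-p_n)^n\le N_n e^{-np_n}$, so it suffices to make $\sum_n N_n e^{-np_n}<\infty$ and invoke Borel--Cantelli.

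The heart of the matter --- and the step I expect to be the main obstacle --- is the uniform lower bound $Q(B(z,t))\ge c\,t^{2+\alpha}$ for $z\in S$ and all small $t$. For $z$ with $d(z,\partial S)\ge 2t$ this is immediate: then $B(z,t)\subset S$ and $d(\cdot,\partial S)\ge t$ there, so $q\ge C t^\alpha$ on $B(z,t)$ and $Q(B(z,t))\ge C\pi t^{2+\alpha}$. The binding case is $z$ on or near $\partial S$, where $q$ may vanish; there one must upgrade the standardness estimate $\nu(B(z,t)\cap S)\ge\chi\pi t^2$ to a density-weighted one, showing that a fraction of the area of $B(z,t)\cap S$ bounded below in terms of $\chi$ lies at distance $\gtrsim t$ from $\partial S$ and hence carries density $\gtrsim t^\alpha$, which yields $Q(B(z,t))\ge c(\chi)\,C\,t^{2+\alpha}$. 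Taking $t=\epsilon_n/2$ with $\epsilon_n=C(\log n/n)^{1/(2+\alpha)}$ makes $np_n$ a constant multiple of $\log n$, and since $N_n\asymp\epsilon_n^{-2}$ is only polynomial in $n$, the series $\sum_n N_n e^{-np_n}$ converges once that multiple is large enough; carrying the constants through (the $\chi\pi$ from the ball area against the covering count for $[-1,1]^2$) produces the stated sufficient condition $C>\sqrt{2/(\chi\pi)}$.

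For (ii): from $S\subset\hat S\subset S\oplus\epsilon_n$, any $y\in\hat{\partial S}$ lies in $\overline{S\oplus\epsilon_n}$ but not in $\mathrm{int}(S)$, so its nearest point of $S$ is on $\partial S$ at distance $\le\epsilon_n$; hence $\hat{\partial S}\subset\partial S\oplus\epsilon_n$. Conversely, fix $x\in\partial S$ and small $\gamma>0$. By partial expandability (valid once $(1+\gamma)\epsilon_n<r$), $d_H\bigl(\partial S,\partial(S\oplus(1+\gamma)\epsilon_n)\bigr)\le R(1+\gamma)\epsilon_n$, so there is $x'$ with $d(x',S)=(1+\gamma)\epsilon_n>\epsilon_n$ and $\norm{x-x'}\le R(1+\gamma)\epsilon_n$. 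Then $x'\notin S\oplus\epsilon_n\supset\hat S$ while $x\in S\subset\hat S$, and since $\hat S$ is closed the segment $[x,x']$ crosses $\hat{\partial S}$ at a point within $R(1+\gamma)\epsilon_n$ of $x$; letting $\gamma\downarrow0$, $d(x,\hat{\partial S})\le R\epsilon_n$. Thus $\partial S\subset\hat{\partial S}\oplus R\epsilon_n$, and with $R=1$ (as in Lemma \ref{lemma::standard} for the filament's support) this together with the first half gives $d_H(\partial S,\hat{\partial S})\le\epsilon_n=r_n$.
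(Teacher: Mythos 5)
Your proposal follows essentially the same route as the paper's proof outline: both invoke the Cuevas--Fraiman covering/Borel--Cantelli argument for the union-of-balls estimator and observe that the only modification needed is to replace the uniform bound $Q(B(z,t))\gtrsim t^2$ by $Q(B(z,t))\gtrsim t^{2+\alpha}$, after which the original proof of \cite{cuevas-boundary} carries over verbatim with $\epsilon^2$ replaced by $\epsilon^{2+\alpha}$. You also correctly flag that mass bound as the crux; for the paper's $S$ it follows from the explicit construction in the proof of Lemma~\ref{lemma::standard}, which produces for each $y\in S$ a sub-ball $B(y^*,\epsilon/2)\subset B(y,\epsilon)\cap S$ whose points lie at distance at least $\epsilon/4$ from $\partial S$, giving exactly the ``density-weighted standardness'' you anticipate needing.
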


{\bf Proof Outline.}
The proof is essentially the same as the proof in
\cite{cuevas-boundary}.
They implicitly assume that
$\inf_{y\in S} q(y) >0$.
In particular their proof (see page 348 of their paper)
argues that, for any $y\in S$,
$Q(B(y,\epsilon)) \geq c\epsilon^2$ for some $c>0$.
This is true under standardness and assuming that
$\inf_{y\in S} q(y) >0$.
However, we allow $q$ to be 0 at the boundary
and only require $q(y) \geq C d(y,\partial S)^\alpha$.
In this case,
by applying Lemma \ref{lemma::new-boundary},
we have that
$Q(B(y,\epsilon)) \geq c\epsilon^{2+\alpha}$.
The result then follows as in their proof
by replacing $\epsilon^2$ with
$\epsilon^{2+\alpha}$. $\Box$

\smallskip
\noindent We will also need the following property of the 
union-of-balls estimator $\hat{\partial S}$.

\begin{lemma} 
\label{lemma::nice-boundary}
Let $Y_1,\ldots, Y_n$ be a sample from $Q_{f,\sigma,h}$.
If $f$ is open and if $S\subset\hat{S}$ then
$\hat{\partial S}$ is a simple, closed curve.
If $f$ is closed and if $S\subset\hat{S}$ then
$\hat{\partial S}$ consists of two simple, closed curves
$\hat{\partial S}_0$ and $\hat{\partial S}_1$.
\end{lemma}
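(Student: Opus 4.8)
The plan is to show that $\hat S$ is homeomorphic to $S$ --- a closed topological disk when $f$ is open and a closed topological annulus when $f$ is closed --- and then to read off its boundary.

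First I would collect the elementary facts. Since each $\epsilon_i\in B(0,\sigma)$ we have $Y_i=f(U_i)+\epsilon_i\in B(f(U_i),\sigma)\subseteq S$, so $\hat S=\bigcup_i B(Y_i,\epsilon_n)\subseteq S\oplus\epsilon_n$, while $S\subseteq\hat S$ by hypothesis; thus $S\subseteq\hat S\subseteq S\oplus\epsilon_n$ and the sample $\{Y_1,\dots,Y_n\}$ is $\epsilon_n$-dense in $S$. Moreover $\hat S=S\cup\bigcup_i B(Y_i,\epsilon_n)$ is connected, being a union of connected sets each meeting the connected set $S$. By Lemma~\ref{lemma::disjoint} --- the disjoint fibers foliate $\cT$, and the structure of $\partial S$ is given there --- the set $S$ is a closed topological disk when $f$ is open (the tube $\cT$ is a topological rectangle, and the two end caps $\cC_0,\cC_1$, disjoint by (A3) since $\|f(1)-f(0)\|>2\Delta>2\sigma$, are half-disks glued onto its ends) and a closed topological annulus when $f$ is closed. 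Finally, (A3) ($\sigma<\Delta$) forces $S$ to have positive reach $\rho_0=\rho_0(\sigma,\Delta)>0$: the global radius-of-curvature bound $\Delta$ bounds the curvature of $\partial S$ by $\max\{\sigma^{-1},(\Delta-\sigma)^{-1}\}$ and precludes near self-approaches of $\partial S$, while in the open case a short computation shows that the offset curves $\partial S_0,\partial S_1$ meet the end-cap circles tangentially, so that $\partial S$ is globally $C^{1,1}$.

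Next I would fix $n$ large enough that $\epsilon_n<\rho_0$ (possible since $\epsilon_n\to0$) and invoke the core claim that $\hat S$ deformation retracts onto $S$. This is the standard ``union of balls'' reconstruction fact: when a set of reach $\rho_0$ is sampled by a subset that is $\delta$-dense with $\delta\le\epsilon_n\ll\rho_0$, the union of the radius-$\epsilon_n$ balls about the sample contains the set and deformation retracts onto it, the retraction being the nearest-point projection $\pi_S$, which is single-valued and continuous on $S\oplus\epsilon_n\supseteq\hat S$ (cf.\ \cite{smale} and the references therein). Granting this, $\hat S$ is homotopy equivalent to $S$, so its first Betti number $b_1(\hat S)=b_1(S)$, which is $0$ when $f$ is open and $1$ when $f$ is closed. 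To pass from homotopy type to boundary: since $Y_1,\dots,Y_n$ are i.i.d.\ with a density, outside a null event no two of the circles $\partial B(Y_i,\epsilon_n)$ coincide or are tangent and no three are concurrent, and excluding this event $\hat S=\overline{\mathrm{int}\,\hat S}$ is a compact $2$-manifold with (possibly cornered) boundary, so $\hat{\partial S}=\partial\hat S$ is a compact $1$-manifold --- a finite disjoint union of simple closed curves. For a connected compact planar $2$-manifold with boundary the number of boundary components equals $1+b_1$; hence $\hat{\partial S}$ is a single simple closed curve when $f$ is open and a disjoint union of two simple closed curves $\hat{\partial S}_0,\hat{\partial S}_1$ when $f$ is closed, which is the assertion.

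The step I expect to be the main obstacle is the deformation retraction of $\hat S$ onto $S$. The sandwiching $S\subseteq\hat S\subseteq S\oplus\epsilon_n$ by itself is far too weak --- a general compact set trapped between $S$ and a small enlargement of it can carry spurious components or tiny holes --- so one must genuinely exploit that $\hat S$ is a union of balls of radius exactly $\epsilon_n$ centred at an $\epsilon_n$-dense subset of $S$, together with $\epsilon_n$ being small relative to $\mathrm{reach}(S)$; making this precise with explicit constants, rather than citing the reconstruction lemma, is the only genuinely delicate point. A secondary technical point is the tangency computation underlying $\mathrm{reach}(S)>0$ in the open case, which is the other place where the interplay of $\sigma$, $\Delta$ and the end caps in (A3) is used essentially.
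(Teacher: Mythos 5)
Your route is correct in spirit but genuinely different from the paper's. The paper argues by elementary induction: it writes $A_n = S \cup \hat S$, shows that adding a single ball $B(Y_i,\epsilon_n)$ whose interior meets $S$ preserves the property ``boundary is a simple closed curve'' (resp.\ two disjoint simple closed curves in the closed case), and concludes from $S\subset\hat S$ that $A_n=\hat S$. You instead go through the homotopy type of $\hat S$: sandwich $S\subset\hat S\subset S\oplus\epsilon_n$, establish positive reach of $S$, invoke a union-of-balls reconstruction result to get $\hat S\simeq S$, then classify $\hat S$ as a compact planar surface with boundary and count boundary components via $1+b_1$. The paper's approach is shorter and completely self-contained; yours is more systematic and would generalize to higher dimensions (where ``simple closed curve'' no longer makes sense) but brings in heavier machinery.

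Two caveats worth flagging. First, as you already note, the deformation-retraction step is the genuine content and cannot be waved away: the sandwich $S\subset\hat S\subset S\oplus\epsilon_n$ alone does not control the topology of $\hat S$ (a set squeezed in that annular region can have spurious cycles), so you must use that $\hat S$ is a union of $\epsilon_n$-balls centred at points of $S$ covering $S$, plus an explicit bound of $\epsilon_n$ against $\mathrm{reach}(S)=\Delta-\sigma$; a naive straight-line retraction to the nearest point of $S$ does not obviously stay inside $\hat S$, so you either need the nerve-theorem route (restricted intersections $B(Y_i,\epsilon_n)\cap S$ geodesically convex, nerve of the cover of $\hat S$ equals nerve of the cover of $S$) or a careful citation with matching constants. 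Second, the lemma as stated assumes only $S\subset\hat S$, whereas your argument additionally needs $\epsilon_n$ small relative to $\Delta-\sigma$; since the lemma is used asymptotically and $\epsilon_n\to 0$, this is not a real loss, but it is an extra hypothesis you are silently inserting and should state. (The paper's own one-ball-at-a-time induction also implicitly relies on $\epsilon_n$ being small enough that a single ball cannot bridge $\partial S_0$ and $\partial S_1$, so this is a shared informality rather than a defect unique to your write-up.)
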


\subsection{From Boundaries to Filaments}
\label{sect::estimation}

We now give two estimators of $\Gamma_f$ which we call the {\em EDT estimator} 
and the {\em medial estimator}. By condition (A3), $\sigma < \Delta$ so that
$\Gamma_f = M(S)$.

The first estimator is inspired by the fact that the $\Gamma_f$ maximizes the EDT.
The second estimator is inspired by the 
following fact.
For a closed curve, $\partial S$ consists of two disjoint
pieces
$\partial S_0$ and $\partial S_1$ and
the medial axis is midway between 
$\partial S_0$ and $\partial S_1$.

\smallskip
The algorithm for the EDT estimator is as follows.
An example is shown in Figure \ref{fig::EDT}.

\smallskip

\HRule

\begin{center}
\underline{\sf The EDT Estimator}
\end{center}

{\sc Input}: support and boundary estimates $\hat S$ and $\hat{\partial S}$
and a radius $\epsilon > 0$.

\smallskip
{\sc Output}: a set of fitted values $\hat\Gamma$.

\smallskip
{\sc Algorithm}:
\begin{enumerate}
\item Compute $\hat\Lambda(y) = d(y,\hat{\partial S})$, for all $y \in \hat S.$  \vadjust{\vskip 2pt}
\item Set $\hat\sigma = \max_{y\in\hat S}\hat\Lambda(y)$.  \vadjust{\vskip 2pt}
\item Let $\delta = 2\epsilon$ and set
$\displaystyle \hat\Gamma = \{ y\in\hat S:\ d(y,\hat{\partial S}) \geq \hat\sigma -\delta\}$.
\end{enumerate}

\HRule

\smallskip
\noindent We remark that the choice $\delta = 2\epsilon$ in the EDT 
procedure is mainly for theoretical purposes. In practice, $\delta$ can 
be used as a tuning parameter.

\begin{figure}
\hbox to\textwidth{\hss \includegraphics[width=5in]{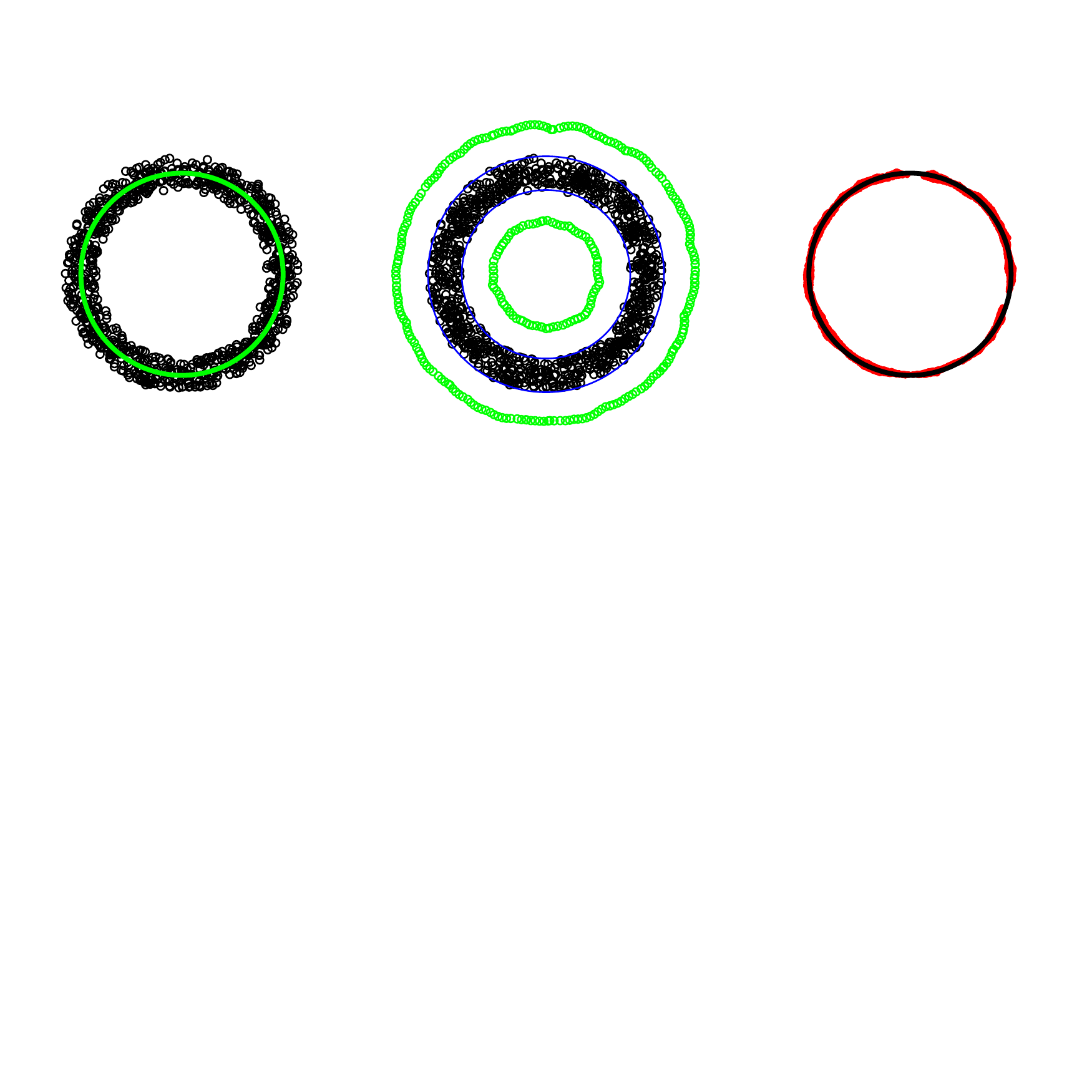}\hss}
\vspace{-3in}
\caption{These plots illustrate the EDT-based estimator.
Left: filament and data.
Center: Estimated boundary.
Right: EDT estimator $\hat\Gamma$.}
\label{fig::EDT}
\end{figure}

\smallskip
\begin{theorem}
\label{thm::edtestimator}
Let
$\hat\Gamma = \left\{y\in\hat S:\strut\ d(y,\hat{\partial S})\geq \hat\sigma-\delta \right\}$
be the EDT estimator,
where $\delta = 2 \epsilon$.
\begin{enumerate}
\item If $d_H(\partial S,\hat{\partial S}) \le \epsilon$, then 
$\Gamma \subset \hat\Gamma \subset \Gamma\oplus (4\epsilon)$,
and $d_H(\Gamma_f, \hat\Gamma) \le 4\epsilon$.
\item If $\hat{S} = \Union_{i=1}^n B(Y_i,\epsilon_n)$
where
$\epsilon_n = C (\log n/n)^{1/(2+\alpha)}$,
$C > \sqrt{2/(\chi \pi)}$
and $\chi=1/4$,
then, with probability one,
\begin{equation}
d_H(\Gamma_f,\hat\Gamma) = O(r_n)
\end{equation}
for all large $n$,
where
$r_n = \left(\frac{\log n}{n}\right)^{1/(2+\alpha)}$.
\end{enumerate}
\end{theorem}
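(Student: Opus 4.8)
The plan is to reduce everything to what has already been established: part~1 follows from Theorem~\ref{thm::edt} together with the characterizations of the medial axis in Lemma~\ref{lemma::edt} and Theorem~\ref{thm::filamentismedial}, and part~2 is obtained by feeding the rate from Lemma~\ref{lemma::supp} into part~1 with $\epsilon = r_n$. For part~1 I would first collect the facts in play. By condition (A3) and Theorem~\ref{thm::filamentismedial}, $\Gamma_f = M(S)$. By Lemma~\ref{lemma::edt}, $\Lambda(y) = \sigma$ for $y \in M(S)$, $\Lambda(y) < \sigma$ for $y \in S \setminus M(S)$, and $d(y, M(S)) + \Lambda(y) = \sigma$ for every $y \in S$. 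By Theorem~\ref{thm::edt}, the hypothesis $d_H(\partial S, \hat{\partial S}) \le \epsilon$ gives $\sup_y |\hat\Lambda(y) - \Lambda(y)| \le \epsilon$ and $|\hat\sigma - \sigma| \le \epsilon$. Finally I would use $S \subseteq \hat S$ and $\hat S \subseteq S \oplus \epsilon$; both hold for the union-of-balls estimator by Lemma~\ref{lemma::supp}, and for the abstract statement they are part of the standing setup.

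For the inclusion $\Gamma_f \subseteq \hat\Gamma$, fix $y \in \Gamma_f = M(S)$. Then $\hat\Lambda(y) \ge \Lambda(y) - \epsilon = \sigma - \epsilon \ge \hat\sigma - 2\epsilon = \hat\sigma - \delta$, and $y \in M(S) \subseteq S \subseteq \hat S$, so $y \in \hat\Gamma$. For the inclusion $\hat\Gamma \subseteq \Gamma_f \oplus 4\epsilon$, fix $y \in \hat\Gamma$; then $\Lambda(y) \ge \hat\Lambda(y) - \epsilon \ge (\hat\sigma - \delta) - \epsilon \ge (\sigma - \epsilon) - 2\epsilon - \epsilon = \sigma - 4\epsilon$. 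The one delicate point is that a priori $y$ lies in $\hat S$, not necessarily in $S$, so Lemma~\ref{lemma::edt}(3) is not immediately applicable; but if $y \in \hat S \setminus S$ then (as $S$ is closed) $\Lambda(y) = d(y, \partial S) = d(y, S) \le \epsilon$ because $\hat S \subseteq S \oplus \epsilon$, which contradicts $\Lambda(y) \ge \sigma - 4\epsilon$ once $\epsilon < \sigma / 5$. Hence $y \in S$, and Lemma~\ref{lemma::edt}(3) yields $d(y, M(S)) = \sigma - \Lambda(y) \le 4\epsilon$, i.e. $y \in \Gamma_f \oplus 4\epsilon$. The two inclusions give $\Gamma_f \subseteq \hat\Gamma \subseteq \Gamma_f \oplus 4\epsilon$, whence $d_H(\Gamma_f, \hat\Gamma) \le 4\epsilon$.

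For part~2 I would check the hypotheses of Lemma~\ref{lemma::supp}: $S$ is compact (it lies in $[-1,1]^2$), it is $(\lambda, \chi)$-standard and partly expandable with $\chi = 1/4$ by Lemma~\ref{lemma::standard}, and $q(y) \ge C\, d(y, \partial S)^\alpha$ by Lemma~\ref{lemma::new-boundary}, while the constant condition $C > \sqrt{2/(\chi \pi)}$ is assumed in the statement. Lemma~\ref{lemma::supp} then gives, with probability one and for all large $n$, $d_H(\partial S, \hat{\partial S}) \le r_n$ with $r_n = C(\log n / n)^{1/(2+\alpha)}$, together with $S \subseteq \hat S$ and $\hat S \subseteq S \oplus r_n$. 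Applying part~1 with $\epsilon = r_n$, so that $\delta = 2 r_n$, yields $d_H(\Gamma_f, \hat\Gamma) \le 4 r_n = O(r_n)$, which is the claim.

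The main obstacle is part~1, and within it the upper inclusion $\hat\Gamma \subseteq \Gamma_f \oplus 4\epsilon$: one must argue that the super-level set $\{\hat\Lambda \ge \hat\sigma - 2\epsilon\}$ stays inside $S$ and then convert a near-maximal value of $\Lambda$ into proximity to $M(S)$ through the additive identity $d(\cdot, M(S)) + \Lambda = \sigma$. The forward inclusion, the triangle-inequality bookkeeping on the three copies of $\epsilon$, and the assembly of part~2 from Lemmas~\ref{lemma::standard}, \ref{lemma::new-boundary} and \ref{lemma::supp} are routine.
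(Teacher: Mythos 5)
Your proof follows the same route as the paper's: Theorem~\ref{thm::edt} gives $\sup_y|\hat\Lambda(y)-\Lambda(y)|\le\epsilon$ and $|\hat\sigma-\sigma|\le\epsilon$, the identity $d(\cdot,M(S))+\Lambda=\sigma$ from Lemma~\ref{lemma::edt} converts near-maximality of $\hat\Lambda$ into proximity to $M(S)=\Gamma_f$, and Lemma~\ref{lemma::supp} supplies the rate for part~2. You also explicitly patch two steps the paper glosses over --- that $M(S)\subset S\subset\hat S$ so that points of $\Gamma_f$ are even eligible for membership in $\hat\Gamma$, and that a point of $\hat\Gamma$ actually lies in $S$ (via $\hat S\subset S\oplus\epsilon$ and $\epsilon<\sigma/5$) so that Lemma~\ref{lemma::edt}(3) applies --- which makes the argument cleaner than the paper's own.
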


\noindent Now we consider the medial estimator. In this case, we estimate the fibers
$L(u)$ by joining points on opposite sides of the estimated boundary. The 
algorithm for constructing the medial estimator follows:

\smallskip
\HRule
\begin{center}
\underline{\sf The Medial Estimator}
\end{center}

\setbox4=\hbox{{\sc Indent}:\hskip 0.75em}
\hangindent=\wd4
\hangafter=1
{\sc Input}: support and boundary estimates $\hat S$ and $\hat{\partial S}$, where
$\hat{\partial S}$ consists of two, disjoint
curves $\hat{\partial S}_0$ and $\hat{\partial S}_1$.

{\sc Output}: a set of fitted values $\hat\Gamma$.

\smallskip
{\sc Algorithm}:
\begin{enumerate}
\item For each $y\in \hat{\partial S}_0$, 
let $\hat y$ be the closest point on $\hat{\partial S}_1$ and \hfil\break
let $\hat\ell_y$ be the line segment connecting $y$ and $\hat y$.           \vadjust{\vskip 2pt}
\item Set $\hat\mu(y)$ to be the midpoint of $\hat\ell_y$.
\item Set $\displaystyle \hat\Gamma = \{ \hat\mu(y):\ y\in \hat{\partial S}_0\}$.
\end{enumerate}
\HRule

We will focus on analyzing this algorithm for closed curves.
The case of open curves is discussed in
Section \ref{sec::open}.

\begin{theorem}
\label{thm::methodII}
Let $\hat\Gamma$ be the medial estimator.
Then:
\begin{enumerate}
\item If $d_H(\partial S_0, \hat {\partial S_0}) \leq \epsilon$ and
$d_H(\partial S_1, \hat {\partial S_1}) \leq \epsilon$,
with $\epsilon < (\Delta - \sigma)/2$, then
\begin{itemize}
\item[(i)]
For every $\hat{\mu} \in \hat\Gamma$ there is a filament point $f(u)\in \Gamma_f$ such 
that $||\hat{\mu} - f(u)||\leq 2\epsilon$.
\item[(ii)]
There exists $C > 0$ such that, for each $f(u)\in \Gamma_f$ 
there is $\hat{\mu} \in \hat\Gamma$ such that $||\hat{\mu} - f(u)||\leq C \sqrt \epsilon$.
\item[(iii)]
$d_H(\hat\Gamma, \Gamma_f) = O(\sqrt \epsilon)$.
\end{itemize}
\item
If $\hat{S}=\Union_{i=1}^n B(Y_i,\epsilon_n)$
where
$\epsilon_n = C (\log n /n)^{1/(2+\alpha)}$,
$C > \sqrt{2/(\chi \pi)}$
and $\chi=1/4$,
then, with probability one, for all large $n$,
\begin{equation}
d_H(\Gamma_f,\hat\Gamma) = O(\sqrt{r_n}) 
\end{equation}
where $r_n = (\log n/n)^{1/(2+\alpha)}$.
\end{enumerate}
\end{theorem}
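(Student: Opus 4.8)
I would prove the deterministic part~(1) first and then obtain~(2) by substituting the rate of Lemma~\ref{lemma::supp}. Write $g\colon\partial S_0\to\partial S_1$ for the \emph{fibre--opposite} map: if $a=f(u)+s(u)\sigma N(u)\in\partial S_0$ then $g(a)=f(u)-s(u)\sigma N(u)\in\partial S_1$, so $a$ and $g(a)$ are the two ends of the fibre $L(u)$ (Lemma~\ref{lemma::disjoint}), $\|a-g(a)\|=2\sigma$, and $f(u)$ is the midpoint of the segment $[a,g(a)]$. Since $f$ is closed and $\sigma<\Delta$ we have $\Gamma_f=M(S)$ (Theorem~\ref{thm::filamentismedial}), so by Lemma~\ref{lemma::edt} $d(z,\Gamma_f)=\sigma-\Lambda(z)$ for every $z\in S$, with $\Lambda(z)=d(z,\partial S)$.

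\textbf{Step 1 (tube geometry).} First I would record two facts coming from the rolling--ball picture ($\sigma<\Delta$) and the fibre foliation of $S=\cT$ (Lemma~\ref{lemma::disjoint}): \emph{(a)} $d(a,\partial S_1)=2\sigma$ for every $a\in\partial S_0$, the nearest point being $g(a)$; consequently $d(\partial S_0,\partial S_1)=2\sigma$ and $d(z,\partial S_0)+d(z,\partial S_1)\ge 2\sigma$ for all $z\in S$. \emph{(b)} A quadratic \emph{flatness} estimate: there is $C>0$ so that, for small $\delta$, if $a\in\partial S_0$, $b\in\partial S_1$ and $\|a-b\|\le 2\sigma+\delta$ then $\|b-g(a)\|\le C\sqrt\delta$; equivalently, writing $a=f(u)+s(u)\sigma N(u)$, the midpoint of $[a,b]$ lies within $\tfrac12 C\sqrt\delta$ of $f(u)\in\Gamma_f$. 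I expect Step~1 to be the main obstacle --- in particular the hard direction $d(\partial S_0,\partial S_1)\ge 2\sigma$ and the constant in the flatness bound; both should follow by expanding $\|a-g(v)\|^2$ about $v=u$, the curvature entering through the bound $\sigma<\Delta$ on the global radius of curvature, together with Lemma~\ref{lemma::disjoint}(4) (nearest boundary points are fibre endpoints).

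\textbf{Step 2 (proof of (1)).} Fix $\hat\mu=\hat\mu(y)$ with $y\in\hat{\partial S}_0$, let $\hat y$ be the point of $\hat{\partial S}_1$ nearest $y$, and pick $p_0\in\partial S_0$ with $\|y-p_0\|\le\epsilon$. Comparing $\hat y$ with a point of $\hat{\partial S}_1$ close to $g(p_0)$ gives $\|y-\hat y\|\le 2\sigma+2\epsilon$, while $\|y-\hat y\|=d(y,\hat{\partial S}_1)\ge d(p_0,\partial S_1)-2\epsilon=2\sigma-2\epsilon$ by Step~1(a). Since $\hat\mu$ is the midpoint of $[y,\hat y]$, $\hat y$ is also the nearest point of $\hat{\partial S}_1$ to $\hat\mu$, so $d(\hat\mu,\partial S_1)\in[\sigma-2\epsilon,\sigma+2\epsilon]$ and $d(\hat\mu,\partial S_0)\le\|\hat\mu-y\|+\epsilon\le\sigma+2\epsilon$; moreover, using Step~1(b) to see that $\hat\mu$ is within $O(\sqrt\epsilon)$ of $\Gamma_f$ and hence in $S$, Step~1(a) gives $d(\hat\mu,\partial S_0)\ge 2\sigma-d(\hat\mu,\partial S_1)\ge\sigma-2\epsilon$. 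Therefore $\Lambda(\hat\mu)=\min\{d(\hat\mu,\partial S_0),d(\hat\mu,\partial S_1)\}\ge\sigma-2\epsilon$, and Lemma~\ref{lemma::edt}(3) yields $d(\hat\mu,\Gamma_f)=\sigma-\Lambda(\hat\mu)\le 2\epsilon$, which is (i). For (ii), given $f(u)\in\Gamma_f$ take $p_0=f(u)+s(u)\sigma N(u)$ and $y\in\hat{\partial S}_0$ with $\|y-p_0\|\le\epsilon$, so $\hat\mu(y)\in\hat\Gamma$; with $b\in\partial S_1$ within $\epsilon$ of $\hat y$ we get $\|p_0-b\|\le 2\sigma+4\epsilon$, so Step~1(b) gives $\|b-g(p_0)\|\le 2C\sqrt\epsilon$, and since $f(u)=\tfrac12(p_0+g(p_0))$,
\[
\|\hat\mu(y)-f(u)\|\le\tfrac12\|y-p_0\|+\tfrac12\bigl(\|\hat y-b\|+\|b-g(p_0)\|\bigr)\le\epsilon+C\sqrt\epsilon=O(\sqrt\epsilon).
\]
Statement (iii) is then immediate from (i) and (ii): $d_H(\hat\Gamma,\Gamma_f)\le\max\{2\epsilon,O(\sqrt\epsilon)\}=O(\sqrt\epsilon)$.

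\textbf{Step 3 (proof of (2)).} The hypotheses of Lemma~\ref{lemma::supp} hold by Lemmas~\ref{lemma::standard} and~\ref{lemma::new-boundary}, so with probability one, for all large $n$, $S\subset\hat S$ and $d_H(\partial S,\hat{\partial S})\le r_n$ with $r_n=(\log n/n)^{1/(2+\alpha)}$. By Lemma~\ref{lemma::nice-boundary}, $\hat{\partial S}$ then consists of two simple closed curves; since $d_H(\partial S,\hat{\partial S})\le r_n\to 0$ while $d(\partial S_0,\partial S_1)=2\sigma$ by Step~1(a), for large $n$ one of them lies within $r_n$ of $\partial S_0$ and the other within $r_n$ of $\partial S_1$, so relabelling them $\hat{\partial S}_0,\hat{\partial S}_1$ we obtain $d_H(\partial S_i,\hat{\partial S}_i)\le r_n<(\Delta-\sigma)/2$ for $i=0,1$ and all large $n$. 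Applying part~(1) with $\epsilon=r_n$ gives $d_H(\Gamma_f,\hat\Gamma)=O(\sqrt{r_n})$, as claimed.
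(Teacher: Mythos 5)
Your proof is correct in outline and takes a genuinely different route from the paper's, and it is worth comparing the two. For part (i), the paper argues by contradiction: it takes the intersection point $f(u)$ of $\Gamma_f$ with the chord $[y,z(y)]$, supposes $\|\hat\mu-f(u)\|>2\epsilon$, and derives $d(f(u),\partial S_i)<\sigma$ for one of $i=0,1$, contradicting the fact that every filament point lies at distance exactly $\sigma$ from each boundary piece. You instead reduce to the EDT machinery: you bound $d(\hat\mu,\partial S_0)$ and $d(\hat\mu,\partial S_1)$ above and below, deduce $\Lambda(\hat\mu)\ge\sigma-2\epsilon$, and conclude via Lemma~\ref{lemma::edt}(3). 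Your route requires the extra observation that $d(z,\partial S_0)+d(z,\partial S_1)\ge 2\sigma$ for $z\in S$, which the paper never states but which follows from the Jordan separation by $\Gamma_f$ (any chord joining the two boundary components crosses $\Gamma_f$, and $\Gamma_f$ is distance $\sigma$ from either component), and it also requires checking $\hat\mu\in S$ before applying that inequality — which you flag. It buys a cleaner argument that exposes the role of the EDT characterization and avoids the case split in the paper.

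For part (ii), the essential geometric ingredient is the same in both proofs: a $\sqrt{\epsilon}$-stability of closest-point projections between the two boundary pieces, coming from the curvature bound $\sigma<\Delta$. The paper factors this through Lemmas~\ref{thm::key}, \ref{thm::key2} and \ref{lemma::painful}, applied in sequence to compare $z(y)$, $h$ and $z^*$. You instead state a single ``flatness'' estimate, Step~1(b): if $a\in\partial S_0$, $b\in\partial S_1$ and $\|a-b\|\le 2\sigma+\delta$, then $\|b-g(a)\|\le C\sqrt{\delta}$. You correctly identify this as the nontrivial ingredient and indicate how it would be proved (second-order expansion around the opposite fibre endpoint, with the curvature entering through the bound on the global radius of curvature). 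This is a compact repackaging of what Lemmas~\ref{thm::key} and \ref{thm::key2} accomplish, and it applies directly to your estimate $\|p_0-b\|\le 2\sigma+4\epsilon$ without the intermediate Lipschitz step of Lemma~\ref{lemma::painful}. One small gap worth flagging: in part~2 you relabel the two components of $\hat{\partial S}$ to match $\partial S_0,\partial S_1$; this does hold for $r_n<\sigma$ but requires a short argument (each connected $\hat{\partial S}_i$ must sit inside exactly one of the disjoint sets $\partial S_j\oplus r_n$, and the pairing must be a bijection since otherwise $\partial S_1\subset\partial S_0\oplus 2r_n$ would violate $d(\partial S_0,\partial S_1)=2\sigma$). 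Overall, the proposal is sound and, modulo proving Step~1(b), gives a correct and somewhat more transparent proof than the paper's.
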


\begin{figure}
\hbox to\textwidth{\hss \includegraphics[width=5in]{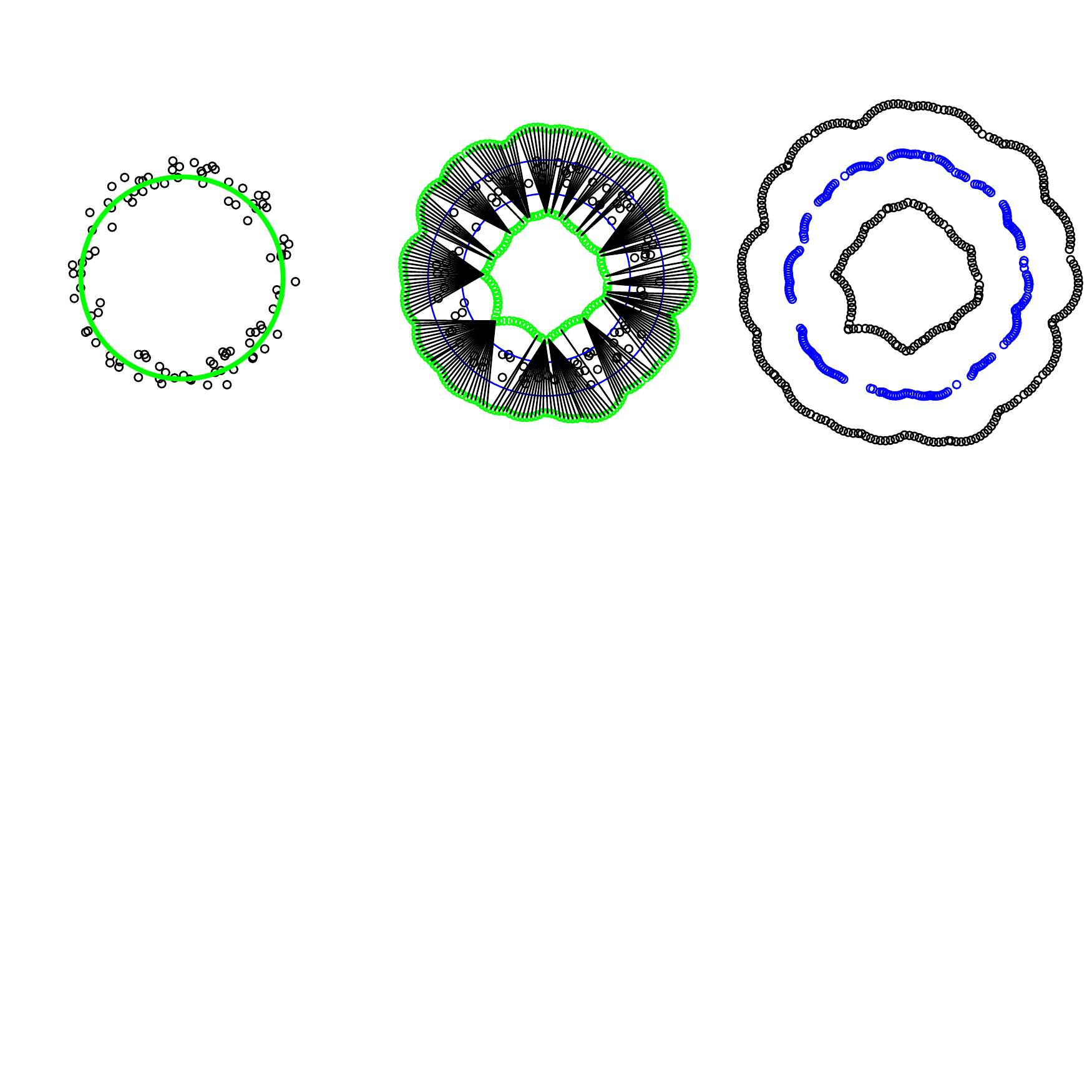}\hss}
\vspace{-3in}
\caption{These plots illustrate the medial estimator.
Left: filament and data.
Center: lines connecting the two boundary estimators.
Right: the medial estimator $\hat\Gamma$.}
\label{fig::MEDIAL}
\end{figure}

\medskip
An example is in Figure \ref{fig::MEDIAL}.
The medial estimator has a slower rate of convergence than the
EDT estimator.
However, Lemma \ref{lemma::union-of-open-curves} and Theorem \ref{thm::the-completion} below
show that it is easy to extract a curve from the fitted values. The extracted curve has 
the faster rate $r_n$ rather than~$\sqrt{r_n}$.

\smallskip
Let $\hat \Gamma$ be the medial estimator and assume that $f$ is closed. 
(The case 
where $f$ is open is considered in Subsection \ref{sec::open}.)
The fitted values $\hat\Gamma$ are derived from the 
estimated boundary $\hat{\partial S}$.
These fitted values have gaps.
All we have to do is connect the gaps with straight lines
to get a curve.
Surprisingly, this also improves the rate of convergence.
Here are the details.

Recall that, from Lemma \ref{lemma::nice-boundary},
$\hat{\partial S} = \hat{\partial S}_0 \union \hat{\partial S}_1$
and that $\hat{\partial S_0}$ is a closed simple curve. The medial estimator
takes each point $y\in\hat{\partial S}_0$ and outputs a fitted value $\hat\mu(y)$.
Let $g$ be a parameterization of $\hat{\partial S}_0$,
so
$\hat{\partial S}_0 = \{(g(u):\ 0 \leq u \leq 1\}$.
Define
$\hat{f}(u) = \hat\mu(g(u))$.

\begin{lemma}
\label{lemma::union-of-open-curves}
The function
$\hat f:[0,1]\to\mathbb{R}^2$
is a union of open curves.
In particular, there exist
$0 = a_0 < a_1 < \cdots < a_N =1$ such that
$\hat f$ is a continuous, open curve on each
$(a_j,a_{j+1})$ but
$\hat f$ is possibly discontinuous at each $a_j$.
\end{lemma}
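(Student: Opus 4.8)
The starting point is that, because $\hat S=\bigcup_{i=1}^n B(Y_i,\epsilon_n)$ is a finite union of discs, both $\hat{\partial S}_0$ and $\hat{\partial S}_1$ (which exist and are simple closed curves by Lemma~\ref{lemma::nice-boundary}) are finite unions of maximal circular arcs, meeting at finitely many corner points. Parameterize $\hat{\partial S}_0$ continuously by $g$ on $[0,1]$ and write the medial estimator as
\[
\hat f(u)=\hat\mu(g(u))=\tfrac12\bigl(g(u)+\pi_1(g(u))\bigr),
\]
where $\pi_1(y)$ is a nearest point of $\hat{\partial S}_1$ to $y$; it is unique for every $y$ outside a set $\cA\subseteq\mathbb R^2$. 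Since $\hat{\partial S}_1$ is compact, nearest--point projection onto it is continuous at every point where it is single--valued (a standard compactness argument), so $\hat\mu$ is continuous on $\mathbb R^2\setminus\cA$ and hence $\hat f$ is continuous on $[0,1]\setminus g^{-1}(\cA)$. The whole lemma thus reduces to showing that $\cA\cap\hat{\partial S}_0$ is finite.

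This finiteness is the crux, and the argument is geometric. A point $y\in\cA$ realizes its distance to $\hat{\partial S}_1$ at two distinct points, each of which is either the radial projection onto some maximal arc with centre $c$ and radius $\rho$ (at distance $|\,\|y-c\|-\rho\,|$) or an endpoint $p$ of some arc (at distance $\|y-p\|$). Equating two such expressions and removing the absolute values shows that $\cA$ is contained in a finite union of \emph{fixed} real algebraic curves of degree at most two: branches of ellipses/hyperbolas $\|y-c\|\pm\|y-c'\|=\rho\pm\rho'$ with foci at two arc centres, curves $\|y-c\|=\rho\pm\|y-p'\|$, perpendicular bisector lines of pairs of endpoints, together with the finitely many arc centres themselves. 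Each maximal arc of $\hat{\partial S}_0$ lies on a circle, which meets each of these conics or lines in at most four points by degree considerations, \emph{unless} the circle is contained in it. In that exceptional case a whole subarc of $\hat{\partial S}_0$ lies on an equidistance locus of two arcs of $\hat{\partial S}_1$; but then the two competing nearest points vary continuously along the subarc, so choosing one of them consistently gives a continuous single--valued selection of $\pi_1$ there, and $\hat f$ is in fact continuous along that subarc, contributing nothing to the discontinuity set. One must also note the piecewise (three--regime) definition of ``distance to an arc,'' whose regime boundaries are rays through the arc's centre; these meet $\hat{\partial S}_0$ finitely often and are harmless for continuity. Hence the discontinuity set of $\hat f$ is finite.

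Let $0=a_0<a_1<\cdots<a_N=1$ enumerate $\{0,1\}$ together with these finitely many interior discontinuities. Then $\hat f$ is continuous on each $(a_j,a_{j+1})$ by the above. Moreover, since $g$ is piecewise real--analytic and $\cA$ has the finite local structure just described, the germ of the arc $g(\,\cdot\,)$ at $a_j^+$ eventually stays in the closure of a single region on which $\pi_1$ is continuous, so the one--sided limits $\hat f(a_j^+)$ and $\hat f(a_{j+1}^-)$ exist; defining $\hat f$ at the ends by these limits makes each $\hat f|_{[a_j,a_{j+1}]}$ a continuous curve, with genuine gaps between consecutive pieces at the $a_j$ — which is exactly what Theorem~\ref{thm::the-completion} uses. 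Each such piece is an open curve (its two endpoints differ) except in degenerate configurations in which $\hat f$ closes up into a loop, a situation that forces a reflection symmetry carrying part of $\hat{\partial S}_0$ onto $\hat{\partial S}_1$ and hence has $Q$--probability zero.

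\textbf{Main obstacle.} Everything except the finiteness of $\cA\cap\hat{\partial S}_0$ is routine. The real work is in that step: identifying precisely which boundary points have a non--unique nearest point on the opposite boundary component, bookkeeping the piecewise definition of the distance to a circular arc, and absorbing the degenerate case in which a boundary arc runs along an equidistance locus.
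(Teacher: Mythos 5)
Your proof is correct and rests on the same structural observation the paper invokes in its one-sentence proof, namely that $\hat{\partial S}_0$ and $\hat{\partial S}_1$ are simple closed curves each made of finitely many circular arcs; you have simply supplied the argument the paper leaves implicit. The key step you fill in---that the ambiguity set of nearest-point projection onto $\hat{\partial S}_1$ lies on a finite union of conics, lines, and points, so by B\'ezout it meets each arc of $\hat{\partial S}_0$ finitely often unless an arc is contained in an equidistance locus, in which case a continuous selection exists---together with the bookkeeping of the three-regime distance formula, is exactly what is needed.
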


Now we define $f^*$ as follows.
In general, $\hat f(a_j^-) \neq \hat f(a_j^+)$.
We define $f^*$ to be the curve obtained by joining
$\hat f(a_j^-)$ and $\hat f(a_j^+)$ by linear interpolation.
We call
$\Gamma^*= \{f^*(u): \ 0 \leq u \leq 1\}$
the {\em completed medial estimator}.

\begin{theorem}
\label{thm::the-completion}
$f^*$ is a simple, closed curve.
Furthermore,
$d_H(\Gamma_{f^*},\Gamma_f) = O_P(r_n)$.
\end{theorem}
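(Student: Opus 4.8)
The plan is to condition on the good event $\cE_n$ on which the union-of-balls estimator behaves as in Lemmas \ref{lemma::supp}--\ref{lemma::nice-boundary}: $S\subset\hat S$, and $\hat{\partial S}=\hat{\partial S}_0\cup\hat{\partial S}_1$ is a disjoint union of two simple closed curves with $d_H(\partial S,\hat{\partial S})\le r_n$. Since $\partial S_0,\partial S_1$ are disjoint (Lemma \ref{lemma::disjoint}(5)), for large $n$ the two pairs of components match up and $d_H(\partial S_i,\hat{\partial S}_i)\le\epsilon:=r_n$, $i=0,1$. As $\cE_n$ has probability one for all large $n$, it suffices to prove a \emph{deterministic} bound $d_H(\Gamma_{f^*},\Gamma_f)\le C\epsilon$ on $\cE_n$ (with $n$ large enough that $\epsilon<(\Delta-\sigma)/2$). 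Write $N$ for the continuous unit normal, $\partial S_0=\{f(u)+\sigma N(u)\}$, $\partial S_1=\{f(u)-\sigma N(u)\}$, and recall from Lemma \ref{lemma::union-of-open-curves} that $\hat f(u)=\hat\mu(g(u))$ is continuous on each $(a_j,a_{j+1})$ with one-sided limits $\hat f(a_j^\pm)$, and that $\Gamma^*=\hat\Gamma\cup\bigcup_j[\hat f(a_j^-),\hat f(a_j^+)]$, the brackets denoting the inserted ``bridge'' segments.

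The conceptually central step is to control a bridge. Fix a jump point $a_j$. Because $\hat f$ jumps there, $g(a_j)$ has two \emph{distinct} nearest points $p_-,p_+$ on $\hat{\partial S}_1$ (the one-sided limits along $g$ of the nearest-point map), and $\hat f(a_j^\pm)=\tfrac12(g(a_j)+p_\pm)$. From the $d_H$-bounds, $g(a_j)$ is within $\epsilon$ of some $f(u_0)+\sigma N(u_0)\in\partial S_0$ and each $p_\pm$ is within $\epsilon$ of some $f(v_\pm)-\sigma N(v_\pm)\in\partial S_1$; comparing $\|g(a_j)-p_\pm\|=d(g(a_j),\hat{\partial S}_1)$ with the distance from $g(a_j)$ to $f(u_0)-\sigma N(u_0)$ gives $\|(f(u_0)+\sigma N(u_0))-(f(v_\pm)-\sigma N(v_\pm))\|\le 2\sigma+4\epsilon$. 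Now $v\mapsto\|(f(u_0)+\sigma N(u_0))-(f(v)-\sigma N(v))\|$ attains its minimum value $2\sigma$ uniquely at $v=u_0$, has strictly positive second derivative there (it equals $2(1-\sigma^2\kappa(u_0)^2)>0$, since $\sigma<\Delta$ forces $\sigma|\kappa|<1$), and is bounded away from $2\sigma$ for $v$ at macroscopic distance from $u_0$; this quantitative non-degeneracy is exactly what underlies the $\sqrt{r_n}$ rate of Theorem \ref{thm::methodII} and is a consequence of (A3). Hence $|v_\pm-u_0|\le C\sqrt\epsilon$. Decomposing $\hat f(a_j^\pm)-f(u_0)$ in the $T(u_0),N(u_0)$ frame and Taylor-expanding: the $N(u_0)$-component is $O(\epsilon)$ (the $\pm\sigma N$ contributions cancel up to second order in $v_\pm-u_0$, i.e.\ $O(\epsilon)$, plus the $O(\epsilon)$ perturbations of $g(a_j)$ and $p_\pm$), while the $T(u_0)$-component is only $O(\sqrt\epsilon)$. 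Thus both bridge endpoints lie within $O(\epsilon)$ of the tangent line to $\Gamma_f$ at $f(u_0)$; the bridge has length and tangential extent $O(\sqrt\epsilon)$, so it stays within $O(\epsilon)$ of that line, and comparing the line with $\Gamma_f$ over an $O(\sqrt\epsilon)$-window (curvature $\le 1/\Delta$) puts the whole bridge in $\Gamma_f\oplus O(\epsilon)$. Together with $\hat\Gamma\subset\Gamma_f\oplus 2\epsilon$ (Theorem \ref{thm::methodII}(i)) this yields $\Gamma^*\subset\Gamma_f\oplus O(\epsilon)$.

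For the reverse inclusion I would track a ``tangential coordinate'': off the jump set $\hat f(u)$ is within $O(\epsilon)$ of $\Gamma_f$, say nearest to $f(\tau(u))$, and $\tau$ is continuous on each $(a_j,a_{j+1})$, winds once around $[0,1]$ as $u$ traverses $\hat{\partial S}_0$, and jumps by $|\tau(a_j^+)-\tau(a_j^-)|=O(\sqrt\epsilon)$ (bounded by $|v_+-v_-|$ up to the curvature-controlled change of frame). So for any $t\in[0,1]$, either $t\in\tau((a_j,a_{j+1}))$ for some $j$, in which case $f(t)$ is $O(\epsilon)$-close to a point of $\hat\Gamma\subset\Gamma^*$; or $t$ lies between $\tau(a_j^-)$ and $\tau(a_j^+)$ for some $j$, in which case $f(t)$ lies on a sub-arc of $\Gamma_f$ of length $O(\sqrt\epsilon)$ whose endpoints $f(\tau(a_j^\pm))$ are $O(\epsilon)$-close to the bridge endpoints $\hat f(a_j^\pm)$; since both that short arc and the bridge lie within $O(\epsilon)$ of the common tangent at $f(u_0)$ and span comparable short tangential intervals, $f(t)$ is within $O(\epsilon)$ of the bridge. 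Hence $\Gamma_f\subset\Gamma^*\oplus O(\epsilon)$, and combining, $d_H(\Gamma_{f^*},\Gamma_f)=O(\epsilon)=O(r_n)$ (in fact with probability one for all large $n$, so certainly $O_P(r_n)$).

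Finally, $f^*$ is a simple closed curve. Closedness is immediate, since $\hat{\partial S}_0$ is a closed curve and the medial-plus-bridge construction returns to its start. For simplicity, note that for large $n$ we have $\Gamma^*\subset\Gamma_f\oplus\rho$ for a fixed $\rho<\Delta$, inside which the normal-bundle coordinates $(s,r)$ of $\Gamma_f$ ($s\in\mathbb{R}/\mathbb{Z}$ arclength, $|r|<\rho$) are well defined; the $s$-coordinate of $f^*$ winds once around $\mathbb{R}/\mathbb{Z}$, inherited from $\hat{\partial S}_0$ through the nearest-point projection and the bridge interpolation, while consecutive arcs and bridges share endpoints and non-consecutive ones occupy essentially disjoint $s$-intervals, so $f^*$ is a graph $\{(s,\rho^*(s))\}$ over $\mathbb{R}/\mathbb{Z}$ and hence non-self-intersecting. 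I expect the main obstacle to be the second step: converting the $\sqrt{r_n}$-scale discrepancy present at each gap (the very thing that caps the raw medial estimator at rate $\sqrt{r_n}$) into an $O(r_n)$ deviation of the bridge from $\Gamma_f$. The point is that this discrepancy is purely \emph{tangential}, so linear interpolation across the gap cancels it, and the curvature bound from (A3) ($\sigma<\Delta$) is precisely what turns an $O(\sqrt{r_n})$ tangential spread into an $O(r_n)$ normal deviation; making the tangential/normal decomposition uniform over all gaps and over all of $\Gamma_f$ is the technical heart, with a secondary nuisance being the book-keeping for simplicity when the relevant $s$-coordinate is only monotone up to $O(\sqrt{r_n})$.
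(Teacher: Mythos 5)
Your proof is correct and rests on the same geometric mechanism as the paper's. The paper expresses the key bound via a ``lens''/sagitta argument: the filament arc between the projections of two consecutive fitted values lies inside the lens $B_1\cap B_2$ formed by two $\Delta$-balls, and since the chord of that arc has length $O(\sqrt{r_n})$, the sagitta is $O(r_n)$. You obtain the identical estimate by Taylor-expanding in the $(T(u_0),N(u_0))$ frame and showing that a bridge endpoint deviates from $\Gamma_f$ by $O(r_n)$ normally while the tangential spread is only $O(\sqrt{r_n})$ --- the same fact in a different coordinate system, with the curvature bound $\kappa\le 1/\Delta$ from (A3) playing exactly the role of the $\Delta$-ball radius in the paper's sagitta computation. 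Where you add value is in filling in what the paper elides: the paper dispatches the inclusion $\Gamma_{f^*}\subset\Gamma_f\oplus O(r_n)$ with ``it is easy to see that this is true of the linear completion as well'' and calls simplicity ``straightforward,'' whereas you spell out the tangent/normal decomposition at each bridge, give a normal-bundle-graph argument for simplicity, and track a tangential coordinate $\tau$ for the reverse inclusion. One small slip: $2(1-\sigma^2\kappa(u_0)^2)$ is the second derivative at $v=u_0$ of the \emph{squared} distance $v\mapsto\|(f(u_0)+\sigma N(u_0))-(f(v)-\sigma N(v))\|^2$, not of the distance itself, whose second derivative there is $(1-\sigma^2\kappa^2)/(2\sigma)$; this does not affect your conclusion $|v_\pm-u_0|=O(\sqrt{\epsilon})$.
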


\vspace{.5cm}

\emph{Multiple Filaments}.
Suppose now that there are finitely many
filaments $f_1,\ldots, f_k$.
First suppose that
$d_{\rm min}(\Gamma_{f_j},\Gamma_{f_k}) > 2\sigma$ for all $j\neq k$
where
$d_{\rm min}(A,B)=\min_{x\in A,y\in B}||x-y||$.
The properties of $\hat S$ guarantee that
for large enough $n$,
$\hat S$ will consist of disjoint, connected sets
$\hat{S}_1, \ldots, \hat{S}_k$.

\begin{corollary}
Suppose that $\sigma < \min_j \Delta(f_j)$, where $\Delta(f_j)$ denotes 
the thickness of the curve $f_j$, and that
$d_{\rm min}(\Gamma_{f_j},\Gamma_{f_k}) > 2\sigma$ for all $j\neq k$.
If the EDT or medial procedure is applied then
$$
\max_j d_H(\Gamma_{f_j},\hat\Gamma_j) = O_P(r_n)
$$
where $r_n$ is as before.
\end{corollary}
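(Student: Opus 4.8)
\medskip
\noindent\textbf{Proof sketch.}
The plan is to \emph{localize}: show that, with probability one for all large $n$, the union-of-balls estimator $\hat S$ breaks into $k$ disjoint connected pieces, one around each $\Gamma_{f_j}$, on each of which running the EDT (resp.\ medial) procedure is the same --- up to an $O(r_n)$ slack --- as running the single-filament procedure already analyzed in Theorem~\ref{thm::edtestimator} (resp.\ Theorem~\ref{thm::methodII}); the corollary then follows by taking a maximum over the finitely many pieces. As a preliminary, record the geometry. Let $S_j=\Union_{0\le u\le1}B(f_j(u),\sigma)$ be the support of $Q_{f_j,h_j,\sigma}$. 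The assumption $d_{\rm min}(\Gamma_{f_j},\Gamma_{f_k})>2\sigma$ gives that $S=\Union_jS_j$ is a disjoint union with $d_{\rm min}(S_j,S_k)\ge\gamma:=\min_{j\ne k}d_{\rm min}(\Gamma_{f_j},\Gamma_{f_k})-2\sigma>0$, and that for every $u$ and every $l\ne j$, $d(f_j(u),\partial S_l)\ge d_{\rm min}(\Gamma_{f_j},\Gamma_{f_l})-\sigma>\sigma$, so the point of $\partial S$ nearest $f_j(u)$ lies on $\partial S_j$ and $d(f_j(u),\partial S)=\sigma$; hence $\sup_{y\in S}d(y,\partial S)=\sigma$, attained exactly on $\Union_j\Gamma_{f_j}$.

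Next, establish consistency of $\hat S$ and its decomposition by applying Lemma~\ref{lemma::supp} once to the whole sample with support $S=\Union_jS_j$. Standardness with $\chi=1/4$, $\lambda=\sigma$ is inherited from Lemma~\ref{lemma::standard} applied to each $S_j$ since $B(y,\epsilon)\cap S\supset B(y,\epsilon)\cap S_j$; partial expandability holds with $R=1$ and $r=\min(\min_j(\Delta(f_j)-\sigma),\gamma/2)$, because enlargements by less than $r$ neither merge the $S_j$ nor create deep inlets; and the mixture density $q=\sum_j\Omega(\{j\})q_j$ equals $\Omega(\{j\})q_j$ on $S_j$, so near $\partial S$ it satisfies $q(y)\ge C'd(y,\partial S)^\alpha$ with $C'=(\min_j\Omega(\{j\}))(\min_jc_{1,j})>0$, $c_{1,j}$ being the constant of Lemma~\ref{lemma::new-boundary} for $S_j$. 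This is where the finiteness of $k$ and the positivity of each $\Omega(\{j\})$ enter (one takes the radius constant $C$ in $\epsilon_n=C(\log n/n)^{1/(2+\alpha)}$ large enough, which does not affect the rate). Lemma~\ref{lemma::supp} then yields, with probability one for all large $n$, $S\subset\hat S$, $d_H(S,\hat S)\le r_n$, and $d_H(\partial S,\hat{\partial S})\le r_n$, where $r_n=(\log n/n)^{1/(2+\alpha)}$. Since $r_n<\gamma/2$ eventually and $S\subset\hat S\subset S\oplus r_n$, the connected components of $\hat S$ are $\hat S_1,\dots,\hat S_k$ with $S_j\subset\hat S_j\subset S_j\oplus r_n$, hence $d_H(S_j,\hat S_j)\le r_n$ and $d_H(\partial S_j,\hat{\partial S}_j)\le r_n$; by Lemma~\ref{lemma::nice-boundary} each $\hat{\partial S}_j$ then has the simple-closed-curve structure the two algorithms require.

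Finally, localize the algorithm. For the EDT estimator put $\hat\Gamma_j=\hat\Gamma\cap\hat S_j$. Since $d(\cdot,\hat{\partial S})\le d(\cdot,\hat{\partial S}_j)$ everywhere and $\hat\sigma=\sigma+O(r_n)$ (as in Theorem~\ref{thm::edt}), one has: (i) $d(f_j(u),\hat{\partial S})\ge\sigma-r_n\ge\hat\sigma-2r_n=\hat\sigma-\delta$ for every $u$, so $\Gamma_{f_j}\subset\hat\Gamma_j$; and (ii) if $y\in\hat\Gamma_j$ then $d(y,\hat{\partial S}_j)\ge d(y,\hat{\partial S})\ge\hat\sigma-\delta\ge\sigma-O(r_n)$, and picking $y'\in S_j$ with $\|y-y'\|\le r_n$ and applying Lemma~\ref{lemma::edt}(3) to $S_j$ together with $M(S_j)=\Gamma_{f_j}$ (Theorem~\ref{thm::filamentismedial}) gives $d(y,\Gamma_{f_j})=O(r_n)$. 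So $d_H(\Gamma_{f_j},\hat\Gamma_j)=O(r_n)$; this is exactly the proof of Theorem~\ref{thm::edtestimator}(1) with $\epsilon=r_n$, valid verbatim on $\hat S_j$ although $\hat\Lambda$ is built from the global $\hat{\partial S}$. The medial case is the same, applying Theorem~\ref{thm::methodII}(1) componentwise (per-component rate $O(\sqrt{r_n})$; the rate $r_n$ asserted in the corollary is the one attained by the completed medial estimator, via Theorem~\ref{thm::the-completion}). As $k<\infty$, $\max_{1\le j\le k}d_H(\Gamma_{f_j},\hat\Gamma_j)\le c\,r_n$ with probability one for all large $n$, which is in particular the claimed $O_P(r_n)$.

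The main obstacle is not the per-component analysis --- that is Theorem~\ref{thm::edtestimator}/\ref{thm::methodII} unchanged --- but the two globalization steps: proving that $\hat S$ separates into the correct $k$ pieces with the right boundary structure, and checking that the global objects $\hat{\partial S}$ and $\hat\sigma$ do not corrupt the thresholding inside any single piece (the clean way being the inequality $d(\cdot,\hat{\partial S})\le d(\cdot,\hat{\partial S}_j)$, which makes the ``upper'' containment automatic, and the fact from the geometry that $d(f_j(u),\partial S)=\sigma$, which makes the ``lower'' containment automatic). Folding everything into one application of Lemma~\ref{lemma::supp} to the mixture is the tidiest route, at the cost of verifying the near-boundary lower bound for $q=\sum_j\Omega(\{j\})q_j$ and allowing the radius constant $C$ to be enlarged.
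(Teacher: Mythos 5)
Your proposal is correct and matches the route the paper evidently has in mind (the paper states this corollary without a written proof, relying on the remark that ``the properties of $\hat S$ guarantee that for large enough $n$, $\hat S$ will consist of disjoint, connected sets $\hat S_1,\dots,\hat S_k$''). You have supplied the details that remark leaves implicit: verifying that the mixture support $S=\Union_j S_j$ is still $(1/4,\sigma)$-standard and partly expandable with $r=\min(\min_j(\Delta(f_j)-\sigma),\gamma/2)$, that the near-boundary lower bound $q\ge C'd(\cdot,\partial S)^\alpha$ survives mixing over finitely many components with $\Omega(\{j\})>0$, and that once $r_n<\gamma/2$ the components $\hat S_j$ inherit $d_H(\partial S_j,\hat{\partial S}_j)\le r_n$ and the simple-closed-curve structure of Lemma~\ref{lemma::nice-boundary}. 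Your localization of the EDT thresholding is the clean way to do it: the inequality $d(\cdot,\hat{\partial S})\le d(\cdot,\hat{\partial S}_j)$ gives $\Gamma_{f_j}\subset\hat\Gamma_j$ automatically, and the fact $d(f_j(u),\partial S)=\sigma$ (which you derive from $d_{\min}>2\sigma$) lets Theorem~\ref{thm::edtestimator}(1) run verbatim on each piece even though $\hat\Lambda$ and $\hat\sigma$ are built from the global boundary. You are also right to flag that the raw medial estimator only gives $O(\sqrt{r_n})$ per component and that the $O_P(r_n)$ rate in the corollary's statement requires the completed medial estimator of Theorem~\ref{thm::the-completion}; this is a genuine ambiguity in the paper's phrasing (``$r_n$ as before''), and your reading is the one consistent with the theorem immediately preceding the corollary.
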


When the condition
$d_{\rm min}(\Gamma_{f_j},\Gamma_{f_k}) > 2\sigma$ fails,
then the curves can get close to each other or even could be
self-intersecting.
In that case, we cannot claim to estimate the entire curve well.
However, we can estimate the well-separated portions of the curves.
Let $\Gamma = \Union_{j=1}^k \Gamma_{f_j}$.
For each $y\in \Gamma$ let
$N(y) = \{j:\ B(y,2\sigma)\cap\Gamma_{f_j} \neq \emptyset\}$.
Let $\Gamma_0 = \{y\in\Gamma:\ |N(y)|=1\}$.

\begin{corollary}
Suppose that $\sigma < \min_j \Delta(f_j)$.
If either the EDT or medial procedures are applied then
$$
d_H(\Gamma_0,\hat\Gamma) = O_P(r_n)
$$
where $r_n = \sqrt{\log n/n}$ for the EDT estimator
and $r_n = (\log n/n)^{1/4}$ for the medial estimator.
\end{corollary}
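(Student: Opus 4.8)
The idea is to reduce the statement to the single-filament results, Theorems~\ref{thm::edtestimator} and~\ref{thm::methodII}, by localization: at a point of $\Gamma_0$ no second filament comes within $2\sigma$, so in a small ball the support $S=\Union_{j=1}^k S_j$, $S_j=\Union_u B(f_j(u),\sigma)$, is literally the support generated by a single filament, and the geometry behind the earlier proofs is available there. First I would check that Lemma~\ref{lemma::supp} still applies to the union support $S$. Standardness and partial expandability are routine, since a union is only ``fatter'': one can keep $\chi=1/4$, $\lambda=\sigma$, while the expandability constant now also depends on the angles at which the $\Gamma_{f_j}$ cross. The boundary density bound $q(y)\ge C\,d(y,\partial S)^\alpha$ for the mixture $q=\sum_j\Omega(j)q_j$ follows because, for $y$ close to $\partial S$, the nearest point of $\partial S$ lies on some $\partial S_j$ and then $d(y,\partial S_j)=d(y,\partial S)$ (the other strand of $\partial S_j$ and its curvature are $\Theta(\sigma)$ away), so $q(y)\ge\Omega(j)q_j(y)\ge C\,d(y,\partial S)^\alpha$ by Lemma~\ref{lemma::new-boundary}. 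Hence $d_H(\partial S,\hat{\partial S})=O_P(r_n)$ and $S\subset\hat S$ for all large $n$.

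Next I would pin down the local picture near $\Gamma_0$. Fix $y=f_j(u)\in\Gamma_0$. Since $B(f_j(u),\sigma)\subset S$ we have $d(y,\partial S)\ge\sigma$; and since $|N(y)|=1$, neither of $f_j(u)\pm\sigma N_j(u)$ can lie in any $S_k$ with $k\ne j$ (otherwise $\Gamma_{f_k}$ would meet $B(y,2\sigma)$), so both lie on $\partial S$ and $d(y,\partial S)=\sigma$. Thus $\Gamma_0\subset M(S)$ and $\Lambda\equiv\sigma$ on $\Gamma_0$. Moreover there is a $\rho>0$, depending only on $\Delta(f_j)-\sigma$ and on the distance from $\Gamma_{f_j}$ to the other filaments near $y$, such that inside $B(y,\rho)$ the set $\partial S$ coincides with $\partial S_j$ and the fibre decomposition of Lemma~\ref{lemma::disjoint} applies to $S_j$, hence locally to $S$; on this patch $\Lambda$ decreases linearly in the fibre coordinate as one moves off $f_j(u)$, and the single-filament arguments can be replayed.

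On such patches the two estimators behave exactly as in the single-filament case. For the medial estimator (applied componentwise, so that its ``two disjoint boundary curves'' requirement is met on each component meeting $\Gamma_0$), near $y\in\Gamma_0$ the boundary estimate splits into two arcs within $r_n$ of the two strands of $\partial S_j$, and the nearest-point/midpoint construction yields $\hat\mu$ with $\|\hat\mu-f_j(u)\|\le C\sqrt{r_n}$; conversely any midpoint built from a boundary point lying over $\Gamma_0$ falls within $C\sqrt{r_n}$ of $\Gamma_0$. This is the local content of Theorem~\ref{thm::methodII} and yields the $(\log n/n)^{1/4}=\sqrt{r_n}$ rate. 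For the EDT estimator, part~1 of Theorem~\ref{thm::edt} gives $\sup_y|\hat\Lambda(y)-\Lambda(y)|\le r_n$, and combined with the linear decay of $\Lambda$ along fibres this shows that, near $\Gamma_0$, the level set $\{\hat\Lambda\ge\hat\sigma-2\epsilon\}$ is $O_P(r_n)$-Hausdorff close to $\{\Lambda=\sigma\}=\Gamma_0$.

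The main obstacle is the threshold $\hat\sigma=\max_{y\in\hat S}\hat\Lambda(y)$ in the EDT procedure. In the single-filament analysis $\hat\sigma$ estimates $\max_{y\in S}\Lambda(y)=\sigma$, but once several filaments cluster together $\max_{y\in S}\Lambda(y)$ can strictly exceed $\sigma$ (already for two nearly parallel strands), so a single global threshold $\hat\sigma-2\epsilon$ will not simultaneously carve out the $\{\Lambda=\sigma\}$ level set in all of the well-separated regions. The remedy is to threshold within each connected component of $\hat S$ against the EDT value attained along the well-separated strands it contains, and to compare the result only with the part of $\Gamma_0$ in that component; the medial estimator needs the same componentwise handling for its two-boundary-curve requirement. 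Turning this bookkeeping into a clean argument --- and, in the same spirit, checking that the standardness and expandability constants for the mixture support do not deteriorate with $n$ --- is the only real work beyond quoting Theorems~\ref{thm::edtestimator} and~\ref{thm::methodII}.
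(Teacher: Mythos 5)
The paper provides no proof of this corollary, so I can only assess your argument on its own merits against the earlier machinery.

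Your plan is sound in its first half. The observations that the union support inherits standardness and partial expandability, that the boundary density bound in Lemma~\ref{lemma::new-boundary} localizes (the nearest boundary point lies on exactly one $\partial S_j$ so only one component of the mixture matters), and that for $y=f_j(u)\in\Gamma_0$ one has $B(f_j(u),\sigma)\subset S$ with both $f_j(u)\pm\sigma N_j(u)\in\partial S$ (so $\Lambda\equiv\sigma$ on $\Gamma_0$) are all correct and are precisely what one needs to transfer Lemma~\ref{lemma::supp}, Theorem~\ref{thm::edt}, and the fibre decomposition to patches of $S$ near $\Gamma_0$.

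However, you have identified the central difficulty but then treated it as ``bookkeeping'' when it is in fact fatal to the argument for the EDT estimator as defined. If two filaments approach to within $2\sigma$ somewhere (which is exactly the regime where $\Gamma_0\ne\Gamma$), the union of tubes has points strictly farther than $\sigma$ from $\partial S$ --- for two nearly parallel strands at separation $d<2\sigma$ the midline has $\Lambda=\sigma+d/2>\sigma$, and near a transversal crossing one gets $\Lambda$ up to about $\sigma\sqrt{2}$. Then $\hat\sigma\ge\Lambda_{\max}-\epsilon>\sigma$, so for $y\in\Gamma_0$ we have $\hat\Lambda(y)\le\sigma+\epsilon<\hat\sigma-2\epsilon$ once $\Lambda_{\max}-\sigma>4\epsilon$: these points are simply not in $\hat\Gamma$, and neither is anything within $O(\epsilon)$ of them. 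So $\Gamma_0\not\subset\hat\Gamma\oplus O(r_n)$ fails, in addition to the $\hat\Gamma\subset\Gamma_0\oplus O(r_n)$ failure you mention. Your proposed remedy --- threshold within each connected component of $\hat S$ --- does not help, because the two filaments that produce the large $\Lambda_{\max}$ are, by construction, in the \emph{same} connected component. One would need a genuinely local threshold (e.g., comparing $\hat\Lambda(y)$ against the maximum of $\hat\Lambda$ over a ball of radius $O(\sigma)$ around $y$, or fixing $\sigma$ and imposing an upper as well as a lower bound on $\hat\Lambda$), which is a modification of the algorithm, not a reorganization of the proof. The medial estimator has the analogous problem: near a crossing, $\hat{\partial S}$ is not a disjoint pair of curves even componentwise, so ``applied componentwise'' does not restore the hypothesis of Theorem~\ref{thm::methodII}, and the nearest-point map on $\hat{\partial S}_0$ may well pick a point on the wrong strand. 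Your write-up should flag both of these as missing steps that require either a local reformulation of the estimators or a weakening of $d_H$ to the one-sided quantity $\sup_{y\in\Gamma_0}d(y,\hat\Gamma)$; as it stands the claim does not follow from the algorithms as written.
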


\subsection{Extracting a curve from EDT estimator}
\label{sect::ALGextract}
Now we discuss how to extract a curve from the fitted values.
We assume that we have already computed 
the union of balls estimator $\hat S$ with an appropriate choice of $\epsilon_n$
and hence that $d_H(S,\hat S) \leq C r_n$ and
$d_H(\partial S,\hat{\partial S}) \leq C r_n$
for some $C>0$.

\smallskip 
Let $\hat\Gamma$ denote the fitted values from the EDT estimator.
Our goal is to use $\hat\Gamma$ to find a curve $\hat f$ such
$d_H(\Gamma_{\hat f},\Gamma_f) \le C d_H(\partial S,\hat{\partial S})$.
Such a curve $\hat f$ can be identified both for open and closed filaments.
The precise statement is given in Theorem \ref{thm::extract} below. 

More informally, recall first that from Theorem \ref{thm::edtestimator}, 
$\Gamma \subset \hat \Gamma \subset \Gamma \oplus (4\epsilon)$.
Now, when $f$ is an open filament, from Lemma \ref{lemma::disjoint},
$S = \cT \cup \cC_0 \cup \cC_1$. Thus, let
$y_0 \in \hat \Gamma \intersect\cC_0$ and $y_1 \in \hat \Gamma \intersect \cC_1$
be points in $\hat \Gamma$ and the two end-caps of $S$.
Any curve $\Gamma_{\hat f}$ between $y_0$ and $y_1$ that lies entirely in $\hat \Gamma$
must cut through every fiber in $\cT$ at a distance at most $4\epsilon$
and it is at most $4\epsilon$ from the end points $f(0)$ and $f(1)$.
Hence $d_H(\Gamma_{\hat f},\Gamma_f) \le 4\epsilon$.

When, instead, $f$ is a closed filament, let $y_0$ be a point in 
${\hat S}^c$ surrounded by $\hat{\partial S_0}$. 
Any closed curve $\Gamma_{\hat f}$ that lies entirely within $\hat \Gamma$
and has winding number $1$ with respect to $y_0$ cuts through every fiber in $\cT$
at a distance at most $4\epsilon$ from $\Gamma_f$. In this case too
$d_H(\Gamma_{\hat f},\Gamma_f) \le 4\epsilon$.

\smallskip
The extraction algorithm is based on the remarks above. 
In the open filament case, because $\cC_0$ and $\cC_1$ are unknown, 
we replace $y_0$ and $y_1$ by estimated end-points $\hat x_0$ and $\hat x_1$ that maximize
the minimum path length between two points in $\hat \Gamma$, as illustrated later in 
Subsection \ref{sec::Extrac}. In the closed 
filament case we use a slightly different implementation, that generalizes 
more readily to the case where it is not known if the filament is open or 
closed.

\smallskip
\HRule
\begin{center}
\underline{\sf EDT Curve Extraction Algorithm}
\end{center}

\setbox4=\hbox{{\sc Indent}:\hskip 0.75em}
\hangindent=\wd4
\hangafter=1
{\sc Input}: EDT Estimate $\hat\Gamma$ and corresponding $\epsilon > 0$,
and constraint sets $\cE_0$ and $\cE_1$. ($\cE_0 = \cE_1 = \mathbb{R}^2$
by default).

\smallskip
{\sc Output}: the graph of a curve $\hat{\Gamma}$.

\smallskip
{\sc Algorithm (Open-Curve Case)}:
\begin{enumerate}
\item Find end points $\hat x_0$ and $\hat x_1$ satisfying
\begin{equation}
\hat x_0, \hat x_1 = \argmax_{u\in\hat\Gamma\intersect\cE_0, 
v\in\hat\Gamma\intersect\cE_1} \min_{\pi\in\cP_{u,v}} {\rm length}(\pi),
\end{equation}
where $\cP_{u,v}$ is the set of paths in $\hat\Gamma$ from $u$ to $v$.
In practice, this is accomplished by constructing a $\xi$-net of points in $\hat\Gamma$
with $0 < \xi < \epsilon/4$; forming the minimum spanning tree 
of this net;
and finding the points that maximize the minimum path length in the tree.
\item Join the end points by a curve in $\hat\Gamma$.
In practice, this is obtained from the minimum spanning via
Dijkstra's algorithm (\cite{Dijkstra}).
\item (Optional) Relax the path to thickness $\Delta$ as follows:
for each successive triple of points $(y_{i-1},y_i,y_{i+1})$ on the path,
shrink $y_i$ as close to $(y_{i+1}+y_{i-1})/2$ while remaining in $\hat\Gamma$.
Iterate until the reduction in thickness is below a fixed threshold.
\end{enumerate}

\smallskip
{\sc Algorithm (Closed-Curve Case)}:
\begin{enumerate}
\item Fix $0 < \eta \ll \epsilon$.
\item Let $\hat y$ be the point defined in equation (\ref{eq::yhat}) 
that determines $\hat \sigma$.
\item Let $\cA_8$ be the union of all line segments through $\hat y$ with end points 
on $\partial\hat\Gamma$ and whose length is $\le 8\epsilon$.
\item Define $\cA = (\cA_8 \intersect \hat\Gamma)\oplus\eta$.
\item Apply the open-curve algorithm to $\hat\Gamma - \cA$ with the constraint that the end points
of the curve, $\hat x_0$ and $\hat x_1$, must both lie on $\partial\cA$
(i.e., set $\cE_0 = \cE_1 = \partial\cA$).
\item Join $\hat x_0$ and $\hat x_1$ by a curve contained within $\cA$, producing a single
closed curve.
\end{enumerate}

\smallskip
{\sc Algorithm (General-Curve Case)}:
\begin{enumerate}
\item Construct $\cA$ as in the closed curve algorithm
\item If $\hat\Gamma - \cA$ has one connected component, continue with the closed-curve algorithm.
(This can, for instance, be determined using a friends-of-friends with a threshold distance
of $\eta$ from the closed-curve algorithm.)
\item Otherwise, $\hat\Gamma - \cA$ must have two connected components. Do the following:
\begin{enumerate}
\item Apply the open-curve algorithm to each component with the constraint that the one of the
end points in each component must lie on the boundary of $\cA$
(i.e., $\cE_0 = \mathbb{R}^2$ and $\cE_1 = \partial\cA$ for the first component and vice
versa for the second).
\item Join the endpoints on the boundary of $\cA$ with any path through $\cA$
to create a single curve.
\end{enumerate}
\end{enumerate}

\HRule
\medskip

For the open-curve case, specification of $\xi$ is
arbitrary. Smaller $\xi$ give larger nets and lead more
convoluted initial paths but allow more effective smoothing
in the relaxation step.  The minimum spanning tree end
points can be refined by using the expected hitting times
for a random walk on the $\xi$-net. Restricting the random
walk to suitably small steps of order $\epsilon$ gives a
sparse transition matrix. The expected hitting time from one
end point to all other points can be maximized to refine the
other end point and so on, alternating end points. This
process tends to converge rather quickly and produces better
results in practice.
Relaxation is optional but must be used if a smooth $\hat\Gamma$
is desired.

For the closed curve case, the choice of $\eta$ is again
arbitrary, a non-zero value is needed to provide clean
separation. The set $\cA$ can be replaced in practice with
the intersection of $\hat\Gamma$ and a ball of radius
$6\epsilon$ around $\hat y$, which is easier to compute, if
somewhat more conservative.

The following theorem shows that the algorithm produces curves
with the desired properties.

\begin{theorem}
\label{thm::extract}
Let $\hat\Gamma$ denote the curve extracted from the EDT
estimator by the algorithm described above. 
Assume that $d_H(\partial S,\hat{\partial S}) \le \epsilon$.
Then, 
\begin{enumerate}
\item If $f$ is closed, $d_H(\hat\Gamma,\Gamma_f) \le 4\epsilon$.
\item If $f$ is open, $d_H(\hat\Gamma,\Gamma_f) \le 16\epsilon$.
\end{enumerate}
\end{theorem}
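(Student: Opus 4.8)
\medskip\noindent\emph{Outline of proof.}
The plan is to reduce everything to the set sandwich $\Gamma_f \subset \hat\Gamma \subset \Gamma_f \oplus 4\epsilon$ furnished by Theorem~\ref{thm::edtestimator}(1) and to show that the extracted curve inherits it. (In the statement of Theorem~\ref{thm::extract}, $\hat\Gamma$ denotes the extracted \emph{curve}; to avoid a clash I write $\hat\gamma$ for that curve and reserve $\hat\Gamma$ for the EDT \emph{set} of fitted values.) One direction is free: the extraction algorithm keeps only points of $\hat\Gamma$, and the segments it adds -- joins through $\cA$, joins between chosen endpoints -- also lie in $\hat\Gamma$, so $\hat\gamma \subset \hat\Gamma \subset \Gamma_f \oplus 4\epsilon$ and every point of $\hat\gamma$ lies within $4\epsilon$ of $\Gamma_f$. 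The content is the reverse direction: $\hat\gamma$ must pass close to every filament point.

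The geometric engine is Lemma~\ref{lemma::disjoint} together with the identity $d(y,M(S)) + \Lambda(y) = \sigma$ of Lemma~\ref{lemma::edt}(3). When $\epsilon$ is small enough that $4\epsilon < \sigma < \Delta$, the tube $\Gamma_f \oplus 4\epsilon$ is a genuine tubular neighbourhood: any $y$ with $d(y,\Gamma_f) \le 4\epsilon$ lies on a unique fiber $L(u)$ and satisfies $\|y - f(u)\| = d(y,\Gamma_f)$, so $\hat\Gamma \cap L(u)$ is a sub-segment of $L(u)$ of length at most $8\epsilon$ centred at $f(u)$, with both endpoints on $\partial(\Gamma_f \oplus 4\epsilon)$; hence $L(u) \cap (\Gamma_f \oplus 4\epsilon)$ is an honest cross-cut of the tube, and \emph{once $\hat\gamma$ is known to meet it, $d(f(u),\hat\gamma) \le 4\epsilon$}. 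Thus the problem reduces to showing that $\hat\gamma$ crosses $L(u)$ for every $u$ (up to a correction near the ends when $f$ is open). When $f$ is closed, $S = \cT$ and $\Gamma_f$ is a simple closed curve, and one checks that $\hat\Gamma$ is a thin topological annulus onto which $\Gamma_f$ is a deformation retract (using Lemma~\ref{lemma::nice-boundary} and the tent profile $\Lambda(f(u)+tN(u)) = \sigma - |t|$ along fibers). Because $\hat y$ lies within $2\epsilon$ of $\Gamma_f$ (Theorem~\ref{thm::edt}(3)) and $\hat\Gamma$ is $O(\epsilon)$-thin, the set $\cA$ in the closed-curve algorithm is a genuine transverse cut of $\hat\Gamma$, so $\hat\Gamma - \cA$ is a simply connected strip; the longest-path step runs the long way along it, and rejoining through $\cA$ closes the loop with winding number $1$ about the point $y_0$ in the bounded component of $\hat S^{c}$. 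A simple closed curve in $\Gamma_f \oplus 4\epsilon$ homotopic to the core crosses each cross-cut $L(u) \cap (\Gamma_f \oplus 4\epsilon)$ an odd number of times, hence at least once, so $\Gamma_f \subset \hat\gamma \oplus 4\epsilon$ and $d_H(\hat\gamma,\Gamma_f) \le 4\epsilon$.

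When $f$ is open, $S = \cT \cup \cC_0 \cup \cC_1$ and $\hat\Gamma$ is a thin ``sausage'' with two ends; the extra work is to locate the maximiser $(\hat x_0,\hat x_1)$ of the min-path-length (geodesic-distance) functional. Any $\hat\Gamma$-path joining a point of $\hat\Gamma \cap \cC_0$ to a point of $\hat\Gamma \cap \cC_1$ threads every fiber, so the geodesic diameter of $\hat\Gamma$ is at least, essentially, the arclength of $\Gamma_f$; conversely a path in $\hat\Gamma$ cannot be much shorter than its nearest-point projection onto $\Gamma_f$ (the projection is Lipschitz since $4\epsilon < \Delta$), and comparing the two estimates forces $\hat x_0$ and $\hat x_1$ to sit near the two distinct endpoints $f(0),f(1)$. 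Granting this, $\hat\gamma$ runs from near $f(0)$ to near $f(1)$ inside the sausage, hence threads $L(u)$ for every interior $u$, so $d(f(u),\hat\gamma) \le 4\epsilon$ there; the fibers $L(u)$ with $u$ within $O(\epsilon)$ of $0$ or $1$ may be missed, but the corresponding $f(u)$ lie within $O(\epsilon)$ of $\hat x_0$ or $\hat x_1 \in \hat\gamma$, and totalling the constants gives $d_H(\hat\gamma,\Gamma_f) \le 16\epsilon$. The general-curve case follows by running the closed or the open argument on the connected component(s) of $\hat\Gamma - \cA$.

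The routine parts are the tube geometry above and the Jordan-curve/winding-number bookkeeping. I expect the main obstacles to be two. First, the topological facts about the algorithm's output: that $\hat\Gamma$ genuinely is a thin annulus (resp.\ sausage) deformation-retracting onto $\Gamma_f$, and that deleting $\cA$ and rejoining produces a \emph{simple}, once-around curve -- these rest on Lemma~\ref{lemma::nice-boundary} and the fiberwise structure of $\Lambda$. Second, and harder, the quantitative endpoint localization in the open case: turning ``the geodesic-diameter pair lies near the ends'' into an \emph{absolute} constant (a crude projection bound would only give a constant depending on the ratio of arclength to thickness), which is what the value $16\epsilon$ reflects and which may need the thickness-relaxation step or the rigidity of thick curves inside a thin tube. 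Throughout one assumes $\epsilon$ small, say $4\epsilon < \sigma < \Delta$; otherwise the asserted bounds exceed $\mathrm{diam}([-1,1]^2)$ and hold trivially.
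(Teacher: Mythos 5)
Your reduction to the sandwich $\Gamma_f \subset \hat\Gamma \subset \Gamma_f \oplus 4\epsilon$ and the splitting into a closed case handled by a winding-number argument and an open case handled by endpoint localization is exactly the paper's strategy, and you have correctly identified the two substantive obstacles. But you leave both unproved, and both are genuine gaps that the paper fills with separate lemmas. For the topological claim that $\hat\Gamma$ is a thin annulus (resp.\ sausage) cut transversally by $\cA$, the paper proves directly (Lemma \ref{lemma::Mhat-properties-top}) that for any fiber sufficiently far from the endcaps the intersection $\hat\Gamma \cap L(u)$ is a single connected segment; this is a calculation in the wedge geometry around each fiber and does not simply drop out of the tent profile $\Lambda(f(u)+tN(u)) = \sigma - |t|$, because $\hat\Lambda$ is built from $\hat{\partial S}$ rather than $\partial S$.

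The harder gap is the one you flag in the open case, and it is precisely the content of Theorem \ref{thm::extreme-points-open-case}. Your worry that a crude projection argument would only give a constant depending on the ratio of arclength to thickness is well placed, but the paper sidesteps it without any thickness-relaxation step or rigidity argument. The key observation is that the geodesic-distance function $\ell(z)$ to the opposite endpoint is $1$-Lipschitz along each fiber (move linearly along the connected fiber segment and then follow the old shortest path, which stays in $\hat\Gamma$ by the connectedness lemma), so displacing $z$ by the at-most-$8\epsilon$ length of a fiber changes $\ell$ by at most $8\epsilon$. If the maximizer $\hat x_0$ were farther than $16\epsilon$ from $f(0)$, then the shortest path from $f(0)$ must cross the fiber $\cF$ through $\hat x_0$ at some $x'$ with $\lVert f(0) - x'\rVert > 8\epsilon$, giving $\ell(f(0)) > 8\epsilon + \ell(x') \ge 8\epsilon + \ell(\hat x_0) - 8\epsilon = \ell(\hat x_0)$, contradicting maximality. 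The Lipschitz constant $1$ is what makes the bound absolute and is where the $16\epsilon$ comes from. So your outline is structurally correct and would become a proof once you establish the connected-fiber lemma and then run this Lipschitz comparison; the route you half-suggested (rigidity of thick curves, thickness relaxation) is unnecessary.
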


\noindent An example of curve extraction is shown in Figure  \ref{fig::example1-EDT-extract}.

\subsection{Decluttering} \label{sec::decluttering}

Assume now that
$Y_i$
has density $m(y)=(1-\eta)q_0(y) + \eta q(y)$
where $q_0$ is the uniform density over a compact set ${\cal C}$
and $q$ is the density of points from the filament.
We assume that $S\subset {\cal C}$ where $S$ is the support of $q$.
Thus,
$q_0(x) = I(x\in {\cal C})/V$ where
$V$ is the area of ${\cal C}$.

Let $Z_i=1$ if $Y_i$ is from $q$ and
$Z_i=0$ if $Y_i$ is from $q_0$.
To identify clutter,
we want to find a classifier $c(y)$ where
$c(Y)=1$ means that we guess that $Z=1$ and
$c(Y)=0$ means that we guess that $Z=0$.

The best classifier is the Bayes' rule, 
\begin{equation}
c_*(y) = I\bigl(\mathbb{P}(Z_i=1|Y_i) \geq 1/2\bigr) = I\bigl(m(y) \geq 2 (1-\eta)\ q_0(y)\bigr)
\end{equation}
where
$$
\mathbb{P}(Z_i=1|Y_i) = \frac{q(Y_i)\ \eta}{m(Y_i)}.
$$
The Bayes rule is not identifiable.
Since $1-\eta \leq 1$, a conservative approximation to the Bayes rule is
\begin{equation}
I\bigl(m(y) \geq 2\  q_0(y)\bigr).
\end{equation}
An estimate of $c$ is
$\hat{c}\ (y) = I(\hat{m}(y) \geq 2 q_0(y))$
where $\hat{m}$ is a density estimator
obtained from $Y_1, \ldots, Y_n$.
In practice we use a kernel density estimator.
We can now apply the previous filament algorithms to the decluttered data set
\begin{equation}
\Bigl\{ Y_i:\ \hat{c}\ (Y_i) =1 \Bigr\}.
\end{equation}
An investigation into the properties
of this decluttering process 
is beyond the scope of this paper
and will be reported elsewhere.
However, we will illustrate the decluttering procedure in the examples
and show that it appears to perform well in practice.

\section{Examples}

We have tested our procedures on a few simulated data-sets.
We start by considering two smooth filaments, one open and the other closed.
In the first example the two filaments are well separated (top left panel
in Figure \ref{fig::example1}) while in the second dataset
the two filaments intersect (top left panel in Figure \ref{fig::example2}).
The third example considers 12 different smooth open filaments, with several 
intersections.  

Note that the condition on the radius of curvature fails to hold in presence of 
intersections between filaments, thus only the first dataset satisfies the conditions of
this paper completely.

In all the examples we have chosen $\epsilon_n$ according to the suggestion in
\cite{cuevas-boundary} as follows:
\begin{equation}
\epsilon_n = 
\max_{1\leq i \leq n}  \min_{j \neq i}  || Y_i - Y_j ||.
\end{equation}

The first two datasets contain 1500 points: 500 of which on each filament and
500 points of background clutter (top right panels in Figures \ref{fig::example1} 
and \ref{fig::example2}).

A summary of the results from the decluttering procedure is given in Figure 
\ref{fig::table} for both dataset. The procedure seems to work well in separating 
filament from clutter points. 

\begin{figure}[h]
\begin{center}
\begin{tabular}{l|cc|c}
                  & \multicolumn{2}{c}{Marked as} & \\ 
True              & filament & clutter & Total \\ \hline
filament          &      990 &      10 &  1000 \\
clutter           &       82 &     418 &   500 \\ \hline
Total             &     1072 &     428 &  1500    
\end{tabular}
\qquad
\begin{tabular}{l|cc|c}
                  & \multicolumn{2}{c}{Marked as} & \\ 
True              & filament & clutter & Total \\ \hline
filament          &      965 &      35 &  1000 \\
clutter           &       89 &     411 &   500 \\ \hline
Total             &     1054 &     446 &  1500    
\end{tabular}
\caption{Summary of decluttering on the first dataset (left) and second dataset (right).}
\label{fig::table}
\end{center}
\end{figure}

The filaments were estimated with the EDT and the Medial Estimator methods
of subsection \ref{sect::estimation}, applied to the decluttered datasets.
The estimated filaments obtained for the first dataset are very close to the true
(bottom panels in Figure \ref{fig::example1}). 

\begin{figure}
\begin{tabular}{cc}
\includegraphics[width=2.5in]{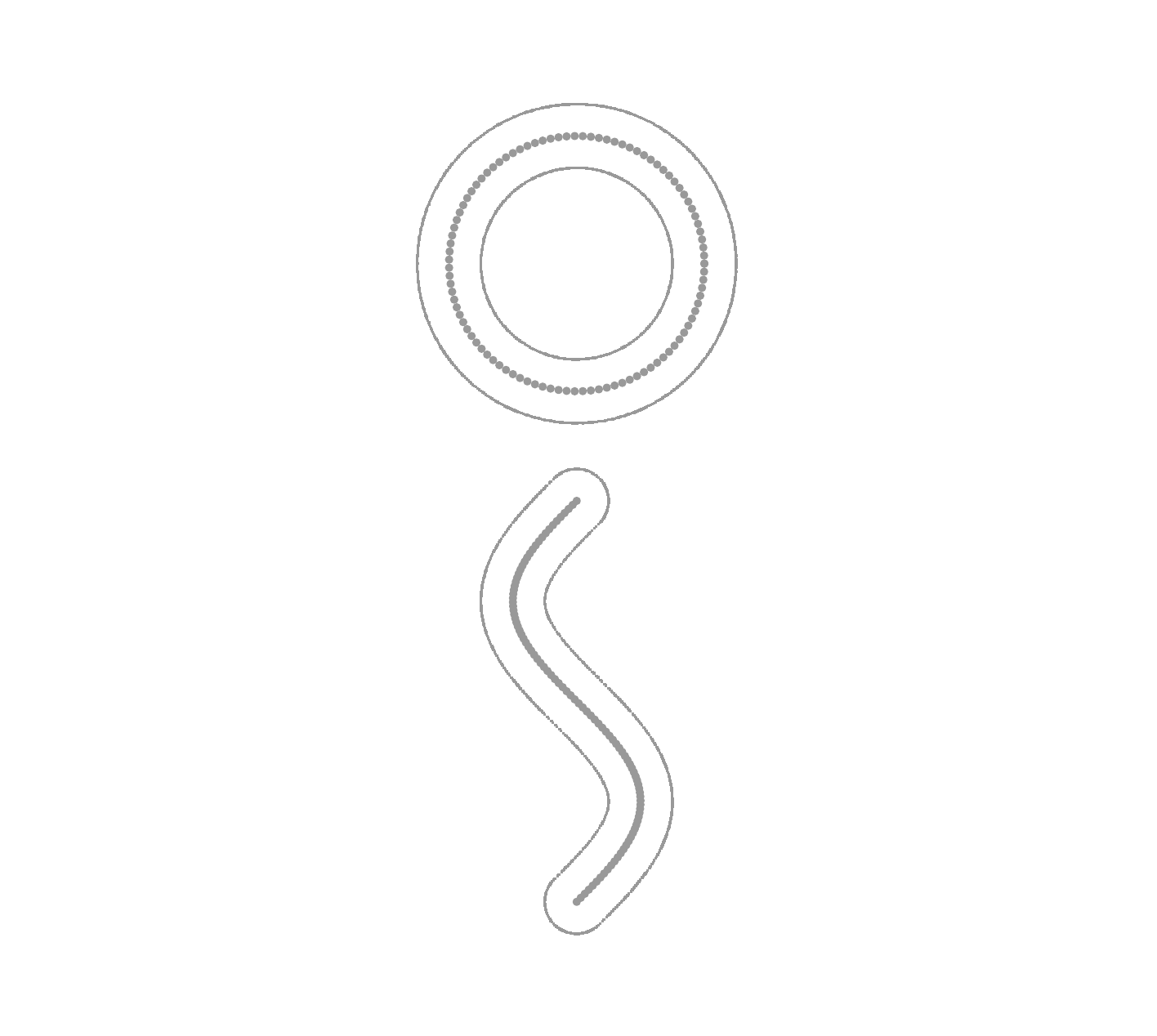} & \includegraphics[width=2.5in]{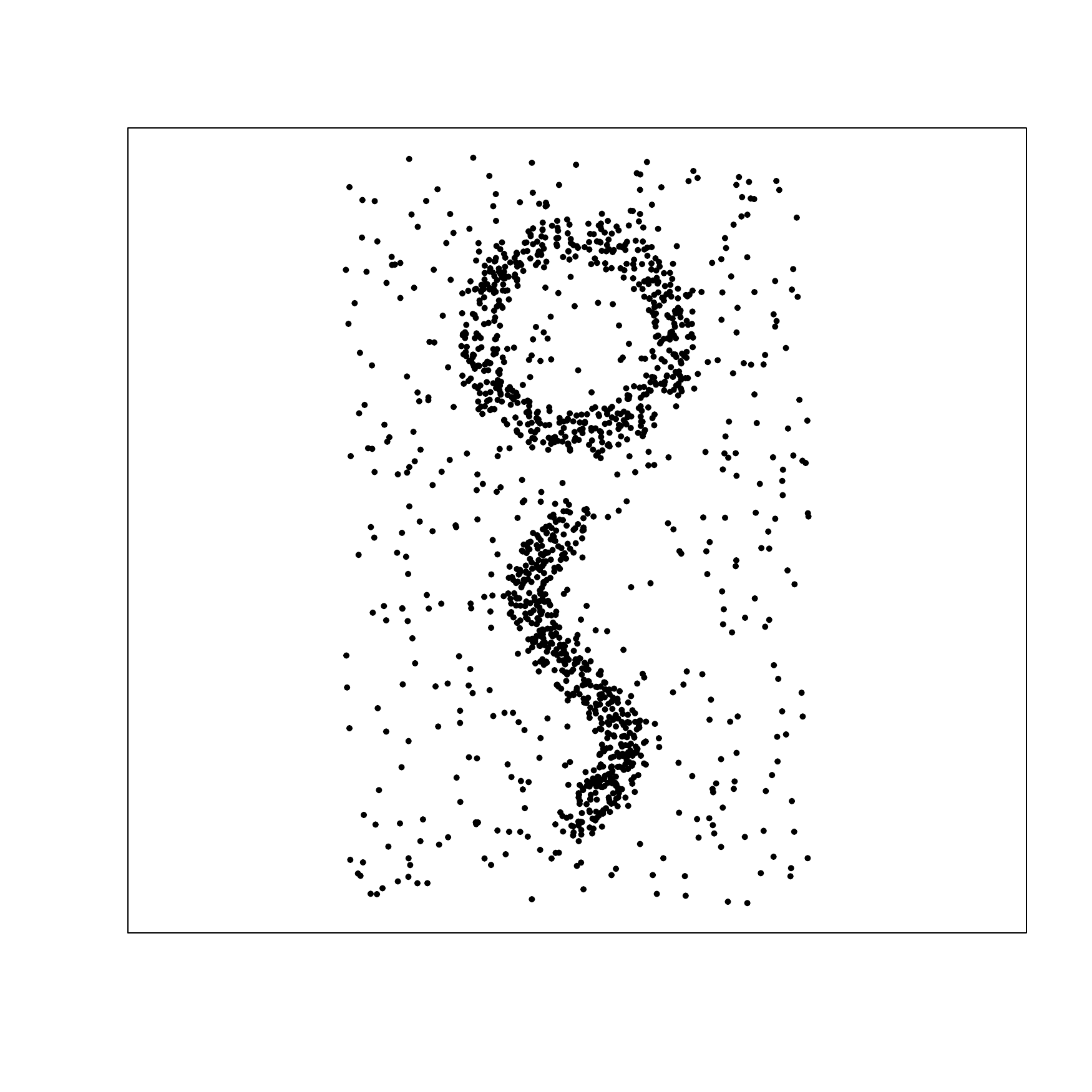} \\
\includegraphics[width=2.5in]{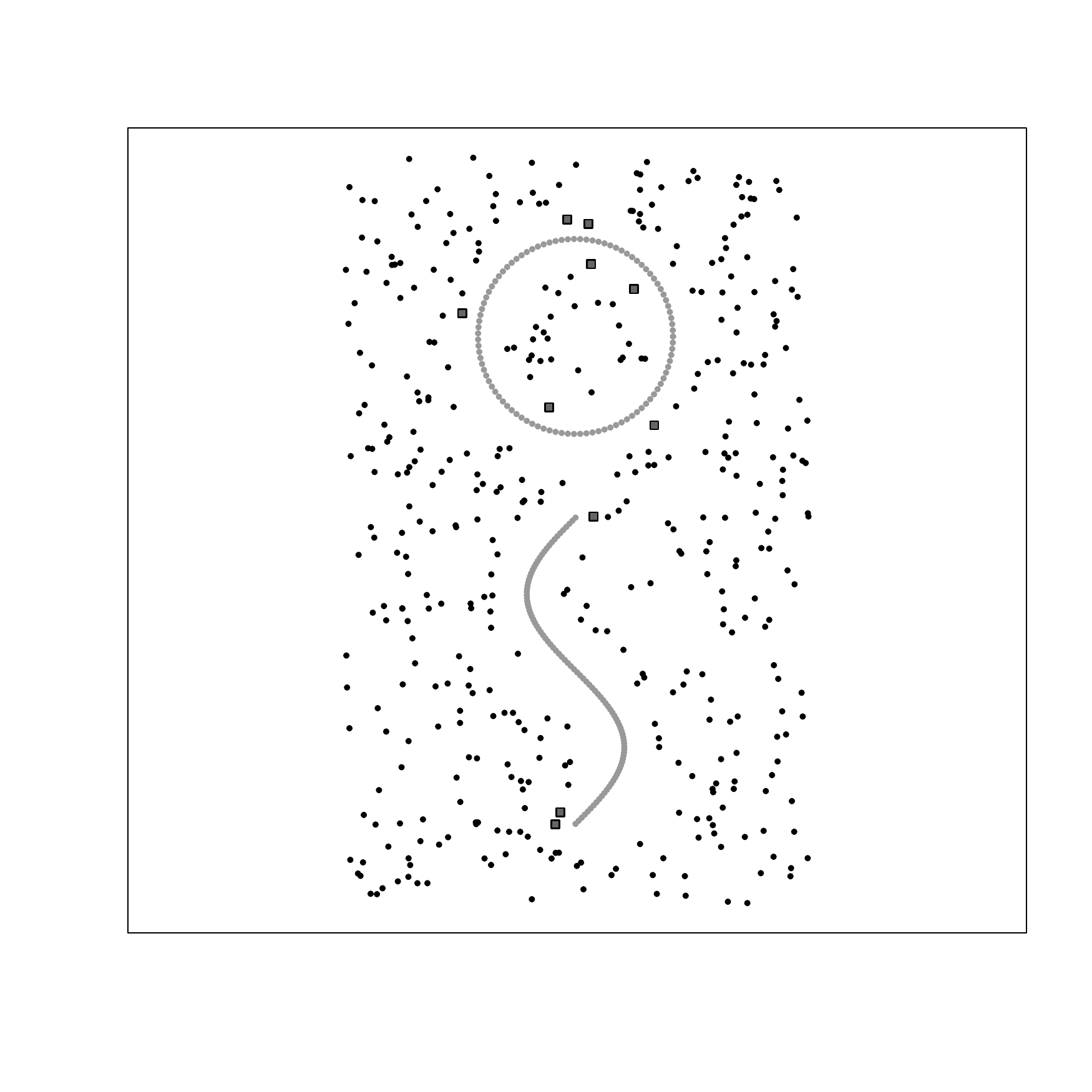} & \includegraphics[width=2.5in]{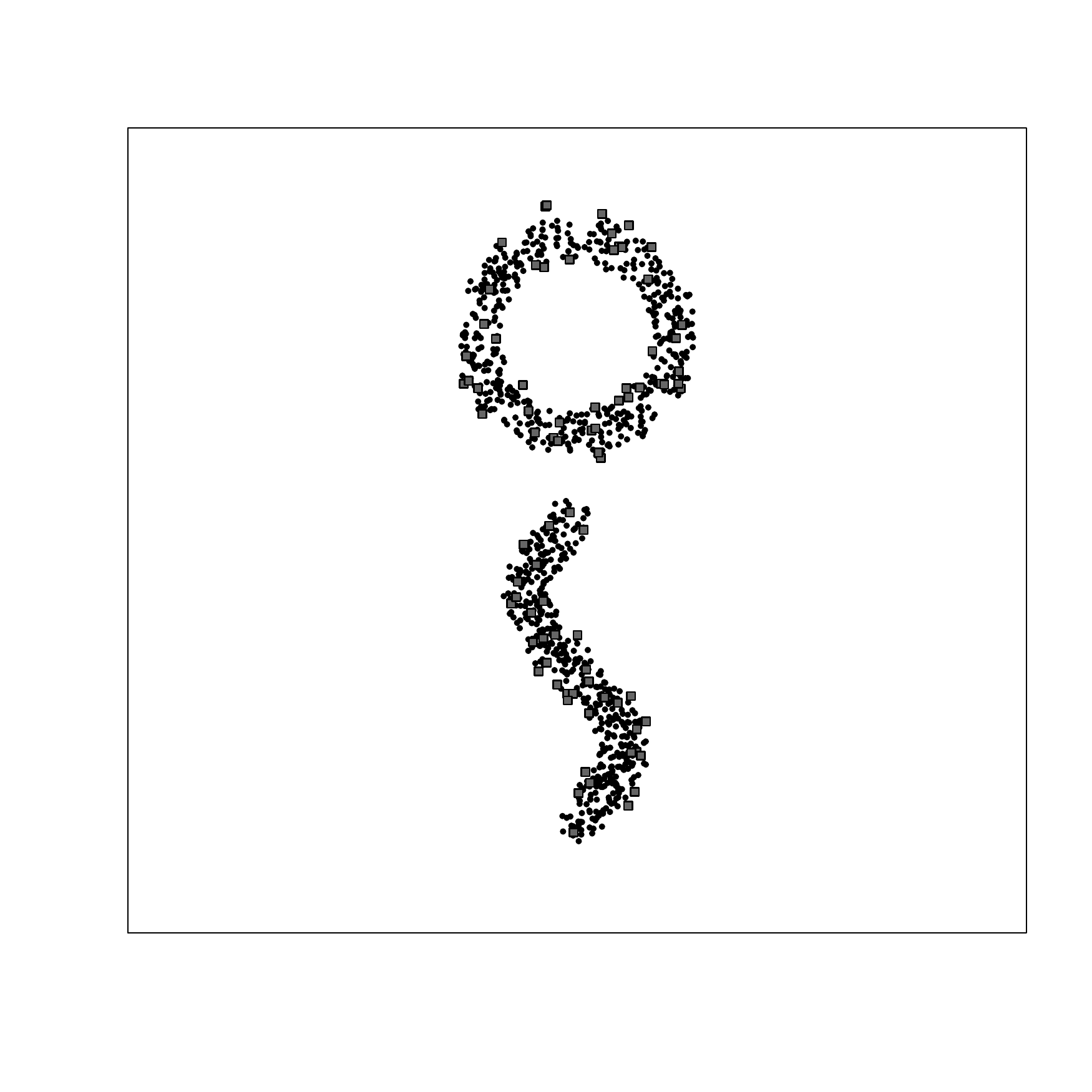} \\
\includegraphics[width=2.5in]{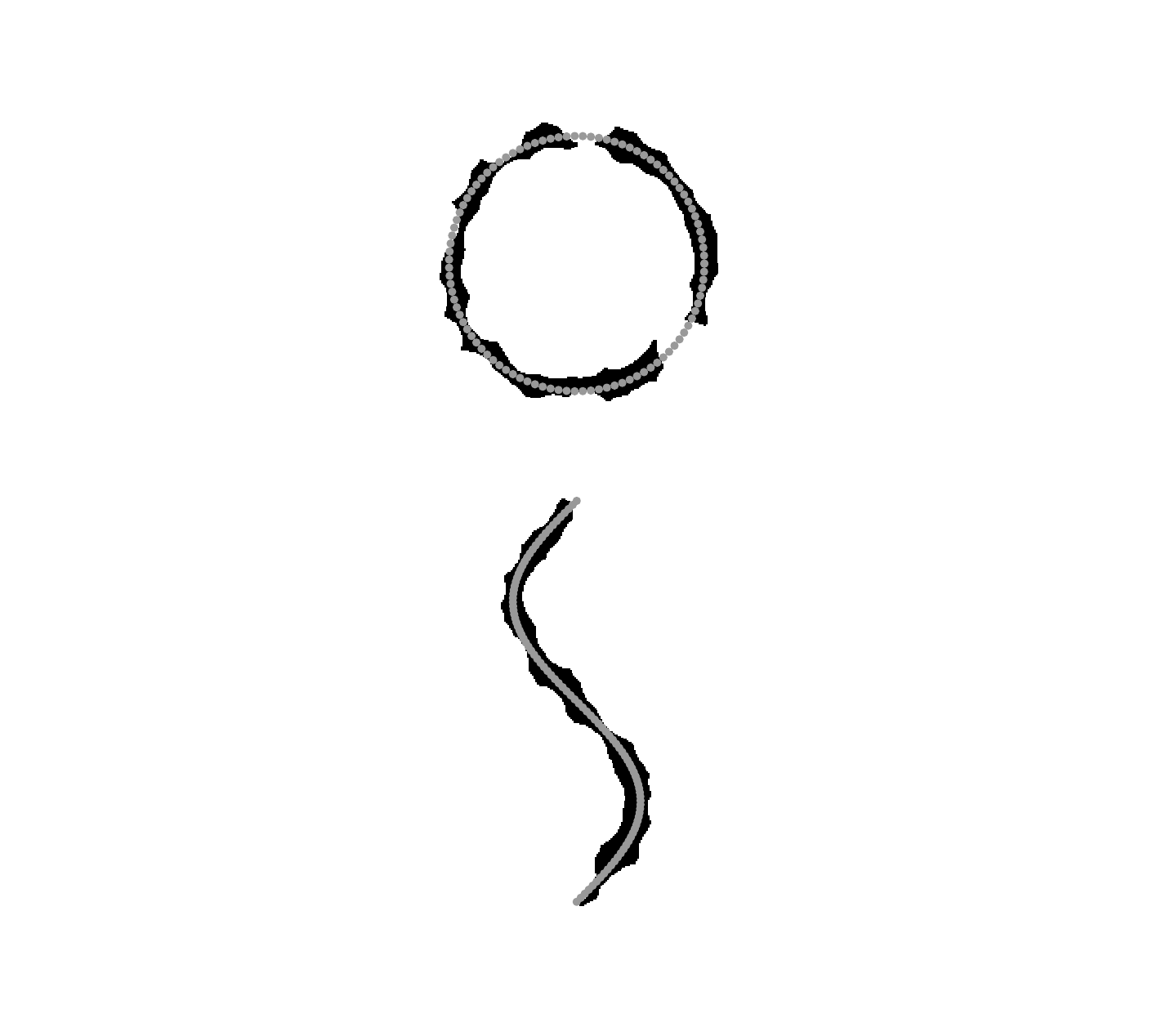} & \includegraphics[width=2.5in]{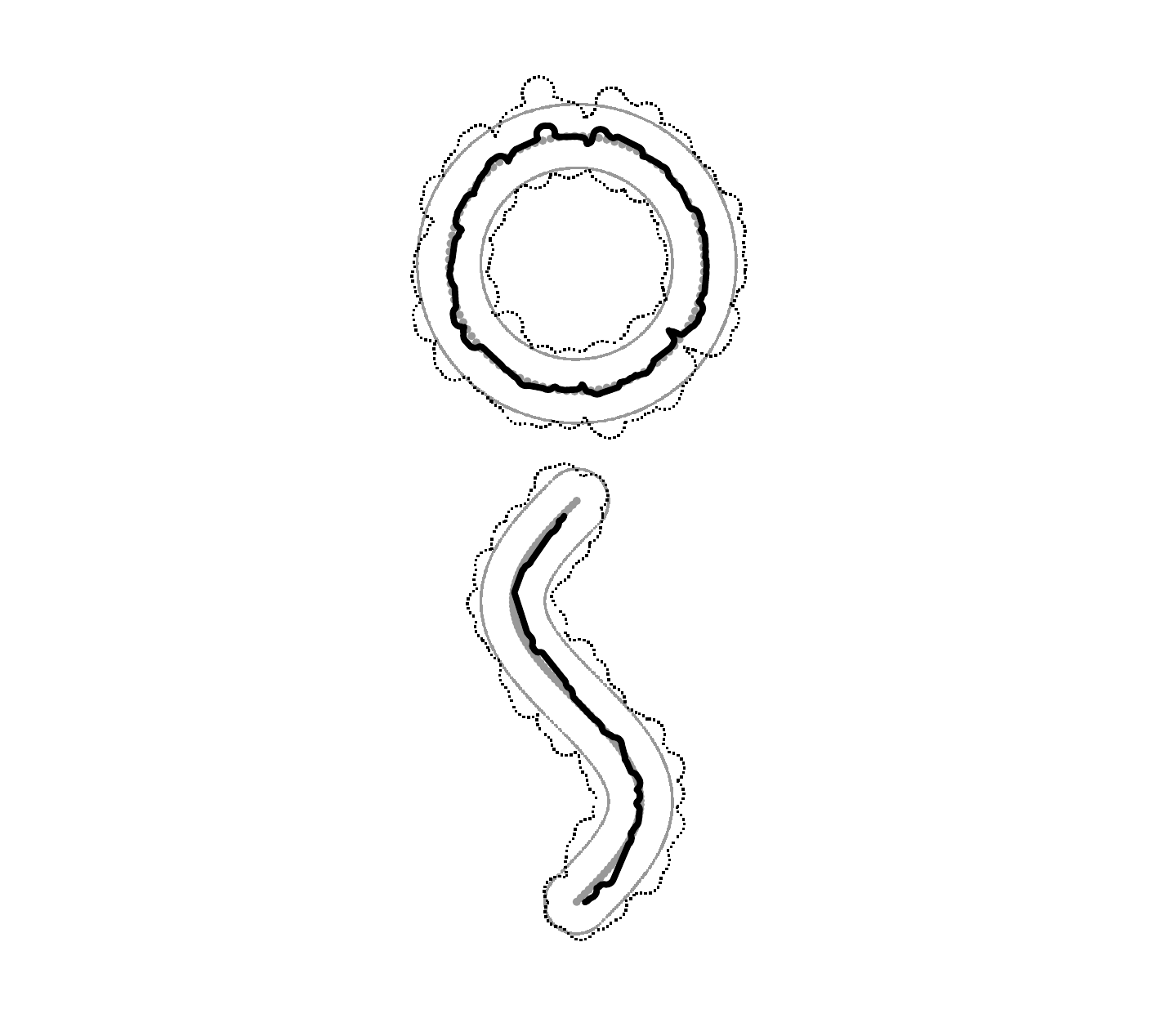} \\
\end{tabular}
\caption{First example.
Top line: true curves and the support of the distribution (left), the data (right).
Center line: points identified as clutter (left), decluttered data (right).
Bottom line: EDT estimator (left), Medial estimator (right).
}
\label{fig::example1}
\end{figure}

\begin{figure}
\hbox to\textwidth{\hss\hskip -.1in \vbox to 5in{\includegraphics[width=2.5in]{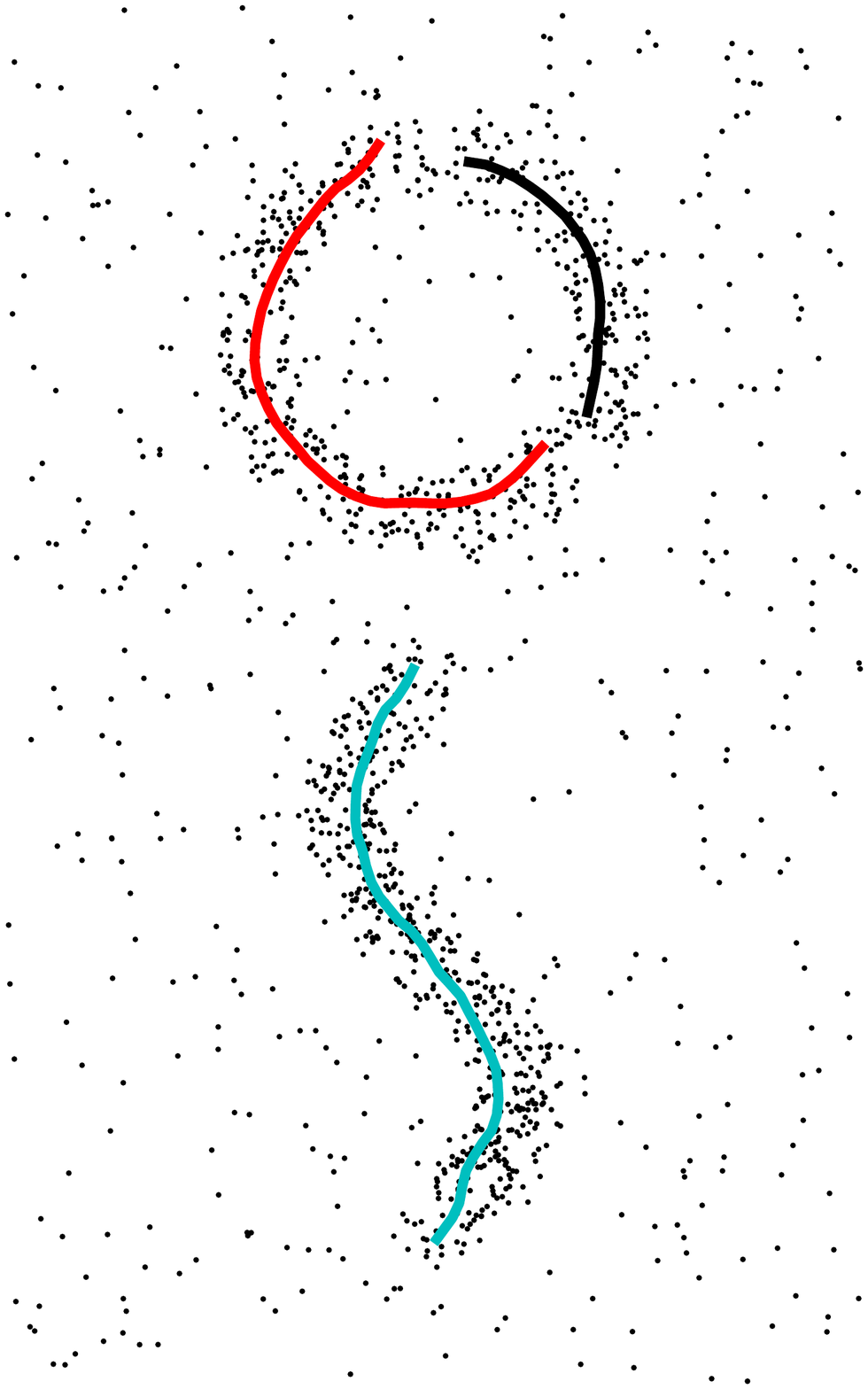}\vss}\hss}
\vspace{-2.2in}
\caption{First example. Curves extracted from the EDT estimator. Data with background clutter
overlayed.}
\label{fig::example1-EDT-extract}
\end{figure}

We applied the curve extraction procedure of subsection \ref{sect::ALGextract} to
the EDT estimator shown in the bottom left panel of Figure \ref{fig::example1}.
Figure \ref{fig::example1-EDT-extract} shows the extracted curves.

The estimated filaments obtained for the first dataset are very close to the true
(bottom panels in Figure \ref{fig::example1}). For the second dataset (bottom panels
in Figure \ref{fig::example2}) the medial estimator fails to detect the true filament 
near the intersection and becomes more and more accurate as it moves away from the 
intersection. Considering that the condition on the radius of curvature is violated, 
even in the second dataset the estimate seems to be quite satisfactory.

\begin{figure}
\begin{tabular}{cc}
\includegraphics[width=2.5in]{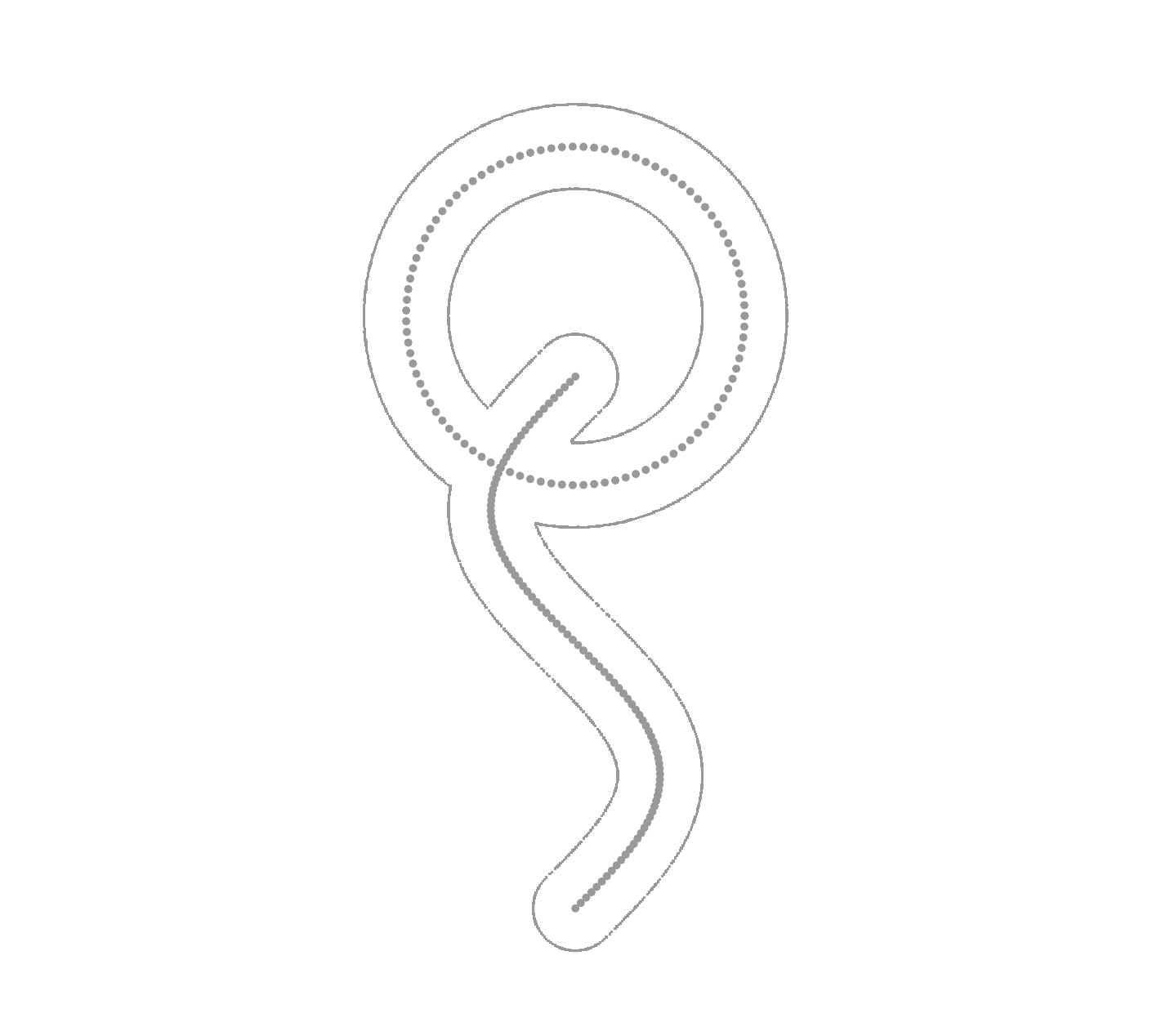} & \includegraphics[width=2.5in]{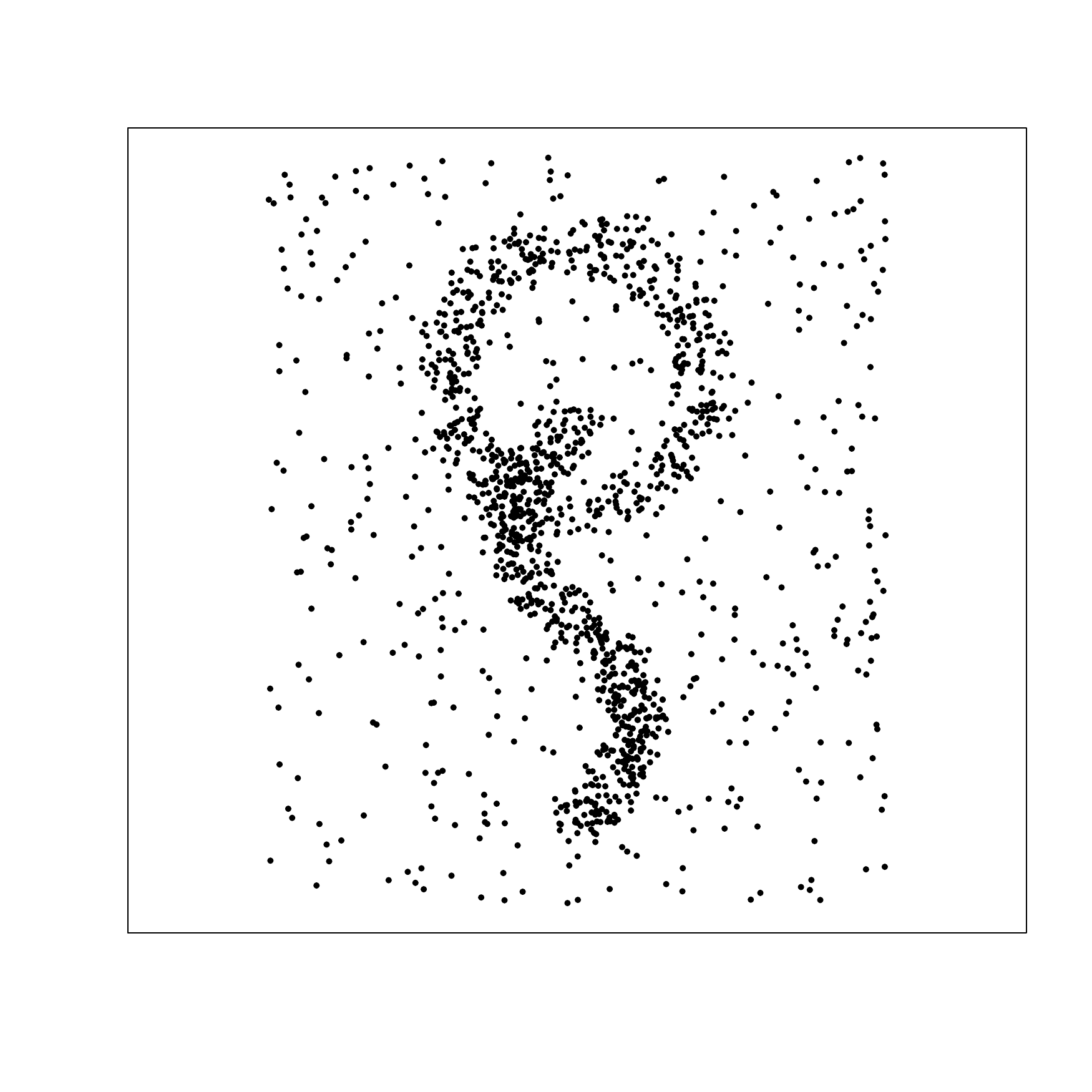} \\
\includegraphics[width=2.5in]{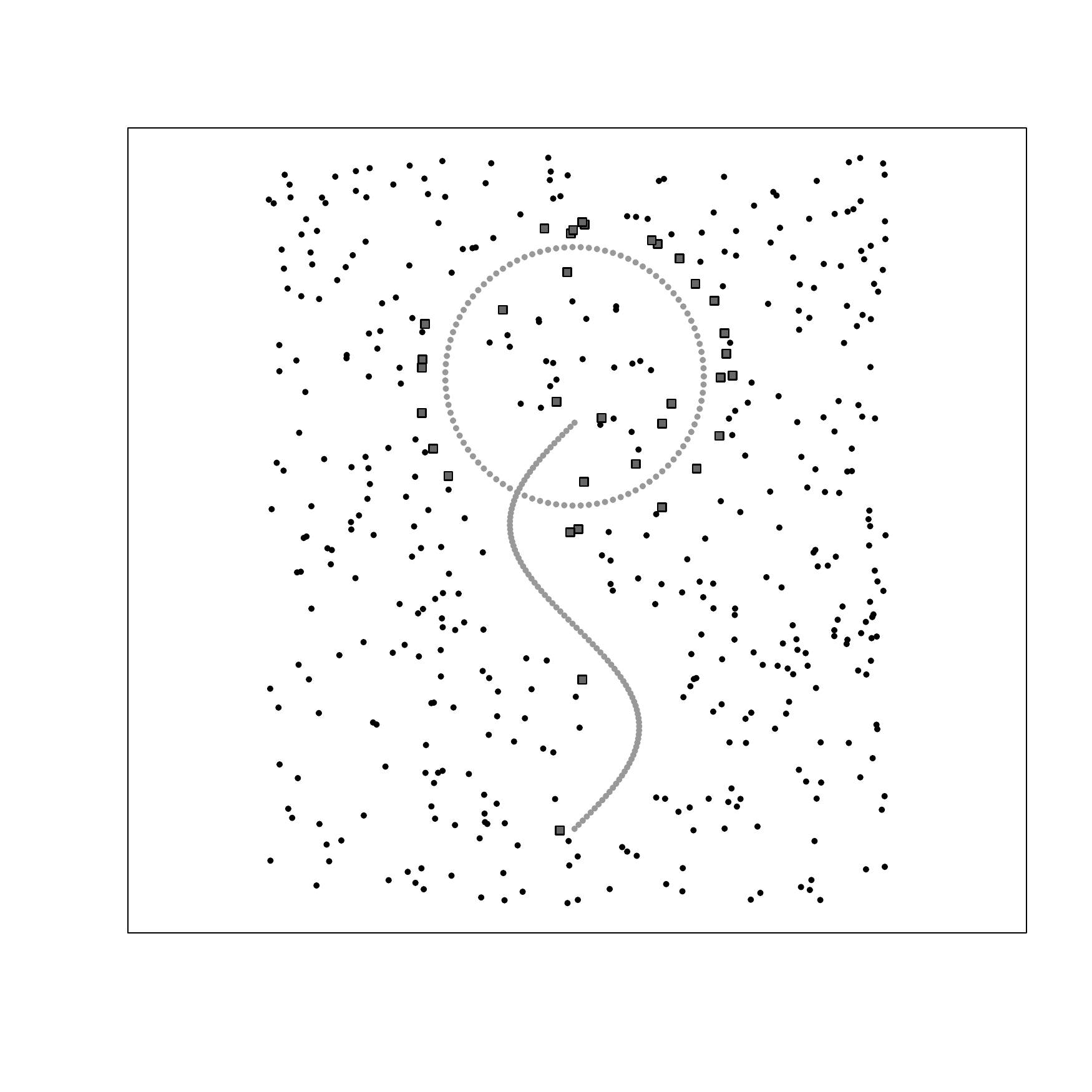} & \includegraphics[width=2.5in]{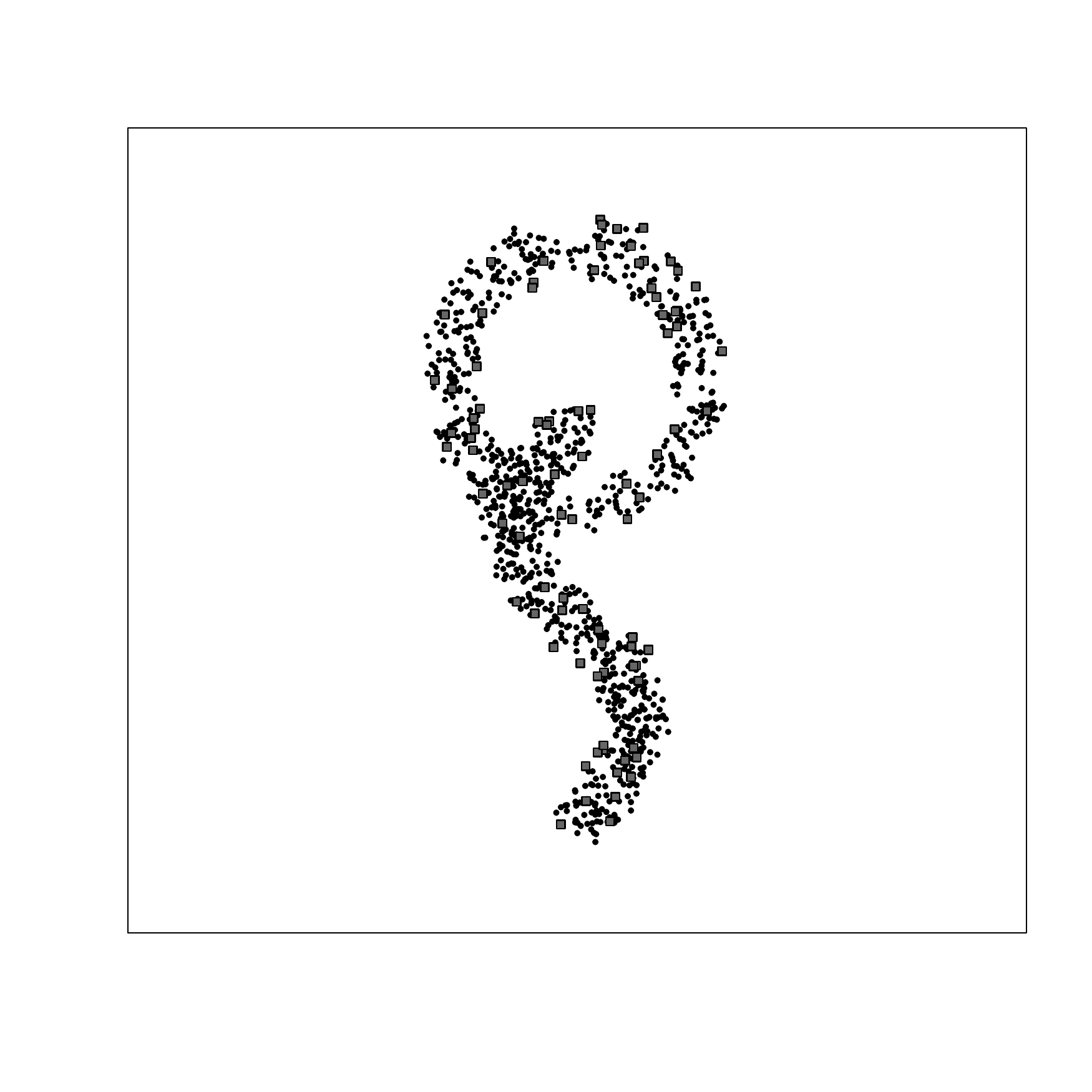} \\
\includegraphics[width=2.5in]{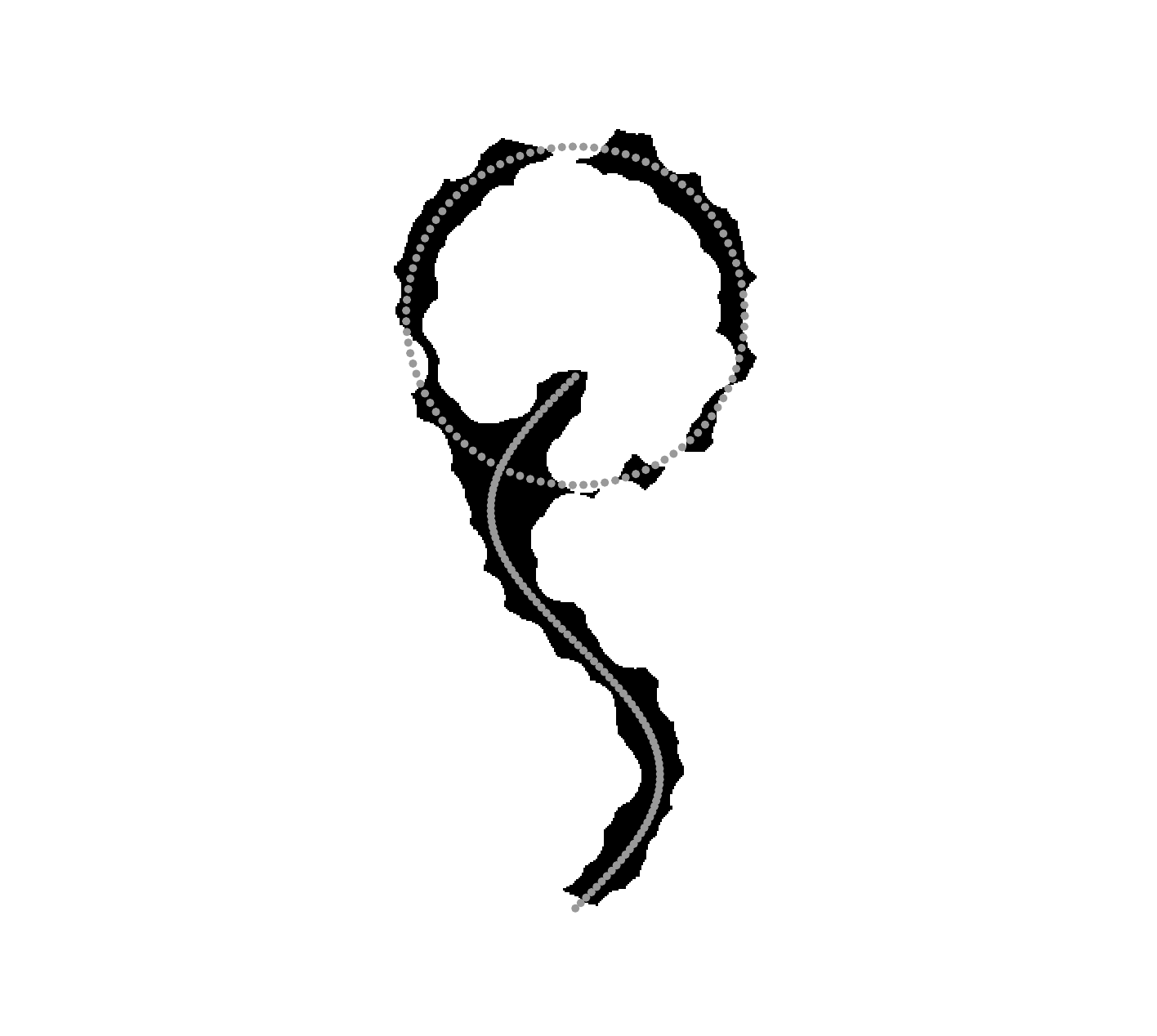} & \includegraphics[width=2.5in]{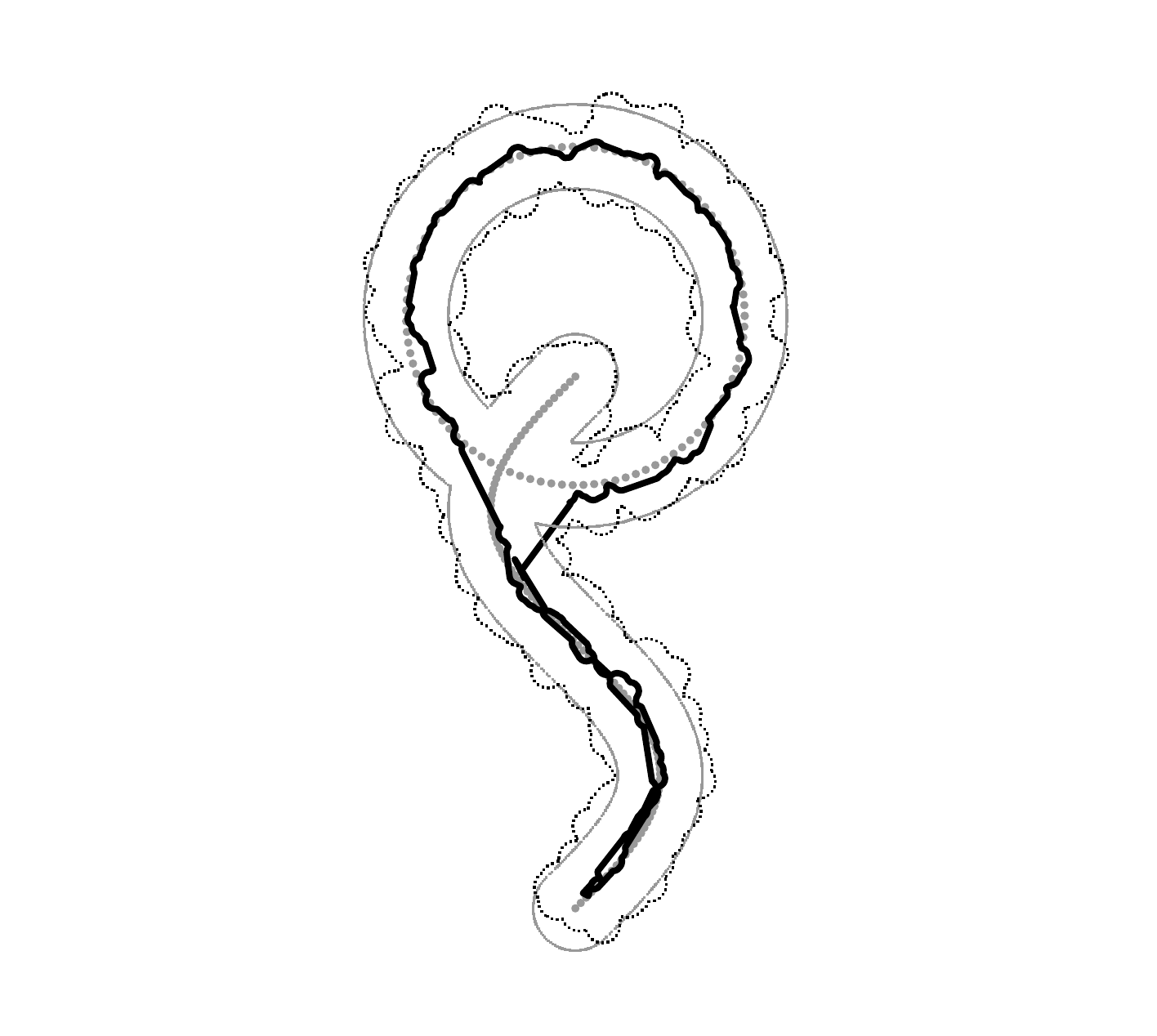} \\
\end{tabular}
\caption{Second example.
Top line: true curves and the support of the distribution (left), the data (right).
Center line: points identified as clutter (left), decluttered data (right).
Bottom line: EDT estimator (left), Medial estimator (right).
}
\label{fig::example2}
\end{figure}

The third dataset is more challenging as it contains 12 filaments, with several 
intersections. Eighty points were generated from each filament and 350 more points 
were generated as background clutter, for a total of $n=1310$ data points (top panels 
in Figure \ref{fig::example3}). The decluttering procedure (central panels in Figure
\ref{fig::example3}) resulted in 989 points marked as filament (34 of which were 
generated as clutter) and 321 points marked as clutter (5 of which were filament points). 
The estimates, obtained from the points marked as filament, are shown in the bottom panel 
of Figure \ref{fig::example3}. These estimates are accurate for filaments with no 
intersections. The accuracy is less satisfactory for intersecting filaments or for filaments 
that are too close to each other. This was to be expected, as the condition on the radius 
of curvature is not satisfied in these cases.

\begin{figure}
\begin{tabular}{cc}
\includegraphics[width=2.5in]{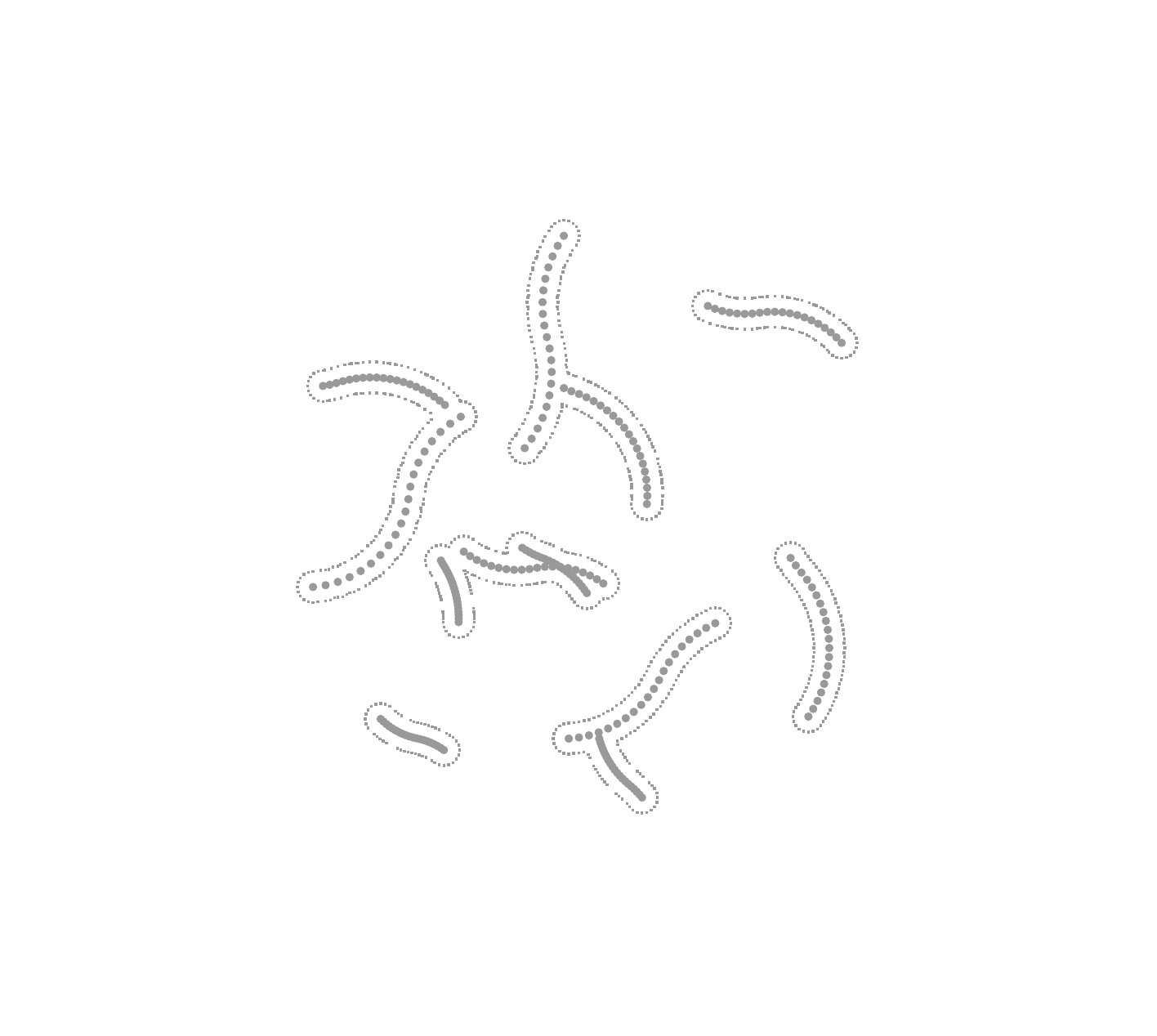} & \includegraphics[width=2.5in]{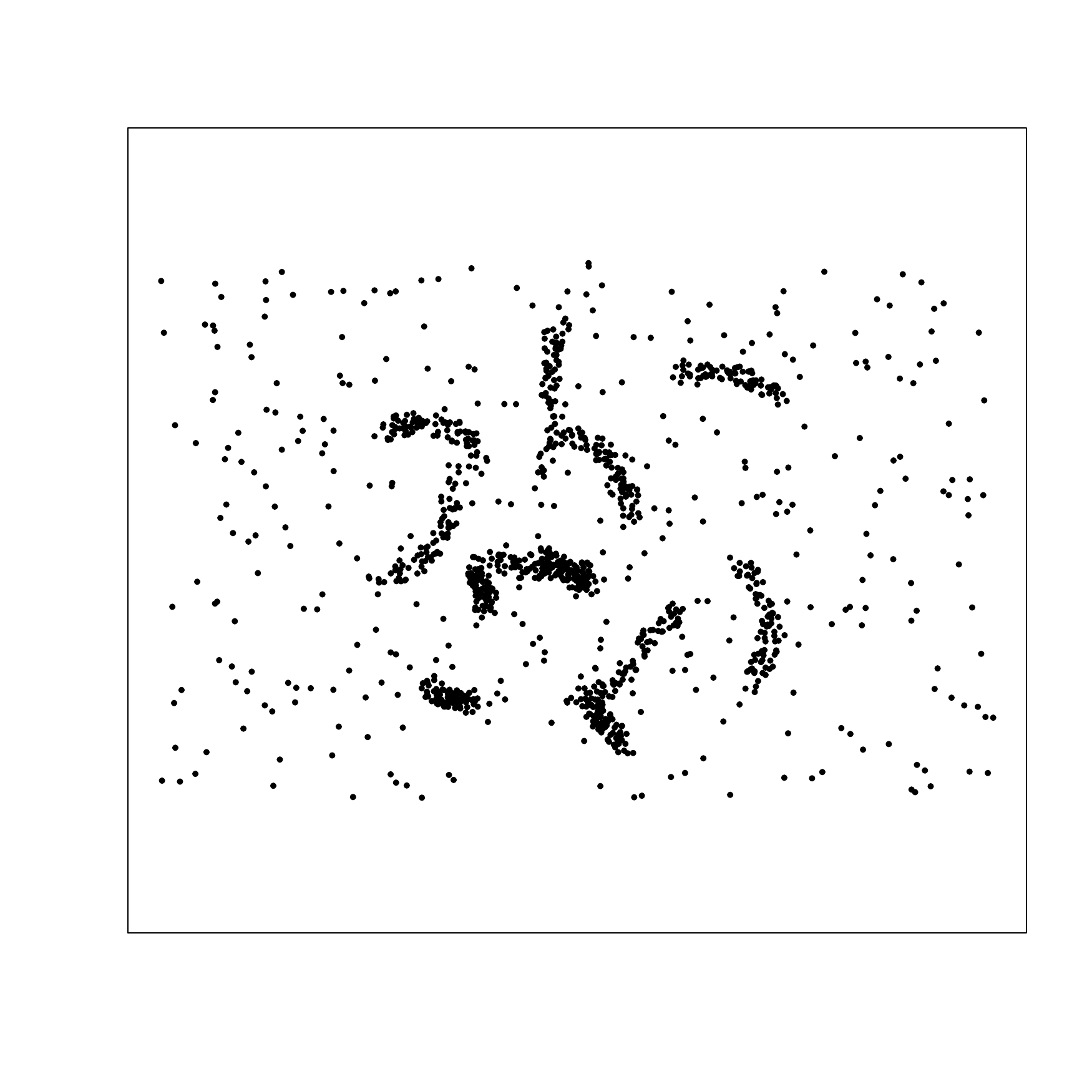} \\
\includegraphics[width=2.5in]{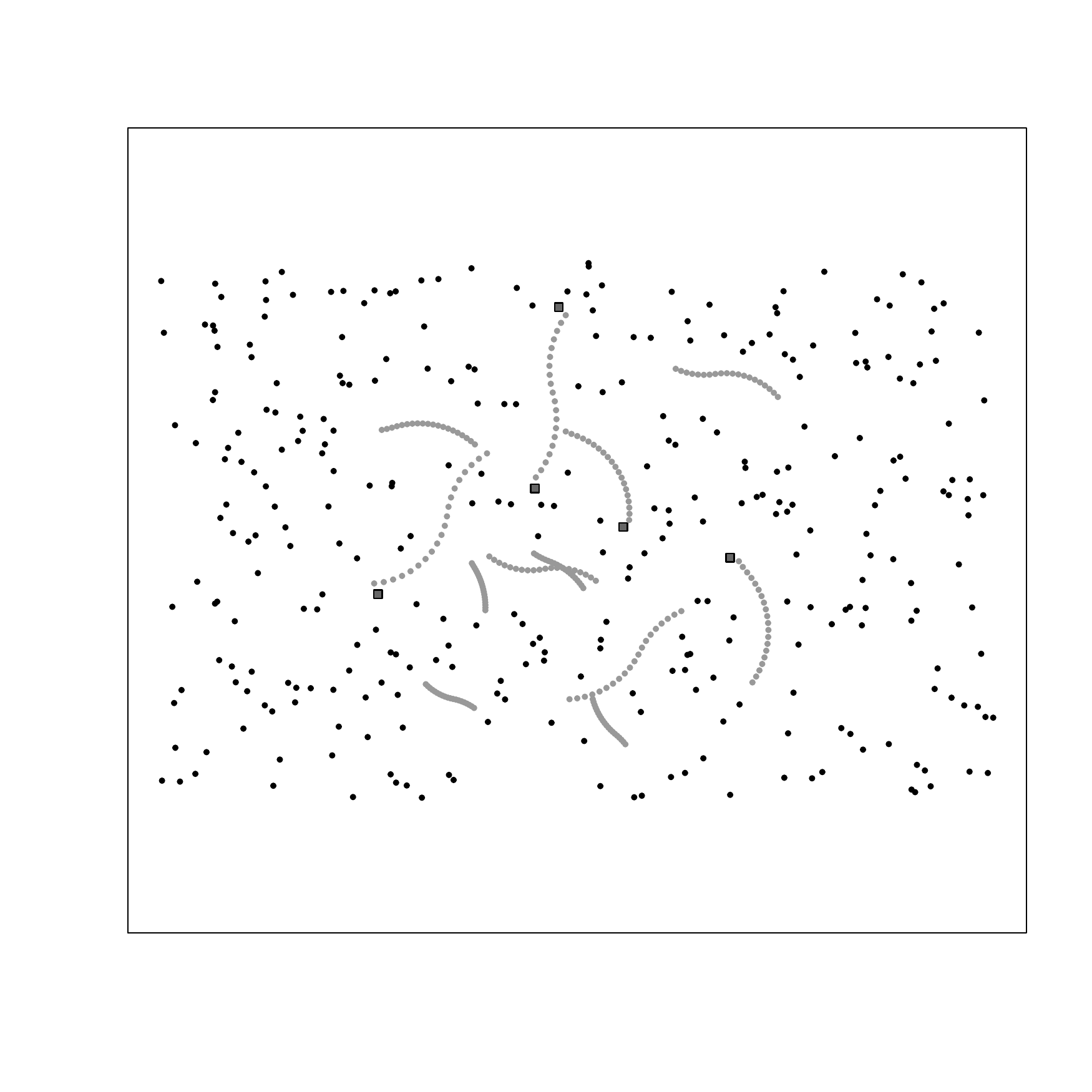} & \includegraphics[width=2.5in]{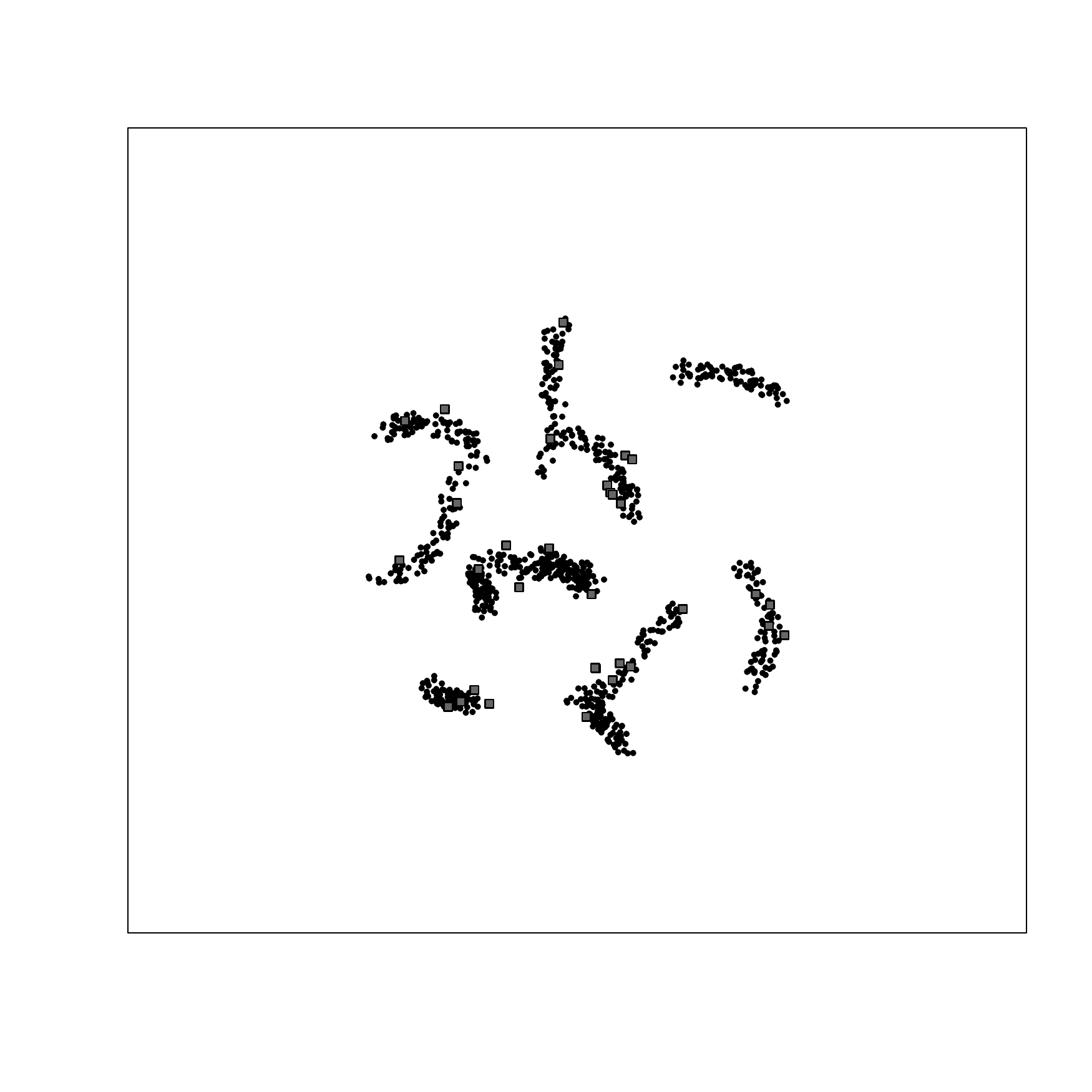} \\
\includegraphics[width=2.5in]{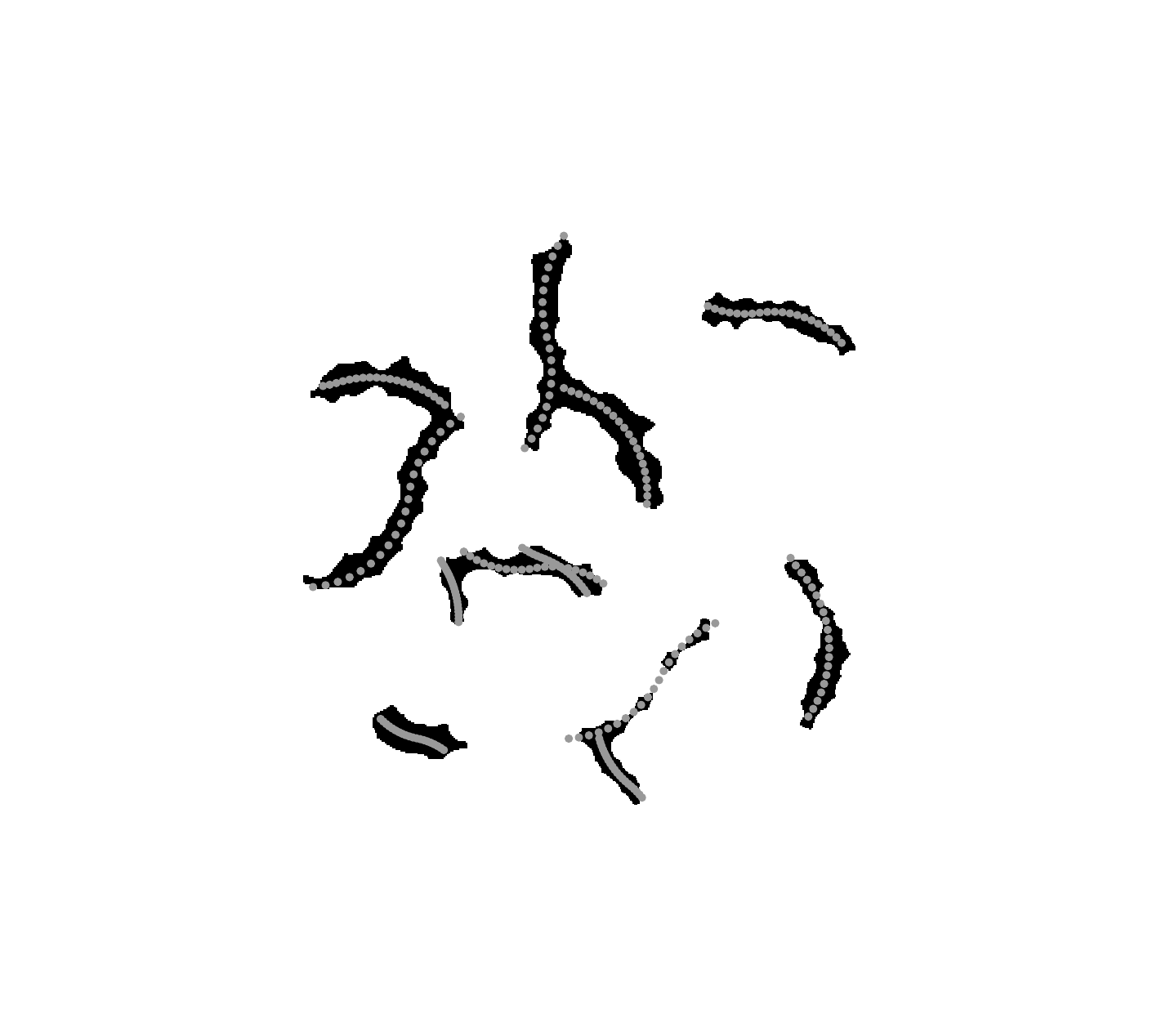} & \includegraphics[width=2.5in]{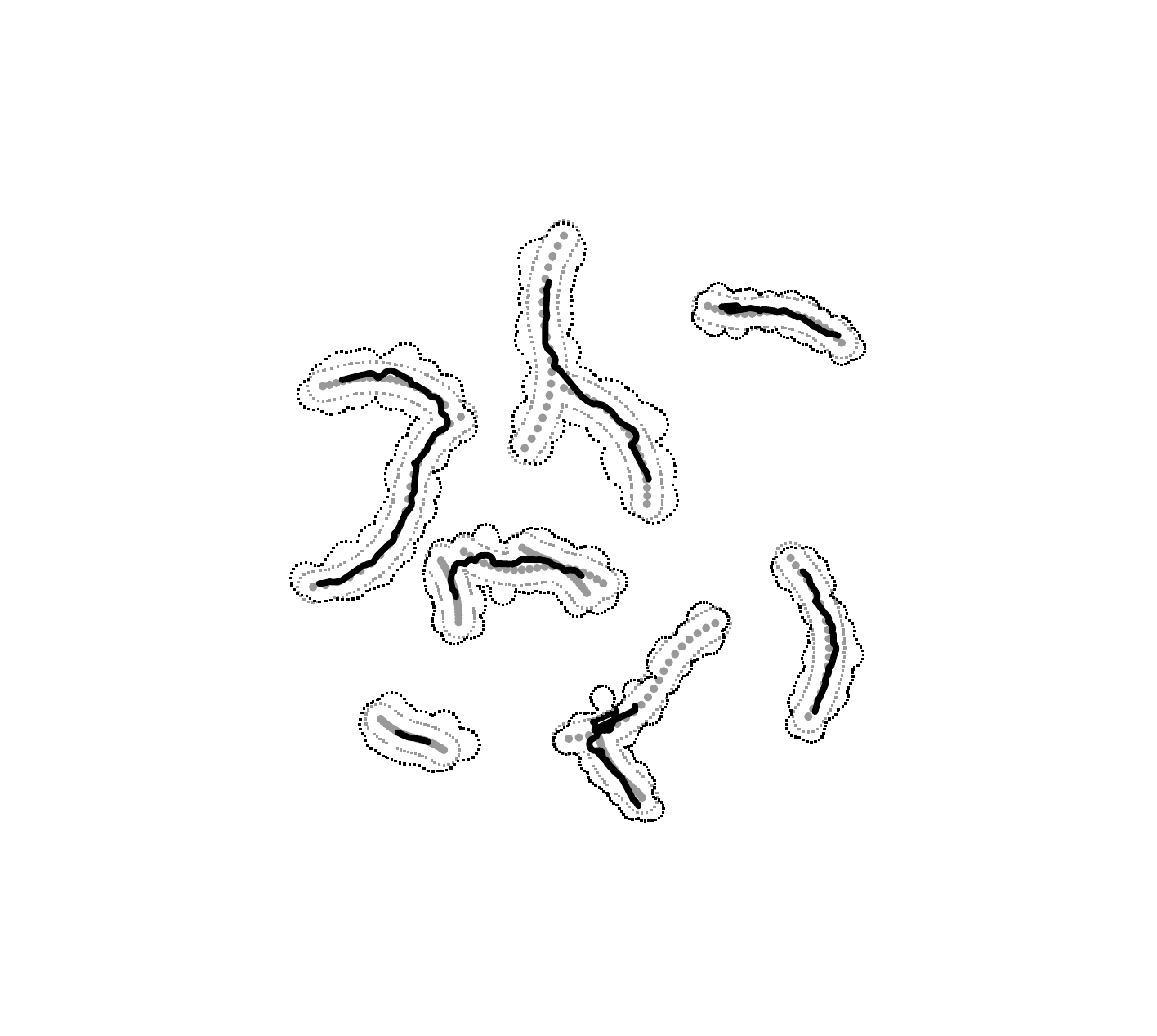} \\
\end{tabular}
\caption{Third example.
Top line: true curves and the support of the distribution (left), the data (right).
Canter line: points identified as clutter (left), decluttered data (right).
Bottom line: EDT estimator (left), Medial estimator (right).
}
\label{fig::example3}
\end{figure}

\section{Discussion}

In recent work (\cite{us}) we found the minimax rate for this problem
under restrictive conditions (but in general dimensions).
In current work,
we are finding the minimax rates in general.
This is a difficult problem because the rate
depends critically on features of the noise distribution $F$.
Moreover, the problem is essentially a deconvolution problem
since the variables $\xi_i = f(U_i)$ are unobserved
and corrupted by noise.
We will report on these results elsewhere.

The estimators presented here are not minimax but are appealing because of their simplicity.
Finding a practical estimator that achieves the minimax rate is an open question.
Our approach, instead, consists of two steps:
producing a set of fitted values $\hat\Gamma$ and then
extracting a curve from $\hat\Gamma$.
We gave two specific methods for obtaining the fitted values
and a curve extraction method for each of the two approaches.
The resulting estimators have reasonably fast rates of convergence.

The noise model is critical.
We assumed compact support which is reasonable for many applications.
Without compact support, the behavior of the methods changes substantially
as it does in nonparametric measurement error problems.

It is interesting to compare our results to those in
\cite{Cheng:2005}.
They show that each of their fitted values
is $O_P((\log n/n)^{1/8})$ from the filament.
Under weaker conditions than they assumed, we get a rate 
which is faster as long as $\alpha$ is not too large.
(They implicitly assume that $\alpha=0$.)
Also, our rate is in Hausdorff distance which is a stronger notion of closeness
than used in their paper.

Currently, we are pursing several extensions of our results.
These include:
the aforemetioned extensions to higher dimensions (manifold learning),
relaxing the smoothness condition,
relaxing the constant $\sigma$ condition,
noise distributions with non-compact support
and comparisons with beamlets.
We are also investigating data-driven methods for choosing 
the tuning parameter $\epsilon$ and
we are studying the
theoretical properties of the decluttering technique.

\newpage

\section{Supplementary Material}

\subsection{Proofs}
\label{sec::proofs}

\smallskip
\begin{proofof} {\bf Lemma \ref{lemma::new-boundary}.}
For $y \in B(0,\sigma)$ the density $\phi$ satisfies
\begin{equation} \label{eqn:tails}
\phi(y) \geq C_1 \cdot d(y, \partial B(0,\sigma))^\beta =  
C_2 \cdot \left[1- \frac{||y||}\sigma \right]^\beta.
\end{equation}
Note also that, monotonicity of $\phi$ implies
that $\phi(y) \leq \phi(0)$.

Let $d=d(y,\partial S)$ and let $y_0\in \partial S$
be the point on $\partial S$ closest to $y=(y_1,y_2)$.
Without loss of generality, assume that
$y_0 = (0,0)^T$ and that the tangent vector to $\partial S$ at $y_0$ is
$(1,0)^T$.
We now prove that 
$q(y_2|y_1) \geq C_4 d(y, \partial S)^{\beta+1/2}$.
In Lemma \ref{lemma::disjoint} we show that 
$S =  \Union_{0 \le u \le 1} \ L(u)$,
where $L(u)$ is defined in (\ref{eq::tube}) as
$L(u) = \bigl\{ f(u) + t N(u):\ -\sigma \leq t \leq \sigma \bigr\}$
and
$N(u)$ is the normal vector at $f(u)$.
Moreover, we show that the $L(u)$'s are disjoint.
Let $\bar u \in [0,1]$ such that $y \in L(\bar u)$, hence
$||y-f(\bar u)|| = \sigma-d$.
Continuity of $f$ implies that there exists an interval $(u', u'') \subset [0,1]$ such that
$||y-f(u)|| \leq \sigma-d/2$ for all $u$ in the interval.
We will show later that $|u' - u''| \geq C_5 \cdot \sqrt d$. 
We can write the density at $y=(y_1, y_2)$ as
$q(y) = \int_{\{ u: ||y-f(u)|| \leq \sigma  \}} \phi(y-f(u)) h(u) \; du$
and the conditional density
\begin{eqnarray*}
q(y_2|y_1) =
\frac{\int_{\{ u: ||y-f(u)|| \leq \sigma  \}} \phi(y-f(u)) h(u) \; du}
{\int_{\{ y_2: (y_1,y_2) \in S \}} \int_{\{ u: ||y-f(u)|| \leq \sigma  \}} \phi(y-f(u)) h(u) \; du \; dy_2 }.
\end{eqnarray*}
The denominator is bounded from above by $\phi(0) C_6$.
Hence,
\begin{eqnarray*}
q(y_2|y_1) 
& \geq &
C_7 \int_{\{ u: ||y-f(u)|| \leq \sigma  \}} \phi(y-f(u)) h(u) \; du \geq
C_7 \int_{u'}^{u''} \phi(y-f(u)) h(u) \; du \\
&\geq &
C_8 \int_{u'}^{u''} \left[1- \frac{||y-f(u)||}\sigma \right]^\beta h(u) \; du
\geq 
C_8 \int_{u'}^{u''} \left[1- \frac{\sigma - d/2}\sigma \right]^\beta h(u) \; du
\\
&\geq &
C_8 \left[\frac d{2\sigma}\right]^\beta C \cdot |u'' - u'|
\geq 
C_9 \cdot d^{\beta+1/2}.
\end{eqnarray*}
Now we show that $|u'-u''| \geq C_5 \sqrt d$.
Let $z=d/2$, $||f(u') - f(u'')||$ is bounded below by the distance of the 
intersection of the two balls $B((0,0), \Delta)$
and $B((\Delta + \sigma-2z,0), \sigma-z)$.
Some algebra shows that
$$
||f(u') - f(u'')|| \geq 2 \sqrt{ z } \sqrt{\frac{2\Delta\sigma}{\Delta+\sigma}}
=
2 \sqrt{ d } \sqrt{\frac {\Delta\sigma}{\Delta+\sigma}}
$$
Finally, since
$$
||f(u') - f(u'') || = \int_{u'}^{u''} \nabla f(u) \; du \leq \sup_{u \in [0,1]} \nabla f(u) |u'-u''|
$$
we obtain
$$
|u'-u''| \geq \frac 1{\sup_{u \in [0,1]} \nabla f(u)} \, 2\, \sqrt{d} \,
\sqrt{\frac {\Delta\sigma}{\Delta+\sigma}} = C_5 \sqrt{d}.
$$
It is easy to see that
$q(y_1)\geq c >0$ for all $y\in B(y_0,\epsilon)$.
Hence,
$q(y) \geq c_2 d^{\beta+1/2}$.

Now we find the upper bound.
Let $d = d(y,\partial S)$.
Let $u_0$ be such that $y\in L(u_0)$.
Now
\begin{eqnarray*}
q(y) &=& \int_{u'}^{u''}\phi(y-f(u)) h(u) du \leq
c_2 \int_{u'}^{u''}\phi(y-f(u)) du\\
& \leq &
c_2 \phi(y-f(u_0))\,| u'' - u'| \leq c_2 C d^\beta \,| u'' - u'|
\end{eqnarray*}
Earlier we showed that
$| u'' - u'|\geq C_5 \sqrt{d}$.
By a similar argument,
$| u'' - u'|\leq c_5 \sqrt{d}$
for some $c_5$.
The result follows.
\end{proofof}

\smallskip
\begin{proofof} {\bf Lemma \ref{lemma::disjoint}.}

{\em 1.} First, consider the closed case.
We show that $S = \cT$.
Suppose not.
Then there is a $y\in S$ such that
$$
y \neq f(u) + t N(u)
$$
for any $u\in[0,1]$ and
$t\in[-\sigma,\sigma]$.
Let $f(u)$ be the closest point on the curve to $y$.
Since $y\notin L(u)$,
$\langle y-f(u),T(u) \rangle \neq 0$.
Without loss of generality, suppose that $\langle y-f(u),T(u)\rangle >0$.
So, for sufficiently small $\epsilon$,
\begin{align*}
||y-f(u)||^2 
    &< ||y - f(u+\epsilon)||^2 = ||y - f(u) - \epsilon T(u)||^2 + o(\epsilon^2) \\
    &= ||y-f(u)||^2 + \epsilon^2 - 2\epsilon \langle y-f(u),T(u)\rangle + o(\epsilon^2)\\
    &< ||y-f(u)||^2,
\end{align*}
which is a contradiction.
For the open case, the balls $B(f(0),\sigma)$ and $B(f(1),\sigma)$ do not
intersect. Both balls are contained in $S$. 
The half plane formed by the normal vectors at $f(0)$ and $f(1)$ split 
these balls in two, with half of each in $\cT$ and half in $\cT^c$.
The result follows.

{\em 2.} Now we show that $u\neq v\in[0,1]$ implies that $L(u)\cap L(v) = \emptyset$.
Suppose that $L(u)$ and $L(v)$ intersect at some point $y$. So
$$
y = f(u) + s N(u) = f(v) + t N(v)
$$
for some $s,t\in[-\sigma,\sigma]$.
Let $A_u$ be the ball of radius $\Delta$ tangent to $f(u)$ and containing $y$.
Let $A_v$ be the ball of radius $\Delta$ tangent to $f(v)$ and containing $y$.
Note that $f(v)\notin A_u$ and $f(u)\notin A_v$.
(This follows from the discussion after (\ref{eq::Delta}).)
Now $f(v)\notin A_u$ implies that $t\geq s$ but
$f(u)\notin A_v$ implies that $s\geq t$ and so $s=t$.
So, $y= f(u) + s N(u) = f(v) + s N(v)$. By the triangle inequality,
\begin{align*}
d(f(v),{\rm center}(A_u)) 
    &=   ||f(u) + \Delta N(u) - f(v)||\\
    &\le ||f(u) + \Delta N(u) - y|| + ||y -f(v)||\\
    &=   ||(\Delta-s) N(u)|| + ||s N(v)||\\
    &=   \Delta.
\end{align*}
But $f(u)\notin A_v$ means that  $d(f(v),{\rm center}(A_u)) \geq \Delta$.
So the inequality above must be equality which implies that
$f(u) + \Delta N(u)$, $y$ and $f(v)$ fall on a line.
Hence, $L(u)$ and $L(v)$ cannot intersect.

{\em 3.} Follows directly from {\em 2.} because $\cT$ is the union of the fibers.

{\em 4.} Follows from {\em 1.}~and {\em 5.} follows from {\em 4}.
\end{proofof}

\medskip 

\begin{proofof} {\bf Theorem \ref{thm::filamentismedial}.}

{\em 1.} First we show that $\Gamma_f \subset M(S)$. Pick any $u\in[0,1]$. 
Let $B = B(f(u),\sigma)$. We claim that $B\cap \partial S$ contains 
at least two points. Let $a = f(u) + \sigma N(u)$ and $b = f(u) - \sigma N(u)$.
We will show that $a$ and $b$ are in $B\cap \partial S$.

Note that $a,b \in B$ and hence they are in $S$.
In fact they are boundary points because they are not in the interior of 
$S$. To show this, suppose to the contrary that $a$ is interior. Hence there 
exists $v$ such that $||a-f(v)|| =\delta < \sigma$. That is, $f(v)$ is in the 
interior of $B(a,\sigma)$. But this contradicts the assumption.
The same argument shows that
$b\in \partial S$. Hence, $f(u)\in M(S)$ and so $\Gamma_f \subset M(S)$. 

Now we show that $M(S)\subset \Gamma_f$. Let $y\in M(S)$. We claim that 
$y=f(u)$ for some $u$. Suppose not. From Lemma \ref{lemma::disjoint},
$y\in L(u)$ for some $u$ and $y\notin L(v)$ for any $v\neq u$.
Also, $f(u)\in M(S)$ and $B(f(u),\sigma)\cap \partial S$ contains
$a = f(u) + \sigma N(u)$ and  $b = f(u) - \sigma N(u)$.
Since $y\in L(u)$ and $y\neq f(u)$ either $||y-a|| < \sigma$ or 
$||y-b|| < \sigma$. Without loss of generality, assume that
$||y-a|| < \sigma$.
Set
$r = ||a-y||$ and $s = ||y-f(u)||$ and note that $r+s = \sigma$.
Let $B = B(y,\delta)$ be the medial ball at $y$. If $\delta > r$ then
the interior of $B(y,\delta)$ has nonempty intersection with $\partial S$.
So $\delta$ must be less than or equal to $r$. On the other hand, if 
$\delta < r$ then $B(y,\delta)\cap \partial S =\emptyset$. So we must 
have $\delta = r$. But $B(y,r)$ is stricty contained in $B(f(u),\sigma)$
except for the common point $a$. Thus, all points in $B(y,r)$ are 
interior points of $S$ except for $a$. So $B(y,r)\cap \partial S$ 
contains fewer that 2 points and hence $y$ cannot be in $M(S)$.

\smallskip

{\em 2.} The proof that
$\Gamma_f \subset M(S)$ is the same as in part {\em 1.}
Now suppose that
$||f(1)-f(0)|| > 2\sigma$.
We will show that $M(S)\subset \Gamma_f$ and hence
$\Gamma_f = M(S)$.
From Equations (\ref{eq::tube}) and (\ref{eq::endcaps}), recall that 
$\cT$, $\cC_0$, and $\cC_1$ denote the tube and the end caps.
By Lemma \ref{lemma::disjoint},
$S = \cT\union \cC_0 \union \cC_1$,
where $\cC_0\cap \cC_1 = \emptyset$.
Let $y\in M(S)$.
If $y\in \cT$ then the proof of the previous part implies that $y=f(u)$ for some $u$.
That is,
$M(S)\cap \cT\subset \Gamma_f$.
Now suppose $y\in \cC_0$.
Then
$d(y,\partial S) = r < \sigma$ for some $r$.
We may assume $r>0$ otherwise $y$ is on the boundary and cannot be medial.
Consider a ball $B(y,\delta)$.
We claim that $B(y,\delta)$ cannot be medial.
In fact, if $\delta < r$ then all points in $B(y,\delta)$ are interior to $S$.
If $\delta = r$ then $B(y,\delta)$ intersects $\partial S$ at a single point.
Finally, if $\delta > r$ then ${\rm interior}(B(y,\delta)) \cap \partial S \neq \emptyset$.
Thus $B(y,\delta)$ cannot be medial and $\cC_0 \cap M(S) = \emptyset$. Similarly,
$\cC_1 \cap M(S) = \emptyset$.
Hence, $M(S) = \Gamma_f$.
\end{proofof}

\medskip
\begin{proofof} {\bf Lemma \ref{lemma::edt}.}
We prove the closed case.
The open case is similar.

{\em 1.} If $y\in M(S)$ then $y=f(u)$ for some $u$
by Theorem \ref{thm::filamentismedial}.
From lemma \ref{lemma::disjoint},
the closest point on the boundary is either
$f(u) + \sigma N(u)$ or 
$f(u) - \sigma N(u)$.
In either case,
$d(y,\partial S) = \sigma$.

{\em 2.} We have $y = f(u) + t N(u)$ for some $u$ and $t$.
Since $y\notin M(S)$, it follows that $y\neq f(u)$ and so $t\neq 0$.
Then, from (1), $d(y,\partial S) < \sigma$.

{\em 3.} We have $y = f(u) + t N(u)$ for some $u$ and some
$t\in[-\sigma,\sigma]$.
The closest boundary point is either
$f(u) + \sigma N(u)$ or 
$f(u) - \sigma N(u)$.
Without loss of generality, assume it is
$f(u) + \sigma N(u)$.
Hence, $t \geq 0$.
So, $\sigma = d(f(u),\partial S) = 
d(f(u),y) + d(y,\partial S) = d(y,M(S)) + \Lambda(y)$.
\end{proofof}

\smallskip
\begin{proofof} {\bf Theorem \ref{thm::edt}.}

{\em 1.} Choose any $y\in \mathbb{R}^2$.
Let $z_*$ be the closest point to $y$ on $\partial S$.
Let $\hat z$ be the closest point to $y$ on $\hat{\partial S}$.
Let $\tilde{z}$ be the closest point to $z_*$ on $\hat{\partial S}$.
Then
\begin{align*}
\hat\Lambda(y) 
   &=   ||y-\hat z||  \leq  ||y - \tilde{z}|| \le  ||y-z_*|| + ||z_* - \tilde{z}||\\
   &\le ||y-z_*|| + \epsilon = \Lambda(y) + \epsilon.
\end{align*}
Now let $\bar{z}$
be the point on $\partial S$ closest to
$\hat z$.
Then
\begin{align*}
\Lambda(y) 
   &= ||y-z_*||  \leq  ||y - \bar{z}|| 
   \le ||y-\hat z|| + ||\hat z - \bar{z}||\\
   &\le \hat\Lambda(y) + \epsilon.
\end{align*}

{\em 2.} Let $\hat y = \argmax_{y \in \hat S} \hat \Lambda(y)$ and let $y_*$ be its  closest point 
in $M(S)$. Then
$$
\hat\sigma = \hat\Lambda(\hat y) \leq \Lambda(\hat y) + \epsilon \leq
\Lambda(y_*) + \epsilon = \sigma + \epsilon.
$$
Also
$$
\hat\sigma = \hat\Lambda(\hat y) \geq
\hat\Lambda(y_*) \geq \Lambda(y_*) - \epsilon = \sigma - \epsilon.
$$

{\em 3.} By Lemma \ref{lemma::disjoint}, there is a unique $u$ such that
$\hat y$ is on the fiber $L(u)$, centered at $f(u)$. So
\begin{eqnarray*}
\sigma = d(f(u),\partial S) &=& ||f(u)-\hat y|| + d(\hat y,\partial S) \geq
||f(u)-\hat y|| + d(\hat y,\hat{\partial S})-\epsilon\\
&=& ||f(u)-\hat y|| + \hat\sigma - \epsilon \geq
||f(u)-\hat y|| + \sigma -2\epsilon.
\end{eqnarray*}
Hence,
$d(\hat y, M(S)) \leq d(\hat y,f(u)) \leq 2\epsilon$.
\end{proofof}

\medskip
\begin{proofof} {\bf Lemma \ref{lemma::standard}.}
Let $y$ be a point in $S$ and let $\Lambda(y) \leq \sigma$ be its distance 
from the boundary $\partial S$. If $\Lambda(y) \geq \epsilon$ then
$B(y,\epsilon)\cap S =B(y,\epsilon)$ so that
$\nu(B(y,\epsilon)\cap S) =\nu (B(y,\epsilon)) = \pi \epsilon^2\geq
\chi \nu(B(y,\epsilon))$.

Suppose that $\Lambda(y) < \epsilon$. Let $f(u)$ be the point on the filament 
closest to $y$ and let $y^*$ be the point on the segment joining $y$ to $f(u)$ such 
that $||y-y^*||=\epsilon/2$. The ball $A=B(y^*,\epsilon/2)$ is contained in both 
$B(y,\epsilon)$ and $S$. Hence,
$\nu(B(y,\epsilon)\cap S) \geq \nu (A) = \pi \epsilon^2/4 = \chi \nu(B(y,\epsilon))$.
This is true for all $\epsilon \leq \sigma$, hence $S$ is $(\chi, \lambda)$-standard for
$\chi = 1/4$ and $\lambda=\sigma$.

Now we show that $S$ is expandable. By Proposition 1 in \cite{cuevas-boundary}
it suffices to show that a ball of radius $r$ rolls freely outside $S$ for some $r$,
meaning that, for each $y\in \partial S$, there is an $a$ such that
$y\in B(a,r)\subset \overline{S^c}$, where $S^c$ is the complement of $S$.
Let $O_y$ be the ball of radius $\Delta-\sigma$ tangent to $y$ such that
$O_y\subset S^c$. Such a ball exists by virtue of the conditions on $\sigma$.
\end{proofof}

\smallskip
\begin{proofof} {\bf Lemma \ref{lemma::nice-boundary}.} 
Suppose first that 
$f$ is open. Then $\partial S$ is a closed, simple curve. Let $A_n = S \union \hat{S}$.
We will first show that $\partial A_n$ is a closed, non-self-intersecting curve for all $n$.
Consider one observation $Y_1$ and note that $A_1 = S \union B(Y_1,\epsilon_n)$. Since $Y_1\in S$, 
${\rm interior}(B(Y_1,\epsilon_n))\cap S \neq \emptyset$.
It is then easy to see that $\partial A_1$ is a closed, non-self-intersecting curve.
A simple induction argument verifies that $\partial A_n$ is a closed, non-self-intersecting 
curve for all $n$. Now, when $S\subset\hat{S}$, we have $A_n = \hat S$ and the conclusion 
follows. The proof for closed curves is similar.
\end{proofof}

\smallskip
\begin{proofof} {\bf Theorem \ref{thm::edtestimator}.}

{\em 1.} First we show that $y\in\hat\Gamma$ implies that $d(y,M) \leq 4\epsilon$.
Let $y\in\hat\Gamma$. Then
$d(y,\partial S)  \geq  
d(y,\hat{\partial S}) -\epsilon \geq \hat\sigma - 2\epsilon-\epsilon \geq
\sigma - \epsilon - 2\epsilon - \epsilon = \sigma - 4\epsilon$.
So
$d(y,M) = \sigma - d(y,\partial S) \leq 
\sigma - \sigma + 4\epsilon  = 4 \epsilon$.
Now we show that $M\subset\hat\Gamma$. Suppose that $y\in M$. Then,
$$
d(y,\hat{\partial S})  \geq  
d(y,\partial S) -\epsilon =\sigma - \epsilon \geq
\hat\sigma - 2\epsilon = \hat\sigma - \delta
$$
so that $y\in\hat\Gamma$.

{\em 2.} The proof of the second statement follows from {\em 1.}~and Lemma~\ref{lemma::supp}.
\end{proofof}

\smallskip 

The next two {\bf Lemmas \ref{thm::key}} and {\bf \ref {thm::key2}} are needed to prove 
Theorem \ref{thm::methodII}.

\begin{lemma}
\label{thm::key}
Let $\Gamma$ and $\Gamma_1$ be two curves in $\mathbb{R}^2$ such that 
$d_H(\Gamma,\Gamma_1) \leq \epsilon$. Given a point $a \in \mathbb{R}^2$, let $a^*$ be the 
point on $\Gamma$ closest to $a$. Let $d = ||a - a^*||$.
Consider a ball, with radius $r> d+\epsilon$ and center $\alpha$, that contains $a$, $a^*$ 
and no other points in $\Gamma$. Then there exists a point $\hat a \in \Gamma_1$ 
such that $||a - \hat a|| = d(a, \Gamma_1)$ and 
\begin{equation}
||a^* - \hat a||^2 \leq \frac{4rd}{r-d} \epsilon + \epsilon^2.
\end{equation}
Thus,
$||a^* - \hat a|| = O(\sqrt{\epsilon})$.
\end{lemma}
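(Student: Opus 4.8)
The plan is to let $\hat a$ be a point of $\Gamma_1$ that realizes $d(a,\Gamma_1)$ (such a point exists because the image of the curve $\Gamma_1$ is compact), and to pin down its location by combining two consequences of the hypothesis $d_H(\Gamma,\Gamma_1)\le\epsilon$. First, since $a^*\in\Gamma$, there is a point of $\Gamma_1$ within $\epsilon$ of $a^*$, so $d(a,\Gamma_1)\le \|a-a^*\|+\epsilon = d+\epsilon$; hence $\|a-\hat a\|\le d+\epsilon$. Second, since $\hat a\in\Gamma_1$, there is a point $b\in\Gamma$ with $\|\hat a-b\|\le\epsilon$. If $b=a^*$ the conclusion is immediate, because then $\|a^*-\hat a\|\le\epsilon$ and $\epsilon^2\le \frac{4rd}{r-d}\epsilon+\epsilon^2$; so from now on assume $b\ne a^*$.

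Next I would locate $\hat a$ relative to the ball $B(\alpha,r)$. Since $B(\alpha,r)$ contains $a^*$ but no other point of $\Gamma$, the point $b$ lies outside it, so $\|b-\alpha\|\ge r$ and therefore $\|\hat a-\alpha\|\ge \|b-\alpha\|-\|\hat a-b\|\ge r-\epsilon$. Moreover, because $\Gamma$ is a curve through $a^*$ whose only point in $B(\alpha,r)$ is $a^*$ itself, $a^*$ must lie on the bounding sphere, $\|a^*-\alpha\|=r$; and because $a^*$ is the nearest point of the (smooth) curve $\Gamma$ to $a$, the segment from $a^*$ to $a$ is normal to $\Gamma$ at $a^*$, as is the segment from $a^*$ to $\alpha$ (the sphere $\partial B(\alpha,r)$ being tangent to $\Gamma$ at $a^*$). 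Hence $\alpha$, $a^*$, $a$ are collinear; and $a$ lies between $a^*$ and $\alpha$, since the alternative would give $\|a-\alpha\|=r+d>r$, contradicting $a\in B(\alpha,r)$. Consequently $\|a-\alpha\|=r-d$.

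Now I would set up coordinates with $\alpha$ at the origin, $a^*=re_1$, $a=(r-d)e_1$, and write $\hat a=(x,y)$. The two facts $\|\hat a-a\|\le d+\epsilon$ and $\|\hat a-\alpha\|\ge r-\epsilon$ read $(x-(r-d))^2+y^2\le(d+\epsilon)^2$ and $x^2+y^2\ge(r-\epsilon)^2$. Subtracting the second from the first and simplifying gives the lower bound $x\ge r-\epsilon(r+d)/(r-d)$. Substituting $y^2\le(d+\epsilon)^2-\big((x-r)+d\big)^2$ into $\|a^*-\hat a\|^2=(x-r)^2+y^2$ produces $\|a^*-\hat a\|^2\le 2d\epsilon+\epsilon^2-2d(x-r)$, and inserting the lower bound on $x$ yields exactly $\|a^*-\hat a\|^2\le \frac{4rd}{r-d}\epsilon+\epsilon^2$. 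Since $r-d>\epsilon>0$, the right-hand side is $O(\epsilon)$ for fixed $r$ and $d$, so $\|a^*-\hat a\|=O(\sqrt\epsilon)$, as claimed.

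The step I expect to need the most care is the geometry in the second paragraph. The bound genuinely requires that the center $\alpha$ lie on the common normal to $\Gamma$ at $a^*$: without collinearity one can place a point $\hat a$ roughly a distance $2d$ from $a^*$ while still satisfying the crude constraints $\|\hat a-a\|\le d+\epsilon$ and $\|\hat a-\alpha\|\ge r-\epsilon$, so the argument must use both that $a^*$ is a boundary point of $B(\alpha,r)$ and that $\Gamma$ is smooth at $a^*$ (which is in force throughout the paper). Once the configuration is fixed, the remainder is the routine two-circle computation sketched above.
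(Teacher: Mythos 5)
Your proof is correct and follows essentially the same route as the paper's: you extract the two constraints $\|\hat a - a\| \le d + \epsilon$ and $\|\hat a - \alpha\| \ge r - \epsilon$ from $d_H(\Gamma,\Gamma_1)\le\epsilon$ and then maximize $\|a^* - \hat a\|^2$ over the resulting region (the paper does this by intersecting the two circles and computing $\|w-a^*\|$; you do the equivalent algebra directly), arriving at the same bound. You are, if anything, slightly more careful than the paper in deriving that $\alpha$, $a$, $a^*$ are collinear via the tangency/normality argument, a point the paper simply asserts with ``let $a$ be a point along the radius that joins $\alpha$ with $a^*$.''
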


\begin{proofof} {\bf Lemma \ref{thm::key}.} See Figure \ref{fig::marcoplot}. 
Consider the ball $A=B(\alpha, r)$ with $r> d+\epsilon$. Let $a$ be a point 
along the radius that joins $\alpha$ with $a^*$. Since 
$d_H(\Gamma, \Gamma_1) \leq \epsilon$, there exists a point 
$g \in \Gamma$ within $\epsilon$ distance from $\hat a$, other than $a^*$. 

\begin{figure}
\vspace{-.3in}
\begin{center}
\includegraphics[width=3.5in]{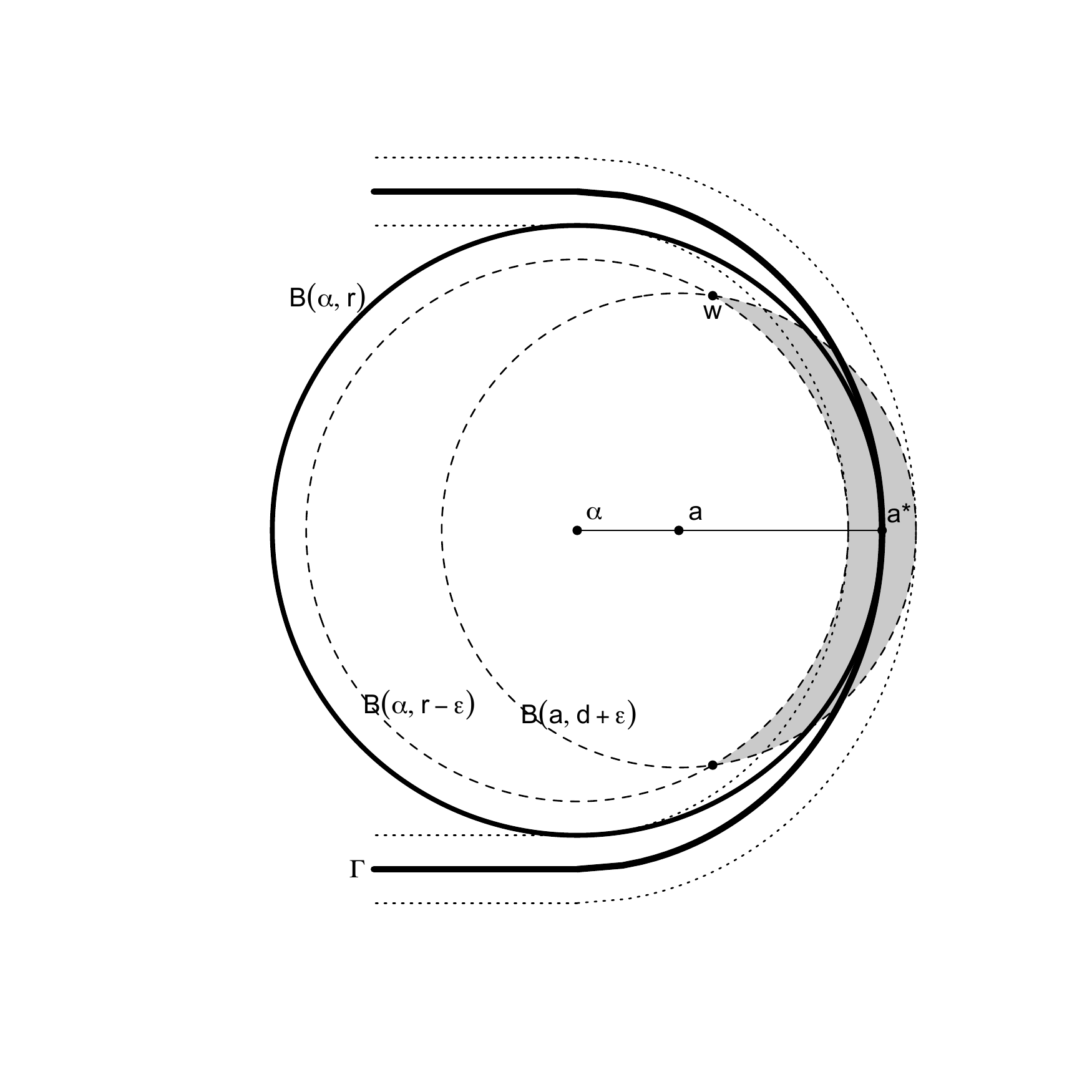}
\end{center}
\vspace{-.2in}
\caption{Diagram for proof of Lemma \ref{thm::key}.}
\label{fig::marcoplot}
\end{figure}

\noindent Note that $\hat a \notin B(\alpha, r-\epsilon)$, otherwise $g$ would be in $B(\alpha, r)$, 
but, by construction, $a^*$ is the only point that belongs to $B(\alpha,r)\cap\Gamma$.
To show that $\hat a \notin B(\alpha, r-\epsilon)$, assume by contradiction that 
$\hat a$ were in $B(\alpha, r-\epsilon)$, then $||\alpha - \hat a|| \leq r-\epsilon$ and
$$
||\alpha - g|| \leq ||\alpha - \hat a|| + || \hat a - g|| \leq r-\epsilon + \epsilon,
$$
thus implying that $g \in B(\alpha,r)$. 

Now, since $||a-\hat a|| \leq d+\epsilon$, then $ \hat a \in B(a, d+\epsilon) 
\cap B(\alpha, r-\epsilon)^c$, the shaded region in Figure \ref{fig::marcoplot}.
Thus, $||\hat a - a^*|| \leq ||w-a^*||$, where $w$ is either one of the two points 
where the two balls $B(\alpha, r-\epsilon)$ and $B(a, d+\epsilon)$ 
cross in Figure~\ref{fig::marcoplot}.

\noindent Without loss of generality assume the system of coordinates is such that: 
$$
\alpha \equiv (0,0) \qquad a\equiv(r-d, 0) \qquad a^*\equiv(r,0),
$$
and the equation of the two balls are:
\begin{eqnarray*}
B(a, d+\epsilon) &:& \left(x-(r-d)\right)^2 + y^2 = (d+\epsilon)^2 \\
B(\alpha, r-\epsilon) &:& x^2 + y^2 = (r-\epsilon)^2.
\end{eqnarray*}
Thus the coordinates of $w$ are
$$
\left ( x_w = r- \frac{r+d}{r-d}\ \epsilon,\  \ 
y_w= \pm \sqrt{(r-\epsilon)^2 - x_w^2}\right),
$$
and the distance between $\hat{a}$ and $a^*$
\begin{eqnarray*}
||\hat a - a^*||^2 &\leq & ||w-a^*||^2 = (x_w-r)^2 + y_w^2 = 
\left(\frac{4rd}{r-d}\right) \epsilon + \epsilon^2.
\end{eqnarray*}
\end{proofof}

\medskip

\begin{lemma}
\label{thm::key2}
Suppose $\epsilon < (\Delta-\sigma)/2$. Let
${\cal Y}(u) = \{ y = f(u)+tN(u):\ 
{\rm\ for\  some\ } u \in [0,1]\ {\rm\  and\ } |t| \leq \sigma + \epsilon\}$ 
be the extended fiber.
It can be shown that the extended fibers
${\cal Y}(u) =\{ f(u) + t N(u):\ -\Delta \leq t \leq \Delta\}$ are disjoint.
For a given $u$ let $y\in{\cal Y}(u)$.
Let $y^*_i$ be the point of $\partial S_i$ ($i=0,1$) closest to $y$.
There exists $\hat y_i \in \hat{\partial S}_i$ 
such that $||y - \hat y_i|| = d(y, \hat{\partial S}_i)$ and 
\begin{equation}
||y^*_i - \hat y_i||^2 \leq \frac{16 \Delta^2}{\beta} \epsilon + \epsilon^2,
\end{equation}
with $0 <\beta < (\Delta-\sigma)/2-\epsilon$.
Hence $||y^*_i - \hat y_i|| =O( \sqrt{\epsilon})$ uniformly over ${\cal Y}(u)$.
\end{lemma}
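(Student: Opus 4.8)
\medskip
\noindent\textbf{Proof proposal.}
The plan is to deduce the bound from Lemma~\ref{thm::key}, applied with $\Gamma=\partial S_i$, $\Gamma_1=\hat{\partial S}_i$, $a=y$ and $a^*=y^*_i$. The hypothesis $d_H(\partial S_i,\hat{\partial S}_i)\le\epsilon$ makes that lemma applicable, and its conclusion $||a^*-\hat a||^2\le\frac{4rd}{r-d}\,\epsilon+\epsilon^2$ will become the asserted bound once I (i) locate $y^*_i$ and bound $d=||y-y^*_i||$, and (ii) exhibit an admissible ball of radius $r>d+\epsilon$ that contains $y$ and $y^*_i$ and no other point of $\partial S_i$; a final step checks $r,d<2\Delta$ so that $\frac{4rd}{r-d}<\frac{16\Delta^2}{\beta}$ with $\beta=r-d$ in the stated range.

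First I would record two geometric facts. (a) \emph{The extended fibers ${\cal Y}(u)=\{f(u)+tN(u):|t|\le\Delta\}$ are pairwise disjoint.} This is exactly the argument of part~2 of Lemma~\ref{lemma::disjoint} with $\sigma$ replaced by $\Delta$ everywhere, the only input being that a ball of radius $\Delta$ tangent to $\Gamma_f$ at one point contains no other point of $\Gamma_f$ --- the defining property of the thickness. (b) \emph{$y^*_i$ lies on the fiber through $f(u)$.} Since $\epsilon<(\Delta-\sigma)/2$, we have $||y-f(u)||=|t|\le\sigma+\epsilon<\Delta$, so by (a), $f(u)$ is the unique nearest point of $\Gamma_f$ to $y$ and $y$ lies on no other fiber; together with the description of $\partial S$ in part~5 of Lemma~\ref{lemma::disjoint} and the normality of the segment $[y,y^*_i]$ to $\partial S_i$, this forces $y^*_i=f(u)+s_i\sigma N(u)$ with $s_i\in\{-1,+1\}$ and $s_0=-s_1$. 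In particular $y$, $y^*_i$ and any center on the normal line through $y^*_i$ are collinear, and $d\le 2\sigma+\epsilon<2\Delta$.

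The heart of the proof --- and the step I expect to be the main obstacle --- is step (ii): constructing the ball. The natural candidate is the ball tangent to $\partial S_i$ at $y^*_i$ with center on the normal $N(u)$ on the side of $y$; by collinearity it automatically contains $y$ (which sits at distance $d<r$ from $y^*_i$ toward the center), so the issue is purely that it must not meet $\partial S_i$ a second time. Because $\partial S_0$ and $\partial S_1$ are the two offset curves of $\Gamma_f$ at distance $\sigma<\Delta$, they inherit a rolling-ball property, with admissible radius of order $\Delta-\sigma$; locally this follows from the offset formula for the radius of curvature (a $\sigma$-offset toward the center of curvature degrades the radius from $\ge\Delta$ to $\ge\Delta-\sigma$, while an offset away from it only increases it), and globally one has to rule out the ball curling around to touch a far arc of the same boundary component --- which is where fact~(a) and a careful accounting of which side of $\partial S_i$ the ball lies on (relative to both the local curvature and the near-self-approaches of $\Gamma_f$) are needed. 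Since $\epsilon<(\Delta-\sigma)/2$, this leaves room to choose $r$ with $d+\epsilon<r$ and $\beta:=r-d$ anywhere in $(0,(\Delta-\sigma)/2-\epsilon)$; Lemma~\ref{thm::key} then gives $||y^*_i-\hat y_i||^2\le\frac{4rd}{r-d}\,\epsilon+\epsilon^2$, and since $rd<4\Delta^2$ the right-hand side is at most $\frac{16\Delta^2}{\beta}\,\epsilon+\epsilon^2$. As none of $\Delta,\sigma,\epsilon$ depends on $y$, this bound is uniform over $y\in{\cal Y}(u)$, so $||y^*_i-\hat y_i||=O(\sqrt\epsilon)$.

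I expect the rolling-ball bound for the offset curves to be the delicate point: the local curvature estimate is routine, but excluding a return contact of the tangent ball with a distant portion of $\partial S_i$ forces one to use the disjointness of the radius-$\Delta$ extended fibers and the fact that near-self-approaches of $\Gamma_f$ push the two sides of one offset apart even while bringing the two sides of the other together, and it is precisely this that charges the radius budget against $\Delta-\sigma$ rather than $\Delta$.
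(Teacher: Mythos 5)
Your high-level plan matches the paper's: apply Lemma~\ref{thm::key} with $\Gamma=\partial S_i$, $\Gamma_1=\hat{\partial S}_i$, $a=y$, $a^*=y^*_i$, after exhibiting a ball that meets $\partial S_i$ only at $y^*_i$ and contains $y$. You also identify the right tool --- disjointness of the radius-$\Delta$ extended fibers --- and you correctly locate the hard step. But you leave that step open, and the numerical budget you attach to it would not close. You propose a ball tangent to $\partial S_i$ directly at $y^*_i$ with center toward $y$, and credit the offset curve $\partial S_i$ a rolling radius ``of order $\Delta-\sigma$.'' That budget is insufficient: when $y$ is interior to $S$ (the case that drives the bound), $d=\|y-y^*_i\|$ can be as large as $2\sigma+\epsilon$, and Lemma~\ref{thm::key} requires $r>d+\epsilon$, hence $r>2\sigma+2\epsilon$, which can exceed $\Delta-\sigma$ whenever $\Delta<3\sigma+2\epsilon$ --- entirely possible under (A3). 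The correct statement, proved separately as Lemma~\ref{lemma::inner.outer}, is that $\partial S_i$ has \emph{inner} critical radius $\Delta+\sigma$ and outer critical radius $\Delta-\sigma$; for your ball, whose center lies on the $\Gamma_f$ side of $\partial S_i$, the relevant budget is $\Delta+\sigma$, which does close. But this is precisely the global fact you flag as ``the delicate point'' without proving, and the local offset-curvature heuristic you cite does not by itself deliver it.

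The paper sidesteps the offset rolling-ball analysis by a cleaner construction: fix $r=(\sigma+\Delta)/2$, let $\alpha_i$ be the center of the radius-$r$ ball tangent to $\Gamma_f$ at $f(u)$ on the side opposite $y^*_i$, and use the inflated ball $B(\alpha_i,r+\sigma)$. Since $\alpha_i=f(u)-s_i r N(u)$ with $r<\Delta$, $\alpha_i$ lies on ${\cal Y}(u)$, and a second contact of $B(\alpha_i,r+\sigma)$ with $\partial S_i$ would put $\alpha_i$ on some other ${\cal Y}(u')$, contradicting the disjointness you list as fact~(a). This is the ``careful accounting'' you gesture at, executed against $\Gamma_f$ rather than against the offset $\partial S_i$, which avoids the inner/outer asymmetry altogether and yields the constants $r+\sigma<2\Delta$, $d<2\Delta$, $r+\sigma-d\ge(\Delta-\sigma)/2-\epsilon$ directly. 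To salvage your version you would need to replace ``order $\Delta-\sigma$'' with the inner critical radius $\Delta+\sigma$ of Lemma~\ref{lemma::inner.outer}, and you would still need to rule out a far tangency of that ball with $\partial S_i$ via the extended fibers, which is the very argument the paper's inflate-from-$f(u)$ trick makes immediate.
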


\begin{proofof} {\bf Lemma \ref{thm::key2}.}
Let $r = (\sigma+\Delta)/2$ and note that $\sigma + \epsilon < r < \Delta$.
Consider two balls of radius $r$ tangent to the filament at $f(u)$ on either side of 
$\Gamma_f$. Both balls contain no points of $\Gamma_f$ other than $f(u)$.
Let $\alpha_i$ be the center of the ball on the side opposite to $y^*_i$, so that
$\alpha_i$ is on the normal through $f(u)$.

Now we show that the balls $B(\alpha_i, r+\sigma)$, $i = 0,1$ centered in $\alpha_i$ satisfy the 
conditions required in Lemma \ref{thm::key}. 
By construction, $B(\alpha_i, r+\sigma)$ is tangent to $\partial S_i$ at $y_i^*$ and 
$y\in B(\alpha_i, r+\sigma)$. The center $\alpha_i$ of $B(\alpha_i,r+\sigma)$ is on the normal 
through $f(u)$, thus $y_i^*$ is the closest point to $\alpha_i$ on the boundary 
$\partial S_i$ and there are no other points in $\partial S_i$ interior to the ball.
Also $\partial S_i$ cannot be tangent to the ball in a point $z\neq y^*_i$, otherwise $\alpha_i$ 
would be on the extended fiber ${\cal Y}(u')$ for some $u' \neq u$.
But
the extended fibers
${\cal Y}(u) =\{ f(u) + t N(u):\ -\Delta \leq t \leq \Delta\}$ are disjoint.
This shows that 
$B(\alpha_i, r+\sigma)$ is the ball $A$ of Lemma \ref{thm::key}, with 
$\alpha = \alpha_i$, $y=a$ and $y^* = a^*$. Hence, from Lemma \ref{thm::key}:
$$
||y^*_i - \hat y_i||^2 \leq \frac{4(r+\sigma)d}{r+\sigma-d} \epsilon + \epsilon^2
$$
where $d= ||y - y^*_i|| \leq 2\sigma + \epsilon$.
The result follows since $r+\sigma < 2\Delta$, $d \leq 2\sigma + \epsilon < 2\Delta$ and 
$r+\sigma-d \geq (\Delta-\sigma)/2-\epsilon > \beta$. \end{proofof}

\begin{figure}[h]
\begin{center}
\includegraphics[width=3.5in]{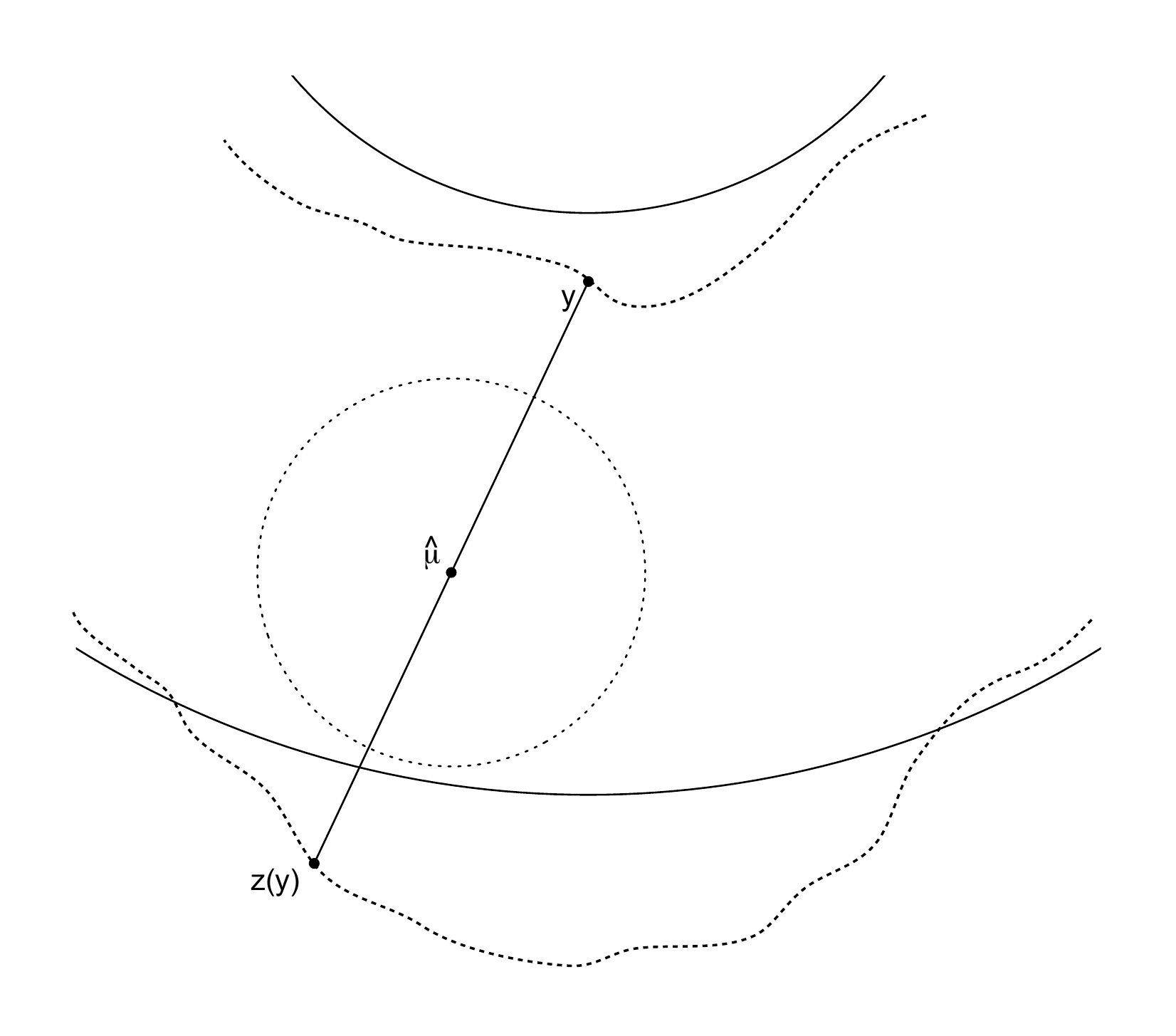}
\end{center}
\vspace{-.3in}
\caption{First illustration for the proof of Theorem \ref{thm::methodII}.}
\label{fig::plot1}
\end{figure}

\begin{proofof} {\bf Theorem \ref{thm::methodII}.}
See Figures \ref{fig::plot1} and \ref{fig::plot2}.

\smallskip
{\em 1.} Recall that $\hat \sigma$ $=$ $\max_{y \in \hat S} d(y,\hat{\partial S})$ 
$<$ $\sigma + \epsilon$ 
and that for each 
$f(u) \in \Gamma_f$ we have $d(f(u), \partial S_0)=d(f(u), \partial S_1) = \sigma$.

{\em (i)} Let $\hat{\mu} \in \hat\Gamma$, and let $y \in \hat{\partial S_0}$ and 
$z(y) \in \hat{\partial S_1}$ be the points that generated it, as in 
Figure \ref{fig::plot1}. Let $\ell(y,z(y))$  be the line segment that joins $y$ to $z(y)$.
The distance between any $x \in \ell(y, z(y))$ and the boundary curves is, respectively,
$d(x,\hat {\partial S_1})= ||x-z(y)||$ and $d(x, \hat {\partial S_0}) \leq ||x-y||$.  
The midpoint $\hat{\mu}$ on $\ell(y,z(y))$ is such that $||\hat{\mu}-z(y)|| \leq \hat \sigma$
and $||\hat{\mu}-y|| \leq \hat \sigma$. Consider the point $f(u) \in \Gamma_f$ at the 
intersection between $\Gamma_f$ and $\ell(y,z(y))$. 

To show that $||\hat{\mu} - f(u)|| \leq 2 \epsilon$, and $f(u)$ belongs to the 
ball $B(\hat{\mu}, 2\, \epsilon)$ in Figure \ref{fig::plot1}, suppose to the contrary 
that $||f(u)-\hat{\mu}||>2\epsilon$, then either 
$||f(u) - y || < \hat \sigma - 2\epsilon$ or
$||f(u)-z(y)||< \hat \sigma -2\epsilon$. 
But if $||f(u) - y || < \hat \sigma - 2\epsilon$, then
$$
d(f(u), \partial S_0) \leq 
d(f(u), \hat{\partial S_0}) + \epsilon \leq ||f(u)-y|| +\epsilon < 
\hat \sigma - 2\epsilon + \epsilon < \sigma
$$
which contadicts the fact that $d(f(u), \partial S_0)=\sigma$. If, instead
$||f(u)-z(y)||< \hat \sigma -2\epsilon$, then
$$
d(f(u), \partial S_1) \leq 
d(f(u), \hat{\partial S_1}) + \epsilon = ||f(u)-z(y)|| +\epsilon < 
\hat \sigma - 2\epsilon + \epsilon < \sigma
$$
that contradicts the fact that $d(f(u), \partial S_1)=\sigma$.

\begin{figure} [h]
\begin{center}
\includegraphics[width=3.5in]{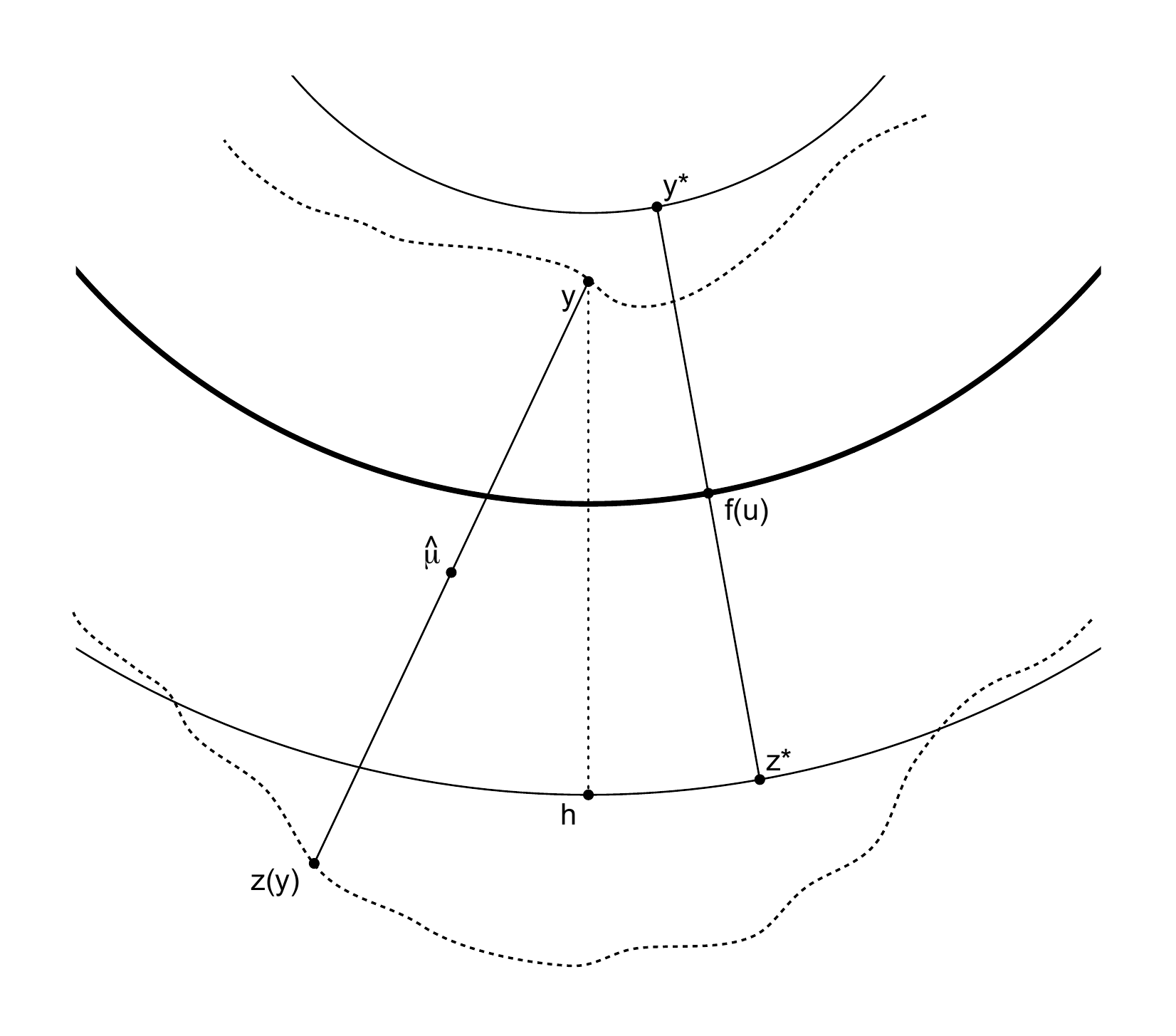}
\end{center}
\vspace{-.2in}
\caption{Second illustration for the proof of Theorem \ref{thm::methodII}.}
\label{fig::plot2}
\end{figure}

\smallskip

{\em (ii)} Let $f(u) \in \Gamma_f$, and let $y^*$ and $z^*$ be its closest points 
on $\partial S_0$ and $\partial S_1$ respectively, as in Figure \ref{fig::plot2}. 
By construction, $f(u)$ is on the midpoint of the segment $\ell(y^*,z^*)$, hence
$f(u)= (y^* + z^*)/2$. Consider a point $ y \in \hat{\partial S_0}$ such that 
$||y^*- y||\leq \epsilon$.  Let $z(y)$ and $h$ be the projections of $y$ on 
$\hat{\partial S_1}$ and $\partial S_1$ respectively.  The midpoint 
$\hat{\mu} = (y + z(y))/2$ belongs to $\hat\Gamma$. From Lemma \ref{thm::key2}, 
$||z(y) - h|| \leq C_1 (\sqrt \epsilon)$ uniformly.  Moreover, from {\bf Lemma 
\ref{lemma::painful}} that follows below, we have 
$||h-z^*|| \leq C_2 \epsilon$ uniformly. Hence:
$$
||z(\hat y) - z^*|| \leq ||z(\hat y) - h|| + ||h - z^*|| 
                     \leq C_1 (\sqrt \epsilon) + C_2 \epsilon \leq C_3 \sqrt \epsilon.
$$
It follows that
$$
||f(u)- \hat{\mu})||= \left\Vert \frac{y^* + z^*}2 - \frac{y + z(y)}2 \right\Vert 
\leq \frac{||y^* - y|| + ||y^* - y||}2 \leq \frac{C_1 + C_3}2 \sqrt\epsilon .
$$

{\em (iii)} is a consequence of {\em (i)} and {\em (ii)}.

\medskip
{\em 2.} The second statement follows from statement {\em 1.} and Lemma \ref{lemma::supp}.  
\end{proofof}

\medskip
Some terminology and the next {\bf Lemma \ref{lemma::inner.outer}} are needed for stating and 
proving {\bf Lemma \ref{lemma::painful}}. 

Now we examine the two disjoint curves $\partial S_i, i=0,1$ that 
constitute the boundary $\partial S$ when $\Gamma_f$ is closed, and the
set $\partial \cT$ when $\Gamma_f$ is open. For each boundary 
curve $\partial S_i$ we can distinguish two sides: one side that faces towards 
$\Gamma_f$, and a second side that faces away from $\Gamma_f$. Each point 
$x \in \partial S_i$ supports two tangent balls that contain no other points of 
$\partial S_i$, one on each side.

Analogously to the definition of thickness of a curve in Section \ref{sect::model},
we define the {\em Outer Thickness} of the boundary $\partial S_i$ 
to be the minimum radius of curvature $r_O$ of all the balls tangent to one point of $\partial S_i$
on the side facing away from $\Gamma_f$. 
We also define the {\em Outer Critical Ball} 
$O_x$ to be the ball facing away from $\Gamma_f$ and tangent to any point 
$x \in \partial S_i$, with radius $r_O$. Similarly, 
we define the {\em Inner Thickness} 
of the boundary $\partial S_i$ to be the minimum radius of curvature $r_I$ of all the 
balls tangent to one point of $\partial S_i$ on the side facing towards $\Gamma_f$,
and the {\em Inner Critical Ball} $I_x$ to be the ball facing towards $\Gamma_f$ 
and tangent to any point $x \in \partial S_i$, with radius $r_I$. Both balls can 
roll freely on the side of $\partial S_i$ where they are constructed, but not 
necessarily on the other side. The thickness of the boundary curves is 
$\Delta(\partial S_i) = \min \{r_O, r_I\}$.

\begin{lemma} \label{lemma::inner.outer}
For every point $y \in \partial S_i$ the outer critical ball $O_y$ has radius 
$r_O=\Delta - \sigma$ and the inner critical ball $I_y$ has radius
$r_I = \Delta + \sigma$. Thus the thickness of $\partial S_i, i=0,1$  is 
$\Delta(\partial S_i) = \Delta - \sigma$.
\end{lemma}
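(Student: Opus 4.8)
\noindent{\bf Proof plan.}\ The idea is to reduce the statement to a calculation at a single boundary point, exploiting that each $\partial S_i$ is a parallel (offset) curve of $\Gamma_f$ at distance $\sigma$, together with the rolling-ball description of the thickness $\Delta$.

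First I would fix $y\in\partial S_i$. By Lemma~\ref{lemma::disjoint}, parts 4 and 5, there is a unique $u$ and a unit normal $N$ (the one pointing from $f(u)$ to $y$) with $y=f(u)+\sigma N$; and since $\partial S_i$ is the $\sigma$-offset of $\Gamma_f$ and $\sigma<\Delta$, the curve $\partial S_i$ has velocity a nonzero multiple of $T(\cdot)$, so its normal at $y$ is again $\pm N$, the point $f(u)$ lies on the $S$-side of $y$, and the two balls tangent to $\partial S_i$ at $y$ have centers $f(u)+(\sigma+\rho)N$ (the outer one, opening into $S^c$) and $f(u)+(\sigma-\rho')N$ (the inner one, opening into $S$). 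I would then take the outer radius $\rho=\Delta-\sigma$, so its center is $c=f(u)+\Delta N$. By the rolling-ball interpretation of $\Delta$ in the discussion after~(\ref{eq::Delta}), $B(c,\Delta)$ is tangent to $\Gamma_f$ at $f(u)$ with interior disjoint from $\Gamma_f$, so $d(c,\Gamma_f)=\Delta$; recalling $S=\Gamma_f\oplus\sigma=\{x:d(x,\Gamma_f)\le\sigma\}$, every point of $B(c,\Delta-\sigma)$ is at distance $\ge\sigma$ from $\Gamma_f$, i.e.\ $B(c,\Delta-\sigma)\subset\overline{S^c}$; as $\norm{c-y}=\Delta-\sigma$, this ball is tangent to $\partial S$ at $y$ and meets $\partial S_i$ only at $y$ on its boundary (any interior point $y'\in\partial S_i$ would satisfy $d(y',\Gamma_f)>\sigma$, impossible for $y'\in\partial S$). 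Symmetrically, with $\rho'=\Delta+\sigma$ and $c'=f(u)-\Delta N$, the ball $B(c',\Delta)$ is again a rolling ball of $\Gamma_f$ and the enlargement $B(c',\Delta+\sigma)$ is tangent to $\partial S$ at $y$ on the $\Gamma_f$-side; the offset-curve formula $\kappa_{\partial S_i}=\kappa/(1\pm\sigma\kappa)$ with $|\kappa|\le 1/\Delta$ shows that $\Delta+\sigma$ (resp.\ $\Delta-\sigma$) is exactly the cap on the radius of curvature of $\partial S_i$ on the side into which it bends (resp.\ on the other side).

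For sharpness I would choose a witness $f(u_0)$ at which the minimum defining $\Delta$ in~(\ref{eq::Delta}) is attained — either a point of curvature $1/\Delta$, or a point of contact of a bitangent circle of radius $\Delta$ — and push the obstruction through the offset map: at the corresponding boundary point, $\partial S_i$ either has radius of curvature exactly $\Delta-\sigma$ on its $S^c$-side, or it returns near the offset of the second contact point, so that any outer tangent ball there of radius strictly larger than $\Delta-\sigma$ must contain a further point of $\partial S_i$; the symmetric argument caps the inner radius at $\Delta+\sigma$. This gives $r_O=\Delta-\sigma$ and $r_I=\Delta+\sigma$, hence $\Delta(\partial S_i)=\min\{r_O,r_I\}=\Delta-\sigma$, and when $f$ is open the identical reasoning applies to the two lateral curves comprising $\partial\cT$ in Lemma~\ref{lemma::disjoint}(5). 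I expect the sharpness step to be the main obstacle: producing and controlling the worst-case outer ball forces one to relate near-self-intersections of the offset curve $\partial S_i$ back to the global quantity $\Delta$ of $\Gamma_f$, whereas the existence of the two critical balls above is essentially routine offset-curve geometry built on Lemma~\ref{lemma::disjoint}.
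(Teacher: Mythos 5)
Your argument for the outer critical ball is correct, and in fact more self-contained than the paper's (which proves only the inner case in detail and declares the outer case ``similar''): since $d(c,\Gamma_f)=\Delta$ with $c=f(u)+\Delta N(u)$, every interior point $x$ of $B(c,\Delta-\sigma)$ has $d(x,\Gamma_f)>\sigma$ by the reverse triangle inequality, hence lies strictly outside $S$ and cannot belong to $\partial S_i$. Your treatment of the inner ball, however, has a genuine gap. The same distance argument fails there: for $x\in B(c',\Delta+\sigma)$ with $c'=f(u)-\Delta N(u)$, the reverse triangle inequality only gives $d(x,\Gamma_f)\geq \Delta-(\Delta+\sigma)=-\sigma$, which is vacuous, and you fall back on the offset-curve curvature formula $\kappa_{\partial S_i}=\kappa/(1\pm\sigma\kappa)$. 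That formula is purely local: it bounds the radius of curvature of $\partial S_i$ pointwise but says nothing about a faraway arc of $\partial S_i$ entering $B(c',\Delta+\sigma)$ when $\Gamma_f$ comes close to self-intersecting. Ruling out that global obstruction is precisely what the rolling-ball property requires and is the crux of the lemma.

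The paper handles the inner ball with a collinearity argument you should adopt in place of the curvature formula. Take any other $x\in\partial S_i$ and the segment $\ell$ from $c'$ to $x$. Because $c'$ lies at depth $\Delta>\sigma$ on the side of $\Gamma_f$ opposite to $\partial S_i$, $\ell$ must cross $\Gamma_f$ at some $f(t)$ (this uses that $\Gamma_f$ separates the relevant region). The rolling ball $B(c',\Delta)$ gives $||f(t)-c'||\geq\Delta$, and since $d(x,\Gamma_f)=\sigma$ we get $||x-f(t)||\geq\sigma$; adding along the collinear points yields $||x-c'||=||x-f(t)||+||f(t)-c'||\geq\sigma+\Delta$, so $x$ is not interior to $B(c',\Delta+\sigma)$. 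Finally, you misidentify where the difficulty lies: the ``sharpness'' direction you flag as the main obstacle is not proved in the paper either and is not used downstream---Lemma~\ref{lemma::painful} invokes only the rolling property of the two critical balls, not exact equality of the radii---so the effort should go into the global inner-ball bound above, not into producing a worst-case witness.
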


\vspace{.2cm}

\begin{proofof} {\bf Lemma \ref{lemma::inner.outer}.}
We start with the inner ball.
Let $y$ be a point on the boundary $\partial S_i$.
Hence, $y=f(u)+\sigma N(u)$ say.
Let $A=B(c,\sigma+\Delta)$
where $c=f(u)-\Delta N(u)$.
We claim that if $x$ is any other point on $\partial S_i$ then
$x\notin A$.
Let $\ell$ be the line segment connecting $x$ to $c$.
We wil show that the length of $\ell$ is strictly larger than 
$\sigma + \Delta$.
Now $x = f(v) + \sigma N(v)$ for some $v$.
The line $\ell$ crosses $\Gamma_f$ at some point $f(t)$.
The closest point on $\Gamma_f$ to $x$ is $f(v)$
and the distance from $x$ to $f(v)$ is $\sigma$.
Hence,
$||x- f(t)|| \geq \sigma$.
Let $A' = B(c,\Delta)$.
Then $f(t)\notin A'$ and hence
$||f(t)-c|| > \Delta$.
Therefore,
$||x -c|| = ||x-f(t)|| + ||f(t)-c|| > \sigma + \Delta$
as required.
The proof for the outer ball is similar.
\end{proofof}

\medskip
\begin{lemma} \label{lemma::painful} 
Let $y\in \partial S_i$, and let $y'$ be a second point in $\mathbb{R}^2$, such 
that $||y-~y'|| < \Delta - \sigma$. Denote by $z$ and $z'$, respectively, the
projections of $y$ and $y'$ on $\partial S_{1-i}$, then the distance between
$z$ and $z'$ is
$$
d(z,z') = ||z-z'|| \leq 2 \ \frac{\Delta+\sigma}{\Delta-\sigma} ||y-y'||.
$$
\end{lemma}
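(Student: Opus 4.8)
The plan is to exploit the rolling-ball geometry of the boundary curves supplied by Lemma~\ref{lemma::inner.outer}, reducing the claim to two elementary quadratic inequalities followed by a Cauchy--Schwarz estimate. Write $D=\|z-z'\|$ and $\delta=\|y-y'\|$; if $D=0$ there is nothing to prove, so assume $D>0$. Since $z$ is a nearest point of $\partial S_{1-i}$ to $y$, the vector $y-z$ is orthogonal to $\partial S_{1-i}$ at $z$, and $y$ lies on the side of $\partial S_{1-i}$ that faces $\Gamma_f$ (by Lemma~\ref{lemma::disjoint} the curve $\partial S_i$ and all of $\Gamma_f$ lie on that side of $\partial S_{1-i}$, across the tube). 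Letting $\hat n_z$ be the unit normal at $z$ pointing toward $\Gamma_f$ and $d_y=\|y-z\|$, we therefore have $y=z+d_y\hat n_z$, and likewise $y'=z'+d_{y'}\hat n_{z'}$ with $d_{y'}=\|y'-z'\|$. The two distance bounds I need come from the hypotheses: by Lemma~\ref{lemma::disjoint} the fiber $L(u)$ through $y$ has its opposite endpoint $w=2f(u)-y$ on $\partial S_{1-i}$, so $d_y\le\|y-w\|=2\sigma$, and hence $d_{y'}\le d_y+\delta<2\sigma+(\Delta-\sigma)=\Delta+\sigma$.

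Next I would record the rolling-ball inequalities. By Lemma~\ref{lemma::inner.outer}, the inner critical ball $I_z$ at $z$ has radius $R:=\Delta+\sigma$ and center $c_z=z+R\hat n_z$, and it meets $\partial S_{1-i}$ only at $z$; in particular $z'$ is not interior to $I_z$, so $\|z'-c_z\|\ge R$. Expanding $\|z'-z-R\hat n_z\|^2\ge R^2$ yields
$$
\langle z'-z,\ \hat n_z\rangle \ \le\ \frac{D^2}{2R},
$$
and by the symmetric argument at $z'$, $\langle z-z',\ \hat n_{z'}\rangle\le D^2/(2R)$.

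Finally I would combine these. From $y-y'=(z-z')+d_y\hat n_z-d_{y'}\hat n_{z'}$ and the two inequalities above,
$$
\langle y-y',\ z-z'\rangle \ =\ D^2 + d_y\langle\hat n_z,\,z-z'\rangle - d_{y'}\langle\hat n_{z'},\,z-z'\rangle \ \ge\ D^2\Bigl(1-\frac{d_y+d_{y'}}{2R}\Bigr).
$$
Since $d_y+d_{y'}<2\sigma+(\Delta+\sigma)=\Delta+3\sigma$ and $R=\Delta+\sigma$, the bracket exceeds $1-\tfrac{\Delta+3\sigma}{2(\Delta+\sigma)}=\tfrac{\Delta-\sigma}{2(\Delta+\sigma)}>0$, where $\Delta>\sigma$ (assumption (A3)) is used. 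On the other hand $\langle y-y',z-z'\rangle\le\|y-y'\|\,\|z-z'\|=\delta D$ by Cauchy--Schwarz, so $\delta D\ge D^2\tfrac{\Delta-\sigma}{2(\Delta+\sigma)}$, and dividing by $D>0$ gives $\|z-z'\|\le \tfrac{2(\Delta+\sigma)}{\Delta-\sigma}\|y-y'\|$, as claimed. (If $y'$ instead lies on the outward side of $\partial S_{1-i}$, then $d_{y'}\le\delta<\Delta-\sigma$ and one uses the \emph{outer} critical ball of radius $\Delta-\sigma$ at $z'$ in place of $I_{z'}$; since $d_{y'}/(2(\Delta-\sigma))<1/2$ and $d_y/(2(\Delta+\sigma))\le\sigma/(\Delta+\sigma)$, the bracket again exceeds $\tfrac{\Delta-\sigma}{2(\Delta+\sigma)}$ and the same bound results.)

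I expect the only genuinely delicate point to be the bookkeeping in the first paragraph: verifying that $y$ and the nearest point $z$ sit on the $\Gamma_f$-facing side of $\partial S_{1-i}$, so that the inner critical ball (radius $\Delta+\sigma$) — rather than the flatter outer one — is the relevant one at $z$, and confirming $d_y\le2\sigma$ from the fiber structure of Lemma~\ref{lemma::disjoint}; the rest is routine algebra. I would also remark that the argument is insensitive to whether $f$ is open or closed, since Lemma~\ref{lemma::inner.outer} and the fiber decomposition of Lemma~\ref{lemma::disjoint} hold in both cases (with $\partial\cT$ in place of $\partial S$ when $f$ is open).
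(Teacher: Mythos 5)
Your proof is correct, and it takes a genuinely different route from the paper's. The paper proceeds by a case split (first handling $\|y'-z\|\le\|y'-y\|$ trivially, then the complementary case), a law-of-sines computation in the triangle $y,c,y'$ to bound $\sin\theta$, identification of the admissible region $B(y',\|y'-z\|)\cap B(c,\Delta+\sigma)^c$, a dedicated auxiliary result (Lemma~\ref{lemma::painful2}) to show the circle-intersection point $w$ is the farthest point of that region from $z$, and finally a chord-length formula $\|z-w\|=2(\Delta+\sigma)\sin\theta$. You instead write $y=z+d_y\hat n_z$, $y'=z'+d_{y'}\hat n_{z'}$ using the first-order projection condition, convert the rolling-ball tangency of Lemma~\ref{lemma::inner.outer} into the second-order support inequalities $\langle z'-z,\hat n_z\rangle\le D^2/(2R)$ and $\langle z-z',\hat n_{z'}\rangle\le D^2/(2R)$, expand $\langle y-y',z-z'\rangle$ to get a lower bound $D^2\bigl(1-(d_y+d_{y'})/(2R)\bigr)$, and close with Cauchy--Schwarz. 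What you gain is a shorter, purely algebraic argument that dispenses with Lemma~\ref{lemma::painful2} and the farthest-point analysis entirely; the two subcases of the paper's proof collapse into one inner-product computation, with only a small parenthetical adjustment (swapping the inner ball for the outer ball of radius $\Delta-\sigma$ at $z'$) when $y'$ lands on the far side of $\partial S_{1-i}$. The paper's argument, in exchange, is more visual and ties directly to the rolling-ball picture used throughout. Both yield the same constant $2(\Delta+\sigma)/(\Delta-\sigma)$. I checked the key steps: $d_y\le 2\sigma$ follows from the fiber endpoint $w=2f(u)-y\in\partial S_{1-i}$, $d_{y'}\le d_y+\delta$ by the triangle inequality, the quadratic expansion of $\|z'-c_z\|\ge R$ is correct, and the bracket bound $1-(d_y+d_{y'})/(2R)>(\Delta-\sigma)/(2(\Delta+\sigma))$ holds in both subcases, so the division by $D>0$ at the end is legitimate.
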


\medskip

\begin{proofof} {\bf Lemma \ref{lemma::painful}.} If $||y'-z|| \leq ||y'-y||$ then
$$
||z-z'|| \leq ||z-y'|| + ||y'-z'|| \leq 2||z-y'||
 \leq 2||y'-y||.
$$
If instead $||y'- z|| > ||y'-y||$ (see Figure \ref{fig::painful}), let $c$ be
the center of the outer critical ball $O_y \equiv B(c, \Delta-\sigma)$, and 
let $\theta$ be the angle $\hat{ycy'}$. Consider the triangle with vertices in 
$y, c$ and $y'$. Since $||c-y|| = \Delta - \sigma$, from the law of sines
applied to $\theta$ and to the angle facing $\ell(c,y)$
$$
\sin \theta = \sin\,(\hat{c y y'})\, \frac{||y-y'||}{||c - y||} 
\leq \frac{||y-y'||}{\Delta-\sigma}.
$$

Now we show that the point $z'$, projected from $y'$ onto $\partial S_{1-i}$, 
lies in the shaded region of Figure \ref{fig::painful}. In fact, 
the inner critical ball $I_z$, tangent to $z$ is such that
$z' \notin B(c, \Delta + \sigma) = I_z$. Moreover, since $z'$ is the 
closest point to $y'$, it follows that $||y'-z'|| \leq ||y'-z||$. 
Thus $z' \in B(y', ||y'-z||)$, and so
$z' \in B(y', ||y'-z||) \cap B(c, \Delta + \sigma)^c$, the shaded
region in Figure \ref{fig::painful}. 
The two balls $B(y', ||y'-z||)$ and $B(c, \Delta + \sigma)$ intercept in the two
points $z$ and $w$. 

Also, in the following {\bf Lemma \ref{lemma::painful2}}, we show that 
$||y-y'|| < \Delta - \sigma$ implies that $w$ is the farthest point from $z$ in 
the shaded region. 

The angle $\hat{zcw} = 2\,\theta$ and the length of the chord $\ell(z,w)$ 
is $||z-w|| = 2(\Delta+~\sigma) \sin \theta$. Thus
$$
||z-z'|| \leq ||z-w|| = 2(\Delta+\sigma) \sin \theta 
\leq 2(\Delta+\sigma) \frac{||y-y'||}{\Delta-\sigma}.
$$
\end{proofof}

\begin{lemma} \label{lemma::painful2}
If $||y-y'|| < \Delta - \sigma$ then $w$ is the farthest point from $z$ 
in the shaded area of Figure \ref{fig::painful}.
\end{lemma}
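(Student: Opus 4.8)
The plan is to reduce Lemma \ref{lemma::painful2} to a single geometric fact: that the point $z^\dagger := 2y'-z$ antipodal to $z$ on the circle $\partial B(y',\|y'-z\|)$ lies in the closed ball $\overline{B(c,\Delta+\sigma)}$. Recall from the proof of Lemma \ref{lemma::painful} the configuration: with $B_1 = B(y',\|y'-z\|)$ and $B_2 = B(c,\Delta+\sigma) = I_z$, the point $z$ lies on both circles $\partial B_1,\partial B_2$, which meet again at $w$, and the shaded region is the lune $\cL = \overline{B_1}\setminus{\rm interior}(B_2)$, whose boundary is the outer arc $\gamma_1 = \partial B_1\setminus B_2$ together with the inner arc $\gamma_2 = \partial B_2\cap\overline{B_1}$, both joining $z$ to $w$ (the generic transverse case; the tangency case is degenerate).

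Granting the fact above, I would argue that $w$ is the farthest point of $\cL$ from $z$ as follows. Given $p\in\cL$, sweep it outward along the ray from $c$: since $\overline{B_1}$ is convex and $\|x-c\|$ only increases (so $x$ stays outside $B_2$), the swept segment stays in $\cL$, and by the law of cosines in triangle $xcz$,
$$
\|x-z\|^2 = \|x-c\|^2 + (\Delta+\sigma)^2 - 2(\Delta+\sigma)\|x-c\|\cos\angle(x,c,z),
$$
which is nondecreasing in $\|x-c\|$ once $\|x-c\|\ge\Delta+\sigma$, so $\|x-z\|$ does not decrease along the sweep; hence $\sup_{\cL}\|\cdot-z\|$ is attained where the sweep leaves $\overline{B_1}$, i.e.\ on $\gamma_1$. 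Along $\gamma_1\subset\partial B_1$ one has $\|x-z\| = 2\|y'-z\|\sin(\angle(x,y',z)/2)$; because $z^\dagger\in\overline{B_2}$, the arc $\gamma_1 = \partial B_1\setminus B_2$ omits $z^\dagger$, so it subtends at $y'$ an angle $\le\pi$ and $\|\cdot-z\|$ is monotone along it, with maximum at the endpoint $w$. This gives $\|p-z\|\le\|w-z\| = 2(\Delta+\sigma)\sin\theta$ for all $p\in\cL$, where $2\theta = \angle zcw$.

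To establish the reduction, I would fix coordinates using Lemma \ref{lemma::inner.outer}: with $y = f(u)+\sigma N(u)$ we have $z = f(u)-\sigma N(u)$, while the common center of $O_y$ and $I_z$ is $c = f(u)+\Delta N(u)$, so $c,y,z$ are collinear with $\|c-y\| = \Delta-\sigma$ and $\|c-z\| = \Delta+\sigma$. Writing $y' = y+\xi$ with $\|\xi\| < \Delta-\sigma$, one gets $z^\dagger - c = 2(y'-c)-(z-c) = (3\sigma-\Delta)N(u) + 2\xi$, and expanding $\|z^\dagger-c\|^2$ and using $(\Delta+\sigma)^2-(3\sigma-\Delta)^2 = 8\sigma(\Delta-\sigma)$ shows that $\|z^\dagger-c\|\le\Delta+\sigma$ is equivalent to
$$
\|\xi\|^2 \ \le\ 2\sigma(\Delta-\sigma) + (\Delta-3\sigma)\,\langle\xi,N(u)\rangle.
$$
I would then close this from the two available bounds, $\|\xi\| < \Delta-\sigma$ and $\langle\xi,N(u)\rangle > -\sigma$; the second is exactly the branch hypothesis $\|y'-z\| > \|y'-y\|$ under which Lemma \ref{lemma::painful2} is invoked in the proof of Lemma \ref{lemma::painful}, since $y'-z = \xi + 2\sigma N(u)$ makes that condition read $4\sigma^2+4\sigma\langle\xi,N(u)\rangle > 0$. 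A short case split on the sign of $\Delta-3\sigma$ reduces this to an elementary quadratic inequality in $\|\xi\|$; verifying that the quadratic has the right sign throughout the admissible range, using only the structural assumption $\sigma<\Delta$, is the one genuinely fiddly point. Everything else is soft planar geometry, so I expect this antipodal-containment step to be the main obstacle.
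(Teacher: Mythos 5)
Your reduction is correct and cleaner than the paper's argument: the radial sweep disposes of the interior and the inner arc $\gamma_2$, and the claim really does reduce to whether the antipodal point $z^\dagger = 2y'-z$ lies in $\overline{B(c,\Delta+\sigma)}$. But the step you flagged as ``fiddly'' is not fiddly --- it fails. The inequality $\|\xi\|^2 \le 2\sigma(\Delta-\sigma) + (\Delta-3\sigma)\langle\xi,N(u)\rangle$ cannot be closed from $\|\xi\|<\Delta-\sigma$ and $\langle\xi,N(u)\rangle>-\sigma$ alone once $\Delta>3\sigma$. Take $\sigma=1$, $\Delta=10$, and $\xi\perp N(u)$ with $\|\xi\|=5$: both hypotheses hold, yet the inequality reads $25\le 18$. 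In the paper's coordinates $c=(0,0)$, $y=(9,0)$, $z=(11,0)$, $y'=(9,5)$, this gives $z^\dagger=(7,10)$, $\|z^\dagger-c\|=\sqrt{149}>11=\Delta+\sigma$, so $z^\dagger$ sits on the outer arc of the lune and $\|z-z^\dagger\| = 2\sqrt{29}$ strictly exceeds $\|z-w\|$.

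So the obstacle is real, and in fact Lemma \ref{lemma::painful2} as stated is false for $\Delta>3\sigma$. The paper's own proof contains a matching slip: in its final display, $\bigl(\frac{\Delta+\sigma}{2}\bigr)^2+\bigl(\frac{\Delta-3\sigma}{2}\bigr)^2-2\cdot\frac{\Delta+\sigma}{2}\cdot\frac{\Delta-3\sigma}{2}$ equals $(2\sigma)^2=4\sigma^2$, not $(\Delta-\sigma)^2$ (they coincide only at $\Delta=3\sigma$), and the direction of the inequality that precedes it depends on the sign of $\Delta-3\sigma$. With the numbers above, $\|y-h\|^2\approx 22.2$, far below $(\Delta-\sigma)^2=81$, so the step ``$y'$ past $h$ forces $\|y-y'\|\ge\Delta-\sigma$'' does not go through. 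Any repair of the bound in Lemma \ref{lemma::painful} would have to exploit that $z'$ is a \emph{projection} onto $\partial S_{1-i}$, not an arbitrary point of the shaded region; maximizing $\|z-\cdot\|$ over the entire lune is simply not a sharp enough envelope.
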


\begin{figure} [h]
\begin{center}
\includegraphics[width=3.5in]{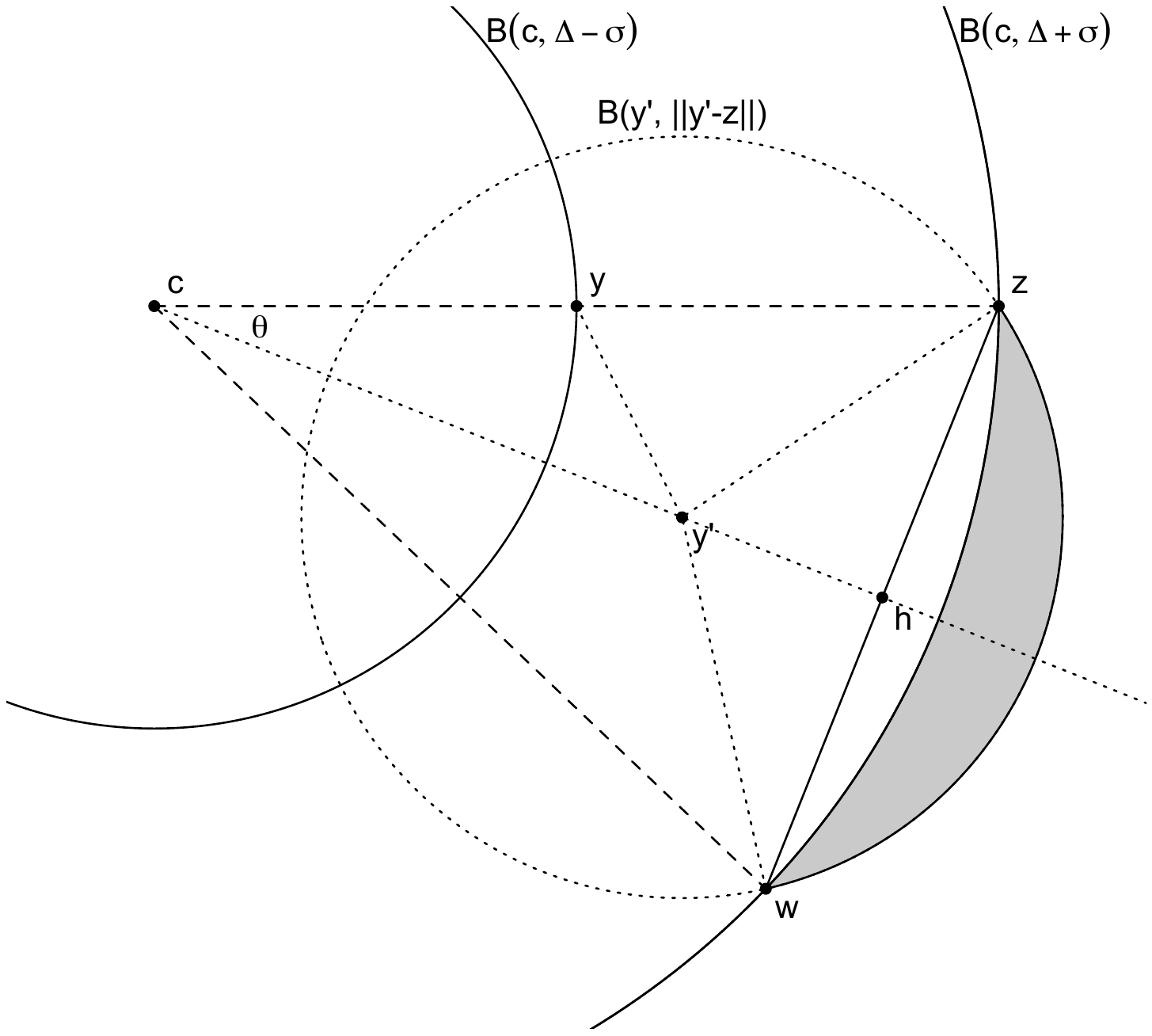}
\end{center}
\caption{Illustration for the proof of Lemma \ref{lemma::painful} and Lemma \ref{lemma::painful2}.}
\label{fig::painful}
\end{figure}

\begin{proofof} {\bf Lemma \ref{lemma::painful2}}. See Figure 
\ref{fig::painful}. While keeping the angle $\theta$ fixed, and as long as 
$||y-y'|| < \Delta - \sigma$, one can move the location of $y'$ along the 
radius of $B(c, \Delta + \sigma)$ from its center $c$ through $y'$. Let 
$h=(z+w)/2$ be the midpoint between $z$ and $w$. If $y'$ is chosen on the 
segment $\ell(c,h)$, then repeating the proof of Lemma \ref{lemma::painful} 
generates the same point $w$ as in Figure \ref{fig::painful}, and $w$ is still 
the point in the shaded region farthest away from $z$. 

If, instead, $y'$ is chosen along the line from $h$ onwards, then, by construction,
there are points in the shaded region that have distance from $z$ larger than 
$||z-w||$. But such $y'$ will violate the condition $||y-y'|| < \Delta - \sigma$. 
In fact, assume without loss of generality that the system of coordinates is such 
that: 
$$
c \equiv (0,0), \quad y \equiv (\Delta-\sigma,0), \quad z \equiv (\Delta+\sigma, 0),
$$
and the coordinates of $w$ and $h$ are
$$
w \equiv \biggl((\Delta+\sigma) \cos 2\,\theta, (\Delta+\sigma) \sin 2\,\theta \biggl);
\;
h \equiv \biggl(\frac{\Delta+\sigma}2 (1 + \cos 2\, \theta),
\frac{\Delta+\sigma}2 \sin 2\,\theta \biggl).
$$
By construction, if $y'$ lies along the line from $h$ onwards, then
$||y-y'|| \geq ||y-h|$. This implies that $ ||y-y'||\geq ||y-h||$, and
\begin{eqnarray*}
||y-h||^2 &=&
\left( \frac{\Delta + \sigma}2 \right)^2 + \left( \frac{\Delta-3\sigma}2 \right)^2
-2\left( \frac{\Delta + \sigma}2 \right)\left( \frac{\Delta-3\sigma}2 \right) \cos 2\theta 
\\
& \geq &
\left( \frac{\Delta + \sigma}2 \right)^2 + \left( \frac{\Delta-3\sigma}2 \right)^2
-2\left( \frac{\Delta + \sigma}2 \right)\left( \frac{\Delta-3\sigma}2 \right)
\\
&=&
(\Delta - \sigma)^2.
\end{eqnarray*}
\end{proofof}

\medskip
\begin{proofof} {\bf Lemma \ref{lemma::union-of-open-curves}.}
This follows from the fact that $\hat{\partial S}_0$ and
$\hat{\partial S}_1$ are each closed simple curves and that each
consists of finitely many arcs of a circle.
\end{proofof}

\medskip
\begin{proofof} {\bf Theorem \ref{thm::the-completion}.}
The fact that $f^*$ is a simple closed curve is straightforward.
We have already shown that each fitted value is within distance
$\epsilon_n$ of $M(S)$.
It is easy to see that this is true of the linear completion as well.
We still need to show that for each $y\in M(S)$ there is a fitted value
with distance $O(\epsilon_n)$.

Choose any $y = f(u)\in M(S)$. The fiber $L(u)$ divides $S$ into two disjoint sets.
Let $\hat{y}_1$ be the fitted value closest to $y$ from the first set and let
$\hat{y}_2$ be the fitted value closest to $y$ from the second set.
Let $\hat\ell = \{ \alpha \hat y_1 + (1-\alpha)\hat y_2 :\ 0\leq \alpha \leq 1\}$.
Let $y_1^*$ be the projection of $\hat y_1$ onto $M(S)$ and
$y_2^*$ be the projection of $\hat y_2$ onto $M(S)$.
Let $\ell$ be the line connecting $y_1^*$ and $y_2^*$.
Since the endpoints of $\hat\ell$ and $\ell$ are
$O(\epsilon_n)$ apart, it follows that
$d_H(\ell,\hat\ell)=O(\epsilon_n)$.

There are two balls $B_1$ and $B_2$
of radius $\Delta$ passing through
$y_1^*$ and $y_2^*$.
The arc of the curve $\Gamma_f$ from $y_1^*$ and $y_2^*$ is contained in the lens
$A=B_1\cap B_2$.
(If not, then a ball of radius $\Delta$ could not roll freely.)
So
$d(y,\ell) \leq d(y,\partial A)$.
But $d(y,\partial A)$ is simply the distance
from the chord of a circle to the circle,
where the chord has length $O(\sqrt{\epsilon})$.
It follows that
$d(y,\partial A) = O(\epsilon)$.
Finally, $d(f(u),M^*) \leq d(y,\ell)+ d_H(\ell,\hat\ell) = O(\epsilon)$.
\end{proofof}

\medskip
\begin{proofof} {\bf Theorem \ref{thm::extract}}.
For the open-curve case with $\cE_0 = \cE_1 = \mathbb{R}^2$, 
claim {\em 1.}~follows directly from 
Theorem \ref{thm::extreme-points-open-case}.
For the open-curve case with $\cE_0 = \partial\cA$ and $\cE_1 = \mathbb{R}^2$, 
as used in the general variant of the algorithm,
note that the endpoint in $\partial\cA$ is a distance $\le 8\epsilon$
from $\cU\intersect\partial\cA$,
where $\cU$ is the corresponding component of $\hat\Gamma$.
Moreover, every point in $\cA$ lies within $8\epsilon$ of $\cA\intersect M$.
Therefore {\em 1.}~also follows from 
Theorem \ref{thm::extreme-points-open-case}.

For the closed case, all that must be proved is that the estimated curve $\hat\Gamma$
is a closed curve that lies within $\hat\Gamma$ and that has (absolute) winding number 1 
around a point $y_0$ in the inner component of ${\hat S}^c$.
Notice also that in the closed case, both the closed and general variants of the algorithm
produce the same curve.

First, recall that $d(\hat y, M) \le 2 \epsilon$
and notice that the unique fiber through $\hat y$, $L(u_0)$, intersects with $\hat\Gamma$
in a line segment of length $\le 8\epsilon$.
This portion of a fiber is thus contained in the set $\cA_8$ (defined in EDT curve 
extraction algorithm for the closed curve case) and thus in $\cA$;
in fact, the fibers $L(u) \intersect \hat\Gamma$ for $u$ in an open set containing $u_0$ are also 
contained in $\cA$.
$\hat\Gamma - \cA$ is thus cut at a fiber and contains a single connected component because
$\epsilon \ll \Delta(f)$.
(If the latter were false, two separated parts of $f$ would lie within $8\epsilon<\Delta$ 
of each other.)

Second, applying the open curve algorith with $\cE_0 = \cE_1 = \partial A$
produces a curve within $\hat\Gamma$ that connects one side of $\cA$ to the other.
(If the latter were false, the curve between the endpoints would have 
length $\le 8\epsilon$, but a longer minimum path length can be obtained
by winding around $y_0$. The winding number cannot be greater than 1
because the curve in $\hat\Gamma - \cA$ is not closed.)
A path between these end points that is contained in $\cA$ closes this curve
and keeps it within $\hat\Gamma$.
The resulting closed curve thus lies within $\hat\Gamma$ and has (absolute) winding number 1
with respect to $y_0$.
Claim {\em 2.}~follows.
\end{proofof}

\medskip

\begin{lemma} \label{lemma::boundary-inclusion}
Suppose $S$ is a compact, connected set in $\R^2$.
Then,
\begin{enumerate}
\item If $y\in S^c$ and $x \in S$ and if $L$ is the line segment from
$x$ to $y$, then $L \intersect \partial S \ne \emptyset$
\item Fix $r > 0$. If $B(x,r) \intersect S \ne \emptyset$ but $B(x,r) \intersect\partial S = \emptyset$,
then $B(x,r) \subset S$. 
\end{enumerate}
\end{lemma}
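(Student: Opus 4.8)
The plan is to establish both parts by an elementary intermediate-value argument along line segments, using only that $S$ is closed (which follows from compactness); connectedness will not actually be needed.

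For part 1, I would parametrize the segment as $\gamma(t) = (1-t)x + t y$ for $t \in [0,1]$, so that $\gamma(0) = x \in S$ and $\gamma(1) = y \in S^c$. Let $t^\star = \sup\{t \in [0,1] : \gamma(t) \in S\}$. Since $\gamma$ is continuous and $S$ is closed, the set $\{t : \gamma(t) \in S\}$ is closed, so the supremum is attained and $\gamma(t^\star) \in S$; moreover $t^\star < 1$ because $\gamma(1) \notin S$. For every $t \in (t^\star, 1]$ we have $\gamma(t) \in S^c$, and letting $t \downarrow t^\star$ exhibits $\gamma(t^\star)$ as a limit of points of $S^c$, so $\gamma(t^\star) \in \overline{S^c}$. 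Combined with $\gamma(t^\star) \in S \subset \overline{S}$, this gives $\gamma(t^\star) \in \overline{S} \cap \overline{S^c} = \partial S$, and $\gamma(t^\star)$ lies on $L$, as required.

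For part 2, I would argue by contradiction. Suppose $B(x,r) \not\subset S$. By hypothesis there is a point $w \in B(x,r) \cap S$, and by the contradiction assumption there is a point $z \in B(x,r)$ with $z \in S^c$. Since $B(x,r)$ is convex, the line segment from $w$ to $z$ is contained in $B(x,r)$; applying part 1 to this segment (with $w$ playing the role of $x$ and $z$ the role of $y$) produces a point of $\partial S$ on it, hence a point of $B(x,r) \cap \partial S$. This contradicts the hypothesis $B(x,r) \cap \partial S = \emptyset$, so $B(x,r) \subset S$.

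The argument is entirely formal; the only point needing a little care is the supremum step in part 1 --- one must check that $t^\star$ is attained (closedness of $S$) and that $\gamma(t^\star)$ is genuinely approached from within $S^c$ (which requires $t^\star < 1$, guaranteed by $y \notin S$). Part 2 then reduces to part 1 via convexity of balls with essentially no additional work, so I do not anticipate any real obstacle here.
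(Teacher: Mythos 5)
Your proof is correct and follows essentially the same line as the paper's: both arguments locate the transition point along the segment where $S$ gives way to $S^c$ and identify it as a boundary point, and both reduce part 2 to part 1 via convexity of the ball. The only cosmetic difference is that the paper characterizes the crossing point as the infimum of $d(x,w)$ over $w\in L\cap S^c$, whereas you take the supremum of $\{t:\gamma(t)\in S\}$; your remark that only closedness (and neither compactness per se nor connectedness) is actually used is also accurate and matches what the paper's proof implicitly relies on.
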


\noindent
\begin{proof}

{\em 1.} Define $d_* = \inf\Set{d(x,w):\; w\in L\intersect S^c}$.
We know that the infimum exists because $L$ is a compact set
and that there is a unique point $z\in L$
for which $d(x,z) = d_*$.
It follows directly that every neighborhood of $z$
contains a point in $S$ and a point in $S^c$,
so $z\in\partial S$.

{\em 2.} Suppose the conclusion does not hold;
that is, there exists a $y\in B(x,r)\intersect S^c$.
By assumption, there is an $z\in B(x,r)\intersect S$.
Apply Result 1 in the lemma to the line segment
between $z$ and $y$, which is contained in $B(x,r)$ by convexity.
This implies that $B(x,r) \intersect \partial S \ne \emptyset$,
contradicting the initial supposition.
The result follows.
\end{proof}

\subsection{Open Curves}
\label{sec::open}

This subsection deals with two issues related to open curves. 
First, the EDT curve extraction algorithm (Subsection \ref{sect::ALGextract})
for open curves required that we estimate the endpoints of the curve. 
Second, for constructing the medial estimator in the open curve case, the
estimated boundary $\hat{\partial S}$ needs to be split into two pieces 
$\hat{\partial S}_0$ and $\hat{\partial S}_1$. 
Both issues are addressed here after the following 
discussion of some basic properies of open curves.

\medskip

Let $f$ be an open curve and let $\partial S$ the boundary support. Define 

$$
E_0(c) = B(f(0),\sigma+ c) \cap \partial S
$$
and
$$
E_1(c) = B(f(1),\sigma+ c) \cap \partial S
$$
to be the extended end caps of the support's boundary.
Let, $E_0 \equiv E_0(0)$ and $E_1 \equiv E_1(0)$.
Define
\begin{eqnarray} \label{eq::V0}
\nonumber
V_0(a) &=& E_0 \Union \left( \union \{ f(u) \pm \sigma N(u):\ \ 0 \leq u \leq a\}\right)\\
\\
\nonumber
V_1(a) &=& E_1 \Union \left( \union \{ f(u) \pm \sigma N(u):\ \ 1- a \leq u \leq 1\}\right).
\end{eqnarray}

Let $\varphi$ denote the arclength of $f$.
Because $f$ is parameterized by arclength normalized to $[0,1]$,
it follows that
the gradient $f'$ of the filament
is such that
$||f'(u)|| = \varphi$ for all $u$.
Define
\begin{equation}
\label{eq::ace}
a \equiv a(c,\epsilon) = 
\sqrt{\frac{2\sigma c + c^2\epsilon}{\varphi^2\left(1 - \sigma/\Delta\right)}}.
\end{equation}

\begin{theorem}
\label{theorem::intersect}
Let $c \geq 0$.
\begin{enumerate}
\item 
$E_0(c\epsilon) \subset V_0(a\sqrt{\epsilon})$ and
$E_1(c\epsilon) \subset V_1(a\sqrt{\epsilon})$.
\item
Let 
$b = a  \varphi \left( 1 + \sigma/\Delta \right)$.
Then
$V_0(a\sqrt{\epsilon})\subset E_0 \oplus b\sqrt{\epsilon}$
and
$V_1(a\sqrt{\epsilon})\subset E_1 \oplus b\sqrt{\epsilon}$.
\item
Let $d = \varphi\, a$.
Then
$V_0(a\sqrt{\epsilon})\subset E_0(d\sqrt{\epsilon})$ and
$V_1(a\sqrt{\epsilon})\subset E_1(d\sqrt{\epsilon})$.
\end{enumerate}
\end{theorem}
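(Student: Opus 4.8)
The plan is to deduce all three items from two pointwise facts about where the boundary curves $f(u)\pm\sigma N(u)$ sit relative to the endpoint $f(0)$ (the statements at $f(1)$ follow verbatim with $u\mapsto 1-u$). These are, for $u$ below a fixed threshold,
\[
\sigma^{2}+\varphi^{2}u^{2}\bigl(1-\sigma/\Delta\bigr)\ \le\ \bigl\|f(u)\pm\sigma N(u)-f(0)\bigr\|^{2}\ \le\ (\sigma+\varphi u)^{2}
\]
and
\[
\bigl\|\bigl(f(u)-f(0)\bigr)\pm\sigma\bigl(N(u)-N(0)\bigr)\bigr\|\ \le\ \varphi u\,(1+\sigma/\Delta).
\]
The upper bound in the first display and the whole of the second are triangle-inequality bookkeeping: $\|f(u)-f(0)\|\le\varphi u$ since $f$ is parameterized by (scaled) arclength, and $\|N(u)-N(0)\|\le\varphi u/\Delta$ since the Gauss map turns with arclength-speed $\kappa\le 1/\Delta$, which is exactly the thickness hypothesis~(A3). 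The lower bound in the first display carries the real content, and is discussed in the last paragraph.

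Granting these estimates, the three parts come out as follows. \emph{Part 1.} Let $y\in E_{0}(c\epsilon)=B(f(0),\sigma+c\epsilon)\cap\partial S$. If $\|y-f(0)\|\le\sigma$ then $y\in E_{0}\subset V_{0}(a\sqrt\epsilon)$. Otherwise $y\in\partial S$ with $\sigma<\|y-f(0)\|\le\sigma+c\epsilon$; for $\epsilon$ small, (A3) ($\|f(1)-f(0)\|>2\Delta$) keeps $y$ off the far end cap and the far part of $\partial S$, so by Lemma~\ref{lemma::disjoint} and the description of $\partial S$ near an end cap the only remaining possibility is $y=f(u)\pm\sigma N(u)$ for some $u\in(0,1)$. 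The lower bound then gives $\varphi^{2}u^{2}(1-\sigma/\Delta)\le(\sigma+c\epsilon)^{2}-\sigma^{2}=2\sigma c\epsilon+c^{2}\epsilon^{2}$, i.e.\ $u\le a\sqrt\epsilon$, so $y\in V_{0}(a\sqrt\epsilon)$. \emph{Part 3.} A point of $V_{0}(a\sqrt\epsilon)$ is either in $E_{0}\subset\partial S\cap B(f(0),\sigma)$ or equals $f(u)\pm\sigma N(u)$ with $0\le u\le a\sqrt\epsilon$; in the second case it lies on $\partial S$ and $\|y-f(0)\|\le\|f(u)-f(0)\|+\sigma\le\varphi u+\sigma\le\sigma+d\sqrt\epsilon$ with $d=\varphi a$, so $y\in E_{0}(d\sqrt\epsilon)$. \emph{Part 2.} The points $f(0)\pm\sigma N(0)$ lie in $\partial B(f(0),\sigma)\cap\partial S\subseteq E_{0}$, and the second display gives $\|y-(f(0)\pm\sigma N(0))\|\le\varphi u(1+\sigma/\Delta)\le\varphi a\sqrt\epsilon(1+\sigma/\Delta)=b\sqrt\epsilon$, whence $V_{0}(a\sqrt\epsilon)\subset E_{0}\oplus b\sqrt\epsilon$.

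The remaining task, and the step I expect to be the main obstacle, is the lower bound in the first display. Writing $y=f(u)\pm\sigma N(u)$ and expanding, $\|y-f(0)\|^{2}=\|f(u)-f(0)\|^{2}+\sigma^{2}\pm2\sigma\langle f(u)-f(0),N(u)\rangle$; because the ball of radius $\Delta=\Delta(f)$ rolls freely along $\Gamma_f$, neither $B(f(u)+\Delta N(u),\Delta)$ nor $B(f(u)-\Delta N(u),\Delta)$ contains $f(0)$ in its interior, which forces $|\langle f(u)-f(0),N(u)\rangle|\le\|f(u)-f(0)\|^{2}/(2\Delta)$ and hence $\|y-f(0)\|^{2}\ge\sigma^{2}+\|f(u)-f(0)\|^{2}(1-\sigma/\Delta)$. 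To finish one would like $\|f(u)-f(0)\|^{2}\ge\varphi^{2}u^{2}$, but the trivial bound runs the other way, so one must instead bound the chord below by the arclength using the curvature constraint — e.g.\ $\|f(u)-f(0)\|\ge 2\Delta\sin(\varphi u/(2\Delta))$ — and check that over the relevant range $u=O(\sqrt\epsilon)$ this still delivers the constant $a$ of the statement (the chord/arclength gap being of higher order in $\epsilon$, hence absorbable). All other ingredients — the decomposition $S=\cT\cup\cC_{0}\cup\cC_{1}$, disjointness of the fibers, and the critical-ball radii $\Delta\mp\sigma$ — are already supplied by Lemmas~\ref{lemma::disjoint} and~\ref{lemma::inner.outer}.
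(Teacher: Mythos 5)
Your parts 2 and 3 mirror the paper's almost word for word: in both cases the key inputs are the arc/chord bound $\|f(u)-f(0)\|\le\varphi u$ and the Gauss-map Lipschitz bound (the paper cites Walther's Theorem~1(iii), which gives $\|N(u)-N(v)\|\le \|f(u)-f(v)\|/\Delta$ and hence $\|N(u)-N(0)\|\le \varphi u/\Delta$), and the final inequalities are identical.

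For part 1, you take a genuinely different route than the paper. The paper extends the normal to $z=f(u)+\Delta N(u)$, intersects the segment $[y,z]$ with the ball $B(f(0),\Delta)$ at a point $A$, and applies the law of cosines to the two triangles $(A,f(0),y)$ and $(f(0),f(u),y)$, massaging the resulting identity for $\|f(0)-f(u)\|^2$ into a function $s(h)$ that is decreasing in the auxiliary length $h$, giving $\|f(0)-f(u)\|^2\le s(0)=\varphi^2 a^2\epsilon$. You instead expand $\|y-f(0)\|^2$ directly and invoke the rolling-ball (empty-tangent-ball) inequality $|\langle f(u)-f(0),N(u)\rangle|\le \|f(u)-f(0)\|^2/(2\Delta)$ to get $\|y-f(0)\|^2\ge \sigma^2+\|f(u)-f(0)\|^2(1-\sigma/\Delta)$, then divide. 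This is shorter, avoids the synthetic construction and the need to track the sign in $y=f(u)\pm\sigma N(u)$ (the absolute-value version handles both at once), and arrives at exactly the same chord bound $\|f(u)-f(0)\|\le \varphi a\sqrt\epsilon$. Both arguments carry the same geometric content, namely that the ball of radius $\Delta$ tangent at $f(u)$ excludes $f(0)$; the paper encodes it trigonometrically, you encode it algebraically.

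You also correctly flag the one place where the paper's write-up is wrong as stated: it asserts $\|f(0)-f(u)\|\ge\varphi u$ to turn the chord bound into the parameter bound $u\le a\sqrt\epsilon$, but the arc/chord inequality runs the other way ($\varphi u$ is the arclength from $f(0)$ to $f(u)$, so $\|f(0)-f(u)\|\le\varphi u$). What is actually needed is a curvature-based lower bound on chord length, which is exactly what the paper's own Lemma~\ref{lemma::distances} supplies: $\|f(u)-f(0)\|\ge \varphi u-(\varphi u)^2/(2\Delta)$ (your $2\Delta\sin(\varphi u/(2\Delta))$ bound is an equivalent form and at least as strong). Since $u=O(\sqrt\epsilon)$ the deficit is $O(\epsilon)$, so one obtains $u\le a\sqrt\epsilon\bigl(1+O(\sqrt\epsilon)\bigr)$ rather than $u\le a\sqrt\epsilon$ exactly; as you say, this is absorbable, but to make the statement literally true one should either replace $a$ by $a(1+O(\sqrt\epsilon))$ or note that the slack built into the $c^2\epsilon$ term in the definition of $a(c,\epsilon)$ covers it. That last verification is the one step you explicitly leave open; it is routine but should be written out. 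With that completed, your proof is correct.

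Two minor remarks. First, in part 1 you should also remember the trivial case $y\in E_0$ (which you do handle in passing). Second, your identification of $y=f(u)\pm\sigma N(u)$ when $y\in\partial S\setminus E_0$ implicitly uses the decomposition in Lemma~\ref{lemma::disjoint}(5), which is fine but worth citing; the paper is similarly terse on this point.
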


\noindent 
\begin{proof}
(See Figure \ref{fig::FIGFIG}).\\
{\em 1.} Let $y \in E_0(c\epsilon)$.
We will show that $y \in V_0(a \sqrt\epsilon)$.
Note that $y$ cannot belong to $E_0$ hence, being $y \in \partial S$,
necessarily $y=f(u) \pm \sigma N(u)$ for some $u$.  We only need to
prove that $u \leq a\sqrt \epsilon$.
Moreover, since for all $u$
$$
||f(0) - f(u)|| \geq \varphi u,
$$
proving that $||f(0) - f(u)|| \leq \varphi a \sqrt\epsilon$ would be sufficient for the claim.

We know that $||y-f(0)|| = \sigma + c\epsilon$ and that $y=f(u)
+ \sigma N(u)$ for some $u$.  The line from $f(u)$ to $y$ defines the
direction of the normal at $f(u)$. Extend the normal at $f(u)$ to the
point $z= f(u) + \Delta N(u)$, so that $||z-f(u)||= \Delta$ and
$||z-f(0)|| > \Delta$, hence $z$ must lie outside the circle
$C3=B(f(0), \Delta)$.
Let $A$ be the intersection between $C3$ and the segment from $y$ to $z$;
such intersection exixts because $y \in C3$ and $z \not \in C3$. We have
\begin{equation*}
||A - f(0)|| = \Delta,  \quad ||A - f(u)|| = \Delta - h \quad \mbox{and} \quad
||A - y||= \Delta - \sigma - h,
\end{equation*}
where $h$ is positive.

\newpage
\begin{figure}
\vspace{-1.2cm}
\includegraphics[width=4.5in]{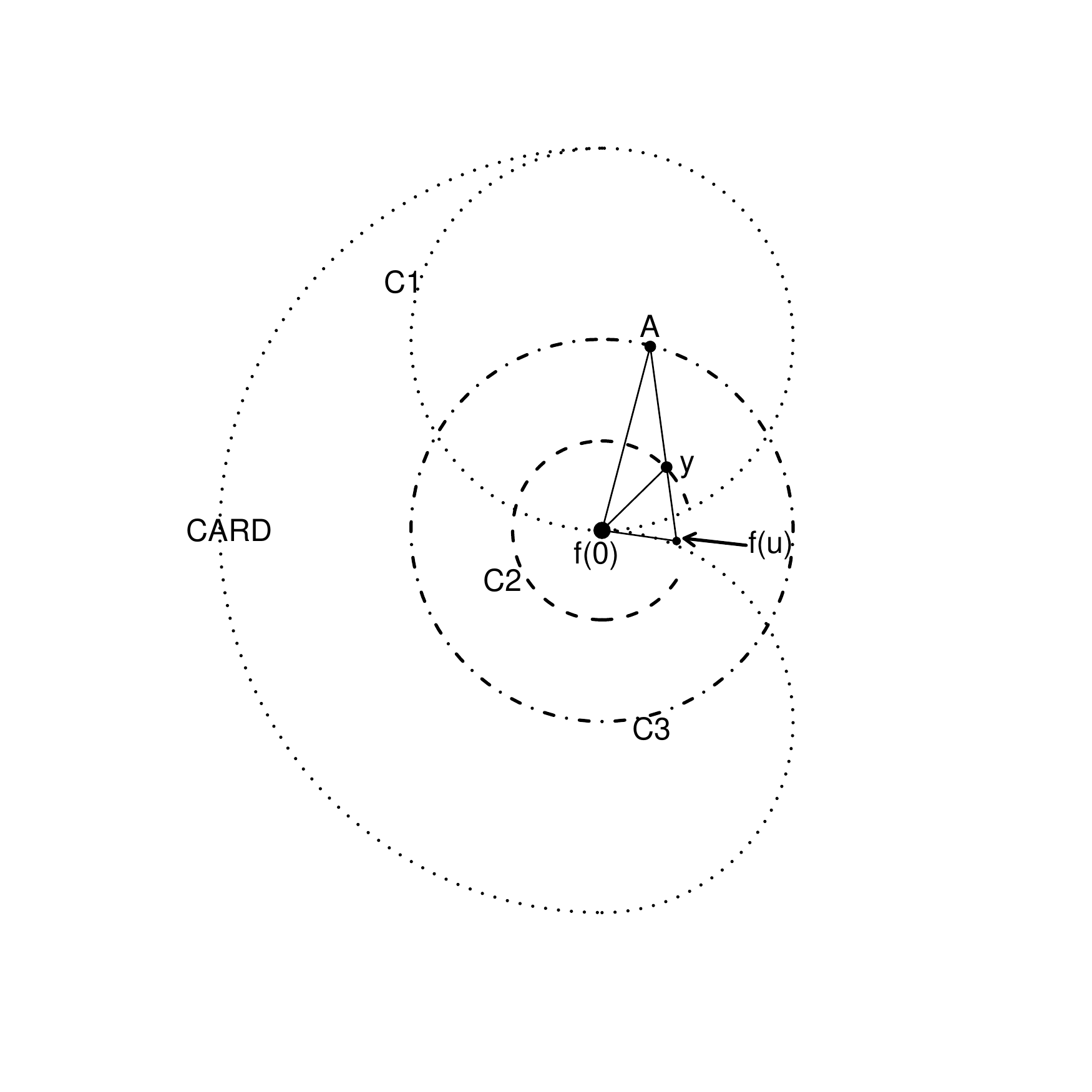}
\vspace{-1.cm}
\caption{Diagram for Theorem \ref{theorem::intersect}.}
\label{fig::FIGFIG}
\end{figure}

Consider the triangle with vertices $A$, $f(0)$ and $y$ and denote by $\theta$ 
the angle at $y$. The cosine theorem gives
$$
|| A - f(0)||^2 = ||A-y||^2 + ||f(0)-y||^2  -2 ||A-y|| \cdot ||f(0)-y|| \cos \theta
$$
so that
$$
\Delta^2 = (\Delta - \sigma - h)^2 + (\sigma + c\epsilon)^2 
-2 (\Delta - \sigma -h)(\sigma + c\epsilon) \cos \theta
$$
and
$$
\cos \theta = \frac{(\Delta - \sigma -h)^2 + (\sigma + c\epsilon)^2 - \Delta^2}
                   {2(\Delta - \sigma - h)(\sigma + c\epsilon)}.
$$
Now consider the triangle with vertices $f(0)$, $f(u)$ and $y$, where the angle 
at $y$ is $\pi - \theta$. 
From the cosine theorem we obtain
$$
||f(0)-f(u)||^2 = ||f(0)-y||^2 + ||y-f(u)||^2 - 2 ||f(0)-y|| \cdot ||y-f(u)|| \cos(\pi-\theta).
$$ 
And, since $\cos(\pi - \theta)= -\cos\theta$
\begin{eqnarray*}
||f(0)-f(u)||^2 &=&
(\sigma + c\epsilon)^2 + \sigma^2 + 2 (\sigma + c\epsilon) \sigma \cos \theta \\
&=&
(\sigma + c\epsilon)^2 + \sigma^2 +  
\sigma\frac{(\Delta - \sigma -h)^2 + (\sigma + c\epsilon)^2 - \Delta^2}{(\Delta - \sigma - h)} \\
&=&
(\sigma + c\epsilon)^2 + \sigma^2 +  \sigma (\Delta - \sigma -h) + 
\sigma\frac{(\sigma + c\epsilon)^2 - \Delta^2}{(\Delta - \sigma - h)} .
\end{eqnarray*}

Note that for small $\epsilon$ (as long as $\Delta^2 \geq (\sigma + c\epsilon)^2$) 
$$
s(h) \equiv (\sigma + c\epsilon)^2 + \sigma^2 +  \sigma (\Delta - \sigma -h) + 
\sigma\frac{(\sigma + c\epsilon)^2 - \Delta^2}{(\Delta - \sigma - h)}
$$
is a decreasing function of $h$ and then for all $h>0$
$$
s(h) \leq s(0) = \frac{\epsilon (c^2\epsilon + 2\sigma c)\Delta}{\Delta - \sigma}.
$$
As a consequence
$$
||f(0)-f(u)||^2 \leq \frac{\epsilon (c^2\epsilon + 2\sigma c)\Delta}{\Delta - \sigma}
$$
and
$$
||f(0)-f(u)|| \leq \sqrt \epsilon \sqrt{ \frac{(c^2\epsilon + 2\sigma c)}{1 - \sigma/\Delta}} = 
\sqrt\epsilon \varphi a(c,\epsilon).
$$

{\em 2.} Let $y = f(u) + \sigma N(u)\in V_0(a\sqrt{\epsilon})$.
Let
$y_0 = f(0) + \sigma N(0)$.
Then
$y_0 \in E_0$ and
$$
||y-y_0|| \leq
||f(u) - f(0)|| + \sigma ||N(u) - N(0)|| \leq
\varphi\, u + \frac{\varphi \sigma u}{\Delta} \leq 
a \sqrt{\epsilon} \varphi \left( 1 + \frac{\sigma }{\Delta} \right) = b\sqrt{\epsilon}
$$
where we used
Theorem 1(iii) of 
\cite{Walther}, namely,
$$
||N(u) - N(v)|| \leq \frac{|| f(u) - f(v)||}{\Delta}.
$$

{\em 3.}
Let $y = f(u) + \sigma N(u)\in V_0(a\sqrt{\epsilon})$.
Then
$$
||y-f(0)|| \leq ||f(u)-f(0)|| + \sigma \leq \varphi\,u + \sigma \leq \sigma + 
       \varphi\,a \sqrt{\epsilon}.
$$
\end{proof}

\subsection{Estimating the Endpoints} \label{sec::Extrac}

In this subsection we derive estimators for
$f(0)$ and $f(1)$.
First we will need some lemmas.
Let $\hat \Gamma$ be the EDT estimator.

\begin{lemma} \label{lemma::Mhat-properties-top}
For fixed $\epsilon > 0$, the set $\hat\Gamma$ has the following properties.
Suppose $u\in \Gamma$ and $\cF_u$ is the intersection of $\hat\Gamma$ and the fiber of $S$ containing $u$.
\begin{enumerate}
\item If $f$ is closed, then $\cF_u$ is a connected line segment through $u$.
\item If $f$ is open and $u$ lies at least $2\epsilon$ from $f(0)$ and $f(1)$
 then $\cF_u$ is a connected line segment.
\end{enumerate}
\end{lemma}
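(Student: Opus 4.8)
\medskip\noindent\emph{Proof sketch.}
The plan is to reduce both statements to a one-dimensional fact about the empirical distance transform $\hat\Lambda(y)=d(y,\hat{\partial S})$ restricted to the fiber through $u$. Throughout, assume the estimator is built from a boundary estimate with $d_H(\partial S,\hat{\partial S})\le\epsilon$ and $S\subset\hat S$, so that Theorem~\ref{thm::edt} gives $|\hat\Lambda-\Lambda|\le\epsilon$ everywhere and $|\hat\sigma-\sigma|\le\epsilon$, and Lemma~\ref{lemma::nice-boundary} applies. Fix $u=f(t_0)\in\Gamma$ and write the fiber of $S$ through $u$ as $y(s)=f(t_0)+sN(t_0)$, $s\in[-\sigma,\sigma]$. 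Since this fiber lies in $S\subset\hat S$, the condition ``$y\in\hat S$'' holds automatically along it, so $\cF_u=\{y(s):\hat\Lambda(y(s))\ge\rho\}$ with $\rho=\hat\sigma-2\epsilon$, and $\sigma-3\epsilon\le\rho\le\sigma-\epsilon$.

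First I would compute the \emph{exact} distance transform along the fiber. By Lemma~\ref{lemma::inner.outer}, the inner critical ball of $\partial S_0$ at $y(\sigma)=f(t_0)+\sigma N(t_0)$ is $B(f(t_0)-\Delta N(t_0),\,\Delta+\sigma)$, and it meets $\partial S_0$ only at $y(\sigma)$; since $B(y(s),\sigma-s)$ is internally tangent to this ball at $y(\sigma)$, it contains no point of $\partial S_0$, so $d(y(s),\partial S_0)=\sigma-s$ for all $s\in[-\sigma,\sigma]$, and symmetrically $d(y(s),\partial S_1)=\sigma+s$. Hence $\Lambda(y(s))=\sigma-|s|$. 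It follows that $\hat\Lambda(y(0))\ge\sigma-\epsilon\ge\rho$, so $u=y(0)\in\cF_u$; and if $y(s)\in\cF_u$ then $\sigma-|s|+\epsilon\ge\hat\Lambda(y(s))\ge\rho\ge\sigma-3\epsilon$, so $|s|\le4\epsilon$. Thus $\cF_u$ is a closed subset of the segment $\{y(s):|s|\le4\epsilon\}$ that contains $u$, and it remains only to show that it is connected.

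The core of the argument is to show that $s\mapsto d(y(s),\hat{\partial S}_0)$ is nonincreasing and $s\mapsto d(y(s),\hat{\partial S}_1)$ nondecreasing for $|s|\le4\epsilon$ (for open $f$, $\hat{\partial S}$ is a single simple closed curve by Lemma~\ref{lemma::nice-boundary}, which one splits locally into the arc running beside $\partial S_0$ and the arc running beside $\partial S_1$; the hypothesis that $u$ lie at least $2\epsilon$ from $f(0)$ and $f(1)$ is what keeps the point of $\hat{\partial S}$ nearest to $y(s)$ off the two end caps, where this splitting breaks down). Granting this, $\hat\Lambda(y(s))$ is the minimum of the two distances, so $\cF_u=\{s:\hat\Lambda(y(s))\ge\rho\}$ is the intersection of a half-line $\{s:s\le s^{+}\}$ with a half-line $\{s:s\ge s^{-}\}$, hence an interval; together with the previous paragraph this shows $\cF_u$ is a connected line segment through $u$. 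To prove the monotonicity, fix $s$ and let $p$ be a point of $\hat{\partial S}_0$ nearest to $y(s)$; by the previous step and the Hausdorff bound $|y(s)-p|\in[\sigma-s-\epsilon,\sigma-s+\epsilon]$, so $p$ lies within a bounded multiple of $\sigma$ of $y(\sigma)$, hence within $\epsilon$ of some $q=f(u')+\sigma N(u')\in\partial S_0$ with $||f(u')-f(t_0)||$ correspondingly bounded. The thickness condition $\sigma<\Delta$ — through disjointness of the extended fibers $\{f(u)+tN(u):|t|\le\Delta\}$ and the bound $||N(u')-N(t_0)||\le||f(u')-f(t_0)||/\Delta$ of \cite{Walther} — then confines $u'$ to a short arc around $t_0$, forcing $a:=\langle p-f(t_0),N(t_0)\rangle\ge\sigma-O(\epsilon)>4\epsilon$. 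Writing $p=f(t_0)+aN(t_0)+bT(t_0)$ gives $|y(s)-p|^2=(s-a)^2+b^2$, which decreases in $s$ for $s\le a$; hence for $s_1<s_2$ in $[-4\epsilon,4\epsilon]$, $d(y(s_1),\hat{\partial S}_0)\le|y(s_1)-p|\le|y(s_2)-p|=d(y(s_2),\hat{\partial S}_0)$. The claim for $\hat{\partial S}_1$ is symmetric.

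The main obstacle is this last monotonicity step: one has to guarantee that the point of the \emph{estimated} boundary nearest to a fiber point always lies on the ``correct'' local arc, so that the empirical distance inherits the monotonicity of the exact one. This is precisely where the thickness assumption is used essentially — absent it, a distant fold of $\partial S_0$ could be nearest to $y(s)$ and destroy the monotonicity — and, in the open case, it is also where the endpoint hypothesis enters to control interference from the flat end caps; pinning down the constants here (and the exact sense in which $u$ must be ``at least $2\epsilon$'' from the endpoints) is the delicate part of the proof.
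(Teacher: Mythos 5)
Your proposal is essentially correct and follows the same underlying geometric strategy as the paper: both arguments reduce the claim to showing that the (estimated) distance transform $\hat\Lambda$ restricted to the fiber is unimodal, which in turn rests on the fact that the point of $\hat{\partial S}$ nearest to a fiber point is constrained by the thickness $\Delta > \sigma$ to lie in a region where distance to the boundary is monotone along the fiber. The paper carries out the monotonicity computation in polar coordinates centered at the critical-ball center (bounding the angular width of the wedge where the nearest boundary point can live, then verifying monotonicity on a larger $\pi/3$ wedge), whereas you decompose the nearest point $p$ into tangent and normal components relative to $f(t_0)$ and use $|y(s)-p|^2 = (s-a)^2 + b^2$; these are equivalent in content, and your framing of $\hat\Lambda$ explicitly as $\min(d(\cdot,\hat{\partial S}_0),\,d(\cdot,\hat{\partial S}_1))$ with each piece separately monotone is arguably cleaner at the structural level. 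Your derivation of the exact fiber profile $\Lambda(y(s)) = \sigma - |s|$ via the inner critical ball and disjointness of fibers is also sound and gives the localization $\cF_u\subset\{|s|\le 4\epsilon\}$.

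One concrete slip: the final chain of inequalities is written in the wrong order. With $p$ chosen nearest to $y(s_1)$, the correct chain for $s_1<s_2\le a$ is
$d(y(s_2),\hat{\partial S}_0)\le |y(s_2)-p| \le |y(s_1)-p| = d(y(s_1),\hat{\partial S}_0)$,
since $(s_2-a)^2\le(s_1-a)^2$; this proves $d(\cdot,\hat{\partial S}_0)$ is nonincreasing in $s$ as intended. As written, you fix $p$ nearest to $y(s_1)$ but assert $|y(s_1)-p|\le|y(s_2)-p|=d(y(s_2),\hat{\partial S}_0)$, which reverses both the direction of the monotonicity and the equality/inequality roles. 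The idea is right; only the bookkeeping is off. Beyond this, your treatment of the open case matches the paper's (both are brief about exactly how the $2\epsilon$ margin from $f(0),f(1)$ excludes the end caps), and your requirement $a=\langle p-f(t_0),N(t_0)\rangle \ge \sigma - O(\epsilon) > 4\epsilon$ implicitly assumes $\epsilon$ small relative to $\sigma$, as does the paper's use of $2\sqrt{\epsilon/\Delta} < \sin(\pi/3)$.
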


\noindent
\begin{proof}
Without loss of generality, we can assume that the fiber through $u$ is oriented
vertically
and that $u = (0,\Delta)$. 
$\Gamma$ must lie above the circle of radius $\Delta$ centered on the origin,
and thus the boundary of the support (on that side of $\Gamma$) must lie above the circle
of radius $\Delta + \sigma$ centered on the origin.
It follows that the outer portion of $\hat{\partial S}$ must lie above
the circle of radius $\Delta + \sigma - \epsilon$ centered on the origin.

First, consider the point 
$x=(0,y)$ where
$y \Delta + h\epsilon$ for $0\le h \le 4$ on the fiber
through $u$. Let $d = \hat\sigma - \delta = \sigma - c \epsilon$ for
some $c\in[-1,3]$ and $r = \Delta + \sigma - \epsilon$. 
And let $z = (r\sin\theta, r\cos\theta)$.
We want to find the maximum $|\theta|$ such that $\norm{y-z} < d$;
this will show limit the range of closest points.

We have that 
$$
\norm{x-z}^2
  = r^2\sin^2\theta + (y - r\cos \theta)^2 
  = r^2 + y^2 - 2 r y \cos\theta.
$$
Taking
$d^2 > r^2 + y^2 - 2 r y \cos\theta$ yields
$$
\cos\theta > \frac{(r-y)^2 + 2 r y - d^2}{2 r y}
           = 1 - \frac{d^2 - (r-y)^2}{2 r y}
     = 1 - \frac{(\sigma - c\epsilon)^2 - (\sigma - \epsilon - h\epsilon)^2}{(\Delta + \sigma - \epsilon)(\Delta + h\epsilon)}.
$$
Hence,
\begin{align}
|1 - \cos(\theta)|&< \left|\frac{(1+h^2+c^2)\epsilon^2 + 2\sigma(1+h-c)\epsilon}{(\Delta + \sigma - \epsilon)(\Delta + h\epsilon)}\right| \\
                  &= \frac{\epsilon}{\Delta}\,\frac{\Delta}
{\Delta+\sigma-\epsilon}\,
\left|\frac{(1+h^2+c^2)\epsilon + 2\sigma(1+h-c)}{\Delta + h\epsilon}\right| < 2\frac{\epsilon}{\Delta}
\end{align}
and thus
$|\sin(\theta)| < 2\sqrt{\frac{\epsilon}{\Delta}}$.

Second, consider a wedge of half angle $\theta \ge 0$ around the vertical axis.
Consider points 
$x_0=(0,y_0)$ and $x_1 = (0,y_1)$ where
$y_0 = \Delta + h_0\epsilon$ and $y_1 = \Delta+h_1\epsilon$
with $0 \le h_1 \le h_0$.
Let $z = (r\sin\varphi,r\cos\varphi)$ for $r \ge \Delta + \sigma - \epsilon$
and $|\varphi| \le \theta$.
We want to find the value of $\theta$ such that $\norm{x_0-z} \le \norm{x_1-z}$ for all such $z$.
In this wedge, in other words, distance to the estimated boundary is monotone along
the filament.

We have
$$
\norm{x_i-z}^2 = r^2\sin^2\varphi + (y_i - r\cos\varphi)^2 =
 r^2 + y_i^2 - 2 r y_i \cos\varphi.
$$
Hence, $\norm{x_0-z} \le \norm{x_1-z}$ requires that
$$
2 r\cos\varphi (h_0 - h_1)\epsilon \ge (h_0 - h_1)\epsilon (2 \Delta + (h_0 + h_1)\epsilon)\\
$$
or equivalently
$$
\cos\varphi \ge \frac{\Delta + \frac{h_0 + h_1}{2}\epsilon}{r}.
$$
This is satisfied whenever $\cos\varphi \ge 1/2$ or equivalently
when $|\varphi| \le \pi/3$.

Combining these two parts, we see that the closest point to the boundary
must lie within a wedge of angular extend $O(\sqrt{\epsilon/\Delta})$,
which is contained in the wedge for which distance to the estimated boundary
is monotone along the filament.
Claim {\em 1.}~follows.
For open curves, 
claim {\em 2.}~follows from the same argument for a point $u$ for
which the fiber through $u$ is sufficiently far from the endcaps.
\end{proof}

\begin{lemma} \label{lemma::Mhat-pcsmooth-bnd}
$\hat\Gamma$ has a finite piecewise $C^2$ 
(two continuous derivatives)
boundary.
\end{lemma}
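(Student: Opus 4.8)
\textbf{Proof proposal for Lemma \ref{lemma::Mhat-pcsmooth-bnd}.}
The plan is to observe that $\hat\Gamma$ is assembled from finitely many circles, so its boundary can only consist of finitely many circular arcs, each of which is $C^\infty$ and in particular $C^2$. Recall from the EDT construction that $\hat\Gamma = \hat S \cap \{y:\ \hat\Lambda(y)\ge c\}$, where $\hat S=\bigcup_{i=1}^n B(Y_i,\epsilon_n)$, $\hat\Lambda(y)=d(y,\hat{\partial S})$, and $c=\hat\sigma-\delta$. For all large $n$ we have $c>0$ (by Theorem \ref{thm::edt}, $|\hat\sigma-\sigma|\le\epsilon$ while $\delta=2\epsilon$, so $c\ge\sigma-3\epsilon>0$; and if instead $c\le 0$ then $\hat\Gamma=\hat S$ and the claim is immediate from the first step below). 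Since the boundary of an intersection is contained in the union of the boundaries and $\hat\Lambda$ is continuous, $\partial\hat\Gamma\subseteq\partial\hat S\cup\{y:\ \hat\Lambda(y)=c\}$, so it suffices to show that each of $\partial\hat S$ and $\{\hat\Lambda=c\}$ is contained in a finite union of $C^\infty$ circular arcs.

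First I would treat $\partial\hat S=\hat{\partial S}$. It is contained in the union of the $n$ circles $\partial B(Y_i,\epsilon_n)$; these intersect pairwise in a finite set of points, which subdivide each circle into finitely many arcs, and $\hat{\partial S}$ is exactly the union of the closures of those arcs not contained in the interior of any ball $B(Y_j,\epsilon_n)$. (When $S\subset\hat S$, which holds almost surely for large $n$ by Lemma \ref{lemma::supp}, Lemma \ref{lemma::nice-boundary} adds that these arcs assemble into one or two simple closed curves, but this is not needed here.) Thus $\hat{\partial S}=A_1\cup\cdots\cup A_m$ is a finite union of arcs of circles of radius $\epsilon_n$, with endpoints in a finite set $P$.

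Next I would analyze $\{\hat\Lambda=c\}$. Since $\hat\Lambda=\min_{1\le k\le m} d(\cdot,A_k)$ and $\hat\Lambda(y)=c$ forces $d(y,A_k)=c$ for some $k$, it is enough to check that each $\{d(\cdot,A_k)=c\}$ is a finite union of circular arcs. For a fixed arc $A_k$ with underlying circle of center $O_k$ and radius $\epsilon_n$, the map $y\mapsto d(y,A_k)$ is piecewise real-analytic: on the sector of $y$ for which the ray from $O_k$ through $y$ meets $A_k$ it equals $|\,\|y-O_k\|-\epsilon_n\,|$, and off that sector it equals $\min_p\|y-p\|$ over the endpoints $p$ of $A_k$, the sector being cut out by finitely many rays through $O_k$ and the endpoints. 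Hence $\{d(\cdot,A_k)=c\}$ is a finite union of arcs of the circles $\partial B(O_k,\epsilon_n+c)$, $\partial B(O_k,|\epsilon_n-c|)$, and $\partial B(p,c)$ for the endpoints $p$ of $A_k$ — all $C^\infty$.

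Combining, $\partial\hat\Gamma$ is contained in a finite union of $C^\infty$ circular arcs; re-cutting these at their finitely many mutual intersection points (and noting, e.g.\ via semialgebraicity of $\hat\Gamma$, that $\partial\hat\Gamma$ has no isolated points) exhibits $\partial\hat\Gamma$ as a finite union of arcs each of which is $C^\infty$, hence $C^2$, which is the stated conclusion. The only mildly delicate point I expect is the piecewise description of $y\mapsto d(y,A_k)$ for a circular arc and the resulting enumeration of its level curves; the rest is bookkeeping about finitely many circles, and presents no real obstacle.
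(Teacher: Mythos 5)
Your proposal is correct and takes essentially the same route as the paper: both reduce the problem to showing that the set of points at a fixed distance from a circular arc is a finite union of circular arcs (arcs of the two concentric circles plus arcs of circles around the endpoints), and then observe that $\hat{\partial S}$ is a finite union of circular arcs of radius $\epsilon_n$. The only cosmetic difference is that the paper passes to the complement $\hat\Gamma^c = \hat S^c \cup (\hat{\partial S}\oplus w)$ and works with $w$-enlargements of arcs, while you decompose $\hat\Gamma$ as an intersection $\hat S \cap \{\hat\Lambda\ge c\}$ and work with the level set $\{\hat\Lambda = c\}$; the key geometric content is identical.
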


\begin{proof}
Because $\hat {\partial \Gamma} =  \hat{\partial\Gamma^c}$, 
it is sufficient to show that $\hat\Gamma^c$ has a 
piecewise smooth boundary. Since $\hat\Gamma$ consists of all points $x\in S$ 
such that $d(x, \partial\hat S) \ge \hat\sigma_n - \delta \equiv w$ for some 
constant $\delta > 0$, it follows that
\begin{equation}
\hat\Gamma^c = \hat S^c \,\Union\, \Biggl(\Union_{z\in\partial\hat S} B(z, w)\Biggr).
\end{equation}
Thus, $\hat {\partial \Gamma^c}$ consists of the points in $S$ that are exactly $w$ away
from $\hat {\partial S}$. 
Because $\hat{\partial  S}$ is a finite union of circular arcs of radius $\epsilon$,
it follows that $\hat {\partial \Gamma^c}$ ($=\hat {\partial \Gamma}$)
is the boundary of a finite union of $w$-enlargements of circular arcs.
A set that is a finite union of sets with piecewise smooth boundaries itself
must have a piecewise smooth boundary.
Thus, it is sufficient to show that the $w$-enlargement of a single circular arc
has a piecewise smooth boundary.

To do this, let ${\cal A}$ be a circular arc, which we can take without loss
of generality to be of the form 
$$
{\cal A} = \{(r \cos t, r\sin t):\; t\in[-\theta_0,\theta_0]\},
$$
for $\theta_0 \in [0,\pi)$.
Let $x_+ = r(\cos \theta_0, \sin\theta_0)$ and $x_-= r(\cos \theta_0, -\sin\theta_0)$
be the two endpoints.
Let $v(x)$ denote the point(s) in ${\cal A}$ that is (are) closest to $x\in\mathbb{R}^2$.
For $x$ in the cone $\lambda_- x_- + \lambda_+ x_+$ for $\lambda_-,\lambda_+ \ge 0$,
$v(x) = r x/||x||$. For $x$ on the negative horizontal axis,
$v(x)$ contains the two endpoints of the arc.  For all other $x$,
$v(x)$ contains the endpoint of the arc on the same side of the horizontal
axis as $x$.
It follows that the set of points $x$ for which $d(x,v(x)) = w$
is a union of three circular arcs: one in the cone consisting of points at radius
$r+w$, one for $x_-$ consisting of part of the circle around $x_-$, and one
for $x_+$ consisting of part of the circle around $x_+$.
This proves the lemma.
\end{proof}

\begin{theorem} \label{thm::extreme-points-open-case}
Let $f$ be an open curve.
Let ${\cal P}_{u,v}$ denote the set of paths between $u, v\in\hat\Gamma$
that are contained in $\hat\Gamma$.
Define $x, y \in \hat\Gamma$ by
\begin{equation}\label{eq::end-est}
x, y = \argmax_{u, v\in\hat\Gamma} \min_{\pi\in{\cal P}_{u,v}} {\rm length}(\pi),
\end{equation}
where length denotes the arclength of the path.

Then,
\begin{equation}
  d_H\left(\{x,y\}, \{f(0), f(1)\}\right) \le 16 \epsilon  
\end{equation}
\end{theorem}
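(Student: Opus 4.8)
The plan is to work with the intrinsic distance $\ell(u,v)=\min_{\pi\in{\cal P}_{u,v}}\mathrm{length}(\pi)$ on $\hat\Gamma$ and to show that the maximizing pair $(x,y)$ of (\ref{eq::end-est}) necessarily consists of one point within $16\epsilon$ of $f(0)$ and one within $16\epsilon$ of $f(1)$; both inclusions defining $d_H(\{x,y\},\{f(0),f(1)\})\le 16\epsilon$ then follow at once. I would work under the standing assumption $d_H(\partial S,\hat{\partial S})\le\epsilon$ of this section, using Theorem \ref{thm::edtestimator} (which gives $\Gamma_f\subset\hat\Gamma\subset\Gamma_f\oplus(4\epsilon)$, hence $\hat\Gamma\subset S$ for $\epsilon$ small), the fiber decomposition $S=\cT\union\cC_0\union\cC_1$ of Lemma \ref{lemma::disjoint}, and the piecewise-$C^2$ regularity of Lemma \ref{lemma::Mhat-pcsmooth-bnd}.

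\emph{Geometry of $\hat\Gamma$.} I would first record that $\hat\Gamma$ is compact, connected and simply connected: away from the end caps it is an interval bundle over $\Gamma_f$ (the slices $\cF_u$ of Lemma \ref{lemma::Mhat-properties-top}), with two contractible pieces attached where $\Gamma_f$ meets $\cC_0$ and $\cC_1$. Hence shortest paths between points of $\hat\Gamma$ exist, $\ell$ is continuous, and the argmax in (\ref{eq::end-est}) is attained. For $t\in(0,1)$ put $\cF_t=\hat\Gamma\intersect L(t)$. On $L(t)$ a point $f(t)+\tau N(t)$ has $d(\cdot,\partial S)=\sigma-|\tau|$ (Lemma \ref{lemma::edt}, since for $|\tau|<\Delta$ the nearest point of $M(S)=\Gamma_f$ is $f(t)$), so combining $|d(\cdot,\hat{\partial S})-d(\cdot,\partial S)|\le\epsilon$, $|\hat\sigma-\sigma|\le\epsilon$ and $\delta=2\epsilon$ shows that $\cF_t$ is a line segment through $f(t)$ with $|\tau|\le 4\epsilon$, hence $\mathrm{diam}(\cF_t)\le 8\epsilon$. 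Moreover $L(t)$ is a cross-cut of the tube both of whose endpoints lie on $\partial S$, so $S\setminus L(t)$ has two components, one containing $\cC_0$ and $f(0)$, the other $\cC_1$ and $f(1)$; intersecting with $\hat\Gamma$ we see that $\cF_t$ separates $\hat\Gamma$, so \emph{every} path in $\hat\Gamma$ from $f(0)$ to $f(1)$ meets $\cF_t$ for each $t\in(0,1)$.

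\emph{The core estimate.} If $x\in\cC_0\cup\cC_1$ then a short computation ($(\Gamma_f\oplus 4\epsilon)\intersect\cC_i$ lies in a ball of radius $O(\epsilon)$ about $f(i)$) shows $x$ is already within $O(\epsilon)<16\epsilon$ of $\{f(0),f(1)\}$, and similarly for $y$; so assume $x\in L(s_x)$, $y\in L(s_y)$, with $\|x-f(s_x)\|\le 4\epsilon$, $\|y-f(s_y)\|\le 4\epsilon$, and (relabelling if needed) $s_x\le s_y$. Let $\gamma$ be a shortest path in $\hat\Gamma$ from $f(0)$ to $f(1)$, so $\mathrm{length}(\gamma)=\ell(f(0),f(1))$. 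It meets $\cF_{s_x}$ and $\cF_{s_y}$, and to reach $L(s_y)$ from $f(0)$ it must first cross $L(s_x)$; so if $p$ is its first point on $\cF_{s_x}$ and $q$ its last point on $\cF_{s_y}$, then $p$ precedes $q$ along $\gamma$ and $\gamma$ factors as $\gamma_1\gamma_2\gamma_3$, from $f(0)$ to $p$, $p$ to $q$, $q$ to $f(1)$, each factor itself a shortest path. Since $p,x$ lie on the segment $\cF_{s_x}\subset\hat\Gamma$, $\ell(p,x)=\|p-x\|\le 8\epsilon$, and likewise $\ell(q,y)\le 8\epsilon$; the triangle inequality for $\ell$ then yields
\[
\ell(f(0),f(1)) \ge \ell(f(0),x)+\ell(x,y)+\ell(y,f(1))-C\epsilon
\]
for an absolute constant $C$. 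Since $(x,y)$ maximizes $\ell$ and $f(0),f(1)\in\hat\Gamma$, we have $\ell(x,y)\ge\ell(f(0),f(1))$; substituting, $\ell(f(0),x)+\ell(y,f(1))\le C\epsilon$, whence $\|f(0)-x\|\le C\epsilon$ and $\|f(1)-y\|\le C\epsilon$. A crude count gives $C=32$; routing the comparisons through the feet $f(s_x),f(s_y)$ and using that $x$, $p$ and $f(s_x)$ all lie within $4\epsilon$ of one another on $L(s_x)$ (and likewise at $s_y$) brings $C$ down to $16$, the asserted bound.

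\emph{Main obstacle.} The point that is genuinely easy to get wrong is to estimate $\ell(f(0),f(1))$ by the arclength $\varphi$ of $f$: a shortest path in $\hat\Gamma$ corner-cuts inside high-curvature stretches, so it can be shorter than $\varphi$ by as much as a constant times the total turning $\int\kappa\,ds$, which is not $O(\epsilon)$ and is unbounded over the admissible curves. The displayed inequality avoids this by keeping $\ell(f(0),f(1))$ on \emph{both} sides, so those (possibly large) corner-cutting savings cancel and only the $O(\epsilon)$ thickness of the slices $\cF_t$ survives. The remaining work is routine but not vacuous: existence and basic regularity of shortest paths in $\hat\Gamma$; the ordering ``$p$ precedes $q$'' (which uses only the separation property of the second paragraph, not full monotonicity of geodesics); the mild extra bookkeeping near the end caps and near $s\in\{0,1\}$, where Lemma \ref{lemma::Mhat-properties-top} does not apply directly but the relevant slices are already $O(\epsilon)$-small; and the constant-chasing needed to land exactly on $16\epsilon$.
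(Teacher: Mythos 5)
Your setup matches the machinery the paper uses: the fiber slices $\cF_t$ of diameter $\le 8\epsilon$, the observation that each is a separating cross-cut of the simply-connected set $\hat\Gamma$ (so paths from $f(0)$ to $f(1)$ meet every $\cF_t$), and the use of the maximality of $\ell(x,y)$ to cancel the possibly large corner-cutting savings. Your ``main obstacle'' remark correctly identifies the trap. The genuine gap is in the constant-chasing. Comparing $\ell(f(0),f(1))$ against $\ell(x,y)$ incurs a $2\|x-p\|$ correction at $\cF_{s_x}$ and a $2\|y-q\|$ correction at $\cF_{s_y}$; with both $\le 8\epsilon$ this yields only $\ell(f(0),x)+\ell(y,f(1))\le 32\epsilon$, and since you cannot split that sum into its two terms, each endpoint is located only to within $32\epsilon$. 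Your suggested refinement, routing through the feet $f(s_x)$, $f(s_y)$ and using the $4\epsilon$ half-width, does not reach $16\epsilon$ either: carrying it through gives $\ell(f(0),f(s_x))+\ell(f(s_y),f(1))\le 24\epsilon$, hence $\|f(0)-x\|\le\ell(f(0),f(s_x))+\|f(s_x)-x\|\le 28\epsilon$ after re-adding the half-width.

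The paper reaches $16\epsilon$ by treating the endpoints \emph{one at a time} via an asymmetric comparison. Assume for contradiction that $\|x-f(0)\|>16\epsilon$, let $\pi'$ be a shortest path from $f(0)$ to $y$ (not to $f(1)$), and let $x'$ be where $\pi'$ meets the fiber $\cF$ through $x$; since $\pi'$ is shortest, $\ell(f(0),y)\ge\|f(0)-x'\|+\ell(x',y)$. Moving along the segment $\cF$ gives the Lipschitz bound $|\ell(x',y)-\ell(x,y)|\le\|x-x'\|\le 8\epsilon$ (here the connectedness of the slice from Lemma \ref{lemma::Mhat-properties-top} is what licenses this), while $\|f(0)-x'\|\ge\|f(0)-x\|-\|x-x'\|>8\epsilon$, so $\ell(f(0),y)>\ell(x,y)$, contradicting maximality of $(x,y)$. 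The $2\|x-x'\|\le 16\epsilon$ correction is thus charged to a single endpoint rather than to the sum, which is exactly what is needed; repeating with the roles swapped handles $f(1)$. As written, your argument proves the statement only with $32\epsilon$ in place of $16\epsilon$, and to hit the theorem's constant it needs to be recast in this one-endpoint-at-a-time form.
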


The two quantities
$x,y$ defined in equation
(\ref{eq::end-est}) are the estimates 
of the endpoints.

\smallskip

\begin{proof}
Suppose $d_H\left(\{x,y\}, \{f(0), f(1)\}\right) > 16\epsilon$.
Then, either $x$ or $y$ must be farther than $16\epsilon$ from $f(0)$ or $f(1)$.
Suppose without loss of generality that
$$ 
\min_{\pi \in {\cal P}_{x,f(0)}} {\rm length}(\pi) < \min_{\pi \in {\cal P}_{x,f(1)}} {\rm length}(\pi). 
$$
That is, we are labeling the two points so that $x$ is ``paired'' with $f(0)$ 
and $y$ is ``paired'' with $f(1)$. Assume that $|x - f(0)| > 16\epsilon$; we show 
that a contradiction follows.

Because $\hat\Gamma \subset  \Gamma \oplus (4\epsilon)$,
it follows that $x$ lies on one of the fibers through $\Gamma$.
(That is, it lies in the ``body'' of $\hat\Gamma$, not in the ``caps,''
whose points are all $< \epsilon$ from $f(0)$.)
Call this fiber ${\cal F}$.
Let $\pi$ be the shortest path from $x$ to $y$,
and let $\pi'$ be the shortest path from $f(0)$ to $y$.
By the assignment of $x$ and $y$ above, it follows that
$\pi'$ must pass through the fiber ${\cal F}$.
Let $x'$ be the point that $\pi'$ passes through on ${\cal F}$
and define $\ell(z)$ to be the length of the shortest
path from $z\in\hat\Gamma$ to $y$.

Again because $\hat\Gamma \subset  \Gamma \oplus (4\epsilon)$,
it follows that ${\cal F}$ has length $\le 8\epsilon$.
Because $f$ is an open curve, it follows immediately that $\hat\Gamma$ is
simply connected.
We claim that
$\ell(x') - \ell(x) \leq ||x-x'||$.
To see this, let $\pi_0$ be the shortest path within $\hat\Gamma$ from $x$ to $y$ and $\ell(x)$  be its length.
Consider the following path joining $x'$ to $y$: start at $x'$, move linearly to $x$ along the fiber,
then follow $\pi_0$. From the previous lemma, this path is entirely within $\hat\Gamma$.
Since the length of this path is $||x-x'|| + \ell(x)$, we have $\ell(x') \leq \ell(x) + ||x-x'||$ and
$$
\ell(x') - \ell(x) \leq ||x-x'||.
$$
Now invert the roles of $x$ and $x'$ and get 
$$
\ell(x) - \ell(x') \leq ||x-x'||
$$
so that the claim follows.

Now,
$$
||f(0) - x'|| \ge ||f(0) - x|| - ||x - x'|| > 16\epsilon - 8\epsilon = 8\epsilon.
$$
It follows that 
$\ell(f(0)) > 8\epsilon + \ell(x')> 8\epsilon + \ell(x) - 8\epsilon > \ell(x)$
which contradicts the assumption that $||x - f(0)|| > 16\epsilon$.
Applying this same argument to $y$ and $f(1)$ shows by contradiction
that $||y - f(1)|| \le 16\epsilon$.
This proves the theorem.
\end{proof}

\subsection{Estimating the Boundaries} \label{sec::boundaries}

Now we consider estimating
$\partial S_0$ and $\partial S_1$.
The estimators are defined in Theorem \ref{thm::this-is-the-est}
but we need some preliminary results first.
Let $\hat{\partial S}$ be an estimate of $\partial S$ such that
$d_H(\partial S,\hat{\partial S})\leq \epsilon$ and let
$\hat x_0$ and $\hat x_1$ be 
the endpoint estimators from Theorem \ref{thm::extreme-points-open-case},
that are such that
$$
||\hat x_0 - f(0)||\leq C \epsilon,\ \ \ ||\hat x_1 - f(1)||\leq C \epsilon.
$$
Define
\begin{center}
\begin{tabular}{ll}
$\hat B_0 = B(\hat x_0,\hat\sigma+ c\epsilon)$      & $\hat B_1 = B(\hat x_1,\hat\sigma + c\epsilon)$\\
$\hat E_0 = \hat{\partial S} \cap \hat B_0$ & $\hat E_1 = \hat{\partial S} \cap \hat B_1$\\
\end{tabular}
\end{center}

Recall the definitions of $V_0, V_1$ and $a$ given in
(\ref{eq::V0}) and
(\ref{eq::ace}).

\begin{theorem}  \label{Th.Th}
Suppose that, 
$d_H(\partial S,\hat{\partial S})\leq \epsilon$,
$||\hat x_0 - f(0)||\leq C \epsilon$,
$||\hat x_1 - f(1)||\leq C \epsilon$ and that
$\hat{\partial S}$ is connected.
Assume that $c \geq C+1$.
Let $a = a(2+c+C,\epsilon)$.
Let $V_0 = V_0(a\sqrt{\epsilon})$ and 
$V_1 = V_1(a\sqrt{\epsilon})$.
Then:
$$
d_H(V_0,\hat E_0)\leq b \sqrt{\epsilon}\ \ \ {\rm and}\ \ \ 
d_H(V_1,\hat E_1)\leq b \sqrt{\epsilon}
$$
where $b = a(\varphi + \sigma/\Delta)$.
\end{theorem}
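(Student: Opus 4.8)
The plan is to prove $d_H(V_0,\hat E_0)\le b\sqrt\epsilon$ (and, by a verbatim argument, $d_H(V_1,\hat E_1)\le b\sqrt\epsilon$) via the two set inclusions $\hat E_0\subset V_0\oplus b\sqrt\epsilon$ and $V_0\subset\hat E_0\oplus b\sqrt\epsilon$, each of which collapses, after an elementary triangle‑inequality estimate, onto one part of Theorem~\ref{theorem::intersect}. The ingredients I would feed in are $|\hat\sigma-\sigma|\le\epsilon$ (Theorem~\ref{thm::edt}), the hypotheses $d_H(\partial S,\hat{\partial S})\le\epsilon$ and $\|\hat x_0-f(0)\|\le C\epsilon$, and connectedness of $\hat{\partial S}$, which through Lemma~\ref{lemma::nice-boundary} keeps $\hat E_0=\hat{\partial S}\cap\hat B_0$ a single sub‑arc of the boundary estimate sitting over the first end cap.

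\emph{The inclusion $\hat E_0\subset V_0\oplus b\sqrt\epsilon$.} Take $\hat y\in\hat E_0$, so $\|\hat y-\hat x_0\|\le\hat\sigma+c\epsilon$, and pick $y'\in\partial S$ with $\|\hat y-y'\|\le\epsilon$. Then $\|y'-f(0)\|\le\|y'-\hat y\|+\|\hat y-\hat x_0\|+\|\hat x_0-f(0)\|\le\epsilon+(\sigma+\epsilon)+c\epsilon+C\epsilon=\sigma+(2+c+C)\epsilon$, that is, $y'\in E_0\big((2+c+C)\epsilon\big)$. Since the statement takes $a=a(2+c+C,\epsilon)$, Theorem~\ref{theorem::intersect}(1) gives $E_0\big((2+c+C)\epsilon\big)\subset V_0(a\sqrt\epsilon)=V_0$, hence $y'\in V_0$ and $\hat y\in V_0\oplus\epsilon\subset V_0\oplus b\sqrt\epsilon$ for small $\epsilon$. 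The precise value $2+c+C$ of the first argument of $a$ in the theorem is exactly what makes this step close.

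\emph{The inclusion $V_0\subset\hat E_0\oplus b\sqrt\epsilon$.} Take $y\in V_0=V_0(a\sqrt\epsilon)$. By Theorem~\ref{theorem::intersect}(2), $V_0\subset E_0\oplus b\sqrt\epsilon$ — the constant there agrees with the present $b$ up to harmless bookkeeping of the arclength factor $\varphi$ — so there is $y_0\in E_0$ with $\|y-y_0\|\le b\sqrt\epsilon$, and $\|y_0-f(0)\|\le\sigma$ since $E_0=B(f(0),\sigma)\cap\partial S$. Now choose $\hat y_0\in\hat{\partial S}$ with $\|y_0-\hat y_0\|\le\epsilon$; then $\|\hat y_0-\hat x_0\|\le\epsilon+\sigma+C\epsilon$, and the triangle inequality together with $\hat\sigma\ge\sigma-\epsilon$ and the hypothesis on $c$ (i.e. $c$ exceeding $C$ by the required margin) puts $\hat y_0\in\hat B_0$, hence $\hat y_0\in\hat E_0$. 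Therefore $\|y-\hat y_0\|\le b\sqrt\epsilon+\epsilon\le b\sqrt\epsilon$ for all sufficiently small $\epsilon$, absorbing the lower‑order term.

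\emph{Where the difficulty is.} The second inclusion is the one that needs care. The temptation is to approximate $y\in V_0$ directly by its nearest point on $\hat{\partial S}$ and hope it lands in $\hat B_0$; this fails, because a point $y$ on the tube portion of $V_0$ can lie a distance of order $\sqrt\epsilon$ outside $B(f(0),\sigma)$, while $\hat B_0$ has radius only $\sigma+O(\epsilon)$, so its nearest neighbour on $\hat{\partial S}$ need not be inside $\hat B_0$ at all. The fix — and the reason $E_0$, rather than the extended end cap, appears on the right in Theorem~\ref{theorem::intersect}(2) — is to first slide $y$ back onto the end cap to the point $y_0\in E_0$, which sits comfortably inside $B(f(0),\sigma)$, and only then pass to $\hat{\partial S}$. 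Beyond this, the only role of the connectedness hypothesis is to certify, via Lemma~\ref{lemma::nice-boundary}, that $\hat E_0$ is the well‑defined sub‑arc over the end cap, so that both inclusions are statements about the same object.
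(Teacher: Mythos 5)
Your proof is correct and follows the same route as the paper's: both inclusions are obtained by a triangle-inequality estimate feeding into Theorem \ref{theorem::intersect}, part (1) for $\hat E_0 \subset V_0 \oplus b\sqrt\epsilon$ and part (2) (passing through $y_0 \in E_0$) for the reverse inclusion. Your remark about why one must slide to $E_0$ before passing to $\hat{\partial S}$ is exactly the point the paper handles tersely (the paper even has a typo here, writing $B(f(0),\sigma+c\epsilon)$ where $B(f(0),\sigma)$ is meant). One bookkeeping caveat: from $\|y_0-f(0)\|\le\sigma$ and only $\hat\sigma\ge\sigma-\epsilon$, you get $\|\hat y_0-\hat x_0\|\le\hat\sigma+(C+2)\epsilon$, so the stated hypothesis $c\ge C+1$ is not quite enough as you have written it; the paper silently uses $\hat\sigma\ge\sigma$ (which is valid for the union-of-balls estimator because $S\subset\hat S$ almost surely, per Lemma \ref{lemma::supp}), tightening the margin to $C+1$. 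You would either need $c\ge C+2$ or should invoke the one-sided bound $\hat\sigma\ge\sigma$ explicitly.
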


\begin{proof}
Let $\hat x \in \hat E_0$.
Thus $||\hat x - \hat x_0|| \leq \hat\sigma + c \epsilon$.
There exists $x \in\partial S$ such that
$||\hat x - x|| \leq \epsilon$.
Now,
\begin{eqnarray*}
||x - f(0)|| & \leq &
||x - \hat x|| + ||\hat x - \hat x_0|| + ||\hat x_0 - f(0)||\\
& \leq & \epsilon + (\hat\sigma + c \epsilon) + C\epsilon\\
& \leq & \epsilon + (\sigma + (c+1) \epsilon) + C\epsilon\\
&=& \sigma + (2+c+C)\epsilon.
\end{eqnarray*}
Thus
$x\in B(f(0),\sigma + (2+c+C)\epsilon)\cap \partial S \in V_0$, by 
Theorem \ref{theorem::intersect}, and
$$
\hat E_0 \subset V_0\oplus\epsilon \subset V_0 \oplus (b\sqrt{\epsilon}).
$$

Now let $x \in V_0$.
There exists $z \in B(f(0),\sigma+ c\epsilon) \cap \partial S$
such that $||x-z||\leq b\sqrt{\epsilon}$.
There is a $\hat z \in \hat{\partial S}$ such that
$||\hat z - z|| \leq \epsilon$.
Now,
\begin{eqnarray*}
||\hat z - \hat x_0|| & \leq &
||\hat z - z|| +  ||z - f(0)|| + ||f(0)-\hat x_0||\\
& \leq & \epsilon + \sigma + C \epsilon\\
& \leq & \hat\sigma + (C+1)\epsilon \leq \hat\sigma + c \epsilon.
\end{eqnarray*}
Therefore $\hat x\in \hat E_0$ and so
$V_0 \subset \hat E_0 \oplus (b\sqrt{\epsilon})$.
\end{proof}

\medskip

\vspace{.5cm}

There is no guarantee that $\hat E_0$ and $\hat E_1$ are connected sets.
But this is crucial if we want to use them for the medial 
estimation procedure.
Define the completion
of $\hat E_0$ denoted by
$[\hat E_0]$ to be the smallest connected subset of
$\hat{\partial S}$ containing
$\hat E_0$.
That is,
$$
[\hat E_0] = \bigcap \Biggl\{ C:\ C\ {\rm is\ connected},\ 
C \subset\hat{\partial S},\ \hat E_0 \subset C\Biggr\}.
$$
Define
$[\hat E_1]$ similarly.
Finally, define
$$
\hat R = \hat{\partial S} - ( [\hat E_0] \union[\hat E_1]).
$$
Now by construction,
$[\hat E_0]$ and $[\hat E_1]$ are connected.
If they are disjoint, it follows that
$\hat R$ consists of two connected components.
To make sure that the completion procedure successfully combines elements
of $\hat E_0$ without adding other elements, we need the following.

\begin{theorem} \label{B}
Under the assumptions of  Theorem \ref{Th.Th}:
$$
\max_{\hat x , \hat y \in \hat E_0} ||\hat x - \hat y||\leq 2\hat\sigma + 2c\epsilon,
\ \ \ {\rm and}\ \ \ 
\max_{\hat x , \hat y \in \hat E_1} ||\hat x - \hat y||\leq 2\hat\sigma + 2c\epsilon.
$$
If $\sigma \leq ||f(1)-f(0)||/5\,$ then 
$\, \min_{\hat x \in \hat E_0,\hat y \in \hat E_1}||\hat x - \hat y|| > 
2\hat\sigma + 2c\epsilon.$
\end{theorem}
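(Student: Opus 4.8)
The plan is to obtain both claims from elementary triangle-inequality estimates, using only the defining radii of the balls $\hat B_0,\hat B_1$, the endpoint bounds $\|\hat x_0-f(0)\|\le C\epsilon$ and $\|\hat x_1-f(1)\|\le C\epsilon$, and the estimate $\hat\sigma\le\sigma+\epsilon$ from Theorem \ref{thm::edt}.

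First, the diameter bounds are immediate: if $\hat x,\hat y\in\hat E_0$ then both lie in $\hat B_0=B(\hat x_0,\hat\sigma+c\epsilon)$, so $\|\hat x-\hat y\|\le\|\hat x-\hat x_0\|+\|\hat x_0-\hat y\|\le 2(\hat\sigma+c\epsilon)=2\hat\sigma+2c\epsilon$, and symmetrically for $\hat E_1$ using $\hat B_1$.

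For the separation bound I would take any $\hat x\in\hat E_0$ and $\hat y\in\hat E_1$ and chain
\[
\|\hat x-\hat y\|\ \ge\ \|\hat x_0-\hat x_1\|-\|\hat x-\hat x_0\|-\|\hat y-\hat x_1\|\ \ge\ \|\hat x_0-\hat x_1\|-2(\hat\sigma+c\epsilon),
\]
and then $\|\hat x_0-\hat x_1\|\ge\|f(1)-f(0)\|-\|\hat x_0-f(0)\|-\|\hat x_1-f(1)\|\ge\|f(1)-f(0)\|-2C\epsilon$. Combining, $\|\hat x-\hat y\|\ge\|f(1)-f(0)\|-2\hat\sigma-2c\epsilon-2C\epsilon$, so it suffices to show $\|f(1)-f(0)\|>4\hat\sigma+4c\epsilon+2C\epsilon$. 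Since $\hat\sigma\le\sigma+\epsilon$, the right side is at most $4\sigma+(4+4c+2C)\epsilon$, while the hypothesis gives $\|f(1)-f(0)\|\ge 5\sigma=4\sigma+\sigma$; hence the strict inequality holds once $\sigma>(4+4c+2C)\epsilon$. As the estimate is uniform over the pair $(\hat x,\hat y)$, taking the minimum finishes the proof.

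The only subtlety is that the conclusion is strict while the hypothesis is phrased with the round constant $5$: the extra full $\sigma$ of slack between $4\sigma$ and $5\sigma$ is exactly what absorbs the $O(\epsilon)$ error terms, so the statement should be read (as elsewhere in this section) for $\epsilon$ small enough — equivalently for all large $n$ when $\epsilon=r_n$. No step here is expected to present a genuine obstacle; the work is entirely in keeping track of constants.
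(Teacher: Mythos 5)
Your proof is correct and follows essentially the same route as the paper's: the diameter bound is the same two-step triangle inequality through $\hat x_0$ (resp.\ $\hat x_1$), and the separation bound is obtained by lower-bounding $\|\hat x - \hat y\|$ via $\|\hat x_0 - \hat x_1\|$, lower-bounding that by $\|f(1)-f(0)\| - 2C\epsilon$, and then invoking the hypothesis $\|f(1)-f(0)\|\ge 5\sigma$ together with $\hat\sigma\le\sigma+\epsilon$. Your explicit observation that the final strict inequality requires $\epsilon$ small relative to $\sigma$ (so that the extra $\sigma$ of slack absorbs the $O(\epsilon)$ terms) is in fact a useful clarification, since the corresponding chain of inequalities in the paper's own proof passes over this point somewhat opaquely.
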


\begin{proof}
For any
$\hat x , \hat y \in \hat E_0$, we have
$||\hat x - \hat y||\leq ||\hat x - \hat x_0|| + ||\hat y - \hat x_0|| \leq
2\hat\sigma + 2 c\epsilon$.
Now let
$\hat x \in \hat E_0$ and
$\hat y \in \hat E_1$.
Now
\begin{eqnarray*}
||\hat x_0 - \hat x_1|| & \leq &
||\hat x_0 - \hat x|| + ||\hat x - \hat y|| + ||\hat y - \hat x_1|| \\
& \leq &
2\hat\sigma + 2c\epsilon + ||\hat x - \hat y||.
\end{eqnarray*}
Hence,
\begin{eqnarray*}
||\hat x - \hat y|| & \geq & ||\hat x_0 - \hat x_1||  - 2\hat\sigma - 2 c \epsilon\\
& \geq &
||f(0)-f(1)|| -  ||\hat x_0 - f(0)|| - ||\hat x_1 - f(1)|| - 2\hat\sigma - 2 c \epsilon\\
& \geq & 
||f(0)-f(1)|| -  2C\epsilon - 2\hat\sigma - 2 c \epsilon\\
&=&
||f(0)-f(1)|| -  2\hat\sigma - 2(C+c)\epsilon\\
& \geq &
5\sigma - 2\hat\sigma - 2(C+c)\epsilon\\
&=& 3\sigma + 2\sigma - 2\hat\sigma - 2(C+c)\epsilon\\
& \geq & 3\sigma + 2\hat\sigma - 2\epsilon + 2\hat\sigma - 2(C+c)\epsilon\\
&=& 3\sigma - 2(C+c-2)\epsilon > 2\hat\sigma + 2c\epsilon.
\end{eqnarray*}
\end{proof}

Combining the above results we have the following.

\begin{theorem}
\label{thm::this-is-the-est}
Suppose that, $d_H(\partial S,\hat{\partial S})\leq \epsilon$,
$||\hat x_0 - f(0)||\leq C \epsilon$,
$||\hat x_1 - f(1)||\leq C \epsilon$ and that
$\hat{\partial S}$ is connected.
Assume that $c \geq C+1$.
Let $a = a(c,\epsilon)$.
Let $V_0 = V_0(a\sqrt{\epsilon})$ and 
$V_1 = V_1(a\sqrt{\epsilon})$.
If $\sigma \leq ||f(1)-f(0)||/5$ then:
\begin{enumerate}
\item $d_H(V_0,[\hat E_0])\leq c_1 \sqrt{\epsilon}$ and
$d_H(V_1,[\hat E_1])\leq c_1 \sqrt{\epsilon}$.
\item $\hat R$ consists of two connected components,
$\hat{\partial S}_0$ and $\hat{\partial S}_1$, say.
\item $d_H(\partial S_0,\hat{\partial S}_0)\leq c_2 \sqrt{\epsilon}$ and
$d_H(\partial S_1,\hat{\partial S}_1)\leq c_2 \sqrt{\epsilon}$.
\end{enumerate}
\end{theorem}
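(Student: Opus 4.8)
\medskip
\noindent\textbf{Proof strategy and Part 1.}
The plan is to assemble the three preceding results---Theorems \ref{theorem::intersect}, \ref{Th.Th} and \ref{B}---around a single geometric device: an order-preserving correspondence between the two simple closed curves $\hat{\partial S}$ and $\partial S$. Since $f$ is open, Lemma \ref{lemma::nice-boundary} gives that $\hat{\partial S}$ is a simple closed curve; $\partial S$ is likewise a simple closed curve and, using Lemma \ref{lemma::inner.outer} for the sides $\partial S_i$ together with the radius-$\sigma$ end caps, $\partial S$ has positive reach, at least $\min(\sigma,\Delta-\sigma)$. For $\epsilon$ small enough I would use nearest-point projection $\pi:\hat{\partial S}\to\partial S$, which is continuous, satisfies $\|w-\pi(w)\|\le\epsilon$, and---because $\hat{\partial S}$ lies in an $\epsilon$-tube around $\partial S$---runs monotonically alongside $\partial S$, so it carries closed arcs to closed arcs and preimages of disjoint arcs of $\partial S$ are disjoint arcs of $\hat{\partial S}$. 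Now for Part 1: Theorem \ref{Th.Th} gives $d_H(V_i,\hat E_i)\le b\sqrt\epsilon$, and since $\hat E_i\subseteq[\hat E_i]$ this already yields $V_i\subseteq[\hat E_i]\oplus b\sqrt\epsilon$. For the reverse containment, for $\hat x\in\hat E_0$ the triangle-inequality computation from the proof of Theorem \ref{Th.Th} gives $\|\pi(\hat x)-f(0)\|\le\sigma+(2+c+C)\epsilon$, so $\pi(\hat E_0)\subseteq E_0((2+c+C)\epsilon)$, which by Theorem \ref{theorem::intersect}(1)--(2) lies in a single arc $W_0\subseteq\partial S$ with $W_0\subseteq E_0\oplus O(\sqrt\epsilon)\subseteq V_0\oplus O(\sqrt\epsilon)$. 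Hence $\hat E_0\subseteq\pi^{-1}(W_0)$, and since $\pi^{-1}(W_0)$ is a connected subset of $\hat{\partial S}$ containing $\hat E_0$, the defining intersection gives $[\hat E_0]\subseteq\pi^{-1}(W_0)$; so every point of $[\hat E_0]$ is within $\epsilon$ of $W_0$, hence within $c_1\sqrt\epsilon$ of $V_0$, and $[\hat E_0]\subseteq V_0\oplus c_1\sqrt\epsilon$. Combining proves $d_H(V_0,[\hat E_0])\le c_1\sqrt\epsilon$, and identically for index $1$.

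\medskip
\noindent\textbf{Part 2.}
The same estimate places $\hat E_1$ inside $\pi^{-1}(W_1)$ for an arc $W_1$ near $f(1)$. By Theorem \ref{B}---this is where the hypothesis $\sigma\le\|f(1)-f(0)\|/5$ is used---$\hat E_0$ and $\hat E_1$ are separated by more than $2\hat\sigma+2c\epsilon\ge\mathrm{diam}(\hat E_0)$, which forces $W_0$ and $W_1$ to be disjoint arcs of $\partial S$, separated by gaps on both sides. Hence $\pi^{-1}(W_0)$ and $\pi^{-1}(W_1)$ are disjoint, so $[\hat E_0]$ and $[\hat E_1]$ are disjoint; being nonempty and proper, they are two disjoint closed arcs of the simple closed curve $\hat{\partial S}$, and therefore $\hat R=\hat{\partial S}-([\hat E_0]\cup[\hat E_1])$ has exactly two connected components, which we name $\hat{\partial S}_0$ and $\hat{\partial S}_1$.

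\medskip
\noindent\textbf{Part 3.}
By Lemma \ref{lemma::disjoint}(5), for open $f$ the curves $\partial S_0,\partial S_1$ are the two long sides of the tube, and $\partial S$ is the cyclic join of $\partial S_0$, an end cap, $\partial S_1$, and the other end cap; consequently $\partial S-(V_0\cup V_1)$ consists of two arcs $\partial\tilde S_0,\partial\tilde S_1$ obtained from $\partial S_0,\partial S_1$ by deleting pieces of length $O(\sqrt\epsilon)$ at the ends, so $d_H(\partial\tilde S_i,\partial S_i)=O(\sqrt\epsilon)$. Using Part 1, the bound $d_H(E_i,V_i)=O(\sqrt\epsilon)$ from Theorem \ref{theorem::intersect}, and the correspondence $\pi$, I would show that $\pi(\hat{\partial S}_i)$ is $O(\sqrt\epsilon)$-Hausdorff-close to $\partial\tilde S_i$ while $\hat{\partial S}_i$ stays within $\epsilon$ of $\pi(\hat{\partial S}_i)$; the triangle inequality then gives $d_H(\hat{\partial S}_i,\partial\tilde S_i)=O(\sqrt\epsilon)$ and hence $d_H(\partial S_i,\hat{\partial S}_i)\le c_2\sqrt\epsilon$.

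\medskip
\noindent\textbf{Main obstacle.}
The delicate point is not the triangle-inequality bookkeeping but the correspondence $\pi$ itself: one must verify that $\hat{\partial S}$ runs monotonically alongside $\partial S$---so that $\pi$ is surjective of degree one, $\pi^{-1}$ of the small arcs $W_0,W_1$ are small arcs, and the completion $[\hat E_0]$ cannot ``wrap around'' and swallow part of $\hat E_1$. This is what requires $\epsilon$ small relative to the reach $\min(\sigma,\Delta-\sigma)$ and a short ordering/degree argument; once it is in hand, Parts 1--3 follow from Theorems \ref{theorem::intersect}, \ref{Th.Th} and \ref{B}.
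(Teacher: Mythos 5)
Your approach is genuinely different from the paper's. The paper proves Part~3 by direct metric bookkeeping: for a point $y = f(u)+\sigma N(u)\in\partial S_0$ with $u$ bounded away from $0$ and $1$, it uses the triangle inequality to show the nearby $\hat y\in\hat{\partial S}$ lies outside $\hat E_0$ and $\hat E_1$; for $u$ near the endpoints it invokes Lemma~\ref{lemma::distances} (the arclength--chord comparison) to control $\|f(u)-f(A\sqrt{\epsilon})\|$; and the reverse inclusion is argued symmetrically. It never introduces a projection map. You instead package the comparison through a nearest-point projection $\pi:\hat{\partial S}\to\partial S$ and deduce Parts~1--3 from $\pi$'s claimed monotonicity (preimages of arcs are arcs). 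Conceptually that is cleaner, and it correctly isolates the one point the paper itself glosses over when it says ``Parts 1 and 2 follow easily'' --- namely, that the completion $[\hat E_0]$ cannot wrap around and absorb far-away pieces of $\hat{\partial S}$.

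The gap, however, is real and is not ``a short ordering/degree argument.'' Nearest-point projection of $\hat{\partial S}$ onto $\partial S$ need not be monotone: $\hat{\partial S}$ is a union of arcs of circles of radius $\epsilon_n$ and can cross the normal segment at a given $p\in\partial S$ several times, so $\pi^{-1}(p)$ --- and hence $\pi^{-1}(W_0)$ --- may be disconnected even when $\epsilon$ is tiny relative to $\min(\sigma,\Delta-\sigma)$. Once $\pi^{-1}(W_0)$ may fail to be connected, the key step ``$\pi^{-1}(W_0)$ is a connected subset of $\hat{\partial S}$ containing $\hat E_0$, hence $[\hat E_0]\subset\pi^{-1}(W_0)$'' does not follow. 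What is actually needed is a bound that controls the whole filling arc $[\hat E_0]$, not just the fibres of $\pi$. A workable replacement is the one the paper's Theorem~\ref{B} is aimed at: since ${\rm diam}(\hat E_0)\le 2\hat\sigma+2c\epsilon$ while $d(\hat E_0,\hat E_1)>2\hat\sigma+2c\epsilon$, and since the two endpoints of $[\hat E_0]$ lie in $\hat E_0\subset V_0\oplus b\sqrt\epsilon$, one must argue directly that the sub-arc between those endpoints that avoids $\hat E_1$ is the smaller one and stays in a tube around $V_0$; this uses the $\epsilon$-proximity to $\partial S$ (via Lemma~\ref{lemma::boundary-inclusion} or a Jordan-curve separation argument), not monotonicity of $\pi$. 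As written, your Parts~1 and~2 rest on the monotonicity claim and are therefore incomplete; your Part~3, which is sketched at the level of ``I would show,'' inherits the same gap through its appeal to the correspondence $\pi$. The route is salvageable but needs the separation argument to be run on arcs of $\hat{\partial S}$ rather than on fibres of $\pi$.
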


Thus, statement {\em 2.}~of the above theorem defines the estimators
$\hat{\partial S}_0$ and $\hat{\partial S}_1$.

\begin{proof}
Parts {\em 1} and {\em 2} follow easily.
Let us turn to {\em 3}.
Let 
$y = f(u)+\sigma N(u) \in \partial S_0$
where
$0 \leq u \leq 1$.
First suppose that
$A\sqrt{\epsilon} < u < 1-A\sqrt{\epsilon}$
where $A = a(2+c+C,\epsilon)$.
Then
$y\notin B(f(0),\sigma + (2+c+C)\epsilon)$.
There exists $\hat y\in\hat{\partial S}$ such that
$||y - \hat y||\leq \epsilon$.
So
\begin{eqnarray*}
\sigma + (2+c +C)\epsilon & < & ||y - f(0)|| \leq
||y - \hat y|| + ||\hat y - \hat x_0|| + ||\hat x_0 - f(0)||\\
& \leq &
\epsilon + ||\hat y - \hat x_0|| + C \epsilon
\end{eqnarray*}
and so
\begin{eqnarray*}
||\hat y - \hat x_0||  & > &  \sigma + (2+c+C)\epsilon - \epsilon - C \epsilon\\
&=& \sigma + (1+c)\epsilon >
\hat\sigma + c\epsilon .
\end{eqnarray*}
Thus, $\hat y \notin \hat E_0$.
A similar argument shows that
$\hat y \notin \hat E_1$ and
$\hat y \notin \hat{\partial S}_1$.
Hence $\hat y \in \hat{\partial S}_0$.
Now suppose that
$0 \leq u \leq A \sqrt{\epsilon}$.
From Lemma \ref{lemma::distances}
$$
||f(u)-f(A\sqrt{\epsilon})|| \leq c_1 \sqrt{\epsilon}
$$
for some $c_1$.
From the first part of the proof,
there is a $\hat y \notin\hat E_0$ such that
$||f(A\sqrt{\epsilon})- \hat y|| \leq \epsilon$.
But
$||f(u) - \hat y|| \leq ||f(u) - f(A\sqrt{\epsilon})|| + ||f(A\sqrt{\epsilon})-\hat y||=
\epsilon + c_1 \sqrt{\epsilon} \leq c_2 \sqrt{\epsilon}$, say.
Hence,
$\partial S_0 \subset \hat{\partial S}_0 \oplus c_2\sqrt{\epsilon}$.

Now let $\hat y$ be in $\hat{\partial S}_0$.
Hence, $||\hat y - \hat x_0|| > \hat\sigma + c \epsilon$.
Let $y\in\partial S$ be such that
$||\hat y - y|| \leq \epsilon$.
Now
\begin{eqnarray*}
\hat\sigma + c\epsilon & < & ||\hat y - \hat x_0||\\
& \leq & ||\hat y - y|| + ||y-f(0)|| + ||f(0)-\hat x_0||\\
& \leq & \epsilon + ||y-f(0)|| + C \epsilon
\end{eqnarray*}
and so
$$
||y-f(0)||  >  \hat\sigma + (C-1-C)\epsilon \geq 
\sigma + (c-1-C)\epsilon =\sigma + \gamma\epsilon
$$
where $\gamma = c-1-C$.
It follows that
$y\notin (V_0(\gamma\sqrt{\epsilon})\union V_1(\gamma\sqrt{\epsilon}))$.
That is, $y = f(u) + \sigma N(u)$ with
$a(\gamma,\epsilon)\sqrt{\epsilon} \leq u \leq (1-a(\gamma,\epsilon)\sqrt{\epsilon})$.
Arguing as above, using Lemma \ref{lemma::distances},
there is a $v$ such that
$a(c,\epsilon)\sqrt{\epsilon} \leq v \leq (1-a(c,\epsilon)\sqrt{\epsilon})$
and such that
$||(f(v)+\sigma N(v)) - (f(u)+\sigma N(u))|| \leq c_3 \sqrt{\epsilon}$
for some $c_3$.
Hence,
$\hat{\partial S}_0 \subset \partial{S}_0 \oplus c_3 \sqrt{\epsilon}$.
A similar argument applies to $\partial S_1$ and 
$\hat{\partial S}_1$.
The theorem follows by taking
$c_4 = \max\{c_2,c_3\}$.
\end{proof}

\smallskip
\begin{theorem}
\label{thm::open-case}
Let $\hat{\partial S}_0$ and $\hat{\partial S}_1$ be the estimators described in
statement 2.~of Theorem \ref{thm::this-is-the-est}. Let $\hat \Gamma$ be the medial 
estimator derived from $\hat{\partial S}_0$ and $\hat{\partial S}_1$.
Then the results of Theorem \ref{thm::methodII} hold.
\end{theorem}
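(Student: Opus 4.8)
The plan is to reduce the open case to the closed-curve machinery of Theorem~\ref{thm::methodII}, using Theorem~\ref{thm::this-is-the-est} to supply the two boundary pieces that the medial construction needs. By Lemma~\ref{lemma::nice-boundary}, whenever $S\subset\hat S$ the boundary $\hat{\partial S}$ of the union-of-balls estimator is a single simple closed curve; fed together with the endpoint estimates $\hat x_0,\hat x_1$ of Theorem~\ref{thm::extreme-points-open-case} (which satisfy $\|\hat x_0-f(0)\|=O(\epsilon)$ and $\|\hat x_1-f(1)\|=O(\epsilon)$) into the splitting procedure, Theorem~\ref{thm::this-is-the-est} returns two connected arcs $\hat{\partial S}_0,\hat{\partial S}_1$ with $d_H(\partial S_i,\hat{\partial S}_i)\le c_2\sqrt\epsilon$, where $\partial S_0,\partial S_1$ are the two sides of $\partial\cT$ of Lemma~\ref{lemma::disjoint}. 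These are exactly the inputs the medial algorithm expects, so $\hat\Gamma=\{\hat\mu(y):y\in\hat{\partial S}_0\}$ is the same sort of object analyzed in Theorem~\ref{thm::methodII}, and everything reduces to re-running that analysis with these inputs.

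First I would record that every geometric ingredient of the proof of Theorem~\ref{thm::methodII} is insensitive to whether $f$ is open or closed. Lemma~\ref{lemma::inner.outer} (the inner and outer critical balls of $\partial S_i$ have radii $\Delta+\sigma$ and $\Delta-\sigma$), the disjointness of the extended fibers $\cY(u)$, Lemma~\ref{lemma::painful} (stability of the cross-tube projection), and Lemmas~\ref{thm::key}--\ref{thm::key2} (the tangential displacement of a point projected onto a nearby curve) are all statements about the intrinsic tube geometry around $\Gamma_f$, which depends only on $\sigma<\Delta(f)$ and not on whether the relevant curves are loops or arcs. The sole place openness entered the closed-case proof was through the two-closed-curves conclusion of Lemma~\ref{lemma::nice-boundary}; that role is now taken over by statement~2 of Theorem~\ref{thm::this-is-the-est}.

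The one genuine subtlety is that Theorem~\ref{thm::this-is-the-est} certifies only $\sqrt\epsilon$-accuracy of the split arcs globally, so a mechanical substitution $\epsilon\mapsto c_2\sqrt\epsilon$ into Theorem~\ref{thm::methodII} would cost a power of $\epsilon$. The cure is to use the finer information inside the proof of Theorem~\ref{thm::this-is-the-est}(3): the $\sqrt\epsilon$-degradation is confined to the two collars $\{f(u)\pm\sigma N(u):0\le u\le A\sqrt\epsilon\}$ and $\{f(u)\pm\sigma N(u):1-A\sqrt\epsilon\le u\le 1\}$, while on the body $A\sqrt\epsilon\le u\le 1-A\sqrt\epsilon$ the estimated sides are genuinely $\epsilon$-close to $\partial S_0,\partial S_1$ (there $\partial S_0$ and $\partial S_1$ are separated by a fixed positive distance, so being $\epsilon$-close to $\partial S$ forces being $\epsilon$-close to the correct side). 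Decomposing $\Gamma_f$ into its body and the two collars: on the body the argument of Theorem~\ref{thm::methodII}, statement~1 items (i) and (ii), applies verbatim with accuracy $\epsilon$, so every $\hat\mu(y)$ produced from a body point of $\hat{\partial S}_0$ is within $O(\sqrt\epsilon)$ of $\Gamma_f$ and every body point $f(u)$ lies within $O(\sqrt\epsilon)$ of some $\hat\mu(y)$ (the $\sqrt\epsilon$ being the usual cost of Lemma~\ref{thm::key2}, the relevant projections staying in the body). For a collar point $f(u)$ with $\|f(u)-f(0)\|=O(\sqrt\epsilon)$, I would argue crudely: the near-end portion of $\hat{\partial S}_0$ lies within $O(\sqrt\epsilon)$ of $\partial S_0$ at parameter of order $\sqrt\epsilon$, so the corresponding values $\hat\mu(y)$ lie within $O(\sqrt\epsilon)$ of $f(A\sqrt\epsilon)$ and hence of $f(u)$; symmetrically near $f(1)$. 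Conversely, any $\hat\mu(y)$ coming from a collar point of $\hat{\partial S}_0$ is still within $O(\sqrt\epsilon)$ of $\Gamma_f$ by the item-(i) estimate, which uses only $\hat{\partial S}_i\subset\partial S_i\oplus O(\sqrt\epsilon)$. Putting the pieces together gives $d_H(\hat\Gamma,\Gamma_f)=O(\sqrt\epsilon)$, i.e.\ statement~1 of Theorem~\ref{thm::methodII}; statement~2 then follows exactly as there, taking $\hat S=\Union_{i=1}^n B(Y_i,\epsilon_n)$ and invoking Lemmas~\ref{lemma::supp} and~\ref{lemma::nice-boundary} to get $d_H(\partial S,\hat{\partial S})\le r_n$ and connectedness of $\hat{\partial S}$ with probability one for all large $n$, so that Theorem~\ref{thm::this-is-the-est} applies with $\epsilon=r_n$ and $d_H(\hat\Gamma,\Gamma_f)=O(\sqrt{r_n})$.

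The hard part will be the collar analysis near $f(0)$ and $f(1)$. There the two sides of $\partial S$ come together (inside the endcap $\cC_0$ the tube decomposition breaks down entirely), the cut made by the splitting procedure is essentially arbitrary, and the nearest-point pairing used to form $\hat\mu$ can jump discontinuously. The point I would push is that each collar has arclength only $O(\sqrt\epsilon)$, so however the pairing behaves on it the resulting midpoints stay trapped in an $O(\sqrt\epsilon)$-neighborhood of $\{f(u):u=O(\sqrt\epsilon)\}$ (and symmetrically at the far end); they therefore neither escape $\Gamma_f\oplus O(\sqrt\epsilon)$ nor leave a gap larger than $O(\sqrt\epsilon)$, with the endpoint estimates $\hat x_0,\hat x_1$ serving to anchor the collars. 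Everything away from the collars is a word-for-word repetition of the closed-case proof.
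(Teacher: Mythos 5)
Your high-level reduction---feed the endpoint estimates into Theorem~\ref{thm::this-is-the-est} to manufacture two connected arcs $\hat{\partial S}_0,\hat{\partial S}_1$, then re-run the closed-curve analysis of Theorem~\ref{thm::methodII} with these inputs---is exactly what the paper intends; its entire proof of this theorem is the single sentence ``Follows by combining the last four results.'' Where you diverge, and genuinely so, is in noticing that this combination is not free: Theorem~\ref{thm::this-is-the-est} certifies only $d_H(\partial S_i,\hat{\partial S}_i)\le c_2\sqrt\epsilon$, and a mechanical substitution $\epsilon\mapsto c_2\sqrt\epsilon$ into Theorem~\ref{thm::methodII} would yield conclusions (ii)--(iii) at rate $O(\epsilon^{1/4})$ rather than $O(\sqrt\epsilon)$ (and $O(r_n^{1/4})$ rather than $O(\sqrt{r_n})$ in part~2). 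The paper is silent on this; its one-liner simply does not engage with the lost power of $\epsilon$. Your body/collar decomposition---exploiting that the $\sqrt\epsilon$ degradation in the proof of Theorem~\ref{thm::this-is-the-est}(3) is confined to the two endpoint collars of arclength $O(\sqrt\epsilon)$, with genuine $\epsilon$-accuracy on the body---is the right way to salvage the stated rate and is a real addition beyond what the paper provides. The collar step itself is sketched rather crudely: you assert that midpoints produced from collar points of $\hat{\partial S}_0$ ``stay trapped,'' but to make that rigorous you would need to show that the nearest-point map from a collar point of $\hat{\partial S}_0$ cannot land on a far-away portion of $\hat{\partial S}_1$ (a localization via the ball $B(\hat x_0,\hat\sigma+c\epsilon)$ used to define $\hat E_0$, together with the separation bound of Theorem~\ref{B}, would do it). Still, the decomposition is the key insight that the paper's terse proof elides, so your route is more careful and more complete than the original even if one step remains to be filled in.
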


\begin{proofof} {\bf Theorem \ref{thm::open-case}.}
Follows by combining the last four results.
\end{proofof}

\begin{lemma} (\cite{smale})
\label{lemma::distances}
If
$||f(u)-f(v)|| \leq \frac{\Delta}{2}$
then
$$
\alpha(u,v) - \frac{\alpha^2(u,v)}{2\Delta} \leq ||f(u)-f(v)|| \leq
\alpha(u,v) \leq \Delta - \Delta \sqrt{ 1 - \frac{2 ||f(u)-f(v)||}{\Delta}}.
$$
\end{lemma}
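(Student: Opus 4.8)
The plan is to reduce everything to the curvature bound supplied by the thickness condition together with one elementary Taylor estimate, and then to invert a quadratic. First I would reparametrize $\Gamma_f$ by true arclength, writing $\gamma:[0,\varphi]\to\mathbb{R}^2$ with $\|\gamma'\|\equiv 1$ and $f(u)=\gamma(\varphi u)$, so that $\alpha(u,v)$ is the arclength (i.e. geodesic) distance between $f(u)$ and $f(v)$. Letting the three points in the definition (\ref{eq::Delta}) of $\Delta=\Delta(f)$ coalesce shows (as already noted after (\ref{eq::Delta})) that the local radius of curvature of $\Gamma_f$ is at least $\Delta$ everywhere, i.e. $\|\gamma''(s)\|\le 1/\Delta$ for all $s$. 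The middle inequality $\|f(u)-f(v)\|\le\alpha(u,v)$ is then immediate, since the straight chord is no longer than the arc.

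For the lower bound, write the arc from $f(u)=\gamma(s_0)$ to $f(v)=\gamma(s_1)$, put $\alpha:=s_1-s_0$, and expand $\gamma'(s)=\gamma'(s_0)+\int_{s_0}^{s}\gamma''(t)\,dt$. Integrating once more gives
\begin{equation*}
\gamma(s_1)-\gamma(s_0)=\alpha\,\gamma'(s_0)+\int_{s_0}^{s_1}\!\!\int_{s_0}^{s}\gamma''(t)\,dt\,ds ,
\end{equation*}
and since $\|\gamma'(s_0)\|=1$ while the double integral has norm at most $\frac1\Delta\int_{s_0}^{s_1}(s-s_0)\,ds=\alpha^2/(2\Delta)$, the triangle inequality yields $\|f(u)-f(v)\|\ge\alpha-\alpha^2/(2\Delta)$, which is the first inequality. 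This step needs no restriction on $\alpha$.

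It remains to get the third inequality. The function $g(a)=a-a^2/(2\Delta)$ is strictly increasing on $[0,\Delta]$ with $g(\Delta)=\Delta/2$, so \emph{provided} one knows $\alpha(u,v)\le\Delta$, the chain $g(\alpha(u,v))\le\|f(u)-f(v)\|\le\Delta/2=g(\Delta)$ together with monotonicity gives $\alpha(u,v)\le g^{-1}(\|f(u)-f(v)\|)=\Delta-\Delta\sqrt{1-2\|f(u)-f(v)\|/\Delta}$, as claimed. The one genuinely non-trivial point — and the step I expect to be the main obstacle — is therefore verifying that $\|f(u)-f(v)\|\le\Delta/2$ forces $\alpha(u,v)\le\Delta$, i.e. that points close in the ambient metric are close along the curve; this is the only place where the \emph{global} part of the thickness (self-approach), not merely the curvature bound, is needed. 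One way to do this is to consider $\phi(s)=\langle\gamma'(s_0),\gamma(s_0+s)-\gamma(s_0)\rangle$ and use the turning-angle estimate $\angle\bigl(\gamma'(s_0),\gamma'(s_0+s)\bigr)\le s/\Delta$ to get $\|f(u)-\gamma(s_0+s)\|\ge\Delta\sin(s/\Delta)$ for $0\le s\le\pi\Delta$, so that a point at arclength $\Delta$ from $f(u)$ sits at ambient distance $>\Delta/2$; the remaining possibility that $\Gamma_f$ loops back to within $\Delta/2$ of $f(u)$ along a distant arc is excluded because $\Delta$ also lower-bounds the distance at which the curve comes near to self-intersecting. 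The detailed verification of this geometric fact is carried out in \cite{smale}; moreover, in the regime in which we actually invoke the lemma (where $\|f(u)-f(v)\|=O(\sqrt{\epsilon})$), the bound $\alpha(u,v)\le\Delta$ follows at once from the displayed lower bound once $\epsilon$ is small. Assembling the three inequalities completes the proof.
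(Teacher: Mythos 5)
The paper does not actually prove Lemma \ref{lemma::distances}; it is stated with a citation to \cite{smale} and no argument is given, so any proof you supply is ``new'' relative to the paper. Your sketch for the first two inequalities (the double-integral Taylor estimate giving $\|f(u)-f(v)\|\ge\alpha-\alpha^2/(2\Delta)$, and the elementary chord-versus-arc comparison) is correct, and your reduction of the third inequality to the single injectivity claim ``$\|f(u)-f(v)\|\le\Delta/2$ forces $\alpha(u,v)\le\Delta$,'' followed by inversion of the increasing branch of $g(a)=a-a^2/(2\Delta)$, is exactly the standard route used in \cite{smale}. You are also right that this injectivity claim is the only genuinely global step: the curvature bound alone controls the chord only for geodesic separations up to order $\pi\Delta$ (your $\sin$ estimate is sharp only on part of that range), and for larger separations one must invoke the self-approach part of the thickness to rule out $\Gamma_f$ wrapping back into a small ball around $f(u)$. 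Deferring the detailed verification of that step to \cite{smale} is reasonable here.

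The one genuine error is the closing sentence asserting that, in the application regime $\|f(u)-f(v)\|=O(\sqrt{\epsilon})$, the bound $\alpha(u,v)\le\Delta$ ``follows at once from the displayed lower bound once $\epsilon$ is small.'' It does not. The function $g(a)=a-a^2/(2\Delta)$ is increasing only on $[0,\Delta]$; on $[\Delta,2\Delta]$ it decreases back to $0$, and for $a>2\Delta$ it is negative. Hence the inequality $g(\alpha)\le\|f(u)-f(v)\|$ is perfectly compatible with $\alpha$ close to $2\Delta$, or indeed arbitrarily large, even when $\|f(u)-f(v)\|$ is tiny. Smallness of the ambient distance never forces smallness of the geodesic distance via the lower bound alone; that implication is precisely the content of the injectivity step, and it must come from the global thickness argument you deferred, not as a corollary of the quadratic estimate. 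Apart from this aside, the proposal is sound.
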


\bibliography{paper}

\end{document}